\documentclass[11pt,a4paper]{article}
\usepackage[T1]{fontenc}
\usepackage[utf8]{inputenc}
\usepackage{lmodern,amsmath,amssymb,amsthm,mathtools,mathrsfs}
\usepackage[a4paper,left=28mm,right=28mm,top=27mm,bottom=25mm,headheight=14pt,headsep=18pt]{geometry}
\usepackage{microtype,enumitem,needspace,titlesec,etoolbox}
\usepackage{xcolor,aliascnt}
\usepackage{cite}
\definecolor{ReferenceBlue}{HTML}{153F75}
\usepackage[colorlinks=true,linkcolor=blue,citecolor=blue,
urlcolor=blue,linktoc=all,bookmarksnumbered=true, bookmarksopen=true,bookmarksopenlevel=1]{hyperref}
\usepackage{bookmark}
\usepackage[capitalize,noabbrev]{cleveref}
\crefname{equation}{Equation}{Equations}
\crefname{theorem}{Theorem}{Theorems}
\crefname{proposition}{Proposition}{Propositions}
\crefname{lemma}{Lemma}{Lemmas}
\crefname{corollary}{Corollary}{Corollaries}
\crefname{definition}{Definition}{Definitions}
\crefname{remark}{Remark}{Remarks}
\crefname{section}{Section}{Sections}
\crefname{appendix}{Appendix}{Appendices}
\AtBeginDocument{}
\hypersetup{pdftitle={Quantitative thermoviscous Darcy limits in periodic and bounded domains},
pdfsubject={Two-domain homogenization with temperature-dependent viscosity and boundary-layer estimates}}
\titleformat{\section}{\large\bfseries}{\thesection}{0.8em}{}
\titleformat{\subsection}{\normalsize\bfseries}{\thesubsection}{0.7em}{}
\titlespacing*{\section}{0pt}{2.2ex plus 0.5ex minus 0.2ex}{1.1ex plus 0.2ex}
\titlespacing*{\subsection}{0pt}{1.7ex plus 0.4ex minus 0.2ex}{0.8ex plus 0.2ex}
\numberwithin{equation}{section}
\allowdisplaybreaks[1]
\newtheoremstyle{paper}{6pt plus 2pt minus 1pt}{6pt plus 2pt minus 1pt}
{\itshape}{}{\bfseries}{.}{0.5em}{}
\theoremstyle{paper}
\newtheorem{theorem}{Theorem}[section]
\newaliascnt{proposition}{theorem}
\newtheorem{proposition}[proposition]{Proposition}
\aliascntresetthe{proposition}
\newaliascnt{lemma}{theorem}
\newtheorem{lemma}[lemma]{Lemma}
\aliascntresetthe{lemma}
\newaliascnt{corollary}{theorem}
\newtheorem{corollary}[corollary]{Corollary}
\aliascntresetthe{corollary}
\theoremstyle{paper}
\newaliascnt{definition}{theorem}
\newtheorem{definition}[definition]{Definition}
\aliascntresetthe{definition}
\theoremstyle{paper}
\newaliascnt{remark}{theorem}
\newtheorem{remark}[remark]{Remark}
\aliascntresetthe{remark}
\newcommand{\statementspace}{\Needspace{3\baselineskip}}
\AtBeginEnvironment{theorem}{\statementspace}
\AtBeginEnvironment{lemma}{\statementspace}
\AtBeginEnvironment{proposition}{\statementspace}
\AtBeginEnvironment{corollary}{\statementspace}
\AtBeginEnvironment{definition}{\statementspace}
\AtBeginEnvironment{remark}{\statementspace}
\newcommand{\T}{\mathbb T^3}
\newcommand{\R}{\mathbb R}
\newcommand{\eps}{\varepsilon}
\newcommand{\sig}{\sigma_\eps}
\newcommand{\cellsize}{\ell_\eps}
\newcommand{\Oe}{\Omega_\eps}
\newcommand{\He}{\mathcal H_\eps}
\newcommand{\V}{\mathcal V}
\newcommand{\Ve}{\mathcal V_\eps}
\newcommand{\PP}{\mathscr P}
\newcommand{\dd}{\,\mathrm d}
\newcommand{\dx}{\dd \bx}
\newcommand{\dt}{\dd t}
\newcommand{\Div}{\operatorname{div}}
\newcommand{\av}[1]{\langle #1\rangle}
\newcommand{\norm}[2]{\left\|#1\right\|_{#2}}
\newcommand{\E}{\mathcal E}
\newcommand{\Lop}{\mathscr L}
\newcommand{\bu}{\boldsymbol{u}}
\newcommand{\bU}{\boldsymbol{U}}
\newcommand{\bv}{\boldsymbol{v}}
\newcommand{\bw}{\boldsymbol{w}}
\newcommand{\bfv}{\boldsymbol{f}}
\newcommand{\bg}{\boldsymbol{g}}
\newcommand{\ba}{\boldsymbol{a}}
\newcommand{\be}{\boldsymbol{e}}
\newcommand{\bV}{\boldsymbol{V}}
\newcommand{\bW}{\boldsymbol{W}}
\newcommand{\bM}{\mathbf{M}}
\newcommand{\bA}{\mathbf{A}}
\newcommand{\bI}{\mathbf{I}}
\newcommand{\bn}{\boldsymbol{n}}
\newcommand{\bnr}{\boldsymbol{n}_{r}}
\newcommand{\bnh}{\boldsymbol{n}_{h}}
\newcommand{\bx}{\boldsymbol{x}}
\newcommand{\bh}{\boldsymbol{h}}
\newcommand{\br}{\boldsymbol{r}}
\newcommand{\bH}{\boldsymbol{H}}
\newcommand{\bG}{\boldsymbol{G}}
\newcommand{\bF}{\boldsymbol{F}}
\newcommand{\bQ}{\boldsymbol{Q}}
\newcommand{\bphi}{\boldsymbol{\phi}}
\newcommand{\bzeta}{\boldsymbol{\zeta}}
\newcommand{\bb}{\boldsymbol{b}}
\newcommand{\bc}{\boldsymbol{c}}
\newcommand{\bd}{\boldsymbol{d}}
\newcommand{\bz}{\boldsymbol{z}}
\newcommand{\bpsi}{\boldsymbol{\psi}}
\newcommand{\bxi}{\boldsymbol{\xi}}
\newcommand{\bmom}{\boldsymbol{M}}
\newcommand{\Bop}{\mathscr B}
\newcommand{\Jop}{\mathscr J}
\newcommand{\Kop}{\mathscr K}
\newcommand{\Sop}{\mathscr S}

\newcommand{\sym}{\mathrm D}
\newcommand{\Xe}{\mathcal X_\eps}
\newcommand{\Hvel}{\mathscr H_\eps}
\newcommand{\Cgeo}{\mathcal C_\eps}
\newcommand{\deltae}{\delta_\eps}
\newcommand{\Dinit}{\mathcal D_\eps^0}

\newif\ifanonymous
\anonymousfalse 
\begin{document}
\thispagestyle{plain}
\begingroup
\renewcommand{\thefootnote}{\fnsymbol{footnote}}
\setcounter{footnote}{0}
\begin{center}
{\LARGE\bfseries Quantitative thermoviscous Darcy limits in periodic and bounded domains\par}
\vspace{1.4em}
{\large Jiaojiao Pan\footnote{\raggedright Institute of Applied Physics and Computational Mathematics, Beijing 100088, China. \texttt{panjiaojiao.math@gmail.com}}\qquad Luqi Wang\footnote{\raggedright School of Mathematics, Nanjing University, Nanjing 210093, China. \texttt{wangluqi@nju.edu.cn}}\par}
\end{center}
\vspace{0.4em}
\begin{small}
	\begin{abstract}
In this paper, we establish quantitative Darcy limits for a three-dimensional inhomogeneous incompressible fluid with temperature-dependent viscosity and quasi-static thermal feedback. The domain is either the flat torus or a bounded domain with $C^{3,\beta}$ boundary, $0<\beta<1$. The holes have size $\eps^\alpha$ and separation of order $\eps$, with $1<\alpha<3$. Before normalization, the fluid and solid conductivities are $\sig^2\kappa_f$ and $\sig^2\kappa_s$, where $\sig^2\sim\eps^{3-\alpha}$. This scaling preserves the thermal feedback in the limit. For a sufficiently regular reference solution and under an explicit absorption condition, the squared density, velocity and temperature errors are bounded by the weighted initial discrepancy together with $\eps^{\alpha-1}+\eps^{3-\alpha}$ on the torus and $\eps^{\alpha-1}+\eps^{(3-\alpha)/2}$ in a bounded domain. An interface-independent temperature estimate controls the viscosity discrepancy, and a weighted Stokes residual retains the coefficient and cell-pressure commutators. The bounded comparison uses a solenoidal wall correction and cellwise divergence repair. We also construct fixed-parameter microscopic weak solutions. The local strong effective solutions require a separate contraction condition in both settings and stronger boundary regularity in the bounded case.
\end{abstract}

\smallskip
\noindent\textbf{\large Keywords.} Homogenization; Darcy's law; Perforated domains; Temperature-dependent viscosity; Relative-energy method.

\smallskip
\noindent\textbf{\large Mathematics Subject Classification.} 35B27, 76M50, 76S05, 76D05, 80M40.
\end{small}
\endgroup
\vspace{1.0em}

\section{Introduction}
Homogenization of viscous flow in perforated domains describes the effective motion produced by a large number of small holes (or solid obstacles). For periodically distributed holes in three dimensions, the ratio between the hole size and the cell spacing determines the limiting equation. In the classical work of Allaire \cite{AllaireI,AllaireII}, holes of size $\eps^\alpha$ and spacing of order $\eps$ lead, after the appropriate velocity normalization, to Darcy's law for $1<\alpha<3$, a Brinkman correction at $\alpha=3$ and the original Stokes or Navier--Stokes system when $\alpha>3$. Lu \cite{Lu2020} later gave a unified treatment of these regimes for the Stokes system.

The evolutionary incompressible problem has subsequently been developed for several hole-size ranges. Lu and Yang \cite{LY2023} treated the homogeneous incompressible system. For transported density, Pan \cite{Pan2025} obtained the critical Brinkman limit, while Lu, Pan and Yang \cite{LPY2025} considered the very-small-hole regime. In the Darcy range $1<\alpha<3$, Basari\'c, Oschmann and Pan \cite{BOP} derived a density-dependent incompressible Darcy law together with quantitative convergence rates by comparing microscopic weak solutions with a strong effective solution. Transported density also requires compactness of the momentum product $\rho_\eps\bu_\eps$ under the weak energy bounds for $\bu_\eps$.

Quantitative estimates require comparison fields that encode the perforation geometry and the effective resistance. Shen \cite{Shen2022} obtained sharp convergence rates for stationary Stokes flow in porous media. H\"ofer, Lu and Oschmann \cite{HLO} developed quantitative corrector estimates in the dilute Darcy regime, including the additional loss caused by the boundary of a bounded domain, while Jing, Lu and Prange \cite{JLP} obtained a unified quantitative Stokes analysis for independent hole and cell scales. Relative-energy methods provide the stability mechanism for nonlinear and density-dependent problems. See, in particular, \cite{BOP,CBSV2025,HNO2026}.

Thermal feedback has also been studied in other perforation regimes. Feireisl, Lu and Sun \cite[Section~1, equation~(1.5)]{FLS} treated an inhomogeneous heat-conducting fluid with temperature-dependent viscosity in the critical Brinkman regime, with a Dirichlet temperature condition. Lu, Pan and Wang \cite{LPW} considered $\alpha>3$, where the perforations disappear from the mechanical limit, while \cite[Proposition~3.3 and Lemmas~4.2--4.3]{PW2026critical} obtained a quantitative critical stability estimate for a quasi-static thermoviscous model with a Neumann temperature condition. Their analysis controls the viscosity discrepancy through the temperature error and estimates coefficient and cell-pressure commutators.

In the Darcy regime considered here, the effective resistance is $\mu(\vartheta)\bM_0\bu$. The kinetic energy carries the vanishing weight $\sig^4$, so it alone does not control the unweighted $L^2$ velocity error. The perforated Poincar\'e inequality instead converts the viscous dissipation into $L^2$ control. A frozen variable-viscosity Stokes lifting separates the geometric approximation error from the microscopic relative error. We estimate the coefficient and cell-pressure commutators produced by the spatially varying viscosity, and the temperature estimate on the entire fluid--solid composite controls the viscosity discrepancy in $L^\infty$. Under the absorption condition, we absorb the corresponding stress error into the viscous dissipation. In a bounded domain, we combine this comparison with the wall-correction and local divergence-repair tools of \cite{HLO}. The resulting estimate extends the transported-density Darcy framework of \cite{BOP} to thermal feedback on the full interval $1<\alpha<3$. The compressible relative-energy analysis of \cite{HNO2026} concerns a different density coupling.

We treat both the periodic geometry and a bounded domain. The conductivity scaling keeps the thermal equation coupled at order one after the Darcy normalization. The effective system couples a density-dependent Darcy law, with coefficient $\mu(\vartheta)\bM_0$, to an elliptic heat equation. Quantitatively, the periodic squared error contains $\eps^{\alpha-1}+\eps^{3-\alpha}$. The bounded-domain estimate acquires the additional wall contribution $\eps^{(3-\alpha)/2}$. The same estimates yield strong $L^2_tL^\infty_x$ convergence of the temperature and therefore of the viscosity.

The relative-energy estimate uses an absorption condition on the viscosity discrepancy. A separate contraction condition constructs a local strong effective solution. Neither condition is needed for fixed parameter microscopic weak existence. Matching lower bounds for the coupled estimates and stability under weaker contraction or absorption hypotheses remain open.

\subsection{Problem formulation}\label{sec:problem-formulation}
Let $\mathcal O$ be either the unit flat torus $\T=\R^3/\mathbb Z^3$ or a fixed bounded connected domain $\Omega\subset\R^3$ of class $C^{3,\beta}$, $0<\beta<1$. In the bounded case, $\bn$ denotes the outward unit normal on $\partial\Omega$ and $\partial_n=\bn\cdot\nabla$. For a finite-volume domain $D$, we write $\langle z\rangle_D=|D|^{-1}\int_Dz\dx$ and abbreviate this average to $\av z$ on the torus. Let $\mathbb N_0=\{0,1,2,\ldots\}$. For $1\le p\le\infty$ and $m\in\mathbb N_0$, set
\[
 L^p_\diamond(D)=\{z\in L^p(D):\langle z\rangle_D=0\},\qquad
 H^m_\diamond(D)=H^m(D)\cap L^2_\diamond(D)
\]
on every finite-volume domain $D$. On the torus these are periodic spaces, while $H^1_0(D)$ denotes the space with zero boundary trace. Vector and tensor spaces are understood componentwise.

We write $\delta_{ij}$ for the Kronecker symbol and $r_+=\max\{r,0\}$. For $\beta\in\mathbb N_0^3$, $ \partial^\beta=\partial_1^{\beta_1}\partial_2^{\beta_2}\partial_3^{\beta_3}$, $ |\beta|=\beta_1+\beta_2+\beta_3$, and for operators $A,B$ we set $[A,B]=AB-BA$. For any $T>0$, the abbreviations
\[
 \|z\|_{p,D}=\|z\|_{L^p(D)},\qquad
 L^p_tX=L^p(0,T;X),\qquad
 C^j_tX=C^j([0,T];X),\qquad C_tX=C^0_tX
\]
are used, with $\|z\|_p=\|z\|_{L^p}$ when the spatial domain is clear. The notation $C_{\mathrm{weak}}$ means weak continuity. The symbol $X^*$ denotes the continuous dual. We write $\mathcal L(X,Y)$ for the space of bounded linear maps from $X$ to $Y$ and $X\hookrightarrow\hookrightarrow Y$ for compact embedding. The notation $\mathcal D'(0,T;X)$ denotes distributions in time with values in $X$. Duality brackets also stand for $L^2$ pairings when no ambiguity can arise.

Fix a bounded simply connected $C^\infty$ reference obstacle $T_0$ containing the origin, with connected exterior and $\overline{T_0}\subset B(0,1/8)$. Throughout the paper, the hole size, cell spacing and associated scales are
\[
 a_\eps=\eps^\alpha,\qquad 1<\alpha<3,\qquad
 \cellsize=
 \begin{cases}
  \lceil\eps^{-1}\rceil^{-1},&\mathcal O=\T,\\
  \eps,&\mathcal O=\Omega,
 \end{cases}
 \qquad
 \sig^2=\frac{\cellsize^3}{a_\eps},\qquad
 \tau_\eps=\frac{a_\eps}{\cellsize}.
\]
For the index set $\mathcal I_\eps$ specified by \eqref{eq:geometry} or \eqref{eq:bounded-geometry}, define
\[
 \begin{gathered}
 Q_{\eps,k}=\cellsize k+\cellsize[-1/2,1/2)^3,\ T_{\eps,k}=\cellsize k+a_\eps\overline{T_0},\  \He=\bigcup_{k\in\mathcal I_\eps}T_{\eps,k},\ 
 \Oe=\mathcal O\setminus\He,\quad h_\eps=|\He|.
 \end{gathered}
\]
Thus $\cellsize/\eps\to1$, $\sig\to0$ and $\tau_\eps\to0$. The torus is tiled by compatible cells, whereas the bounded geometry contains holes only in complete interior cells.

In the periodic case, set
\begin{equation}\label{eq:continuous-parameter}
 N_\eps=\lceil\eps^{-1}\rceil,\qquad
 \mathcal I_\eps=\mathcal I_\eps^{\mathrm{per}}
     =\{0,\ldots,N_\eps-1\}^3.
\end{equation}
The spacing equals $N_\eps^{-1}$ and satisfies
\begin{equation}\label{eq:mesh-comparison}
 \frac{\eps}{1+\eps}\le\cellsize\le\eps,\qquad
 0\le\eps-\cellsize\le\eps^2,\qquad
 \frac{\cellsize}{\eps}\to1.
\end{equation}
Indices and obstacle locations are understood modulo $N_\eps$ and $\mathbb Z^3$, respectively. The cells tile $\T$, and the total solid volume is
\begin{equation}\label{eq:geometry}
 h_\eps=|T_0|\frac{a_\eps^3}{\cellsize^3}=|T_0|\tau_\eps^3.
\end{equation}
For sufficiently small $\eps$, all holes lie strictly inside their cells and remain pairwise disjoint. We may also assume $a_\eps<\cellsize$, $\sig\le1$ and $h_\eps\le1$. The exact and asymptotic scale relations are
\[
 \begin{gathered}
 \sig^2=\left(\frac{\cellsize}{\eps}\right)^3\eps^{3-\alpha}\sim\eps^{3-\alpha},\\
 \left(\frac{\cellsize}{\sig}\right)^2=\tau_\eps=\frac{\eps^\alpha}{\cellsize}\sim\eps^{\alpha-1},\\
 h_\eps=|T_0|\left(\frac{\eps}{\cellsize}\right)^3\eps^{3(\alpha-1)}\sim |T_0|\eps^{3(\alpha-1)}.
 \end{gathered}
\]
When $\eps=1/N$, the compatible mesh has spacing $\eps$. For general real $\eps$, \eqref{eq:mesh-comparison} changes the spacing by $O(\eps^2)$ without changing the prescribed hole size. The chosen resistance scale keeps the cell-average resistance exact.

In the bounded case, the spacing is $\eps$ and the index set is
\begin{equation}\label{eq:bounded-geometry}
 \mathcal I_\eps=\mathcal I_\eps^{\Omega}
   =\{k\in\mathbb Z^3:\overline{Q_{\eps,k}}\subset\Omega\}.
\end{equation}
In particular, $\sig^2=\eps^{3-\alpha}$ and $\tau_\eps=\eps^{\alpha-1}$.
Only complete interior cubes carry holes. In particular, no hole is cut by $\partial\Omega$. Set
\[
 \mathcal Q_\eps=\bigcup_{k\in \mathcal I_\eps^{\Omega}}\overline{Q_{\eps,k}},
 \qquad
 S_\ell=\{\bx\in\Omega:\operatorname{dist}(\bx,\partial\Omega)<\ell\},
\]
where $\ell$ denotes the boundary-strip thickness. There exists a fixed tubular-neighborhood radius $\ell_0>0$ such that, for $0<\ell<\ell_0$,
\begin{equation}\label{eq:bounded-count}
 \begin{gathered}
 \Omega\setminus\mathcal Q_\eps\subset S_{2\sqrt3\eps},
 \quad |S_\ell|\le C_\Omega\ell,\quad
 h_\eps=\#\mathcal I_\eps^{\Omega}\,a_\eps^3|T_0|\le C_\Omega\eps^{3(\alpha-1)}.
 \end{gathered}
\end{equation}
Moreover, $\eps^3\#\mathcal I_\eps^{\Omega}=|\Omega|+O(\eps)$ and $\sig^2a_\eps/\cellsize^3=1$. With $R_0=\sup_{y\in T_0}|y|<1/8$ and $a_\eps\le\eps$,
\begin{equation*}
 \operatorname{dist}(T_{\eps,k},\partial\Omega)
       \ge\frac{\eps}{2}-a_\eps R_0
       \ge\frac{3\eps}{8}\quad(k\in \mathcal I_\eps^{\Omega}).
\end{equation*}
The omitted boundary strip has vanishing measure, so its normalized contribution to the interior resistance tends to zero. The $C^{3,\beta}$ boundary suffices for the solenoidal wall correction in the homogenization argument. The independent $H^s$ construction of the macroscopic solution uses the stronger regularity stated in \cref{thm:bounded-local}.

Unless stated otherwise, spatial norms are taken over the ambient domain, namely $\T$ in the periodic problem and $\Omega$ in the bounded problem. Unextended fluid fields are measured on $\Oe$. Constants carrying only fixed-data subscripts are uniform in $\eps$ and in Galerkin dimensions. The constants $C_\eps$, $C_{\eps,T}$ and $C_D$ may depend on the fixed perforated domain, while $K_{\eps,n}$ may also depend on $n$. Finally, $A_\eps\sim B_\eps$ means $A_\eps/B_\eps\to1$.

A tilde denotes zero extension from $\Oe$ to $\mathcal O$, while $\mathbf1_E$ is the indicator of $E$. Our tensor conventions are
\[
 (\nabla \bv)_{ij}=\partial_jv_i,\qquad
 \sym\bv=\frac{\nabla\bv+(\nabla\bv)^{\mathrm T}}2,\qquad
 (\boldsymbol a\otimes\boldsymbol b)_{ij}=a_ib_j,\qquad
 \mathbf A:\mathbf B=\sum_{i,j}A_{ij}B_{ij}.
\]

With the perforated domain fixed by \eqref{eq:geometry} or \eqref{eq:bounded-geometry} and $T>0$, we consider the normalized microscopic system
\begin{equation}\label{eq:micro-model}
\left\{
\begin{aligned}
 \partial_t\rho_\eps+\Div(\rho_\eps\bu_\eps)&=0,
     &&\text{in }(0,T)\times\Oe,\\
 \Div\bu_\eps&=0,
     &&\text{in }(0,T)\times\Oe,\\
 \sig^4\!\left[\partial_t(\rho_\eps\bu_\eps)
  +\Div(\rho_\eps\bu_\eps\otimes\bu_\eps)\right]
  -\Div\!\left(2\sig^2\mu(\vartheta_\eps)\sym\bu_\eps\right)
  +\nabla p_\eps
 &=\rho_\eps\bfv-\vartheta_\eps\bg,
     &&\text{in }(0,T)\times\Oe,\\
 -\Div(\bar\kappa_\eps\nabla\vartheta_\eps)
 &=\widetilde\bu_\eps\cdot\bg,
     &&\text{in }(0,T)\times\mathcal O.
\end{aligned}
\right.
\end{equation}
Here $\rho_\eps$, $\bu_\eps$, $p_\eps$ and $\vartheta_\eps$ are the density, velocity, pressure and temperature fluctuation. The mechanical variables are defined in $\Oe$, while $\vartheta_\eps$ is defined on the entire conducting composite $\mathcal O$. The prescribed data are independent of time and satisfy
\begin{equation}\label{eq:force-data}
 \bfv\in C^\infty(\mathcal O;\R^3),\qquad
 F\in C^\infty(\mathcal O),\qquad \bg=\nabla F.
\end{equation}
On the torus these functions are periodic. In the bounded case,
$C^\infty(\mathcal O)$ means restriction of a smooth function from a
neighborhood of $\overline\Omega$. These spatial assumptions are used in the
Sobolev estimates for the effective solution.

The coefficients in \eqref{eq:micro-model} satisfy
\begin{equation}\label{eq:coefficients}
 \begin{gathered}
 \mu\in C^\infty(\R),\quad
 0<\mu_*\le\mu(r)\le\mu^*<\infty,\quad
 L_\mu=\|\mu'\|_{L^\infty(\R)}<\infty,\\
 \bar\kappa_\eps=\kappa_f\mathbf1_{\Oe}+\kappa_s\mathbf1_{\He},\quad
 \kappa_f,\kappa_s>0,\quad
 \kappa_*=\min\{\kappa_f,\kappa_s\},\quad
 \kappa^*=\max\{\kappa_f,\kappa_s\}.
 \end{gathered}
\end{equation}
Here $\mu$ is the temperature-dependent viscosity, $\kappa_f$ and $\kappa_s$ are the fluid and solid conductivities. The corresponding composite conductivity is $\bar\kappa_\eps$. Higher derivatives of $\mu$ are required only on the temperature range reached by the solutions. We fix common density bounds $0<\rho_*\le\rho^*<\infty$.

The initial and boundary conditions associated with \eqref{eq:micro-model} are
\begin{equation}\label{eq:micro-reference}
 \begin{gathered}
 \bu_\eps|_{\partial\He}=0,\quad
 \langle\vartheta_\eps\rangle_{\mathcal O}=0,\\
 \rho_\eps(0)=\rho_\eps^0,\quad \bu_\eps(0)=\bu_\eps^0,
 \quad \rho_*\le\rho_\eps^0\le\rho^*.
 \end{gathered}
\end{equation}
In the periodic case all unknowns are periodic. In the bounded case,
\begin{equation*}
 \bu_\eps=0,\qquad
 \bar\kappa_\eps\nabla\vartheta_\eps\cdot\bn=0
 \quad\text{on }\partial\Omega.
\end{equation*}
Thus the microscopic velocity satisfies the no-slip condition on the obstacle boundaries and, in the bounded-domain setting, also on $\partial\Omega$. The heat flux through $\partial\Omega$ is zero. The heat equation is understood variationally and encodes the transmission of temperature and normal heat flux across the fluid--solid interfaces. Since $\widetilde\bu_\eps$ is solenoidal and $\bg=\nabla F$, the source $\widetilde\bu_\eps\cdot\bg$ in the heat equation has zero mean.

The normalized variables in \eqref{eq:micro-model} are related to the primitive variables by
\begin{equation}\label{eq:normalization}
 \begin{gathered}
 t=\sig^2s,\quad
 \rho_\eps(t,\bx)=R_\eps(t/\sig^2,\bx),\quad
 \bu_\eps(t,\bx)=\sig^{-2}\bU_\eps(t/\sig^2,\bx),\\
 \vartheta_\eps(t,\bx)=\Theta_\eps(t/\sig^2,\bx)-\theta_0,\quad
 p_\eps(t,\bx)=\Pi_\eps(t/\sig^2,\bx)+\theta_0F(\bx).
 \end{gathered}
\end{equation}
Here $s$ is the original time variable, while $R_\eps$, $\bU_\eps$, $\Theta_\eps$ and $\Pi_\eps$ denote the primitive density, velocity, absolute temperature and pressure. The constant $\theta_0$ is the reference temperature. We use $\mu(r)$ for the viscosity evaluated at $\theta_0+r$. Before normalization, the conductivity equals $\sig^2\bar\kappa_\eps$ and the thermal force is $-\Theta_\eps\bg$. The pressure shift in \eqref{eq:normalization} absorbs the conservative part $-\theta_0\bg$. Consequently, both inertial terms carry the factor $\sig^4$, whereas the viscous term carries $\sig^2$. The common factor $\sig^2$ cancels from the heat equation, which keeps the thermal coupling at order one in the Darcy limit. The resulting heat equation is quasi-static, with thermal inertia and advection absent from the chosen model.

The effective resistance is determined by the exterior Stokes problems
\[
\left\{
\begin{aligned}
 -\Delta\bV^i+\nabla Q^i&=0,\quad \Div\bV^i=0,
     &&\text{in }\R^3\setminus\overline{T_0},\\
 \bV^i&=0,
     &&\text{on }\partial T_0,\\
 \bV^i(\bx)&\to\be_i,
     &&\text{as }|\bx|\to\infty,
\end{aligned}
\right.
\qquad i=1,2,3.
\]
Here $\be_i$ denotes the $i$-th vector of the standard basis of $\R^3$. The pair $(\bV^i,Q^i)$ belongs to the finite-energy class $\bV^i-\be_i\in L^6(\R^3\setminus\overline{T_0})$, $\nabla\bV^i\in L^2$ and $Q^i\in L^2$, with the pressure normalized by decay at infinity. The resistance matrix is
\begin{equation}\label{eq:resistance}
 \begin{gathered}
 (\bM_0)_{ij}=\int_{\R^3\setminus\overline{T_0}}
        \nabla \bV^i:\nabla \bV^j\dx
 =2\int_{\R^3\setminus\overline{T_0}}
        \sym \bV^i:\sym \bV^j\dx,\quad i,j\in\{1,2,3\},\\
 \lambda=\lambda_{\min}(\bM_0)>0,\quad
 \Lambda=\lambda_{\max}(\bM_0).
 \end{gathered}
\end{equation}
The matrix $\bM_0$ is symmetric positive definite. The two representations in \eqref{eq:resistance} agree by integration by parts and the decay of the exterior Stokes fields.

The limiting Darcy--heat system is
\begin{equation}\label{eq:limit}
\left\{
\begin{aligned}
 \partial_t\rho+\bu\cdot\nabla\rho&=0,\quad \Div\bu=0,
     &&\text{in }(0,T)\times\mathcal O,\\
 \mu(\vartheta)\bM_0\bu+\nabla p&=\rho\bfv-\vartheta\bg,
     &&\text{in }(0,T)\times\mathcal O,\\
 -\kappa_f\Delta\vartheta&=\bu\cdot\bg,
     &&\text{in }(0,T)\times\mathcal O,\\
 \langle\vartheta\rangle_{\mathcal O}&=0,\quad
 \langle p\rangle_{\mathcal O}=0,\quad \rho(0)=\rho_0.
\end{aligned}
\right.
\end{equation}
Here $\rho$, $\bu$, $\vartheta$ and $p$ are the effective density, velocity, temperature fluctuation and pressure. On the torus they are periodic. In a bounded domain they satisfy
\begin{equation*}
 \bu\cdot\bn=0,\qquad \partial_n\vartheta=0
 \quad\text{on }\partial\Omega.
\end{equation*}
Accordingly, the effective boundary condition retains only impermeability, $\bu\cdot\bn=0$. The tangential component of the Darcy velocity remains free. The limiting conductivity is $\kappa_f$, because the total volume of the holes tends to zero while the two reference conductivities stay fixed and positive. Testing the Darcy and heat equations with $\bu$ and $\vartheta$ gives
\[
 \int_{\mathcal O}\mu(\vartheta)\bM_0\bu\cdot \bu\dx
+\kappa_f\int_{\mathcal O}|\nabla\vartheta|^2\dx
=\int_{\mathcal O}\rho \bfv\cdot \bu\dx,
\]
and this nonnegative thermal contribution is retained in the relative-energy argument.

\subsection{Weak solutions}
We first define weak solutions on the torus. The microscopic problem is \eqref{eq:micro-model} with $\mathcal O=\T$, geometry \eqref{eq:geometry}, boundary condition \eqref{eq:micro-reference} and normalization \eqref{eq:normalization}. The coefficient bounds in \eqref{eq:coefficients} make the symmetric stress coercive in the energy space.

For either geometry, extend the density by
\[
 \widehat\rho_\eps=\rho_\eps\mathbf1_{\Oe}+\rho_*\mathbf1_{\He}.
\]
Assume the initial density satisfies the bounds in \eqref{eq:micro-reference}. In the periodic case, define
\[
 \Ve=\{\bv\in H^1(\Oe;\R^3): \bv\text{ is periodic},
       \bv|_{\partial\He}=0,\ \Div \bv=0\}.
\]
We also use
\[
 \Xe=\{\bv\in H^1(\Oe;\R^3):\bv\text{ is periodic},
                         \bv|_{\partial\He}=0\},\quad
 \Hvel=\overline{\Ve}^{\,L^2(\Oe)}.
\]
Periodicity is imposed on the unit torus. General data may vary from one cell to the next. Interior cell faces carry no boundary condition. The admissible initial velocity satisfies $\bu_\eps^0\in\Hvel$ and $\int_{\Oe}\rho_\eps^0|\bu_\eps^0|^2\dx<\infty$. Then $\widetilde \bu_\eps$ is solenoidal on $\T$ and $\partial_t\widehat\rho_\eps+\Div(\widehat\rho_\eps\widetilde \bu_\eps)=0$. In particular $\av{\widetilde \bu_\eps\cdot \bg}=0$. The temperature equation is posed on the whole torus and imposes the transmission conditions for temperature and normal heat flux.

\begin{definition}[Finite-energy weak solution]\label{def:weak}
A triple $(\rho_\eps,\bu_\eps,\vartheta_\eps)$ is a finite-energy weak solution on $(0,T)$ if
\[
 \rho_*\le\widehat\rho_\eps\le\rho^*
 \quad\text{a.e.\ in }(0,T)\times\T,
\]
and
\[
 \begin{gathered}
 \widehat\rho_\eps\in C([0,T];L^q(\T))\quad(1\le q<\infty),\\
 \bu_\eps\in L^\infty(0,T;L^2(\Oe))\cap L^2(0,T;\Ve),\quad
 \vartheta_\eps\in L^2(0,T;H^1(\T)),\quad
 \av{\vartheta_\eps}=0.
 \end{gathered}
\]
For $b\in C^1(\R)$ and any smooth periodic scalar test $\zeta$ on $[0,T)\times\T$ that vanishes for $t$ sufficiently close to $T$, the extended continuity equation holds in the renormalized form
\[
 \int_0^T\!\!\int_{\T} b(\widehat\rho_\eps)
 (\partial_t\zeta+\widetilde \bu_\eps\cdot\nabla\zeta)\dx\dt
 =-\int_{\T} b(\widehat\rho_\eps^0)\zeta(0)\dx.
\]
For any smooth solenoidal vector test $\bphi$ on $[0,T)\times\overline{\Oe}$, periodic on the unit torus, vanishing on $\partial\He$ and vanishing for $t$ sufficiently close to $T$, the momentum identity is
\begin{equation}\label{eq:weak-momentum}
\begin{aligned}
 &-\sig^4\int_0^T\!\!\int_{\Oe}
  \bigl(\rho_\eps \bu_\eps\cdot\partial_t\bphi
  +\rho_\eps \bu_\eps\otimes \bu_\eps:\nabla\bphi\bigr)\dx\dt+2\sig^2\int_0^T\!\!\int_{\Oe}
       \mu(\vartheta_\eps)\sym \bu_\eps:\sym\bphi\dx\dt
 \\
 &=\int_0^T\!\!\int_{\Oe}(\rho_\eps \bfv-\vartheta_\eps \bg)\cdot\bphi\dx\dt
   +\sig^4\int_{\Oe}\rho_\eps^0\bu_\eps^0\cdot\bphi(0)\dx.
\end{aligned}
\end{equation}
For almost every $t\in(0,T)$, the weak heat equation is
\begin{equation}\label{eq:weak-heat}
 \int_{\T}\bar\kappa_\eps\nabla\vartheta_\eps(t)\cdot\nabla\psi\dx
 =\int_{\T}(\widetilde \bu_\eps(t)\cdot \bg)\psi\dx
 \quad\bigl(\psi\in H^1(\T),\ \av\psi=0\bigr).
\end{equation}
For almost every $\tau\in(0,T)$, the energy inequality is
\begin{equation}\label{eq:energy}
\begin{aligned}
 &\frac{\sig^4}{2}\int_{\Oe}\rho_\eps(\tau)|\bu_\eps(\tau)|^2\dx
 +2\sig^2\int_0^\tau\!\!\int_{\Oe}
                \mu(\vartheta_\eps)|\sym \bu_\eps|^2\dx\dt+\int_0^\tau\!\!\int_{\T}
                 \bar\kappa_\eps|\nabla\vartheta_\eps|^2\dx\dt
 \\
 &\le \frac{\sig^4}{2}\int_{\Oe}\rho_\eps^0|\bu_\eps^0|^2\dx
       +\int_0^\tau\!\!\int_{\Oe}\rho_\eps \bfv\cdot \bu_\eps\dx\dt.
\end{aligned}
\end{equation}
The weakly continuous representatives satisfy
\[
 \bu_\eps\in C_{\mathrm{weak}}([0,T];\Hvel),\qquad
 \rho_\eps\bu_\eps\in
 C_{\mathrm{weak}}([0,T];L^2(\Oe;\R^3)),
\]
with initial traces
\[
 \bu_\eps(0)=\bu_\eps^0,\qquad
 (\rho_\eps\bu_\eps)(0)=\rho_\eps^0\bu_\eps^0.
\]
\end{definition}
The renormalized identity follows from periodic spatial mollification and \cite[Lemma~II.1(ii)]{DiPernaLions}, applied to the periodic extensions of the bounded density $\widehat\rho_\eps$ and the solenoidal velocity $\widetilde\bu_\eps\in L^2(0,T;H^1(\T))$. The weakly continuous representatives and initial traces are recovered in \cref{lem:mass-operator,sec:weak-existence}. A distributional microscopic pressure is reconstructed on each fixed fluid domain by the divergence right inverse used in the existence proof.

For later use, the uniform initial-energy hypothesis is
\begin{equation}\label{eq:initial-weighted-bound}
 \sup_{0<\eps<\eps_0}\sig^4\int_{\Oe}\rho_\eps^0|\bu_\eps^0|^2\dx<\infty.
\end{equation}

The periodic fixed-parameter existence theory is developed in \cref{sec:weak-existence}. At fixed $\eps$, the compactness constants may depend on the perforated domain but remain independent of the Galerkin dimension. Once the initial weighted kinetic energies are uniformly bounded, the energy inequality yields the $\eps$-uniform estimates in \eqref{eq:uniform}.

For a bounded domain, the weak formulation uses the same mechanical equations with a different solenoidal space and an insulated thermal boundary condition.

The macroscopic solenoidal space is
\begin{equation*}
 \V_\Omega=\left\{\bv\in L^2(\Omega;\R^3):
       \int_\Omega \bv\cdot\nabla\phi\dx=0
          \text{ for }\phi\in H^1(\Omega)\right\}.
\end{equation*}
This expresses both $\Div \bv=0$ and the zero normal trace. Testing with $\phi=x_i$ also gives $\int_\Omega v_i\dx=0$. Thus the bounded-domain solenoidal space excludes nonzero constant velocities, whereas such velocities are admissible on the torus. The microscopic spaces are
\[
 \V_{\eps,\Omega}=H^1_0(\Oe;\R^3)\cap\{\Div \bv=0\},\quad
 \mathscr H_{\eps,\Omega}
=\overline{\V_{\eps,\Omega}}^{\,L^2(\Oe)}.
\]
Zero extension from $\Oe$ to $\Omega$ sends $\V_{\eps,\Omega}$ into $H^1_0(\Omega)\cap\V_\Omega$. The data and coefficients satisfy \eqref{eq:force-data} and \eqref{eq:coefficients}.

In the bounded domain, the mechanical equations retain the form \eqref{eq:micro-model} with $\mathcal O=\Omega$. The thermal part of the bounded-domain problem is
\begin{equation*}
\left\{
\begin{aligned}
 -\Div(\bar\kappa_\eps\nabla\vartheta_\eps)
      &=\widetilde\bu_\eps\cdot\bg,
     &&\text{in }\Omega,\\
 \bar\kappa_\eps\nabla\vartheta_\eps\cdot\bn&=0,
     &&\text{on }\partial\Omega,\\
 \int_\Omega\vartheta_\eps\dx&=0.
\end{aligned}
\right.
\end{equation*}
Here the velocity vanishes on both $\partial\Omega$ and the obstacle boundaries, while the temperature is defined on the entire conducting composite, including the solid inclusions. The conormal condition is understood variationally. The weak heat equation is
\begin{equation}\label{eq:bounded-weak-heat}
 \int_\Omega\bar\kappa_\eps\nabla\vartheta_\eps\cdot\nabla\psi\dx
 =\int_\Omega(\widetilde \bu_\eps\cdot \bg)\psi\dx
 \quad(\psi\in H^1(\Omega)).
\end{equation}
The source in this Neumann problem has zero mean because
\begin{equation}\label{eq:bounded-compatibility}
 \int_\Omega \bv\cdot \bg\dx
 =\int_\Omega \bv\cdot\nabla F\dx=0
       \quad(\bv\in\V_\Omega).
\end{equation}

\begin{definition}[Bounded-domain finite-energy weak solution]
\label{def:bounded-weak}
A triple $(\rho_\eps,\bu_\eps,\vartheta_\eps)$ is a bounded-domain finite-energy weak solution on $(0,T)$ if
\[
 \rho_*\le\widehat\rho_\eps\le\rho^*
 \quad\text{a.e.\ in }(0,T)\times\Omega,
\]
and
\[
 \begin{gathered}
 \widehat\rho_\eps\in C([0,T];L^q(\Omega))\quad(1\le q<\infty),\quad \bu_\eps\in L^\infty(0,T;L^2(\Oe))\cap L^2(0,T;\V_{\eps,\Omega}),\\
 \vartheta_\eps\in L^2(0,T;H^1(\Omega)),\quad \int_\Omega\vartheta_\eps\dx=0.
 \end{gathered}
\]
For $b\in C^1(\R)$ and any smooth scalar test $\zeta$ on $[0,T)\times\overline\Omega$ that vanishes for $t$ sufficiently close to $T$,
\[
 \int_0^T\!\!\int_\Omega b(\widehat\rho_\eps)
 (\partial_t\zeta+\widetilde\bu_\eps\cdot\nabla\zeta)\dx\dt
 =-\int_\Omega b(\widehat\rho_\eps^0)\zeta(0)\dx.
\]
For any smooth solenoidal vector test $\bphi$ on $[0,T)\times\overline{\Oe}$ that vanishes for $t$ sufficiently close to $T$ and satisfies $\bphi=0$ on $\partial\Oe$,
\[
\begin{aligned}
 &-\sig^4\int_0^T\!\!\int_{\Oe}
  \bigl(\rho_\eps\bu_\eps\cdot\partial_t\bphi
       +\rho_\eps\bu_\eps\otimes\bu_\eps:\nabla\bphi\bigr)\dx\dt+2\sig^2\int_0^T\!\!\int_{\Oe}
       \mu(\vartheta_\eps)\sym\bu_\eps:\sym\bphi\dx\dt\\
 &=\int_0^T\!\!\int_{\Oe}
       (\rho_\eps\bfv-\vartheta_\eps\bg)\cdot\bphi\dx\dt
   +\sig^4\int_{\Oe}\rho_\eps^0\bu_\eps^0\cdot\bphi(0)\dx.
\end{aligned}
\]
For almost every $t\in(0,T)$, $\vartheta_\eps(t)$ satisfies \eqref{eq:bounded-weak-heat}. For almost every $\tau\in(0,T)$,
\[
\begin{aligned}
 &\frac{\sig^4}{2}\int_{\Oe}\rho_\eps(\tau)|\bu_\eps(\tau)|^2\dx
 +2\sig^2\int_0^\tau\!\!\int_{\Oe}
       \mu(\vartheta_\eps)|\sym\bu_\eps|^2\dx\dt+\int_0^\tau\!\!\int_\Omega
       \bar\kappa_\eps|\nabla\vartheta_\eps|^2\dx\dt\\
 &\le \frac{\sig^4}{2}\int_{\Oe}\rho_\eps^0|\bu_\eps^0|^2\dx
       +\int_0^\tau\!\!\int_{\Oe}\rho_\eps\bfv\cdot\bu_\eps\dx\dt.
\end{aligned}
\]
The weakly continuous representatives satisfy
\[
 \bu_\eps\in C_{\mathrm{weak}}([0,T];\mathscr H_{\eps,\Omega}),\qquad
 \rho_\eps\bu_\eps\in
 C_{\mathrm{weak}}([0,T];L^2(\Oe;\R^3)),
\]
with initial traces
\[
 \bu_\eps(0)=\bu_\eps^0,\qquad
 (\rho_\eps\bu_\eps)(0)=\rho_\eps^0\bu_\eps^0.
\]
\end{definition}

The bounded-domain fixed-parameter construction is given in \cref{app:bounded-existence}. Its Galerkin argument uses the insulated Neumann problem for the temperature together with the no-slip velocity space. 

\subsection{Main results}\label{sec:main-results}
We next state the effective well-posedness and quantitative homogenization results. The periodic problem isolates the interior mechanism, while the bounded-domain statements display the additional wall contribution.

Consider first the periodic effective problem and the assumptions that provide the reference solution for homogenization. The system is \eqref{eq:limit} on $\T$, with periodic unknowns and mean-zero temperature and pressure. Its resistance matrix $\bM_0$ and eigenvalues $\lambda,\Lambda$ are defined in \eqref{eq:resistance}. We equip the solenoidal space $\V=\{\bv\in L^2(\T;\R^3):\Div \bv=0\}$ with the $L^2$ norm. On mean-zero scalars we use $\Lop=-\kappa_f\Delta:H^2_\diamond(\T)\longrightarrow L^2_\diamond(\T)$. Define $\Bop:\V\longrightarrow L^2_\diamond(\T)$ by $\Bop\bv=\bv\cdot\bg$. Since $\av{\Bop\bv}=0$ for $\bv\in\V$, the constant
\begin{equation*}
 C_\infty=\|\Lop^{-1}\Bop\|_{\mathcal L(\V,L^\infty)},\quad
 c_0=\mu_*\lambda,\quad H_0=\rho^*\|\bfv\|_2
\end{equation*}
is finite. Introduce the macroscopic smallness parameter
\begin{equation*}
 q_0=\frac{\Lambda L_\mu C_\infty H_0}{c_0^2}.
\end{equation*}

At each time the elliptic Darcy--heat subsystem determines $(\bu,\vartheta,p)$ from the density. The bound on $q_0$ makes this instantaneous map contractive at the $L^2$ level and allows the transport equation to be closed locally in time.

\begin{theorem}[Local strong solutions]\label{thm:local}
Let $s\ge3$ be an integer and let $\rho_0\in H^s(\T)$ satisfy $0<\rho_*\le\rho_0\le\rho^*$. Assume \eqref{eq:force-data}, \eqref{eq:coefficients} and $q_0\le1/2$. Then there exists $T_*>0$ such that \eqref{eq:limit} admits a unique strong solution on $[0,T_*]$ satisfying
\begin{align*}
 \rho&\in C([0,T_*];H^s)\cap C^1([0,T_*];H^{s-1}),\\
 \bu&\in C([0,T_*];H^s(\T;\R^3))
             \cap C^1([0,T_*];H^{s-1}(\T;\R^3)),\\
 \vartheta&\in C([0,T_*];H^{s+2})
             \cap C^1([0,T_*];H^{s+1}),\quad
 p\in C([0,T_*];H^{s+1}).
\end{align*}
The density bounds are preserved and $\sup_{t\le T_*}\|\bu(t)\|_2\le H_0/c_0$. The time $T_*$ and the Sobolev bounds depend only on the stated data and finitely many derivatives of $\mu$ on the attained temperature interval.
\end{theorem}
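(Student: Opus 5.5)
We will treat \eqref{eq:limit} as a transport equation driven by an instantaneous elliptic solution map. Fix $t$ and a density $\rho$ with $\rho_*\le\rho\le\rho^*$. We seek $(\bu,\vartheta,p)=\Sop(\rho)$ solving
\[
 \mu(\vartheta)\bM_0\bu+\nabla p=\rho\bfv-\vartheta\bg,\quad \Div\bu=0,\quad \Lop\vartheta=\Bop\bu .
\]
Since $\bg=\nabla F$, we have $\vartheta\bg=\nabla(\vartheta F)-F\nabla\vartheta$. Projecting onto $\V$ with the Leray projector $\PP$, and writing $\vartheta=\Lop^{-1}\Bop\bu$, the velocity is a fixed point of
\[
 \bu=\Jop(\bu),\qquad
 \Jop(\bw)=\text{the solution of }\PP\bigl(\mu(\vartheta_\bw)\bM_0\bu\bigr)=\PP(\rho\bfv-\vartheta_\bw\bg),\qquad
 \vartheta_\bw=\Lop^{-1}\Bop\bw .
\]
For a frozen coefficient $\mu(\vartheta_\bw)\in[\mu_*,\mu^*]$, Lax--Milgram on $\V$ with $c_0=\mu_*\lambda$ gives a unique $\bu$.

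The key observation is that the thermal force does no work: $\int\vartheta\bg\cdot\bu\dx=\int\vartheta\Lop\vartheta\dx=\kappa_f\|\nabla\vartheta\|_2^2\ge0$. This yields the a priori bound $\|\bu\|_2\le H_0/c_0$, which is the bound claimed in the theorem. For contraction we take two data $\bw_1,\bw_2$. The difference of the Darcy equations is tested against $\bu_1-\bu_2$, and again the thermal force term enters with a favourable sign. The viscosity commutator is bounded by
\[
 \Lambda L_\mu\|\vartheta_1-\vartheta_2\|_\infty\|\bu_2\|_2\|\bu_1-\bu_2\|_2
 \le \Lambda L_\mu C_\infty\frac{H_0}{c_0}\,\|\bw_1-\bw_2\|_2\|\bu_1-\bu_2\|_2 .
\]
This gives a Lipschitz constant $q_0\le1/2$, so the map is contractive in $L^2$. (To keep the sign structure one can instead set up the fixed point on $\vartheta$ in $L^\infty$, or treat the coupled system directly; the estimate above is the $L^2$ contraction the hypothesis on $q_0$ is designed for.) The same difference argument, with $\rho_1\ne\rho_2$, gives the Lipschitz bound $\|\Sop(\rho_1)-\Sop(\rho_2)\|_{L^2}\le C\|\rho_1-\rho_2\|_2$.

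Next we need higher regularity of $\Sop(\rho)$ for $\rho\in H^s$. We differentiate the equation $\partial^\beta$ for $|\beta|\le s$, and note that $\mu(\vartheta)\ge\mu_*$ makes the operator $\bu\mapsto\PP(\mu(\vartheta)\bM_0\bu)$ uniformly coercive on $\V$. The commutators $[\partial^\beta,\mu(\vartheta)]\bM_0\bu$ involve derivatives of $\vartheta$ of order up to $s$, which are controlled by $\|\bu\|_{H^{s-2}}$ through elliptic regularity of $\Lop$. They are therefore of lower order and are handled by Moser estimates and induction on $|\beta|$. Elliptic regularity then gives $\vartheta\in H^{s+2}$, and the pressure equation $\Delta p=\Div(\rho\bfv-\vartheta\bg-\mu(\vartheta)\bM_0\bu)$ gives $p\in H^{s+1}$.

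The result is $\|\bu\|_{H^s}\le \Phi(\|\rho\|_{H^s})$, with $\Phi$ depending on derivatives of $\mu$ on the interval $|r|\le C_\infty H_0/c_0$ that the temperature actually attains. This step is the main technical obstacle. The difficulty is to make sure the top-order term carries $\nabla^s\rho$ only linearly through $\rho\bfv$, so that no loss of derivatives occurs.

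Finally, we close the transport equation $\partial_t\rho+\Sop(\rho)\cdot\nabla\rho=0$. We build iterates $\rho^{n+1}$ transported by $\bu^n=\Sop(\rho^n)\in C_tH^s$; since $s\ge3$ gives $\bu^n\in W^{1,\infty}$, the flow is well defined. The standard commutator estimate
\[
 \frac{\dd}{\dt}\|\rho^{n+1}\|_{H^s}\le C\|\bu^n\|_{H^s}\|\rho^{n+1}\|_{H^s}
\]
shows the iterates stay in a ball in $H^s$ up to a time $T_*$ fixed by $\Phi$. Contraction is then obtained in $C_tL^2$, using the Lipschitz bound on $\Sop$ and $\|\nabla\rho\|_\infty$. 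Uniqueness follows from the same $L^2$ difference estimate and Gronwall. Transport along the measure-preserving flow preserves the bounds $\rho_*\le\rho\le\rho^*$. The time regularity $C^1_tH^{s-1}$ for $\rho$ is read off from the transport equation. For $\bu$ and $\vartheta$ it comes from differentiating the elliptic system in $t$, which expresses $\partial_t\bu$ through $\partial_t\rho\in H^{s-1}$ via the linearized, again coercive, operator. Continuity in time in the top norms follows by the usual Bona--Smith or weak-continuity-plus-norm-continuity argument.
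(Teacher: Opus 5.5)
Your overall architecture matches the paper's: an instantaneous elliptic map from the density to $(\bu,\vartheta,p)$ obtained by contraction, a Sobolev bootstrap in which the viscosity commutators are lower order because $\Lop^{-1}$ gains two derivatives, and a transport iteration. The step that fails as written is the fixed-point map itself. If the thermal force is evaluated at the input, so that $\bu$ solves $\PP(\mu(\vartheta_\bw)\bM_0\bu)=\PP(\rho\bfv-\vartheta_\bw\bg)$, then testing with $\bu$ gives $\int\vartheta_\bw\bg\cdot\bu\dx=\kappa_f\int\nabla\vartheta_\bw\cdot\nabla\vartheta_\bu\dx$. This has no sign when $\bu\neq\bw$, with two consequences.
\begin{itemize}
\item The bound $\|\bu\|_2\le H_0/c_0$ holds only at a fixed point. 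The map therefore need not preserve the ball on which you use $\|\bu_2\|_2\le H_0/c_0$.
\item The difference of two outputs contains the cross term $\int(\vartheta_{\bw_1}-\vartheta_{\bw_2})\bg\cdot(\bu_1-\bu_2)\dx$, which $q_0$ does not control.
\end{itemize}
Concretely, take constant viscosity $\mu\equiv\mu_0$. Then $L_\mu=0$ and $q_0=0$, yet your map is affine, $\bw\mapsto\mathscr M^{-1}\PP(\rho\bfv)-\mathscr M^{-1}\Kop\bw$ with $\mathscr M\bv=\PP(\mu_0\bM_0\bv)$. The spectral radius of $\mathscr M^{-1}\Kop$ is at least $\|\Kop\|/(\mu_0\Lambda)$. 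Replacing $F$ by $\gamma F$ multiplies $\Kop$ by $\gamma^2$ and keeps $q_0=0$, so the iteration need not contract or even converge. Your parenthetical remedy, a fixed point on $\vartheta$ in $L^\infty$, has the same defect if $\vartheta$ is also frozen in the thermal force.

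The missing idea is to freeze only the viscosity and keep the thermal feedback inside the linear operator. For $\tau\in L^\infty_\diamond(\T)$, solve $\PP(\mu(\tau)\bM_0\bu_\tau)+\Kop\bu_\tau=\PP(\rho\bfv)$, where $\Kop\bu=\PP\bigl(\bg\,\Lop^{-1}(\bu\cdot\bg)\bigr)$ is self-adjoint and nonnegative. Then:
\begin{itemize}
\item Lax--Milgram still applies with constant $c_0$, and every output satisfies $\|\bu_\tau\|_2\le H_0/c_0$.
\item The thermal part of the difference is $\langle\Kop(\bu_{\tau_1}-\bu_{\tau_2}),\bu_{\tau_1}-\bu_{\tau_2}\rangle\ge0$.
\item Only your displayed viscosity term survives, so $\tau\mapsto\Jop\bu_\tau$ (with the paper's $\Jop=\Lop^{-1}\Bop$) contracts in $L^\infty$ with factor exactly $q_0$.
\end{itemize}
This is the proof of \cref{prop:nonlinear-frozen}. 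Equivalently, keep your $\bw$-iteration in $L^2$ but with $\Kop\bu$ on the left.

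With this correction the rest of your plan goes through, with three remarks.
\begin{itemize}
\item Contracting the transport iterates in $C_tL^2$ instead of the paper's $C_tH^{s-1}$ is a legitimate simplification. It needs only the $L^2$ stability \eqref{eq:nonlinear-L2-stability} between solutions and the uniform bound on $\|\nabla\rho^n\|_\infty$. Interpolation with the uniform $H^s$ bound then recovers convergence below top order.
\item Passing from $\rho\in C_tH^s$ to $\bu\in C_tH^s$ requires continuity of $\Sop$ in $H^s$, not just boundedness. This is the case $m=s$ of \eqref{eq:nonlinear-Hm-stability}, obtained by running your bootstrap on the difference equation.
\item For $\partial_t\bu\in C_tH^{s-1}$, the linearized operator must be shown to be an isomorphism of $H^{s-1}\cap\V$, not merely coercive on $L^2$.
\end{itemize}
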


The perforated Poincar\'e inequality, proved in \cite[Lemma~3.4.1]{AllaireII} and in the present dilute scaling in \cite[Lemma~2.3]{HLO}, gives a constant $C_P$ independent of $\eps$ such that
\begin{equation}\label{eq:poincare}
 \|\bv\|_{L^2(\Oe)}\le C_P\sig\|\sym \bv\|_{L^2(\Oe)}
 \quad(\bv\in\Ve).
\end{equation}
For solenoidal fields, integration by parts on the unit torus
after zero extension through the holes gives $2\|\sym \bv\|_2^2=\|\nabla \bv\|_2^2$. For a measurable scalar coefficient $a$ with $\kappa_*\le a\le\kappa^*$ and $b\in L^2_\diamond(\T)$, let $\psi\in H^1_\diamond(\T)$ solve the corresponding elliptic problem. Let $C_H$ be a common constant in the estimate
\begin{equation}\label{eq:heat-operator-bound}
 \|\psi\|_\infty\le C_H\|b\|_2,\quad
 -\Div(a\nabla\psi)=b,\quad\av\psi=\av b=0,\quad
 \kappa_*\le a\le\kappa^*.
\end{equation}
Such a constant independent of the measurable coefficient $a$ exists by \cref{lem:scalar}. Put
\begin{equation*}
 C_g=C_H\|\bg\|_\infty,\quad
 W_0=\frac{C_P\mu^*\Lambda H_0}{2\mu_*c_0},\quad
 \chi_0=\frac{2L_\mu W_0 C_g C_P}{\mu_*},
\end{equation*}
where we impose $\chi_0\le1/2$ for the viscosity-error absorption.

\begin{remark}[Roles and non-emptiness of the coupling conditions]

The condition $q_0\le1/2$ makes the instantaneous macroscopic solution map contractive and allows its differentiation. The condition $\chi_0\le1/2$ permits absorption of the viscosity difference in relative energy.

With all other data fixed, replacing $\bfv$ by $\bfv_\delta=\delta \bfv$, $\delta>0$, gives $H_0(\bfv_\delta)=\delta H_0(\bfv)$ and hence $q_0(\bfv_\delta)=\delta q_0(\bfv)$, $\chi_0(\bfv_\delta)=\delta\chi_0(\bfv)$. The same identities hold for the domain-dependent constants in \eqref{eq:bounded-smallness}. Thus both coupling conditions allow nonzero forcing. A state with nonconstant temperature and viscosity is exhibited in \cref{sec:nonconstant-state}.
\end{remark}

For a strong solution of \eqref{eq:limit}, write $m(t,\bx)=\mu(\vartheta(t,\bx))$. At each time define $\bw_\eps\in\Ve$ by
\begin{equation}\label{eq:lift}
 2\sig^2\int_{\Oe}m\,\sym \bw_\eps:\sym\bphi\dx
 =\int_{\Oe}m\bM_0\bu\cdot\bphi\dx
 \quad(\bphi\in\Ve).
\end{equation}
Coercivity defines this lifting uniquely. Set
\begin{equation*}
 \eta_\eps^2=\|\widetilde{\bw}_\eps-\bu\|_{L^2((0,T)\times\T)}^2,
 \quad \deltae=\frac{\cellsize}{\sig}+\sig.
\end{equation*}
The quantitative approximation is established in \cref{sec:lift}, where \cref{lem:geometric-tools} is combined with the coefficient and cell-pressure commutators generated by the variable weight. The underlying two-scale construction is recorded in \cref{app:periodic-geometry}.

For the initial data, the natural discrepancy is
\begin{equation}\label{eq:natural-initial}
 \Dinit=\frac{\sig^4}{2}\int_{\Oe}\rho_\eps^0
                 |\bu_\eps^0-\bu(0)|^2\dx
              +\frac12\|\widehat\rho_\eps^0-\rho_0\|_2^2.
\end{equation}
For the proof we also use
\begin{equation}\label{eq:initial}
 \E_\eps^0=\frac{\sig^4}{2}\int_{\Oe}\rho_\eps^0
                  |\bu_\eps^0-\bw_\eps(0)|^2\dx
              +\frac12\|\widehat\rho_\eps^0-\rho_0\|_2^2.
\end{equation}
Lemma~\ref{lem:initial-equivalence} compares these discrepancies up to an additive $C\sig^4$. For brevity set
\begin{equation}\label{eq:error-norm}
\begin{aligned}
 \mathfrak E_\eps(T)
 &=\sup_{0\le t\le T}\|\widehat\rho_\eps(t)-\rho(t)\|_2^2
      +\|\widetilde \bu_\eps-\bu\|_{L^2_tL^2_x}^2+\|\vartheta_\eps-\vartheta\|_{L^2_tH^1_x}^2
      +\|\vartheta_\eps-\vartheta\|_{L^2_tL^\infty_x}^2.
\end{aligned}
\end{equation}

The decomposition through $\bw_\eps$ separates the geometric corrector error from the relative error between the microscopic velocity and the lifted macroscopic field. The next estimate controls the latter by the initial discrepancy, the lifting error and the conductivity defect on the holes.

\begin{theorem}[Relative-energy stability estimate]\label{thm:stability}
Assume the periodic geometry defined by \eqref{eq:continuous-parameter}--\eqref{eq:geometry}, the data in \eqref{eq:force-data}, the coefficients in \eqref{eq:coefficients} and the initial density bounds in \eqref{eq:micro-reference}. Let $(\rho,\bu,\vartheta,p)$ be a macroscopic solution on $[0,T]$ with the regularity stated in \cref{thm:local}, satisfying $\rho_*\le\rho\le\rho^*$, and suppose $\chi_0\le1/2$. Let $(\rho_\eps,\bu_\eps,\vartheta_\eps)$ be any finite-energy weak solution in \cref{def:weak}. For fixed $q>3$ there exists a constant $C_{T,q}$ and a threshold $\eps_1>0$ such that, for all $0<\eps<\eps_1$,
\begin{equation}\label{eq:main-estimate}
 \mathfrak E_\eps(T)
 \le C_{T,q}\left(\Dinit+\eta_\eps^2
       +|\kappa_s-\kappa_f|^2h_\eps^{2/q}+\sig^3\right).
\end{equation}
The estimate also holds with $\E_\eps^0$ in place of $\Dinit$. The constants $C_{T,q}$ and $\eps_1$ are independent of $\eps$ and the microscopic weak solution.
\end{theorem}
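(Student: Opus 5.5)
We run a relative-energy argument in which the microscopic velocity is compared with the lifted field $\bw_\eps$ of \eqref{eq:lift}, not with $\bu$. Define
\[
 \E_\eps(t)=\frac{\sig^4}{2}\int_{\Oe}\rho_\eps|\bu_\eps-\bw_\eps|^2\dx+\frac12\|\widehat\rho_\eps-\rho\|_2^2 .
\]
\textbf{Step 1 (relative energy inequality).} Combine three ingredients: the energy inequality \eqref{eq:energy}; the weak momentum identity \eqref{eq:weak-momentum} tested with $\bphi=\bw_\eps$, which is admissible after mollification in time because $\bw_\eps\in\Ve$ inherits time regularity from $(\bu,\vartheta)$; and the renormalized transport equation with $b(r)=r^2$, tested against $\rho$. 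This yields
\[
 \E_\eps(\tau)+2\sig^2\!\int_0^\tau\!\!\int_{\Oe}\mu(\vartheta_\eps)|\sym(\bu_\eps-\bw_\eps)|^2+\int_0^\tau\!\!\int\bar\kappa_\eps|\nabla\vartheta_\eps|^2\le \E_\eps^0+\sum_j R_j .
\]
The $\sig^4$ inertial remainders involving $\partial_t\bw_\eps$ and the convective term are bounded by $\sig^4\|\bw_\eps\|_{W^{1,\infty}}$-type quantities. On the torus $\sig^2\|\sym\bw_\eps\|_2^2\lesssim 1$, so these remainders cost $O(\sig^3)$ plus terms absorbable into the dissipation and into $\E_\eps$; this is the source of the $\sig^3$ term.

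\textbf{Step 2 (use of the lifting).} By \eqref{eq:lift}, the cross viscous term $2\sig^2\int m\,\sym\bw_\eps:\sym(\bu_\eps-\bw_\eps)$ equals $\int m\bM_0\bu\cdot(\bu_\eps-\bw_\eps)$. Substituting the Darcy law $m\bM_0\bu=\rho\bfv-\vartheta\bg-\nabla p$, the pressure drops out because $\widetilde\bu_\eps-\widetilde\bw_\eps$ is solenoidal on $\T$. The remaining force terms combine into $\int(\rho_\eps-\rho)\bfv\cdot(\bu_\eps-\bw_\eps)$ and $-\int(\vartheta_\eps-\vartheta)\bg\cdot(\bu_\eps-\bw_\eps)$, up to errors of the form $\int\cdots(\widetilde\bw_\eps-\bu)$ that are controlled by $\eta_\eps$. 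The density difference is handled with Young's inequality and Gronwall, using \eqref{eq:poincare} to turn $\|\bu_\eps-\bw_\eps\|_2$ into dissipation. The transport error $\int(\widehat\rho_\eps-\rho)(\widetilde\bu_\eps-\bu)\cdot\nabla\rho$ is treated the same way after splitting $\widetilde\bu_\eps-\bu=(\widetilde\bu_\eps-\widetilde\bw_\eps)+(\widetilde\bw_\eps-\bu)$.

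\textbf{Step 3 (viscosity discrepancy and temperature).} The remaining term is $2\sig^2\int(\mu(\vartheta_\eps)-m)\sym\bw_\eps:\sym(\bu_\eps-\bw_\eps)$, which is bounded by
\[
 L_\mu\|\vartheta_\eps-\vartheta\|_\infty\,\sig\|\sym\bw_\eps\|_2\,\sig\|\sym(\bu_\eps-\bw_\eps)\|_2 .
\]
For the temperature, write $-\Div(\bar\kappa_\eps\nabla(\vartheta_\eps-\vartheta))=(\widetilde\bu_\eps-\bu)\cdot\bg+\Div((\kappa_s-\kappa_f)\mathbf 1_{\He}\nabla\vartheta)$. \Cref{lem:scalar} and \eqref{eq:heat-operator-bound} give an $L^\infty$ and $H^1$ bound by $C_g\|\widetilde\bu_\eps-\bu\|_2$. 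The divergence-form source is estimated with $q>3$ Sobolev/Meyers-type elliptic estimates; since $|\He|=h_\eps$, it contributes $|\kappa_s-\kappa_f|h_\eps^{1/q}\|\nabla\vartheta\|_\infty$. Next, $\|\widetilde\bu_\eps-\bu\|_2\le C_P\sig\|\sym(\bu_\eps-\bw_\eps)\|_2+\eta_\eps$. From the energy estimate for \eqref{eq:lift}, $\sig\|\sym\bw_\eps\|_2\le W_0$. Hence the viscosity term is at most $\chi_0\mu_*\sig^2\|\sym(\bu_\eps-\bw_\eps)\|_2^2$ plus admissible errors, and $\chi_0\le1/2$ lets us absorb it into the dissipation. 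This absorption is the main obstacle. The constants must close exactly as defined, and the heat estimate must be uniform in the interface geometry. If the $L^\infty$ bound with $h_\eps^{1/q}$ fails for merely measurable $\bar\kappa_\eps$, I would first try the $W^{1,q}$ estimate for the torus Laplacian with $\kappa_f$, treating $(\kappa_s-\kappa_f)\mathbf1_{\He}$ perturbatively.

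\textbf{Step 4 (conclusion).} Apply Gronwall to obtain $\sup\E_\eps+\sig^2\|\sym(\bu_\eps-\bw_\eps)\|_{L^2_tL^2_x}^2\le C(\E_\eps^0+\eta_\eps^2+|\kappa_s-\kappa_f|^2h_\eps^{2/q}+\sig^3)$. Then use \eqref{eq:poincare} and the triangle inequality with $\eta_\eps$ for the velocity, and Step 3 for the $L^2_tH^1_x$ and $L^2_tL^\infty_x$ temperature norms. Finally, \cref{lem:initial-equivalence} replaces $\E_\eps^0$ by $\Dinit$ at the cost of $C\sig^4\le C\sig^3$.
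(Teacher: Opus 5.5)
\textbf{Main gap: the thermal work term is never treated.} Your outline has the paper's structure: comparison with the lifting $\bw_\eps$, substitution of \eqref{eq:lift} and the Darcy law, pressure elimination, absorption of the viscosity residual under $\chi_0\le1/2$, Gronwall, and \cref{lem:initial-equivalence}. But after Step~2 the right-hand side contains $-\int_{\Oe}z_\eps\bg\cdot\be_\eps\dx$, with $z_\eps=\vartheta_\eps-\vartheta$ and $\be_\eps=\bu_\eps-\bw_\eps$. You only say how the density term is handled; this term cannot be closed by Young and Gronwall.

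It is not small. By \eqref{eq:temperature-error-infinity} and \eqref{eq:poincare}, $\|z_\eps\|_\infty\le C_g(C_P\sig\|\sym\be_\eps\|_2+\|\br_\eps\|_2)+\dots$. The crude bound is therefore $C_HC_P^2\|\bg\|_\infty^2\,\sig^2\|\sym\be_\eps\|_2^2$, which has the same order as the dissipation. Its coefficient is not controlled by $\chi_0\le1/2$; for constant viscosity $\chi_0=0$ whatever the size of $\bg$. Gronwall cannot help either, because $\E_\eps$ contains no temperature and controls $\|\be_\eps\|_2^2$ only with the weight $\sig^4$.

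The missing idea is that the thermal coupling is dissipative. Test \eqref{eq:heat-difference} with $z_\eps$ and add the identity
\[
 \int_\T\bar\kappa_\eps|\nabla z_\eps|^2\dx
 =\int_\T(\be_\eps+\br_\eps)\cdot\bg\,z_\eps\dx
 -\int_\T(\bar\kappa_\eps-\kappa_f)\nabla\vartheta\cdot\nabla z_\eps\dx
\]
to the kinetic inequality. Then:
\begin{itemize}
\item the $\be_\eps$ term cancels the thermal work exactly;
\item the dissipation $\int\bar\kappa_\eps|\nabla z_\eps|^2$ appears on the left;
\item the two remaining terms are bounded by $\delta\|\nabla z_\eps\|_2^2+C(\|\br_\eps\|_2^2+|\kappa_s-\kappa_f|^2h_\eps)$, and $h_\eps\le h_\eps^{2/q}$.
\end{itemize}
Without this step the relative-energy inequality does not close under the stated hypotheses.

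\textbf{Two smaller repairs.} First, the three ingredients listed in Step~1 do not give the evolution of $\frac{\sig^4}{2}\int\rho_\eps|\bw_\eps|^2\dx$. You also need the continuity equation tested with $|\widetilde\bw_\eps|^2/2$, justified as in \cref{lem:comparison-density}. It is this test, combined with the momentum test, that produces the remainder $\sig^4\int\rho_\eps\be_\eps\cdot(\partial_t\bw_\eps+(\bu_\eps\cdot\nabla)\bw_\eps)\dx$.

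Second, that remainder cannot be controlled through $W^{1,\infty}$ norms of $\bw_\eps$. Near each hole $\bw_\eps$ behaves like the rescaled exterior field, so $\nabla\bw_\eps$ is of order $a_\eps^{-1}$, and $\sig^4a_\eps^{-1}\sim\eps^{6-3\alpha}$ does not vanish for $\alpha\ge2$. What works is the $L^2$-based route you mention next, made explicit by interpolation and the perforated Poincar\'e inequality:
\begin{itemize}
\item $\|\be_\eps\|_4^2\le C\sig^{1/2}\|\nabla\be_\eps\|_2^2$ and $\|\bw_\eps\|_4\le C\sig^{-3/4}$;
\item these yield the term $C\sig^{7/2}\|\nabla\be_\eps\|_2^2$, absorbable for small $\eps$ (this is where $\eps_1$ enters);
\item they also yield $C\sig^{5/2}\|\nabla\be_\eps\|_2\le\delta\sig^2\|\nabla\be_\eps\|_2^2+C\sig^3$, which is the source of $\sig^3$.
\end{itemize}
Your worry in Step~3 is unnecessary: \cref{lem:scalar} already gives the $L^\infty$ bound for merely measurable $\bar\kappa_\eps$.
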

The constants in \cref{thm:stability} depend on $T,q$, the fixed data and the reference-solution norms on $[0,T]$. The lifting error appears explicitly in \eqref{eq:main-estimate}. The stability argument requires only $\chi_0\le1/2$. Under $q_0\le1/2$, \cref{thm:local} supplies the reference solution for $0<T\le T_*$.

The explicit periodic rate follows from \cref{thm:stability} together with the Stokes approximation \eqref{eq:eta-rate}. With $q=4$, the conductivity contribution is of no larger order than the geometric error.

\begin{corollary}[Quantitative homogenization]\label{thm:homogenization}
Under the assumptions of \cref{thm:stability},
\begin{equation}\label{eq:explicit-estimate}
 \mathfrak E_\eps(T)
 \le C_T\left(\Dinit+\eps^{\alpha-1}+\eps^{3-\alpha}\right).
\end{equation}
If $\Dinit\to0$, the density converges in $L^\infty_tL^2_x$, the velocity in $L^2_tL^2_x$ and the temperature in both $L^2_tH^1_x$ and $L^2_tL^\infty_x$. Moreover,
\begin{equation}\label{eq:viscosity-convergence}
 \|\mu(\vartheta_\eps)-\mu(\vartheta)\|_{L^2_tL^\infty_x}
 \to0.
\end{equation}
\end{corollary}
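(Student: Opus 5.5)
The corollary follows by inserting explicit rates into \cref{thm:stability}. We apply \eqref{eq:main-estimate} with $q=4$. This leaves three terms to estimate: the lifting error $\eta_\eps^2$, the conductivity defect $|\kappa_s-\kappa_f|^2h_\eps^{1/2}$, and the remainder $\sig^3$.

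For the lifting error we invoke the Stokes approximation \eqref{eq:eta-rate} established in \cref{sec:lift}. We expect it to give $\eta_\eps^2\le C_T\deltae^2$, with $\deltae=\cellsize/\sig+\sig$. Since $(\cellsize/\sig)^2=\tau_\eps\sim\eps^{\alpha-1}$ and $\sig^2\sim\eps^{3-\alpha}$, this yields
\[
\eta_\eps^2\le C_T\bigl(\eps^{\alpha-1}+\eps^{3-\alpha}\bigr).
\]
The constant depends only on the norms of $(\rho,\bu,\vartheta)$ on $[0,T]$ supplied by the regularity hypothesis. This is the only substantive input. If \eqref{eq:eta-rate} were stated only in terms of $\deltae$, the conversion is exactly the scale relations listed after \eqref{eq:geometry}, including $\cellsize/\eps\to1$.

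For the remaining terms, \eqref{eq:geometry} gives $h_\eps\sim|T_0|\eps^{3(\alpha-1)}$. Hence
\[
h_\eps^{1/2}\le C\eps^{3(\alpha-1)/2}\le C\eps^{\alpha-1},
\]
because $\alpha>1$ and $\eps<1$. Likewise $\sig^3\sim\eps^{3(3-\alpha)/2}\le\eps^{3-\alpha}$. Summing these bounds gives \eqref{eq:explicit-estimate} for $\eps<\eps_1$. For $\eps_1\le\eps<\eps_0$ one enlarges $C_T$ using the uniform energy bounds, or simply restricts to small $\eps$ as in the theorem.

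The convergence statements then follow from the definition \eqref{eq:error-norm} of $\mathfrak E_\eps$. Since $1<\alpha<3$, both exponents are positive, so $\mathfrak E_\eps(T)\to0$ whenever $\Dinit\to0$. Each summand of $\mathfrak E_\eps(T)$ is one of the claimed convergences: density in $L^\infty_tL^2_x$, velocity in $L^2_tL^2_x$, and temperature in $L^2_tH^1_x$ and $L^2_tL^\infty_x$. Finally, the pointwise bound $|\mu(\vartheta_\eps)-\mu(\vartheta)|\le L_\mu|\vartheta_\eps-\vartheta|$ from \eqref{eq:coefficients} gives
\[
\|\mu(\vartheta_\eps)-\mu(\vartheta)\|_{L^2_tL^\infty_x}\le L_\mu\|\vartheta_\eps-\vartheta\|_{L^2_tL^\infty_x}\to0,
\]
which is \eqref{eq:viscosity-convergence}. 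The main obstacle lies entirely in the lifting rate \eqref{eq:eta-rate}, which we take as given from \cref{sec:lift}.
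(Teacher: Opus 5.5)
Your proposal is correct and matches the paper's proof essentially step for step: take $q=4$ in \cref{thm:stability}, insert \eqref{eq:eta-rate}, bound $|\kappa_s-\kappa_f|^2h_\eps^{1/2}\le C\eps^{3(\alpha-1)/2}\le C\eps^{\alpha-1}$ and $\sig^3\le\sig^2\le\eps^{3-\alpha}$, read off the convergences from \eqref{eq:error-norm}, and use the Lipschitz bound on $\mu$ for \eqref{eq:viscosity-convergence}. Your side remark about the range $\eps_1\le\eps<\eps_0$ is not needed, since the corollary inherits the small-$\eps$ threshold of \cref{thm:stability}.
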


Since \eqref{eq:explicit-estimate} bounds a sum of squares, its exponents translate into rates for each norm by taking square roots. The initial discrepancy enters through its own decay rate.

\begin{corollary}[Rates for the norms]\label{cor:norm-rate}
Under the hypotheses of \cref{thm:homogenization}, if $\Dinit\le C\eps^{\gamma_0}$ for some $\gamma_0>0$, then the squared error in \eqref{eq:error-norm} is $O(\eps^{\gamma_{\mathrm{sq}}})$, where
\[
 \gamma_{\mathrm{sq}}=\min\{\gamma_0,\alpha-1,3-\alpha\}.
\]
Each corresponding norm is $O(\eps^{\gamma_{\mathrm{norm}}})$, where $\gamma_{\mathrm{norm}}=\gamma_{\mathrm{sq}}/2$. More generally, under the hypotheses of \cref{thm:stability}, if $\Dinit\le C\eps^{\gamma_0}$ and $\eta_\eps\le C\eps^{\beta_{\mathrm{lift}}}$ for some $\gamma_0,\beta_{\mathrm{lift}}>0$, the squared-error exponent is
\begin{equation}\label{eq:conditional-rate}
 \min\left\{\gamma_0,\,2\beta_{\mathrm{lift}},\,
         \frac{6(\alpha-1)}q,\,\frac{3(3-\alpha)}2\right\},\quad q>3.
\end{equation}
When $\kappa_s=\kappa_f$, the conductivity entry is omitted from this last minimum.
\end{corollary}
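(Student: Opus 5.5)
The plan is to read \cref{cor:norm-rate} off directly from \eqref{eq:main-estimate} and \eqref{eq:explicit-estimate}, using only exponent bookkeeping. No new analysis is needed. For the first assertion, insert $\Dinit\le C\eps^{\gamma_0}$ into \eqref{eq:explicit-estimate}. This gives
\[
\mathfrak E_\eps(T)\le C_T\bigl(C\eps^{\gamma_0}+\eps^{\alpha-1}+\eps^{3-\alpha}\bigr)\le C'\eps^{\gamma_{\mathrm{sq}}}
\]
for $0<\eps<\min\{\eps_1,1\}$, since $\eps^{a}\le\eps^{b}$ whenever $a\ge b$ and $\eps\le1$. Each of the four summands in \eqref{eq:error-norm} is nonnegative and bounded by $\mathfrak E_\eps(T)$. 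Taking square roots therefore gives each of the norms $\sup_t\|\widehat\rho_\eps-\rho\|_2$, $\|\widetilde\bu_\eps-\bu\|_{L^2_tL^2_x}$, $\|\vartheta_\eps-\vartheta\|_{L^2_tH^1_x}$ and $\|\vartheta_\eps-\vartheta\|_{L^2_tL^\infty_x}$ as $O(\eps^{\gamma_{\mathrm{sq}}/2})$.

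For the general statement \eqref{eq:conditional-rate}, start instead from \eqref{eq:main-estimate} with the fixed $q>3$ and bound each term on its right-hand side. The hypotheses give $\Dinit\le C\eps^{\gamma_0}$ and $\eta_\eps^2\le C\eps^{2\beta_{\mathrm{lift}}}$. In the periodic geometry, \eqref{eq:geometry} gives $h_\eps\sim|T_0|\eps^{3(\alpha-1)}$, and therefore $h_\eps\le 2|T_0|\eps^{3(\alpha-1)}$ for small $\eps$. Consequently
\[
|\kappa_s-\kappa_f|^2h_\eps^{2/q}\le C\eps^{6(\alpha-1)/q}.
\]
Finally, $\sig^2\sim\eps^{3-\alpha}$ gives $\sig^3\le C\eps^{3(3-\alpha)/2}$. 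Adding these four bounds and again using $\eps\le1$ yields the minimum in \eqref{eq:conditional-rate}. When $\kappa_s=\kappa_f$, the conductivity term in \eqref{eq:main-estimate} vanishes identically, so its exponent drops out of the minimum. The norm rates again follow by halving the exponent.

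The argument is routine. The only point needing care is uniformity of the constants. The constants are $C_{T,q}$, the constant $C$ hidden in $\sim$ (the factor $(\cellsize/\eps)^3\in[(1+\eps)^{-3},1]$ from \eqref{eq:mesh-comparison}) and the threshold $\eps_1$, and all of them must be independent of $\eps$ and of the weak solution. This holds by \cref{thm:stability} and the explicit scale relations. I do not expect a genuine obstacle beyond this.
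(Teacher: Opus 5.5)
Your proposal is correct and follows the paper's own argument. The first claim comes from inserting the initial rate into \eqref{eq:explicit-estimate} and taking square roots, and \eqref{eq:conditional-rate} comes from bounding the terms of \eqref{eq:main-estimate} one by one, where the paper bounds $h_\eps^{2/q}$ through the exact identity $h_\eps=|T_0|\tau_\eps^3$ and \eqref{eq:mesh-comparison}; your asymptotic relation $h_\eps\sim|T_0|\eps^{3(\alpha-1)}$ gives the same estimate.
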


\begin{remark}[Admissible families and the initial layer]
The velocity contribution to \eqref{eq:natural-initial} is weighted by $\sig^4$. In particular, the initial velocities need not converge in unweighted $L^2$. For example, take $\rho_\eps^0=\rho_0|_{\Oe}$ and $\bu_\eps^0=0\in\Hvel$. The density extension differs from $\rho_0$ only on the holes and
\begin{equation*}
 \Dinit\le C\bigl(\sig^4+h_\eps\bigr)\to0.
\end{equation*}
These data satisfy \eqref{eq:initial-weighted-bound} as well. If $\bu(0)\ne0$, their initial velocity discrepancy is of order one. This scaling permits an initial layer and is compatible with the asserted $L^2(0,T;L^2)$ velocity convergence.
\end{remark}

The bounded geometry retains the interior resistance $\bM_0$ and effective conductivity $\kappa_f$. The outer boundary $\partial\Omega$ changes the velocity trace and the convergence rate. The effective system satisfies
\begin{equation}\label{eq:bounded-limit}
\left\{
\begin{aligned}
 \partial_t\rho+\bu\cdot\nabla\rho&=0,\quad \Div\bu=0,
     &&\text{in }(0,T)\times\Omega,\\
 \mu(\vartheta)\bM_0\bu+\nabla p&=\rho\bfv-\vartheta\bg,
     &&\text{in }(0,T)\times\Omega,\\
 -\kappa_f\Delta\vartheta&=\bu\cdot\bg,
     &&\text{in }(0,T)\times\Omega,\\
 \bu\cdot\bn&=0,\quad\partial_n\vartheta=0,
     &&\text{on }(0,T)\times\partial\Omega,\\
 \int_\Omega\vartheta\dx&=0,\quad
 \int_\Omega p\dx=0,\quad \rho(0)=\rho_0.
\end{aligned}
\right.
\end{equation}
The normal condition in \eqref{eq:bounded-limit} makes the Darcy velocity tangent to the boundary, so no inflow density is prescribed. We next define the bounded-domain elliptic operators and the coupling conditions for local strong solutions.

The effective system has the same instantaneous thermal coupling as on the torus, with the inverse Neumann Laplacian replacing the periodic inverse. Let $\Lop_\Omega=-\kappa_f\Delta_N$, where $\Delta_N$ has domain $\{\psi\in H^2_\diamond(\Omega):\partial_n\psi=0\}$ and codomain $L^2_\diamond(\Omega)$. Define $\Bop_\Omega:\V_\Omega\longrightarrow L^2_\diamond(\Omega)$ and $\Jop_\Omega$ by
\[
 \Bop_\Omega\bv=\bv\cdot\bg,\quad
 \Jop_\Omega=\Lop_\Omega^{-1}\Bop_\Omega.
\]
By \eqref{eq:bounded-compatibility}, the source has zero mean for $\bv\in\V_\Omega$ and Neumann regularity gives $\Jop_\Omega:\V_\Omega\longrightarrow H^2_\diamond(\Omega)$. We fix uniform constants $C_{P,\Omega}$ and $C_{H,\Omega}$ such that
\[
 \|\bv\|_2\le C_{P,\Omega}\sig\|\sym \bv\|_2
       \quad(\bv\in\V_{\eps,\Omega}),\quad
 \|\psi\|_\infty\le C_{H,\Omega}\|b\|_2,
\]
where $b\in L^2(\Omega)$ and the second inequality concerns the mean-zero variational Neumann problem $-\Div(a\nabla\psi)=b$ with $\kappa_*\le a\le\kappa^*$ and $\int_\Omega b\dx=0$. Lemmas~\ref{lem:bounded-tools} and~\ref{lem:bounded-scalar} provide these estimates. Put $C_{g,\Omega}=C_{H,\Omega}\|\bg\|_\infty$ and define
\begin{equation}\label{eq:bounded-smallness}
\begin{gathered}
 C_{\infty,\Omega}=\|\Jop_\Omega\|_{\mathcal L(\V_\Omega,L^\infty)},
 \quad H_{0,\Omega}=\rho^*\|\bfv\|_{L^2(\Omega)},\\
 q_{0,\Omega}=\frac{\Lambda L_\mu C_{\infty,\Omega}H_{0,\Omega}}{c_0^2},
 \quad W_{0,\Omega}
   =\frac{C_{P,\Omega}\mu^*\Lambda H_{0,\Omega}}{2\mu_*c_0},\quad \chi_{0,\Omega}
      =\frac{2L_\mu W_{0,\Omega}C_{g,\Omega}C_{P,\Omega}}{\mu_*}.
\end{gathered}
\end{equation}
As in the periodic problem, $q_{0,\Omega}$ controls the construction of the macroscopic solution, whereas $\chi_{0,\Omega}$ controls the viscosity error in homogenization. The contraction condition, combined with conormal pressure estimates and transport along a tangent velocity, yields the following local result.
\begin{theorem}[Local bounded-domain strong solutions]\label{thm:bounded-local}
Let $s\ge4$ be a fixed integer and assume \eqref{eq:force-data} and \eqref{eq:coefficients}. Suppose that $\partial\Omega$ is of class $C^{s+1,1}$ and $\rho_0\in H^s(\Omega)$ satisfies $0<\rho_*\le\rho_0\le\rho^*$. Assume $q_{0,\Omega}\le1/2$. There exists $T_{*,\Omega}>0$ such that \eqref{eq:bounded-limit} has a unique strong solution on $[0,T_{*,\Omega}]$, with
\begin{align*}
 \rho&\in C_tH^s(\Omega)\cap C^1_tH^{s-1}(\Omega),\\
 \bu&\in C_t(H^s(\Omega;\R^3)\cap\V_\Omega)
                         \cap C^1_tH^{s-1}(\Omega;\R^3),\\
 \vartheta&\in C_tH^{s+2}_\diamond(\Omega)
                         \cap C^1_tH^{s+1}(\Omega),\quad
 p\in C_tH^{s+1}_\diamond(\Omega).
\end{align*}
The density bounds are preserved and $\sup_t\|\bu(t)\|_2\le H_{0,\Omega}/c_0$.
\end{theorem}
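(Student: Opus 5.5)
We follow the periodic construction of \cref{thm:local}: reduce \eqref{eq:bounded-limit} to a transport equation for $\rho$ driven by an instantaneous elliptic solution map, and close by a fixed point in $C_tH^s$. The first step is the \emph{frozen elliptic problem}. Given $\rho\in H^s(\Omega)$ with $\rho_*\le\rho\le\rho^*$, we seek $(\bu,\vartheta,p)$ with $\bu\in\V_\Omega$, $\vartheta=\Jop_\Omega\bu$, and
\[
 \mu(\Jop_\Omega\bu)\bM_0\bu+\nabla p=\rho\bfv-(\Jop_\Omega\bu)\bg .
\]
We solve it by iterating $\bv\mapsto\bu$, where $\bu$ is the Leray-type solution of $\mu(\Jop_\Omega\bv)\bM_0\bu+\nabla p=\rho\bfv-(\Jop_\Omega\bv)\bg$ in $\V_\Omega$. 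This is a Lax--Milgram problem in $\V_\Omega$ with coercivity constant $c_0$. The term $(\Jop_\Omega\bv)\bg=\nabla(F\Jop_\Omega\bv)-F\nabla\Jop_\Omega\bv$ needs care, but its contribution is handled exactly as in the torus. Testing with $\bu$ and using $\int(\vartheta\bg)\cdot\bu=\kappa_f\|\nabla\vartheta\|_2^2\ge0$ at the fixed point gives $\|\bu\|_2\le H_{0,\Omega}/c_0$. The difference of two iterates is controlled by $L_\mu\|\Jop_\Omega(\bv_1-\bv_2)\|_\infty\Lambda\|\bu\|_2/c_0$, which is at most $q_{0,\Omega}\|\bv_1-\bv_2\|_2$ by \eqref{eq:bounded-smallness}. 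Hence $q_{0,\Omega}\le1/2$ yields an $L^2$ contraction on the ball of radius $H_{0,\Omega}/c_0$. The thermal term is treated in the same way as in the periodic proof, either absorbed via the positivity above or handled by the same linearization.

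The second step is \emph{elliptic regularity with a variable matrix coefficient}. The velocity is recovered from a pressure equation. Writing $\bA=(m\bM_0)^{-1}$ with $m=\mu(\vartheta)$, we get $\bu=\bA(\rho\bfv-\vartheta\bg-\nabla p)$. The conditions $\Div\bu=0$ and $\bu\cdot\bn=0$ become the conormal Neumann problem
\[
 \Div(\bA\nabla p)=\Div(\bA(\rho\bfv-\vartheta\bg)),\qquad
 \bA\nabla p\cdot\bn=\bA(\rho\bfv-\vartheta\bg)\cdot\bn \ \text{on }\partial\Omega .
\]
Its coefficients are uniformly elliptic because $\bM_0$ is positive definite. $H^{k+1}$ regularity for this problem needs $\bA\in H^k\cap W^{1,\infty}$ and $\partial\Omega\in C^{k,1}$. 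Combined with Neumann regularity $\vartheta\in H^{k+2}$ from $\bu\in H^k$, we bootstrap $k=0,1,\dots,s$. The nonlinearity $\mu(\vartheta)$ is controlled by Moser estimates, since $H^{s+2}\hookrightarrow W^{s,\infty}$ effectively. The loop is closed by tame estimates whose top-order term is linear in $\|\rho\|_{H^s}$. The boundary class $C^{s+1,1}$ is exactly what conormal $H^{s+1}$ regularity for $p$ and $H^{s+2}$ regularity for $\vartheta$ require. This gives $\|\bu\|_{H^s}+\|\vartheta\|_{H^{s+2}}+\|p\|_{H^{s+1}}\le C(\|\rho\|_{H^s})$. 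We also need Lipschitz dependence on $\rho$ in a lower norm, $\|\bu_1-\bu_2\|_{H^{s-1}}\le C\|\rho_1-\rho_2\|_{H^{s-1}}$, obtained from the contraction and the same linear estimates applied to the differences.

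The third step is \emph{transport along a tangent field}. Since $\bu\in H^s\subset W^{2,\infty}$ ($s\ge4$) and $\bu\cdot\bn=0$, characteristics stay in $\overline\Omega$. Hence $\rho$ is given by the flow, the bounds $\rho_*\le\rho\le\rho^*$ are preserved, and no inflow data is needed. $H^s$ energy estimates need care at the boundary: normal derivatives cannot be integrated by parts freely. We use the standard approach of estimating $\|\partial^\beta\rho\|_2$ on $\overline\Omega$ directly through the Lagrangian representation $\rho(t)=\rho_0\circ X_t^{-1}$, where $X_t$ is the flow of $\bu$. Then $\|\rho(t)\|_{H^s}\le C(\|X_t^{\pm1}\|_{W^{s,\infty}\cap H^s})\|\rho_0\|_{H^s}$, with flow bounds coming from $\int_0^t\|\bu\|_{H^s}$ via Gronwall. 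Alternatively, we can use tangential conormal derivatives plus the equation to recover normal ones. With this, the map $\rho\mapsto\rho'$ (solve the elliptic problem, then transport) maps a ball of $C_tH^s$ into itself for small $T_{*,\Omega}$. It contracts in $C_tH^{s-1}$, and weak-$*$ closedness of the ball gives the fixed point and uniqueness.

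Time regularity follows from the equation: $\partial_t\rho=-\bu\cdot\nabla\rho\in C_tH^{s-1}$. Differentiating the elliptic system in $t$, via the implicit function theorem, which is available thanks to the contraction, gives $\bu\in C^1_tH^{s-1}$ and $\vartheta\in C^1_tH^{s+1}$. The main obstacle I expect is the second step: closing the conormal pressure regularity with the variable, $\vartheta$-dependent anisotropic coefficient up to the boundary, uniformly enough to feed the transport estimate. I would first try the tame bootstrap in $k$, keeping the top-order dependence linear in $\|\rho\|_{H^s}$.
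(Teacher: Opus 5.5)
Your overall architecture matches the paper's: an instantaneous elliptic map obtained by contraction in the viscosity argument, a conormal pressure bootstrap, a transport iteration bounded in $H^s$ and contracting in $H^{s-1}$, and the implicit function theorem for time regularity. Two steps, however, fail as written.

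\textbf{Step 1.} You freeze the thermal force $(\Jop_\Omega\bv)\bg$ on the right-hand side. Testing with $\bu$ then gives
\[
 c_0\|\bu\|_2\le H_{0,\Omega}+\|\Kop_\Omega\bv\|_2,\qquad
 c_0\|\bu_1-\bu_2\|_2\le c_0q_{0,\Omega}\|\bv_1-\bv_2\|_2+\|\Kop_\Omega(\bv_1-\bv_2)\|_2,
\]
where $\Kop_\Omega=\Bop_\Omega^*\Jop_\Omega$. The second estimate even grants $\|\bu_2\|_2\le H_{0,\Omega}/c_0$. Nothing in the hypotheses makes $\|\Kop_\Omega\|/c_0$ small. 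Hence $q_{0,\Omega}\le1/2$ yields neither invariance of the ball of radius $H_{0,\Omega}/c_0$ nor a contraction. The positivity $\int_\Omega\vartheta\bg\cdot\bu\dx=\kappa_f\|\nabla\vartheta\|_2^2$ requires $\vartheta=\Jop_\Omega\bu$, so it is unavailable for iterates; invoking it ``at the fixed point'' is circular.

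The fix, which is what the paper does, is to freeze only the argument of $\mu$ and keep the feedback in the operator:
\[
\PP_\Omega(\mu(\tau)\bM_0\bu_\tau)+\Kop_\Omega\bu_\tau=\PP_\Omega\bh .
\]
Since $\Kop_\Omega$ is self-adjoint and nonnegative, this problem is coercive with constant $c_0$ for every $\tau$. The bound $\|\bu_\tau\|_2\le H_{0,\Omega}/c_0$ then holds uniformly, and in the difference estimate the $\Kop_\Omega$ term has a sign and drops out. Your Lipschitz bound then holds verbatim, and $\tau\mapsto\Jop_\Omega\bu_\tau$ contracts with factor $q_{0,\Omega}$.

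\textbf{Step 2.} The bootstrap cannot start from the hypothesis you state. With only $\bu\in L^2$ you know $\vartheta\in H^2$, hence $\bA=\mu(\vartheta)^{-1}\bM_0^{-1}\in H^2$. In three dimensions this does not embed in $W^{1,\infty}$, so the first rung $p\in H^2$ is precisely the obstacle you flag and leave open.

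The paper uses the scalar structure $\bA=a\bA_0$, with $a=\mu(\vartheta)^{-1}$ and $\bA_0=\bM_0^{-1}$. Testing with $\phi/a$ converts the problem into a constant-principal-part conormal problem with the lower-order source $\nabla\log a\cdot\bA_0(\bH-\nabla p)$. Its cost $\|\nabla\log a\|_6\|\nabla p\|_3$ is absorbed by interpolating $p$ between $H^1$ and $H^2$. Smooth approximation of $\log a$ in $H^2$ justifies this a priori bound (\cref{lem:bounded-conormal}). Every higher order then needs only constant-coefficient conormal regularity (\cref{lem:constant-conormal}). Once $\bu\in H^1$, one has $\vartheta\in H^3\hookrightarrow W^{1,\infty}$, and your bootstrap proceeds.

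\textbf{Step 3.} The Lagrangian bound you propose is not available: the flow of an $H^s$ field is of class roughly $W^{s-2,\infty}$, not $W^{s,\infty}$. It is also unnecessary. For every multi-index $\beta$, normal directions included, $\partial^\beta\rho$ is transported by $\bu$ with source $-[\partial^\beta,\bu\cdot\nabla]\rho$. The only boundary term, $\frac12\int_{\partial\Omega}(\bu\cdot\bn)|\partial^\beta\rho|^2\,\mathrm dS$, vanishes, so no conormal derivatives are needed.

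The paper implements this through the measure-preserving characteristic representation for $L^2$ transport with $L^1_tL^2$ sources. The same representation, after identifying the weak initial trace of $\partial^\beta\rho$, upgrades your weak-$*$ limit to $C_tH^s$. As you set it up, that limit is only in $L^\infty_tH^s\cap C_tH^{s-1}$.
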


We construct this solution in \cref{sec:bounded-macro}. The assumption $s\ge4$ also yields the $W^{2,\infty}$ velocity bound required by the boundary correction.

For the conditional stability theorem, we isolate the regularity of the macroscopic reference solution actually used by the boundary correction and the time-dependent lifting.

\begin{definition}[Admissible reference solution in the bounded domain]
\label{def:bounded-reference}
Let $T>0$. A quadruple $(\rho,\bu,\vartheta,p)$ solving \eqref{eq:bounded-limit} on $[0,T]$ is called an admissible reference solution if
\[
\begin{gathered}
 \rho\in L^\infty(0,T;W^{1,\infty}(\Omega)),\qquad
 \rho_*\le \rho\le \rho^*,\\
 \bu\in L^\infty(0,T;W^{2,\infty}(\Omega;\R^3))
      \cap W^{1,\infty}(0,T;L^2(\Omega;\R^3)),\\
 \vartheta\in L^\infty(0,T;W^{1,\infty}(\Omega))
      \cap W^{1,\infty}(0,T;L^\infty(\Omega)),\qquad
 p\in L^\infty(0,T;H^1_\diamond(\Omega)).
\end{gathered}
\]
The continuity and momentum equations hold distributionally, with the displayed regularity. $\Div\bu=0$ and $\bu\cdot\bn=0$ hold in the weak normal-trace sense. The heat equation with $\partial_n\vartheta=0$ is understood variationally on $H^1(\Omega)$, with its stated zero mean.
\end{definition}
The heat source belongs to $L^2(\Omega)$. On a $C^{3,\beta}$ domain, \cref{lem:constant-conormal} gives $H^2$ Neumann regularity and hence a trace interpretation of the normal derivative. Set
\[
\begin{aligned}
 \mathcal N_{T,\Omega}:={}&
 \|\rho\|_{L^\infty_tW^{1,\infty}_x}
 +\|\bu\|_{L^\infty_tW^{2,\infty}_x}
 +\|\partial_t\bu\|_{L^\infty_tL^2_x}+\|\vartheta\|_{L^\infty_tW^{1,\infty}_x}
 +\|\partial_t\vartheta\|_{L^\infty_tL^\infty_x}
 +\|p\|_{L^\infty_tH^1_x}.
\end{aligned}
\]
The transport equation and the displayed bounds imply $\partial_t\rho=-\bu\cdot\nabla\rho\in L^\infty(0,T;L^2(\Omega))$. We use the resulting representative $\rho\in W^{1,\infty}(0,T;L^2(\Omega))\subset C([0,T];L^2(\Omega))$, with $\rho(0)=\rho_0$. The $L^2$-continuous representative of $\bu$ and the $L^\infty$-continuous representative of $\vartheta$ are used at time zero. Their essential spatial bounds extend to the endpoints by weak-* compactness of the bounded spatial derivatives and identification with these continuous representatives. Thus the initial discrepancy and the density supremum in \eqref{eq:bounded-error} are well defined.

The boundary $\partial\Omega$ prevents direct use of the periodic comparison field since the Darcy velocity is tangent to the wall but need not vanish there. We define the exact Stokes lifting as before and estimate its error by a boundary correction in \cref{sec:bounded-cutoff}. Let $m\in W^{1,\infty}(\Omega)$ satisfy $m_*\le m\le m^*$ for fixed constants $0<m_*\le m^*<\infty$. Let $\bv\in\V_\Omega$. We define $\bw_\eps[m,\bv]\in\V_{\eps,\Omega}$ as the solution of
\begin{equation}\label{eq:bounded-lift}
 2\sig^2\int_{\Oe}m\sym \bw_\eps[m,\bv]:\sym\bphi\dx
       =\int_{\Oe}m\bM_0\bv\cdot\bphi\dx
           \quad(\bphi\in\V_{\eps,\Omega}).
\end{equation}
For a macroscopic solution set $m=\mu(\vartheta)$, $\bw_\eps(t)=\bw_\eps[m(t),\bu(t)]$ and
\[
 \eta_{\eps,\Omega}^2
=\|\widetilde{\bw}_\eps-\bu\|_{L^2(0,T;L^2(\Omega))}^2.
\]
For the bounded-domain comparison, the natural initial discrepancy is
\begin{equation*}
 \mathcal D_{\eps,\Omega}^0
 =\frac{\sig^4}{2}\int_{\Oe}\rho_\eps^0
                         |\bu_\eps^0-\bu(0)|^2\dx
       +\frac12\|\widehat\rho_\eps^0-\rho_0\|_{L^2(\Omega)}^2,
\end{equation*}
and set
\begin{equation}\label{eq:bounded-error}
\begin{aligned}
 \mathfrak E_{\eps,\Omega}(T)
  &=\sup_{0\le t\le T}\|\widehat\rho_\eps(t)-\rho(t)\|_2^2
                  +\|\widetilde \bu_\eps-\bu\|_{L^2_tL^2_x}^2+\|\vartheta_\eps-\vartheta\|_{L^2_tH^1_x}^2
                  +\|\vartheta_\eps-\vartheta\|_{L^2_tL^\infty_x}^2,
\end{aligned}
\end{equation}
where all four ambient spatial norms are on $\Omega$.

The admissible initial class contains sequences with $\mathcal D_{\eps,\Omega}^0\to0$. For example, $\rho_\eps^0=\rho_0|_{\Oe}$ and $\bu_\eps^0=0\in\mathscr H_{\eps,\Omega}$ give $\mathcal D_{\eps,\Omega}^0\le C(\sig^4+h_\eps)\to0$. Thus the bounded-domain theorem, like its periodic counterpart, allows an $O(1)$ initial velocity mismatch because the kinetic contribution is weighted by $\sig^4$.

The lifting \eqref{eq:bounded-lift}, the Neumann temperature estimate from \cref{sec:bounded} and the kinetic--density comparison from the periodic argument yield stability in terms of the lifting error. The wall-corrected Stokes estimate then converts this stability bound into the explicit bounded-domain rate.

\begin{theorem}[Bounded-domain stability and homogenization]
\label{thm:bounded-homogenization}
Assume the bounded geometry \eqref{eq:bounded-geometry} and the boundary regularity $\partial\Omega\in C^{3,\beta}$ stated in \cref{sec:problem-formulation}, together with the data and constitutive assumptions \eqref{eq:force-data} and \eqref{eq:coefficients}. Let $(\rho,\bu,\vartheta,p)$ be an admissible reference solution in the sense of \cref{def:bounded-reference} on $[0,T]$ and assume $\chi_{0,\Omega}\le1/2$. Let $(\rho_\eps,\bu_\eps,\vartheta_\eps)$ be any finite-energy weak solution in \cref{def:bounded-weak}. For fixed $q>3$, there exists $\eps_1>0$ such that, whenever $0<\eps<\eps_1$,
\begin{equation}\label{eq:bounded-stability}
 \mathfrak E_{\eps,\Omega}(T)
  \le C_{T,\Omega,q}\left(\mathcal D_{\eps,\Omega}^0
      +\eta_{\eps,\Omega}^2
      +|\kappa_s-\kappa_f|^2h_\eps^{2/q}+\sig^3\right).
\end{equation}
The squared error also satisfies
\begin{equation}\label{eq:bounded-final-rate}
\displaystyle
 \mathfrak E_{\eps,\Omega}(T)
 \le C_{T,\Omega}\left(\mathcal D_{\eps,\Omega}^0
      +\eps^{\alpha-1}+\eps^{(3-\alpha)/2}\right).
\end{equation}
If $\mathcal D_{\eps,\Omega}^0\to0$, there is strong convergence of density in $L^\infty_tL^2_x$, velocity in $L^2_tL^2_x$, temperature in $L^2_tH^1_x\cap L^2_tL^\infty_x$ and viscosity in $L^2_tL^\infty_x$. The constants and the small-$\eps$ threshold depend only on the fixed domain, $T$, the density, force, geometric and constitutive bounds and $\mathcal N_{T,\Omega}$. The constant in \eqref{eq:bounded-stability} also depends on $q$.
\end{theorem}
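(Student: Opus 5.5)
The proof has two parts. First I establish \eqref{eq:bounded-stability} by repeating the periodic relative-energy argument of \cref{thm:stability} in the bounded setting. Then I obtain \eqref{eq:bounded-final-rate} by bounding the lifting error $\eta_{\eps,\Omega}^2$ through the wall correction of \cref{sec:bounded-cutoff}.

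\textbf{Stability estimate.} I will use the relative energy
\[
\E_\eps(t)=\frac{\sig^4}{2}\int_{\Oe}\rho_\eps|\bu_\eps-\bw_\eps|^2\dx+\frac12\|\widehat\rho_\eps-\rho\|_2^2,
\]
where $\bw_\eps(t)=\bw_\eps[\mu(\vartheta(t)),\bu(t)]\in\V_{\eps,\Omega}$ is the lifting \eqref{eq:bounded-lift}.

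1. \emph{Admissibility of $\bw_\eps$ as a test.} The Lipschitz dependence of the lifting on $(m,\bv)$, together with $\partial_t\vartheta\in L^\infty$ and $\partial_t\bu\in L^2$, gives $\sig^2\|\partial_t\bw_\eps\|_{2}\lesssim 1$. By density, $\bw_\eps$ may then be inserted into the weak momentum identity. This yields an inequality for $\E_\eps$ from the energy inequality in \cref{def:bounded-weak}.

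2. \emph{Inertial terms.} Every term carrying $\sig^4$ (time derivative and convection of $\bw_\eps$) is bounded using $\sig^4\|\bw_\eps\|^2\lesssim\sig^2$ and $\sig^2\|\nabla\bw_\eps\|\lesssim\sig\|\bu\|$. Together with Young's inequality against the dissipation, they contribute $\sig^3$ plus $\int\E_\eps$.

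3. \emph{Viscous terms.} In $2\sig^2\int\mu(\vartheta_\eps)\sym(\bu_\eps-\bw_\eps):\sym(\cdot)$ I replace $\mu(\vartheta_\eps)$ by $m=\mu(\vartheta)$. The defining identity of $\bw_\eps$ turns $2\sig^2\int m\,\sym\bw_\eps:\sym(\bu_\eps-\bw_\eps)$ into $\int m\bM_0\bu\cdot(\bu_\eps-\bw_\eps)$, which is compared with the Darcy law. Here $\int\nabla p\cdot\widetilde\bu_\eps=0$, since $\widetilde\bu_\eps\in H^1_0\cap\V_\Omega$. The leftover pressure term $\int\nabla p\cdot(\widetilde\bw_\eps-\bu)$ vanishes for the same reason, because $\widetilde\bw_\eps-\bu\in\V_\Omega$ by the weak normal-trace condition on $\bu$.

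4. \emph{Viscosity discrepancy.} The remainder $2\sig^2\int(\mu(\vartheta_\eps)-m)\sym\bw_\eps:\sym(\bu_\eps-\bw_\eps)$ is bounded by
\[
L_\mu\|\vartheta_\eps-\vartheta\|_\infty\,\sig\|\sym\bw_\eps\|\,\sig\|\sym(\bu_\eps-\bw_\eps)\|.
\]
Here $\sig\|\sym\bw_\eps\|\le W_{0,\Omega}$ by testing the lifting with itself and applying $C_{P,\Omega}$. For the temperature error I subtract the heat equations. This gives $-\Div(\bar\kappa_\eps\nabla(\vartheta_\eps-\vartheta))=(\widetilde\bu_\eps-\bu)\cdot\bg+\Div((\kappa_s-\kappa_f)\mathbf1_{\He}\nabla\vartheta)$, where the constant-conormal regularity of \cref{lem:constant-conormal} is used to handle the boundary. \cref{lem:bounded-scalar} bounds the first piece by $C_{g,\Omega}\|\widetilde\bu_\eps-\bu\|_2$. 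The second piece is at most $C_q|\kappa_s-\kappa_f|h_\eps^{1/q}$ by a $W^{-1,q}$, $q>3$, De Giorgi/Meyers-type bound, using $\nabla\vartheta\in L^\infty$. Splitting $\widetilde\bu_\eps-\bu$ into $(\widetilde\bu_\eps-\widetilde\bw_\eps)+(\widetilde\bw_\eps-\bu)$ and applying $C_{P,\Omega}$ to the first part, the resulting constant is exactly $\chi_{0,\Omega}\le1/2$. The term is therefore absorbed into half the dissipation $\mu_*\sig^2\|\sym(\bu_\eps-\bw_\eps)\|^2$, up to $\eta_{\eps,\Omega}^2+|\kappa_s-\kappa_f|^2h_\eps^{2/q}$.

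5. \emph{Density terms.} The difference $\int(\rho_\eps-\rho)\bfv\cdot(\bu_\eps-\bw_\eps)$, together with the transport of $\widehat\rho_\eps-\rho$, is bounded by the $W^{1,\infty}$ norm of $\rho$ times $\|\widehat\rho_\eps-\rho\|_2\|\widetilde\bu_\eps-\bu\|_2$. The hole contributions are $O(h_\eps)$.

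6. \emph{Closing.} Gronwall's inequality and the comparison of $\mathcal D^0_{\eps,\Omega}$ with $\E^0_\eps$ up to $C\sig^4$ close the estimate. The remaining norms in \eqref{eq:bounded-error} are recovered from the dissipation, the Poincaré inequality and the temperature estimate. This proves \eqref{eq:bounded-stability}.

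\textbf{Explicit rate.} It remains to show $\eta_{\eps,\Omega}^2\lesssim\eps^{\alpha-1}+\eps^{(3-\alpha)/2}$, which I expect to be the main obstacle. The plan is to build a competitor of the form
\[
\bu^{\rm app}=\sum_k\chi_\eps\,\bV^i((\bx-\eps k)/a_\eps)\,u_i
\]
(cell correctors multiplied by a boundary cutoff), plus the solenoidal wall correction and the cellwise divergence repair (Bogovskii) of \cite{HLO}. The $W^{2,\infty}$ bound on $\bu$ controls the commutators with the cutoff. The interior commutators with the variable coefficient $m$ and the cell pressure give $\eps^{\alpha-1}$ as in the periodic case. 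The cutoff at width $\eps$ costs $\sig^{-2}\cdot\sig^2\cdot\eps\,\|\bu\|_{W^{1,\infty}}^2$ in energy. After optimizing the layer thickness this yields $\eps^{(3-\alpha)/2}$.

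Finally, $\bw_\eps$ minimizes the $m$-weighted energy functional. Testing the difference $\bw_\eps-\bu^{\rm app}$ in \eqref{eq:bounded-lift} and applying $C_{P,\Omega}$ therefore bounds $\|\widetilde\bw_\eps-\bu\|_2^2$ by the approximation residual. Since $h_\eps^{2/q}$ and $\sig^3$ are of no larger order than these two rates, \eqref{eq:bounded-final-rate} follows, and the convergence statements are then immediate from Lipschitz continuity of $\mu$.
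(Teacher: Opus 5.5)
Your architecture matches the paper's. The paper also builds the relative energy on the exact lifting $\bw_\eps$, freezes $\mu(\vartheta_\eps)$ to $m$, and absorbs $R_{\mu,\eps}$ through $\chi_{0,\Omega}\le1/2$ and the interface-independent Neumann bound. It then obtains the rate from a wall-corrected comparison field with an optimized strip width.

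There is, however, a genuine gap in the stability part: you never treat the thermal force difference. When you compare with Darcy's law, the force work leaves two terms. One is $\int_{\Oe}d_\eps\bfv\cdot\be_\eps\dx$, which your step 5 handles. The other is $-\int_{\Oe}z_\eps\bg\cdot\be_\eps\dx$, with $z_\eps=\vartheta_\eps-\vartheta$ and $\be_\eps=\bu_\eps-\bw_\eps$, and nothing you list closes it.
\begin{itemize}
\item Via \eqref{eq:bounded-temp-error} and the Poincar\'e inequality, it is only bounded by a fixed multiple of $X_\eps^2=\sig^2\|\sym\be_\eps\|_2^2$ plus remainders.
\item No smallness is assumed for that multiple. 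The term survives with constant viscosity, where $\chi_{0,\Omega}=0$.
\item It is not controlled by $\int\E_{\eps,\Omega}\dt$, which only sees $\sig^4\|\be_\eps\|_2^2$.
\end{itemize}
The missing idea is an exact cancellation. First use $\int_\Omega\bar\kappa_\eps|\nabla\vartheta_\eps|^2\dx=\int_{\Oe}\vartheta_\eps\bg\cdot\bu_\eps\dx$, so that the microscopic energy inequality carries the force $\rho_\eps\bfv-\vartheta_\eps\bg$. Then add \eqref{eq:bounded-heat-difference} tested with $z_\eps$:
\[
 \int_\Omega\bar\kappa_\eps|\nabla z_\eps|^2\dx
 =\int_\Omega(\be_\eps+\br_\eps)\cdot\bg\,z_\eps\dx
 -\int_\Omega(\bar\kappa_\eps-\kappa_f)\nabla\vartheta\cdot\nabla z_\eps\dx .
\]
The $\be_\eps$ term cancels the thermal work identically. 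The remaining terms cost a fraction of the thermal dissipation plus $C(\|\br_\eps\|_2^2+|\kappa_s-\kappa_f|^2h_\eps)$. This nonnegative thermal contribution is what lets the argument close under $\chi_{0,\Omega}\le1/2$ alone.

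There are also two smaller points.

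\emph{Quadratic convective term.} The term $\sig^4\int_{\Oe}\rho_\eps\be_\eps\cdot(\be_\eps\cdot\nabla)\bw_\eps\dx$ cannot be handled by Young's inequality or by Gronwall, because $\nabla\bw_\eps$ has no uniform $L^\infty$ bound. You need $\|\be_\eps\|_4^2\le C\sig^{1/2}\|\nabla\be_\eps\|_2^2$ together with $\|\nabla\bw_\eps\|_2\le C\sig^{-1}$. This gives $C\sig^{7/2}\|\nabla\be_\eps\|_2^2$, which is absorbed into the dissipation only for $\eps<\eps_1$. That is where the threshold comes from; your bound $\sig^4\|\bw_\eps\|^2\lesssim\sig^2$ is not the relevant mechanism.

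\emph{Layer width in the rate.} The layer cannot have width $\eps$. With width $\ell$, the wall correction costs $\ell^{1/2}$ in $L^2$. In weighted energy it costs $\ell^{1/2}+\sig\ell^{-1/2}$, coming from $\sig\|\nabla\bW_\eps\|_{L^2(S_\ell)}\|\bb^\ell[\bu]\|_\infty$ and $\sig\|\nabla\bb^\ell[\bu]\|_2$. Balancing forces $\ell=\sig$, whereas $\ell=\eps$ gives a squared error of order $\sig^2/\eps\sim\eps^{2-\alpha}$. With this bookkeeping and the cellwise repair, your plan for the rate is the paper's construction.
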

Under the hypotheses of \cref{thm:bounded-local}, the Sobolev embeddings $H^s(\Omega)\hookrightarrow W^{2,\infty}(\Omega)$ and $H^{s+2}(\Omega)\hookrightarrow W^{1,\infty}(\Omega)$ give an admissible reference solution for $0<T\le T_{*,\Omega}$. The estimates are uniform over the microscopic weak-solution class.

The proof in \cref{sec:bounded-homogenization} combines the bounded lifting and temperature estimates. The additional wall contribution is quantified in \cref{prop:bounded-stokes-rate}. Taking square roots gives the norm rates in \cref{cor:bounded-rates}. If the reference velocity has zero full boundary trace, the boundary correction disappears and the periodic geometric rate is recovered.

\begin{corollary}[Norm rates and the special boundary-compatible case]
\label{cor:bounded-rates}
Under the hypotheses of \cref{thm:bounded-homogenization}, if $\mathcal D_{\eps,\Omega}^0\le C\eps^{\gamma_0}$ for some $\gamma_0>0$, the squared error in \eqref{eq:bounded-error} has exponent
\[
 \gamma_{\mathrm{sq}}^{\Omega}
=\min\{\gamma_0,\alpha-1,(3-\alpha)/2\}.
\]
Each norm in \eqref{eq:bounded-error} has exponent $\gamma_{\mathrm{sq}}^{\Omega}/2$. If $\bu(t)|_{\partial\Omega}=0$ for $t\in[0,T]$, then
\[
 \mathfrak E_{\eps,\Omega}(T)\le C_{T,\Omega}\bigl(\mathcal D_{\eps,\Omega}^0+\eps^{\alpha-1}+\eps^{3-\alpha}\bigr).
\]
Under the same initial-rate hypothesis, each norm then has exponent $\frac12\min\{\gamma_0,\alpha-1,3-\alpha\}$.
\end{corollary}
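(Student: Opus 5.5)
The corollary follows from \cref{thm:bounded-homogenization} by exponent bookkeeping and square roots. The only new input is the boundary-compatible case, where the wall correction must be shown to vanish.

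For the first assertion, insert $\mathcal D_{\eps,\Omega}^0\le C\eps^{\gamma_0}$ into \eqref{eq:bounded-final-rate}. Since $\eps<1$, each of $\eps^{\gamma_0}$, $\eps^{\alpha-1}$ and $\eps^{(3-\alpha)/2}$ is bounded by $\eps^{\gamma_{\mathrm{sq}}^\Omega}$, so $\mathfrak E_{\eps,\Omega}(T)\le C\eps^{\gamma_{\mathrm{sq}}^\Omega}$. Each of the four quantities in \eqref{eq:bounded-error} is a nonnegative summand of $\mathfrak E_{\eps,\Omega}(T)$. Hence, for instance, $\sup_t\|\widehat\rho_\eps-\rho\|_2\le(\mathfrak E_{\eps,\Omega}(T))^{1/2}\le C\eps^{\gamma_{\mathrm{sq}}^\Omega/2}$, and the same argument applies to the other three norms.

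For the boundary-compatible case, I would return to the stability estimate \eqref{eq:bounded-stability} and improve only the lifting error $\eta_{\eps,\Omega}^2$. The wall contribution $\eps^{(3-\alpha)/2}$ in \cref{prop:bounded-stokes-rate} comes from the solenoidal wall correction, which repairs the nonzero tangential trace of $\bu$ on $\partial\Omega$. If $\bu(t)|_{\partial\Omega}=0$, I would build the comparison field without that correction, as the interior two-scale construction used for \eqref{eq:eta-rate} on the torus:
\[
\bW_\eps(t)=\textstyle\sum_i u_i(t)\,\bV^i_\eps \quad\text{plus cellwise divergence repair}.
\]
Here $\bV^i_\eps$ are the cell-truncated exterior Stokes correctors on interior cells, and the corrector is set equal to $\be_i$ on the hole-free strip $\Omega\setminus\mathcal Q_\eps$. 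The divergence repair is local to cells (Bogovskii on cubes, from \cite{HLO}). The condition $\bu=0$ on $\partial\Omega$ then makes $\bW_\eps\in\V_{\eps,\Omega}$ directly.

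Next I would compare $\bw_\eps$ with $\bW_\eps$. Use the coercivity of \eqref{eq:bounded-lift} with $\bphi=\bw_\eps-\bW_\eps$, and apply the perforated Poincar\'e inequality with constant $C_{P,\Omega}$. The residual is then controlled by the same coefficient and cell-pressure commutator estimates as in the periodic \cref{sec:lift}. This gives $\eta_{\eps,\Omega}^2\le C(\eps^{\alpha-1}+\eps^{3-\alpha})$. The strip terms contribute $|S_{2\sqrt3\eps}|\cdot$(bounded integrand) $=O(\eps)\le O(\eps^{\alpha-1})$, because $\alpha-1<2$ while $\sig^2$ weights the gradient terms. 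Combining this with $h_\eps^{2/q}\lesssim\eps^{6(\alpha-1)/q}$ at $q=4$ and $\sig^3\lesssim\eps^{3(3-\alpha)/2}$ in \eqref{eq:bounded-stability} yields the stated improved bound. The norm exponents then follow by square roots as before.

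The main obstacle is checking that the strip $\Omega\setminus\mathcal Q_\eps$ causes no loss once the wall correction is removed. Near $\partial\Omega$, $\bu$ is $O(\eps)$ in $W^{1,\infty}$ because $\bu\in W^{2,\infty}$ vanishes on the boundary. So the residual there is bounded by $\sig^2\|\nabla\bu\|^2_{L^2(S)}+\|\bu\|_{L^2(S)}^2\lesssim\eps$, which is dominated by $\eps^{\alpha-1}$.
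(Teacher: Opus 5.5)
Your proposal is correct and follows the paper's route. The first part is the same exponent bookkeeping on \eqref{eq:bounded-final-rate}. In the trace-compatible case, your field $\bW_\eps\bu-\mathfrak B_\eps(\bu)$ is exactly the comparison $\mathcal C_\eps\bu$ of \cref{lem:bounded-zero-trace}, which is compared with the lifting through the untruncated residual of \cref{lem:bounded-untruncated}; this gives $\eta_{\eps,\Omega}^2\le C(\eps^{\alpha-1}+\eps^{3-\alpha})$, and inserting that into \eqref{eq:bounded-stability} with $q=4$ gives the bound. Two small corrections: $\eps\le\eps^{\alpha-1}$ fails for $\alpha>2$, so use $\eps\le\eps^{\alpha-1}+\eps^{3-\alpha}$, which holds since $\min\{\alpha-1,3-\alpha\}\le1$; and the strip is not a real obstacle, because its missing resistance is already absorbed in \eqref{eq:bounded-cell-residual}, so the zero trace of $\bu$ is needed only to make the comparison field admissible in $\V_{\eps,\Omega}$ (and only $\bu$, not $\nabla\bu$, is $O(\eps)$ on the strip).
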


At $\alpha=2$, with initial discrepancies $o(\eps)$, \eqref{eq:explicit-estimate} and \eqref{eq:bounded-final-rate} give norm bounds of $O(\eps^{1/2})$ on the torus and $O(\eps^{1/4})$ in a bounded domain. The comparisons hold on the existence interval of the reference solution, while the microscopic weak solutions exist on every finite interval.

The thermal and lifting estimates are proved in \cref{sec:heat,sec:lift}, followed by the periodic stability estimate in \cref{sec:periodic-quantitative} and the reference-solution construction in \cref{sec:periodic-macro}. The bounded-domain tools and comparison are developed in \cref{sec:bounded,sec:bounded-quantitative}. The appendices contain the periodic cell construction and the fixed-parameter weak-existence proofs.

\section{Preliminaries and thermal estimates}\label{sec:heat}
This section establishes the elliptic estimate that converts a velocity error into a temperature and viscosity error. Its constants depend only on the uniform ellipticity bounds and the ambient domain, so they remain uniform with respect to the $\eps$-dependent fluid--solid interfaces.

\begin{lemma}\label{lem:scalar}
Let $a$ be a measurable scalar coefficient on $\T$ with $\kappa_*\le a\le\kappa^*$. If $b\in L^2(\T)$ has mean zero, $\bG\in L^q(\T;\R^3)$ with $q>3$ and $\psi\in H^1(\T)$ is the mean-zero solution of
\[
-\Div(a\nabla\psi)=b+\Div\bG,
\]
then
\begin{equation}\label{eq:scalar}
 \|\psi\|_{H^1}+\|\psi\|_\infty
 \le C_q(\|b\|_2+\|\bG\|_q).
\end{equation}
The constant depends only on the fixed torus, $\kappa_*,\kappa^*$ and $q$. In particular it is independent of any interfaces of $a$.
\end{lemma}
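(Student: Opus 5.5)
The argument has two parts: an energy estimate for the $H^1$ norm, and De Giorgi (or Stampacchia) truncation for the $L^\infty$ norm. Both use only the ellipticity bounds $\kappa_*\le a\le\kappa^*$, so no regularity of $a$ and no information about its interfaces enters.

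\emph{Existence and the $H^1$ bound.} Work on $H^1_\diamond(\T)$ with the bilinear form $\int_{\T}a\nabla\psi\cdot\nabla\phi\dx$. By the Poincar\'e--Wirtinger inequality on the torus this form is coercive, with constant $\kappa_*/C_{\T}$. The right-hand side
\[
\phi\mapsto\int_{\T} b\phi\dx-\int_{\T}\bG\cdot\nabla\phi\dx
\]
is bounded by $C(\|b\|_2+\|\bG\|_2)\|\phi\|_{H^1}$, and $\|\bG\|_2\le\|\bG\|_q$ because $|\T|=1$. Lax--Milgram then gives a unique mean-zero solution. Testing the equation with $\psi$ itself yields
\[
\kappa_*\|\nabla\psi\|_2^2\le \|b\|_2\|\psi\|_2+\|\bG\|_2\|\nabla\psi\|_2 .
\]
Combined with Poincar\'e, this gives $\|\psi\|_{H^1}\le C(\|b\|_2+\|\bG\|_q)$.

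\emph{The $L^\infty$ bound.} Since $\psi$ has mean zero, it is enough to bound $\sup\psi$; the bound on $\inf\psi$ follows by applying the same argument to $-\psi$ with data $(-b,-\bG)$. For $k\ge0$, test with $(\psi-k)_+\in H^1(\T)$. The test function need not have mean zero: the form is invariant under adding constants, and $\int_{\T} b\dx=0$, so the weak formulation holds for all $H^1$ tests. Write $A_k=\{\psi>k\}$. The truncated energy inequality is
\[
\kappa_*\|\nabla(\psi-k)_+\|_2^2\le \int_{A_k}|b|(\psi-k)_+\dx+\|\bG\|_{L^2(A_k)}\|\nabla(\psi-k)_+\|_2 .
\]
Estimate $\|\bG\|_{L^2(A_k)}\le\|\bG\|_q|A_k|^{1/2-1/q}$. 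For the $b$ term, use H\"older with the Sobolev embedding $H^1(\T)\hookrightarrow L^6(\T)$:
\[
\int_{A_k}|b|(\psi-k)_+\dx\le\|b\|_2|A_k|^{1/3}\|(\psi-k)_+\|_6 .
\]

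On the torus the $L^6$ norm of $(\psi-k)_+$ is controlled by $\|\nabla(\psi-k)_+\|_2$ plus an $L^1$-type term. That term is harmless once $|A_k|\le1/2$: for $k\ge k_0:=2\|\psi\|_1\le 2\|\psi\|_2$, Chebyshev gives $|A_k|\le1/2$. For such levels one then has the Sobolev--Poincar\'e bound
\[
\|(\psi-k)_+\|_6\le C\|\nabla(\psi-k)_+\|_2 .
\]
This holds because a nonnegative function vanishing on a set of measure at least $1/2$ satisfies a Poincar\'e inequality. Set $F=\|b\|_2+\|\bG\|_q$. The energy inequality then gives
\[
\|\nabla(\psi-k)_+\|_2\le C F\bigl(|A_k|^{1/3}+|A_k|^{1/2-1/q}\bigr)\le CF|A_k|^{1/2-1/q}
\]
for $|A_k|\le1$, since $q>3$ implies $1/2-1/q<1/3$. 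Thus
\[
\|(\psi-k)_+\|_6\le CF|A_k|^{1/2-1/q}.
\]

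For $h>k$ one has $(h-k)|A_h|^{1/6}\le\|(\psi-k)_+\|_6$, and therefore
\[
|A_h|\le \frac{(CF)^6}{(h-k)^6}|A_k|^{3-6/q}.
\]
The exponent $3-6/q$ exceeds $1$ precisely because $q>3$. Stampacchia's iteration lemma then gives $|A_{k_0+d}|=0$ with $d=CF|A_{k_0}|^{(3-6/q-1)/6}\le CF$. Hence
\[
\sup\psi\le k_0+CF\le C(\|b\|_2+\|\bG\|_q).
\]
Every constant depends only on $\kappa_*$, $\kappa^*$, $q$ and the Sobolev and Poincar\'e constants of $\T$.

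The main obstacle is the bookkeeping in the truncation step. One must ensure that the Sobolev inequality can be used without a mean-zero condition on the truncation, which is why levels are restricted to $k\ge k_0$. One must also check that the combined exponent on $|A_k|$ exceeds $1$; this is exactly where $q>3$ enters, and the lower-order source $b\in L^2$ is harmless because in three dimensions $L^2\subset W^{-1,q'}$-type control suffices. An alternative is to cite the classical De Giorgi--Nash--Moser/Stampacchia $L^\infty$ estimate, for instance Gilbarg--Trudinger Theorem 8.15 applied on a few unit cubes covering the torus, together with the global $L^2$ bound from the first step.
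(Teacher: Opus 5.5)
Your proof is correct and follows the paper's argument essentially step for step: Lax--Milgram gives the $H^1$ bound, and Stampacchia truncation is run above a Chebyshev level where $|A_k|\le 1/2$, so that the Sobolev--Poincar\'e inequality applies to $(\psi-k)_+$ without a mean-zero condition. The only slip is your claim that $q>3$ gives $1/2-1/q<1/3$, which fails for $q>6$. Replace the exponent by $\gamma=\min\{1/3,\,1/2-1/q\}$, as the paper does, which still satisfies $6\gamma>1$; alternatively, reduce to $q\le 6$ using $\|\bG\|_6\le\|\bG\|_q$ on the unit torus.
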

\begin{proof}
Existence and the $H^1$ estimate follow from the Lax--Milgram theorem and the mean-zero Poincar\'e inequality. We prove the $L^\infty$ estimate by the truncation argument of \cite{Stampacchia}. Put $M=\|b\|_2+\|\bG\|_q$. If $M=0$, coercivity and the zero mean give $\psi=0$, so assume $M>0$. The energy estimate gives $\|\psi\|_2\le C_1M$. Since $|\T|=1$, Chebyshev's inequality with $k_0=\sqrt{2}C_1M$ gives $|\{\psi>k_0\}|\le1/2$. For $k\ge k_0$ set $A_k=\{\psi>k\}$ and $\varphi_k=(\psi-k)_+$. Since $\varphi_k$ vanishes on a set of measure at least $1/2$, $\|\varphi_k\|_6\le C\|\nabla\varphi_k\|_2$. Since $\av b=0$, testing with $\varphi_k-\av{\varphi_k}$ gives the same identity as testing with $\varphi_k$. Hence
\[
 \kappa_*\|\nabla\varphi_k\|_2^2
 \le C\|b\|_2|A_k|^{1/3}\|\nabla\varphi_k\|_2
+\|\bG\|_q|A_k|^{1/2-1/q}\|\nabla\varphi_k\|_2.
\]
With $\gamma=\min\{1/3,\,1/2-1/q\}>1/6$, it follows that
\[
(\ell-k)|A_\ell|^{1/6}\le\|\varphi_k\|_6
 \le CM|A_k|^\gamma\quad(\ell>k).
\]
Thus $|A_\ell|\le[CM/(\ell-k)]^6|A_k|^{6\gamma}$. Set $a=6\gamma>1$, $\nu=6/(a-1)$ and choose $d>0$ so that $(C/d)^6 2^{6+\nu}\le1$. For
\[
 k_j=k_0+dM(1-2^{-j}),\qquad Y_j=|A_{k_j}|,
\]
the level-set estimate gives
\[
 Y_{j+1}\le(C/d)^6 2^{6(j+1)}Y_j^a.
\]
Since $Y_0\le1/2$ and $\nu(a-1)=6$, induction yields $Y_j\le Y_0 2^{-\nu j}$. Hence $\psi\le k_0+dM$ almost everywhere, with $d$ independent of $M$. Applying the same argument to $-\psi$ proves \eqref{eq:scalar}.
\end{proof}

\begin{proof}[Proof of \eqref{eq:heat-operator-bound}]
Apply \cref{lem:scalar} with $\bG=0$ and, for definiteness, $q=4$. Estimate \eqref{eq:scalar} gives $\|\psi\|_\infty\le C_4\|b\|_2$. We take $C_H=C_4$, which depends only on $\kappa_*,\kappa^*$ and the fixed torus.
\end{proof}

The energy inequality, \eqref{eq:poincare} and the mean-zero Poincar\'e inequality give, whenever the initial weighted kinetic energy is bounded by a common constant $E_*$ independent of $\eps$,
\begin{equation}\label{eq:uniform}
 \sig^2\|\sqrt{\rho_\eps}\bu_\eps\|_{L^\infty_tL^2_x}
 +\|\widetilde \bu_\eps\|_{L^2_tL^2_x}
 +\sig\|\nabla \bu_\eps\|_{L^2_tL^2_x}
 +\|\vartheta_\eps\|_{L^2_tH^1_x}\le C_T.
\end{equation}
The constant $C_T$ depends on $E_*$ and the fixed data, not on the individual perforated domain. Indeed, the work of $\bfv$ is bounded by $\mu_*\sig^2\|\sym \bu_\eps\|_{L^2_{t,x}}^2+C_T$ and is absorbed. \cref{lem:scalar} further gives
\begin{equation*}
 \|\vartheta_\eps(t)\|_\infty
 \le C_g\|\widetilde \bu_\eps(t)\|_2
\end{equation*}
for almost every $t\in(0,T)$. Together with \eqref{eq:uniform}, this gives an $\eps$-uniform bound in $L^2_tL^\infty_x$. For each fixed $\eps$, the energy inequality also yields a bound in $L^\infty_tL^\infty_x$, but this bound need not be uniform as $\eps\to0$. More precisely, under the same common initial energy bound $E_*$ used in \eqref{eq:uniform},
\[
 \|\vartheta_\eps\|_{L^\infty_tL^\infty_x}
 \le \frac{C_gC_T}{\sqrt{\rho_*}}\sig^{-2}.
\]

In either geometry, write $z_\eps=\vartheta_\eps-\vartheta$ for the temperature error and $\bv_\eps=\widetilde \bu_\eps-\bu$ for the velocity error. Subtracting the two heat equations gives
\begin{equation}\label{eq:heat-difference}
 -\Div(\bar\kappa_\eps\nabla z_\eps)
 =\bv_\eps\cdot \bg+
  \Div\bigl((\bar\kappa_\eps-\kappa_f)\nabla\vartheta\bigr).
\end{equation}
Since $\bv_\eps$ is solenoidal and $\bg=\nabla F$, the scalar source has zero mean. By linearity, we estimate its contribution using \eqref{eq:heat-operator-bound} and the divergence source using \cref{lem:scalar}. We obtain
\begin{equation}\label{eq:temperature-error-infinity}
 \|z_\eps(t)\|_\infty
 \le C_g\|\bv_\eps(t)\|_2
 +C_q|\kappa_s-\kappa_f|h_\eps^{1/q}
                      \|\nabla\vartheta(t)\|_\infty.
\end{equation}
We use this temperature bound to estimate the viscosity residual in the lifting comparison.

\section{Periodic correctors and the frozen Stokes lifting}\label{sec:lift}
For the periodic problem, the comparison velocity must encode the dilute Stokes resistance while accommodating the temperature-dependent viscosity. The construction starts from the unit-viscosity cell fields, restores solenoidality and subsequently quantifies the commutators generated by the frozen coefficient.

\subsection{Cell correctors and comparison fields}
We begin with the unit-viscosity geometric estimates. The dual space $\Xe^*$ is defined using the full $H^1(\Oe)$ norm on $\Xe$. Its test functions have zero trace on the holes and may be nonsolenoidal. Thus $\Xe^*$ is the dual of the zero-hole-trace $H^1$ space and, in the periodic setting, plays the role of the $H^{-1}(\Oe)$ space used in the corrector estimate. The periodic construction is described in \cref{app:periodic-geometry}.

\begin{lemma}[Geometric estimates]\label{lem:geometric-tools}
For the geometry in \eqref{eq:geometry}, there are a constant $C$, a matrix field $\bW_\eps=(\bW_\eps^1,\bW_\eps^2,\bW_\eps^3)$ and a pressure vector $\bQ_\eps=(Q_\eps^1,Q_\eps^2,Q_\eps^3)$ whose components correspond to the three columns, such that $\bW_\eps\in H^1(\T;\R^{3\times3})$ after zero extension through the holes and
\begin{gather}
 \bW_\eps^i|_{\Oe}\in\Ve,\quad \|\bW_\eps\|_\infty\le C,\quad \|\bW_\eps-\bI\|_2\le C\tau_\eps,\quad \sig\bigl(\|\nabla \bW_\eps\|_{2,\Oe}+\|\bQ_\eps\|_{2,\Oe}\bigr)\le C,\label{eq:cell-L2}\\
 \left|\left\langle
 \sig^2(-\Delta \bW_\eps^i+\nabla Q_\eps^i)-\bM_0\be_i,
 \bzeta\right\rangle\right|
 \le C\cellsize\|\bzeta\|_{H^1(\Oe)}
 \quad(\bzeta\in\Xe).\label{eq:cell-residual}
\end{gather}
Moreover, for $\bv\in\Xe$,
\begin{equation}\label{eq:general-poincare}
 \|\bv\|_2\le C\sig\|\nabla \bv\|_2.
\end{equation}
There is a linear right inverse of divergence $\mathcal B_\eps:L^2_\diamond(\Oe)\longrightarrow\Xe$ satisfying
\begin{equation}\label{eq:bogovskii}
 \Div\mathcal B_\eps b=b,\quad
 \|\nabla\mathcal B_\eps b\|_2
 \le C(1+\sig^{-1})\|b\|_2.
\end{equation}
For fixed $\alpha\in(1,3)$ the constants are independent of sufficiently small $\eps$ and of $N_\eps$.
\end{lemma}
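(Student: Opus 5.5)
The plan is to build the corrector from Allaire's oscillating test functions, adapted to the compatible mesh $\cellsize=N_\eps^{-1}$. Fix a cutoff $\chi\in C_c^\infty(B(0,1/2))$ with $\chi=1$ on $B(0,1/4)$. In each cell $Q_{\eps,k}$ set
\[
 \bW_\eps^i(\bx)=\be_i+\chi\!\left(\tfrac{\bx-\cellsize k}{\cellsize}\right)\Bigl(\bV^i\!\left(\tfrac{\bx-\cellsize k}{a_\eps}\right)-\be_i\Bigr)+\text{(Bogovskii correction on the annulus)},
\]
with pressure $Q_\eps^i=a_\eps^{-1}\chi(\cdot)\,Q^i((\bx-\cellsize k)/a_\eps)$. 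The cutoff term $\nabla\chi\cdot(\bV^i-\be_i)$ is not solenoidal. I would remove its divergence with a Bogovskii operator on the annulus $B(\cellsize k,\cellsize/2)\setminus B(\cellsize k,\cellsize/4)$, rescaled from a fixed annulus, so that its norms are uniform. Since everything is built cellwise on a compatible tiling, periodicity on $\T$ is automatic and $\bW_\eps^i|_{\Oe}\in\Ve$.

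The bounds in \eqref{eq:cell-L2} come from the decay of the exterior fields, $|\bV^i-\be_i|\lesssim a_\eps/|\bx|$ and $|\nabla\bV^i|+|Q^i|\lesssim a_\eps/|\bx|^2$ in original variables. These give $\|\bW_\eps\|_\infty\le C$. Summing over $N_\eps^3\sim\cellsize^{-3}$ cells gives $\|\bW_\eps-\bI\|_2^2\lesssim\cellsize^{-3}a_\eps^2\cellsize=\tau_\eps^2$ and $\|\nabla\bW_\eps\|_2^2\lesssim\cellsize^{-3}a_\eps=\sig^{-2}$, and the pressure bound follows in the same way. For the residual \eqref{eq:cell-residual}, integrate by parts cell by cell. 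Where $\chi=1$ the Stokes equations hold exactly, so only two contributions remain: the boundary flux on $\partial T_{\eps,k}$ and the annulus terms. The flux is $\sig^2 a_\eps\int_{\partial T_0}(\nabla\bV^i-Q^i\bI)\bn$ evaluated against $\bzeta$; since $\bzeta=0$ on the hole, it is rewritten through the annulus integrals. The annulus terms have the form $\sig^2\cellsize^{-3}\bigl(a_\eps\,\bM_0\be_i\cdot\int\bzeta\bigr)$ plus remainders. Because $\sig^2a_\eps/\cellsize^3=1$ exactly, the leading part reproduces $\int\bM_0\be_i\cdot\bzeta$ up to a cell-averaging error of the form $\cellsize\|\nabla\bzeta\|_2$, obtained by comparing annulus averages with full-cell averages through the local Poincar\'e inequality. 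The remainders carry higher powers of $a_\eps/\cellsize$ and are bounded by $C\cellsize\|\bzeta\|_{H^1}$. This is Allaire's computation, and in the dilute normalization it is exactly the content of \cite[Lemma~2.3]{HLO}.

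The Poincar\'e inequality \eqref{eq:general-poincare} for $\bv\in\Xe$ (no solenoidal constraint) is proved cellwise. On each cell, the capacity-type inequality $\|\bv\|_{2,Q}^2\le C(\cellsize^3/a_\eps)\|\nabla\bv\|_{2,Q}^2$ holds for $\bv$ vanishing on $T_{\eps,k}$. This follows by scaling from the bound $\|v\|_{L^2(B_R)}^2\le CR^3\|\nabla v\|^2$ for functions vanishing on $T_0$, combined with comparing the cell average to the average on the shell around the hole. Summing over the cells gives the constant $\sig^2$.

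For \eqref{eq:bogovskii} I follow Allaire's restriction-operator route. Take the standard Bogovskii operator on $\T$ for mean-zero $b$ (after zero extension), then apply a restriction operator $R_\eps$ that preserves divergence, vanishes on the holes, and satisfies $\|\nabla R_\eps\bu\|_2\le C(\|\nabla\bu\|_2+\sig^{-1}\|\bu\|_2)$. The restriction is built locally by solving a Stokes or Bogovskii problem on the scaled ball $B(\cellsize k,a_\eps R)$ minus the hole. Because the extension of $b$ is zero on the holes, one more local Bogovskii correction on the same perforated balls fixes $\Div=b$ exactly. The main obstacle is keeping the constants uniform: the $\sig^{-1}$ factor comes from the scaling of the local restriction on the holes, and the estimate relies on the $1/|\bx|$ decay in 3D together with $a_\eps\ll\cellsize$. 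All other constants come from fixed reference configurations and are therefore independent of $\eps$ and $N_\eps$.
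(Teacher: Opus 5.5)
Your proposal is correct. Its Poincar\'e and divergence-inverse parts are the paper's own argument in \cref{app:periodic-geometry}.
\begin{itemize}
\item The Poincar\'e inequality comes from a cellwise capacity estimate, $\|\widetilde z\|_{L^2(Q_{\eps,k})}^2\le C(\cellsize^2+\cellsize^3/a_\eps)\|\nabla\widetilde z\|_{L^2(Q_{\eps,k})}^2$, with the cell average controlled through the hole.
\item The divergence inverse is $\bv=\nabla\Delta_{\T}^{-1}\widetilde b$ followed by the local correction $\chi_k\bv-\mathcal B_{E_k}\Div(\chi_k\bv)$ on $E_k=B(\cellsize k,a_\eps/2)\setminus T_{\eps,k}$. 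That correction is already solenoidal, and its compatibility condition is exactly $\widetilde b=0$ in the hole. Your extra Bogovski\u\i{} step is therefore unnecessary.
\end{itemize}

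The genuine difference is how the corrector is glued.
\begin{itemize}
\item The paper, following Allaire and \cite[Proposition~2.7 and Appendix~A]{HLO}, keeps the rescaled exterior field on $B(\cellsize k,\cellsize/8)\setminus T_{\eps,k}$. It matches this field to $\be_i$ by a homogeneous Stokes solve on $\cellsize/8<|\bx-\cellsize k|<\cellsize/4$. The residual is then a pair of traction measures on two spheres, with densities bounded by $Ca_\eps/\cellsize^2$ (via regularity of the annular Stokes problem) and exact mass $a_\eps\bM_0\be_i$.
\item In your cutoff construction, the residual tested against $\bzeta\in\Xe$ has two parts: the exterior traction on $\partial B(\cellsize k,\cellsize/4)$, and the volume functional $\int_A(\nabla\bW_\eps^i:\nabla\bzeta-Q_\eps^i\Div\bzeta)\dx$ on $A=\{\cellsize/4<|\bx-\cellsize k|<\cellsize/2\}$. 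The volume functional annihilates constants and is bounded by $Ca_\eps\cellsize^{-1/2}\|\nabla\bzeta\|_{L^2(A)}$.
\item Force balance on $B(\cellsize k,\cellsize/4)\setminus T_{\eps,k}$ gives the same mass. Pairing with $\widetilde\bzeta-\bar\bzeta_k$ gives the same per-cell bound $C\cellsize^{5/2}\|\nabla\widetilde\bzeta\|_{L^2(Q_{\eps,k})}$, and summation gives $C\cellsize\|\nabla\bzeta\|_2$.
\end{itemize}
Your route avoids the auxiliary Stokes problem and needs only $L^2$ information on the annulus for the residual. The paper's route avoids a Bogovski\u\i{} correction inside the corrector.

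Two points in your write-up need tightening.
\begin{itemize}
\item The mass $a_\eps\bM_0\be_i$ does not come from rewriting the hole flux against $\bzeta$; that term simply vanishes for $\bzeta\in\Xe$. It comes from the force balance between the hole and the inner sphere. After that identification there are no remainders in powers of $a_\eps/\cellsize$ to track: the only error is the pairing with $\widetilde\bzeta-\bar\bzeta_k$.
\item The bound $\|\bW_\eps\|_\infty\le C$ needs an $L^\infty$ estimate for the annular Bogovski\u\i{} correction, which the $L^2$ theory does not give. The $W^{1,p}$ estimate with $p>3$ on the fixed rescaled annulus, together with Morrey's inequality, gives a sup bound of order $a_\eps/\cellsize$.
\end{itemize}
Finally, in the paper's usage \cite[Lemma~2.3]{HLO} is the perforated Poincar\'e inequality, not the corrector residual estimate.
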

\begin{proof}
We use the exterior-cell construction of \cite[Proposition~2.7 and Appendix~A]{HLO}, with exponent $2$ and with the resistance normalization \eqref{eq:resistance}. In that proposition, the $W^{-1,2}$ residual is stated at the Darcy scale. \cref{app:periodic-geometry} repeats the rescaling for the independent cell and hole lengths $\cellsize$ and $a_\eps$ used here. The inverse-divergence estimate is the $q=2$ case of \cite[Theorem~2.3]{DFL}. The corresponding perforated-domain formulation is also recorded in \cite[Proposition~2.5]{HLO}. The periodic construction in \cref{app:periodic-geometry} has norm $C(1+\sig^{-1})$.

The pressures $Q_\eps^i$ are assembled before normalization. Subtracting one global constant from each component gives $\int_{\Oe}Q_\eps^i\dx=0$. This leaves the residual unchanged, because $\int_{\Oe}\Div\bzeta\dx=0$ for $\bzeta\in\Xe$ and preserves the $L^2$ bound up to a fixed factor. Renormalizing the already assembled cell or annular pieces separately would alter the pressure jumps and hence the distributional gradient.
\end{proof}

\begin{remark}[Constant viscosity in the cell problem]
For a constant viscosity $m_k>0$, the normalized exterior solution is $(\bV^i,m_kQ^i)$, since
\[
-m_k\Delta \bV^i+\nabla(m_kQ^i)=0,\quad \bV^i\to \be_i.
\]
The velocity profile and its far-field normalization are unchanged. Pressure and traction are multiplied by $m_k$ and the resistance matrix is therefore $m_k\bM_0$.
\end{remark}

To obtain an admissible comparison, the cell fields are corrected by a divergence right inverse. For $\bv\in W^{1,\infty}(\T;\R^3)\cap\V$, define the divergence defect and the periodic comparison by
\begin{equation*}
 \begin{split}
 \mathfrak d_\eps(\bv)&=\Div(\bW_\eps\bv)
   =\sum_{i,j=1}^3\bigl((\bW_\eps)_{ij}-\delta_{ij}\bigr)\partial_iv_j,\\
 \bb_\eps(\bv)&=\mathcal B_\eps[\mathfrak d_\eps(\bv)],\quad
 \Cgeo \bv=\bW_\eps\bv-\bb_\eps(\bv).
 \end{split}
\end{equation*}
The defect has zero mean on $\Oe$ because $\bW_\eps\bv$ has zero hole trace and is periodic. Consequently $\Cgeo \bv\in\Ve$. By \eqref{eq:cell-L2}, \eqref{eq:bogovskii} and \eqref{eq:general-poincare},
\begin{equation}\label{eq:comparison-L2}
\begin{aligned}
 \|\mathfrak d_\eps(\bv)\|_2
   &\le C\tau_\eps\|\nabla \bv\|_\infty,\\
 \|\bb_\eps(\bv)\|_2+\sig\|\nabla \bb_\eps(\bv)\|_2
   &\le C\tau_\eps\|\nabla \bv\|_\infty,\\
 \|\widetilde{\Cgeo \bv}-\bv\|_2
   &\le C\tau_\eps\|\bv\|_{W^{1,\infty}},\\
 \|\Cgeo \bv\|_2+\sig\|\nabla\Cgeo \bv\|_2
   &\le C\|\bv\|_{W^{1,\infty}}.
\end{aligned}
\end{equation}
Thus the small divergence defect compensates for the factor $\sig^{-1}$ in the inverse-divergence estimate at the weighted energy level.

\subsection{Estimates for the frozen lifting}
A spatially varying coefficient produces both stress and cell-pressure commutators. Since $\Cgeo\bv$ already has the correct divergence and hole trace, it remains to compare its weighted stress with the Darcy resistance and to quantify these two commutator terms.

\begin{lemma}[The variable-coefficient residual]\label{lem:weighted-residual}
Let $\bv\in W^{1,\infty}(\T;\R^3)\cap\V$ and $m\in W^{1,\infty}(\T)$ satisfy $m_*\le m\le m^*$ for fixed constants $0<m_*\le m^*<\infty$. Then, for $\bphi\in\Ve$,
\begin{equation*}
 \left|2\sig^2\int_{\Oe}m\,\sym(\Cgeo \bv):\sym\bphi\dx
      -\int_{\Oe}m\bM_0\bv\cdot\bphi\dx\right|
 \le C\deltae\,\sig\|\sym\bphi\|_2.
\end{equation*}
The constant depends only on the geometric data, $m_*,m^*$, $\|m\|_{W^{1,\infty}}$ and $\|\bv\|_{W^{1,\infty}}$. It is independent of $\eps$.
\end{lemma}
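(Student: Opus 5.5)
We split $\Cgeo\bv=\bW_\eps\bv-\bb_\eps(\bv)$ and treat the two pieces separately. The correction piece is direct. By \eqref{eq:comparison-L2},
\[
 2\sig^2\Bigl|\int_{\Oe}m\,\sym\bb_\eps(\bv):\sym\bphi\dx\Bigr|\le C\sig^2\|\nabla\bb_\eps(\bv)\|_2\|\sym\bphi\|_2\le C\tau_\eps\sig\|\sym\bphi\|_2 .
\]
Since $\tau_\eps=(\cellsize/\sig)^2\le\cellsize/\sig$ for small $\eps$, this is bounded by $C\deltae\sig\|\sym\bphi\|_2$. All the work is in the main piece $\bW_\eps\bv=\sum_i\bW_\eps^iv_i$.

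For the main piece we first convert the symmetric form to the full gradient. Both $\bphi$ and $\bW_\eps\bv$ have zero hole trace and are periodic, so we can integrate by parts on the torus:
\[
 2\int m\,\sym\bU:\sym\bphi=\int m\nabla\bU:\nabla\bphi+\int m(\nabla\bU)^{\mathrm T}:\nabla\bphi ,
\]
and the second integral equals $-\int\nabla m\cdot(\ldots)+\int m\,\Div\bU\,\Div\bphi$. Since $\Div\bphi=0$ and $\sig^2\|\nabla\bU\|_2\|\nabla m\|_\infty\|\bphi\|_2\le C\sig\cdot\sig\|\sym\bphi\|_2$ by \eqref{eq:cell-L2} and \eqref{eq:poincare}, the transposed term costs only $O(\sig)\,\sig\|\sym\bphi\|_2$. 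We also use $2\|\sym\bphi\|_2^2=\|\nabla\bphi\|_2^2$ throughout.

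Next expand $\nabla(\bW_\eps^iv_i)=v_i\nabla\bW_\eps^i+\bW_\eps^i\otimes\nabla v_i$. The second term gives $\sig^2\|\bW_\eps\|_\infty\|\nabla\bv\|_\infty\|\nabla\bphi\|_{1}$, which is at most $C\sig\cdot\sig\|\sym\bphi\|_2$ on the unit torus. The first term gives
\[
 \sig^2\int m v_i\nabla\bW_\eps^i:\nabla\bphi=\sig^2\int\nabla\bW_\eps^i:\nabla(mv_i\bphi)-\sig^2\int\nabla\bW_\eps^i:(\bphi\otimes\nabla(mv_i)).
\]
The commutator (the second integral) is bounded by $C\sig^2\|\nabla\bW_\eps\|_2\|\bphi\|_2\le C\sig\cdot\sig\|\sym\bphi\|_2$. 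The test field $\bzeta=mv_i\bphi$ lies in $\Xe$ but is not solenoidal, which is why we bring in the cell pressure. We write
\[
 \sig^2\int\nabla\bW_\eps^i:\nabla\bzeta=\bigl\langle\sig^2(-\Delta\bW_\eps^i+\nabla Q^i_\eps),\bzeta\bigr\rangle+\sig^2\int Q^i_\eps\Div\bzeta ,
\]
with $\Div\bzeta=\bphi\cdot\nabla(mv_i)$ because $\Div\bphi=0$. The pressure commutator is then at most $\sig^2\|Q_\eps\|_2\|\nabla(m\bv)\|_\infty\|\bphi\|_2\le C\sig\cdot\sig\|\sym\bphi\|_2$ by \eqref{eq:cell-L2} and \eqref{eq:poincare}. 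By \eqref{eq:cell-residual}, the bracket equals $\int m v_i\bM_0\be_i\cdot\bphi$ up to an error $C\cellsize\|\bzeta\|_{H^1(\Oe)}\le C\cellsize\|\bphi\|_{H^1}$. Using \eqref{eq:poincare}, $\|\bphi\|_{H^1}\le C\|\nabla\bphi\|_2\le C\|\sym\bphi\|_2$, so this error is $C(\cellsize/\sig)\,\sig\|\sym\bphi\|_2$. Summing over $i$ produces exactly $\int m\bM_0\bv\cdot\bphi$.

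Collecting the pieces, every error is $C(\sig+\cellsize/\sig+\tau_\eps)\sig\|\sym\bphi\|_2\le C\deltae\sig\|\sym\bphi\|_2$. The main obstacle is the residual step: one has to make sure that \eqref{eq:cell-residual} applies to the nonsolenoidal product $mv_i\bphi$, and that the $H^1$ norm of this product is controlled through the perforated Poincaré inequality without an extra factor $\sig^{-1}$. This is precisely the source of the $\cellsize/\sig$ term in $\deltae$. The other delicate point is keeping the pressure commutator at $O(\sig^2)$: this relies on $\Div\bphi=0$, so that only the derivative of $mv_i$ enters.
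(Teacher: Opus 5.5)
Your proposal is correct and follows the paper's proof essentially step by step. It uses the same split into the divergence correction and $\bW_\eps\bv$, the same conversion to the full gradient, the same product-rule expansion, and the same pairing of the cell residual with the nonsolenoidal product $m v_i\bphi\in\Xe$, which produces the stress and pressure commutators. The one cosmetic difference is in the transposed term: you move the derivative onto $\bU$ and bound it by $\sig^2\|\nabla m\|_\infty\|\nabla\bU\|_2\|\bphi\|_2$, while the paper moves it onto $\bphi$ and uses $\sig^2\|\nabla m\|_\infty\|\bU\|_2\|\nabla\bphi\|_2$; both give $O(\sig)\,\sig\|\sym\bphi\|_2$.
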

\begin{proof}
With $\bU=\bW_\eps\bv$ and $Z=\sig\|\sym\bphi\|_2$, \eqref{eq:poincare} and $2\|\sym\bphi\|_2^2=\|\nabla\bphi\|_2^2$ give $\|\bphi\|_2\le C Z$ and $\|\nabla\bphi\|_2\le C\sig^{-1}Z$. The contribution of the divergence correction satisfies
\[
2\sig^2\left|\int_{\Oe}m\,\sym \bb_\eps(\bv):\sym\bphi\dx\right|
 \le C\tau_\eps Z.
\]
The remaining contribution contains $\bU$. This field has zero trace on the holes, although it need not be solenoidal. Using $\Div\bphi=0$ and integrating by parts, we obtain
\begin{equation}\label{eq:weighted-transpose}
\begin{aligned}
 2\sig^2\int_{\Oe}m\,\sym \bU:\sym\bphi\dx
 &=\sig^2\int_{\Oe}m\nabla \bU:\nabla\bphi\dx+T_m,\\
 T_m&=-\sig^2\sum_{i,j}\int_{\Oe}
                  U_j(\partial_i m)(\partial_j\phi_i)\dx.
\end{aligned}
\end{equation}
The zero trace of $\bU$ removes the boundary term. Density then extends the identity to every $\bphi\in\Ve$. Since $\|\bU\|_2\le \|\bW_\eps\|_\infty\|\bv\|_\infty|\Oe|^{1/2}\le C$ and $\|\nabla\bphi\|_2\le C\sig^{-1}Z$,
\[
|T_m|\le \sig^2\|\nabla m\|_\infty\|\bU\|_2\|\nabla\bphi\|_2\le C\sig Z.
\]

For each column we introduce $\omega_i=mv_i$. The product rule then gives
\[
 \nabla \bU=\sum_i v_i\nabla \bW_\eps^i+\bW_\eps\nabla \bv.
\]
Since $\|\bW_\eps\|_\infty\le C$, the product term satisfies
\[
 \sig^2\left|\int_{\Oe}m\,\bW_\eps\nabla\bv:\nabla\bphi\dx\right|
 \le C\sig^2\|\bW_\eps\|_\infty\|\nabla\bv\|_\infty\|\nabla\bphi\|_2
 \le C\sig Z.
\]
For the other terms set $\mathcal R_\eps^i=\sig^2(-\Delta \bW_\eps^i+\nabla Q_\eps^i)-\bM_0\be_i$. Since $\omega_i\in W^{1,\infty}(\T)$ and $\bphi\in\Ve$, the product $\omega_i\bphi$ is periodic, belongs to $H^1(\Oe;\R^3)$ and has zero trace on every hole boundary. Hence $\omega_i\bphi\in\Xe$, with $\|\omega_i\bphi\|_{H^1}\le C\|\omega_i\|_{W^{1,\infty}}\|\bphi\|_{H^1}$. The identity $\Div(\omega_i\bphi)=\nabla\omega_i\cdot\bphi$ requires pairing the residual in the full zero-hole-trace dual space $\Xe^*$. Pairing the residual with this product gives
\begin{align*}
 &\sig^2\int_{\Oe}\omega_i\nabla \bW_\eps^i:\nabla\bphi\dx
       -\int_{\Oe}\omega_i\bM_0\be_i\cdot\bphi\dx\\
 &=\langle\mathcal R_\eps^i,\omega_i\bphi\rangle
   -\sig^2\int_{\Oe}\nabla \bW_\eps^i:
                         (\bphi\otimes\nabla\omega_i)\dx
   +\sig^2\int_{\Oe}Q_\eps^i\nabla\omega_i\cdot\bphi\dx.
\end{align*}
The last term is the pressure commutator generated by $\nabla\omega_i$. The bounds $\|\bphi\|_2\le CZ$, $\|\nabla\bphi\|_2\le C\sig^{-1}Z$, $\|\nabla\bW_\eps^i\|_2+\|Q_\eps^i\|_2\le C\sig^{-1}$ and $\|\omega_i\|_{W^{1,\infty}}\le C$ give
\begin{align*}
 |\langle\mathcal R_\eps^i,\omega_i\bphi\rangle|
 &\le C\cellsize\|\omega_i\bphi\|_{H^1}
 \le C\cellsize(\|\bphi\|_2+\|\nabla\bphi\|_2)
 \le C\frac{\cellsize}{\sig}Z,\\
 \sig^2\left|\int_{\Oe}\nabla \bW_\eps^i:(\bphi\otimes\nabla\omega_i)\dx\right|
 &\le C\sig^2\|\nabla\bW_\eps^i\|_2\|\nabla\omega_i\|_\infty\|\bphi\|_2
 \le C\sig Z,\\
 \sig^2\left|\int_{\Oe}Q_\eps^i\nabla\omega_i\cdot\bphi\dx\right|
 &\le C\sig^2\|Q_\eps^i\|_2\|\nabla\omega_i\|_\infty\|\bphi\|_2
 \le C\sig Z.
\end{align*}
We sum over $i$ and add the transpose term in \eqref{eq:weighted-transpose} and the divergence correction. Since $\tau_\eps=(\cellsize/\sig)^2\le\cellsize/\sig$ for small $\eps$, this proves the lemma.
\end{proof}

The residual estimate now yields the frozen Stokes approximation.

\begin{proposition}[Quantitative frozen Stokes approximation]\label{prop:stokes-approx}
Let $\bv\in W^{1,\infty}(\T;\R^3)\cap\V$ and $m\in W^{1,\infty}(\T)$, with fixed constants $0<m_*\le m^*<\infty$ satisfying $m_*\le m\le m^*$. Let $\bw_\eps[m,\bv]\in\Ve$ be the solution of \eqref{eq:lift} with $\bu=\bv$ and the prescribed coefficient $m$. Then
\begin{equation}\label{eq:stokes-rate}
 \|\widetilde{\bw}_\eps[m,\bv]-\bv\|_2\le C\deltae,
\end{equation}
where $C$ depends on the geometric data, $m_*,m^*$, $\|m\|_{W^{1,\infty}}$ and $\|\bv\|_{W^{1,\infty}}$, but not on $\eps$. The estimate is uniform when these quantities have common bounds. For a reference solution satisfying the hypotheses of \cref{thm:stability} on $[0,T]$,
\begin{equation}\label{eq:eta-rate}
 \eta_\eps^2\le C_T(\tau_\eps+\sig^2)
              \le C_T\bigl(\eps^{\alpha-1}+\eps^{3-\alpha}\bigr).
\end{equation}
\end{proposition}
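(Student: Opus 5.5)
The estimate follows from a Céa-type argument: compare $\bw_\eps=\bw_\eps[m,\bv]$ with the admissible comparison field $\Cgeo\bv\in\Ve$, using \cref{lem:weighted-residual} as the consistency estimate.

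Set $\be_\eps=\bw_\eps-\Cgeo\bv\in\Ve$. Subtracting the residual of \cref{lem:weighted-residual} from the defining identity \eqref{eq:lift} (with $\bu=\bv$) gives, for every $\bphi\in\Ve$,
\[
 \left|2\sig^2\int_{\Oe}m\,\sym\be_\eps:\sym\bphi\dx\right|
 \le C\deltae\,\sig\|\sym\bphi\|_2 .
\]
We test with $\bphi=\be_\eps$ and use $m\ge m_*$. This yields $2m_*\sig^2\|\sym\be_\eps\|_2^2\le C\deltae\sig\|\sym\be_\eps\|_2$, hence $\sig\|\sym\be_\eps\|_2\le C\deltae$. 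The perforated Poincaré inequality \eqref{eq:poincare} then converts this into
\[
 \|\be_\eps\|_{L^2(\Oe)}\le C_P\sig\|\sym\be_\eps\|_2\le C\deltae .
\]
After zero extension we write $\widetilde\bw_\eps-\bv=\widetilde\be_\eps+(\widetilde{\Cgeo\bv}-\bv)$. The second term is at most $C\tau_\eps\|\bv\|_{W^{1,\infty}}$ by \eqref{eq:comparison-L2}. Since $\tau_\eps=(\cellsize/\sig)^2\le\cellsize/\sig\le\deltae$ for small $\eps$, this proves \eqref{eq:stokes-rate}. The constant depends only on the quantities listed in \cref{lem:weighted-residual} and on $m_*$, so it is uniform under common bounds.

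For \eqref{eq:eta-rate}, apply \eqref{eq:stokes-rate} at each time $t$ with $m=\mu(\vartheta(t))$ and $\bv=\bu(t)$. The regularity in \cref{thm:local} with $s\ge3$ gives, by Sobolev embedding on $\T$, uniform-in-time bounds $\|\bu(t)\|_{W^{1,\infty}}\le C\|\bu(t)\|_{H^s}$ and $\|\vartheta(t)\|_{W^{1,\infty}}\le C\|\vartheta(t)\|_{H^{s+2}}$. Hence $\|m(t)\|_{W^{1,\infty}}\le \mu^*+L_\mu\|\nabla\vartheta(t)\|_\infty$ is bounded on $[0,T]$, while $\mu_*\le m\le\mu^*$. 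The constant in \eqref{eq:stokes-rate} is therefore uniform in $t$, and integrating the square over $(0,T)$ gives
\[
 \eta_\eps^2\le C_T\deltae^2\le 2C_T\bigl((\cellsize/\sig)^2+\sig^2\bigr)=2C_T(\tau_\eps+\sig^2).
\]
The scale relations $\tau_\eps\sim\eps^{\alpha-1}$ and $\sig^2\sim\eps^{3-\alpha}$ then give the final bound.

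The only points needing care are routine. Measurability in time of $t\mapsto\widetilde\bw_\eps(t)$ follows from continuity of the solution map of \eqref{eq:lift} in $(m,\bv)$ for the $L^\infty\times L^2$ topology, a simple coercivity perturbation argument. Also, $\Cgeo\bv\in\Ve$ is a legitimate test function, since it is solenoidal with zero hole trace, as noted after \eqref{eq:comparison-L2}. The essential difficulty, namely the commutators, has already been absorbed into \cref{lem:weighted-residual}.
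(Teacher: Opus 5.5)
Your proposal is correct and follows the paper's proof essentially line by line. You test the difference between \eqref{eq:lift} and the consistency estimate of \cref{lem:weighted-residual} with $\bw_\eps[m,\bv]-\Cgeo\bv$, then conclude by coercivity, \eqref{eq:poincare}, \eqref{eq:comparison-L2} with $\tau_\eps\le\cellsize/\sig$, and time-uniform $W^{1,\infty}$ bounds on $\mu(\vartheta)$ and $\bu$ from $H^s(\T)\hookrightarrow W^{1,\infty}(\T)$. Your remark on the time measurability of $\widetilde\bw_\eps$ is a harmless addition that the paper leaves implicit here, since it treats the time regularity of the lifting separately afterwards.
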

The proof first compares the exact lifting with $\Cgeo\bv$. The time-dependent estimate then follows from the same bound with $m=\mu(\vartheta)$ and $\bv=\bu$.

\begin{proof}[Proof of \cref{prop:stokes-approx}]
By the Lax--Milgram theorem, $\bw=\bw_\eps[m,\bv]$ is well defined. We subtract the comparison relation from its variational equation and take $\bphi=\bw-\Cgeo \bv\in\Ve$. Then \cref{lem:weighted-residual} gives
\[
2m_*\sig^2\|\sym(\bw-\Cgeo \bv)\|_2^2
 \le C\deltae\,\sig\|\sym(\bw-\Cgeo \bv)\|_2.
\]
Coercivity and \eqref{eq:poincare} give $\sig\|\nabla(\bw-\Cgeo \bv)\|_2+\|\bw-\Cgeo \bv\|_2\le C\deltae$. Together with \eqref{eq:comparison-L2}, this proves \eqref{eq:stokes-rate}. For $m=\mu(\vartheta)$ and $\bv=\bu$, the assumed reference regularity and $H^s(\T)\hookrightarrow W^{1,\infty}(\T)$ for $s\ge3$ give uniform $W^{1,\infty}$ bounds on $[0,T]$. Therefore
\[
 \eta_\eps^2\le C_T\deltae^2
 \le C_T\left(\frac{\cellsize^2}{\sig^2}+\sig^2\right)
=C_T(\tau_\eps+\sig^2)
 \le C_T\bigl(\eps^{\alpha-1}+\eps^{3-\alpha}\bigr).
\]
This proves the stated estimate for the time-dependent macroscopic coefficient.
\end{proof}

We next record the energy and time-derivative bounds needed in the relative-energy argument. Testing \eqref{eq:lift} with $\bw_\eps$ gives
\[
2\mu_*\sig^2\|\sym \bw_\eps\|_2^2
 \le\mu^*\Lambda\|\bu\|_2\|\bw_\eps\|_2.
\]
Together with \eqref{eq:poincare}, this implies
\begin{equation}\label{eq:lift-bound}
 \sig\|\sym \bw_\eps(t)\|_2
 \le\frac{C_P\mu^*\Lambda}{2\mu_*}\|\bu(t)\|_2
 \le W_0.
\end{equation}
For a zero-hole-trace field $z$, the fixed-torus Sobolev inequality and \eqref{eq:general-poincare}, with $\sig\le1$, give
\[
 \|\widetilde z\|_6
 \le C(\|z\|_2+\|\nabla z\|_2)
 \le C\|\nabla z\|_2.
\]
Zero extension and interpolation give
\[
 \|\bw_\eps\|_2+\sig\|\nabla \bw_\eps\|_2
+\sig^{1/2}\|\bw_\eps\|_3\le C\|\bu\|_2.
\]
These estimates are uniform in $\eps$. Differentiating the lifting with respect to time gives
\begin{equation}\label{eq:lift-time}
\begin{aligned}
 2\sig^2\int_{\Oe}m\,\sym\partial_t\bw_\eps:\sym\bphi\dx
 &=\int_{\Oe}(m\bM_0\partial_t\bu+\partial_tm\,\bM_0\bu)\cdot\bphi\dx\\
 &\quad-2\sig^2\int_{\Oe}\partial_tm\,
                         \sym \bw_\eps:\sym\bphi\dx.
\end{aligned}
\end{equation}
Here $\partial_tm=\mu'(\vartheta)\partial_t\vartheta$. The assumed macroscopic regularity gives $\partial_t\vartheta\in C_tL^\infty_x$. Testing \eqref{eq:lift-time} with $\partial_t\bw_\eps$ yields
\begin{equation}\label{eq:lift-time-bound}
 \|\partial_t\bw_\eps\|_2+\sig\|\nabla\partial_t\bw_\eps\|_2
 \le C\bigl(\|\partial_t\bu\|_2
       +\|\partial_tm\|_\infty\|\bu\|_2\bigr).
\end{equation}
The differentiation is justified by difference quotients, using $m\ge\mu_*$ and $m\in C^1_tL^\infty_x$. The bounds are uniform in time on each fixed strong-solution interval.

For fixed $\eps$, define $\mathcal A_\eps(t):\Ve\longrightarrow\Ve^*$ by
\[
 \langle\mathcal A_\eps(t)\bv,\bphi\rangle
=2\sig^2\int_{\Oe}m(t)\sym \bv:\sym\bphi\dx.
\]
The positive lower bound for $m$ and \eqref{eq:poincare} make this an isomorphism on the fixed domain. Moreover, $\mathcal A_\eps\in C^1([0,T];\mathcal L(\Ve,\Ve^*))$ because $m\in C^1_tL^\infty_x$ and the force functional $\bphi\mapsto\int_{\Oe}m\bM_0\bu\cdot\bphi\dx$ belongs to $C^1_t\Ve^*$. The inverse identity
\[
 \mathcal A_\eps(t)^{-1}-\mathcal A_\eps(s)^{-1}
=\mathcal A_\eps(t)^{-1} [\mathcal A_\eps(s)-\mathcal A_\eps(t)]\mathcal A_\eps(s)^{-1}
\]
therefore implies
\begin{equation}\label{eq:lift-test-regularity}
 \bw_\eps\in C^1([0,T];\Ve)
 \subset C([0,T];\Ve)\cap W^{1,2}(0,T;\Hvel).
\end{equation}
This operator argument is performed at fixed $\eps$, whereas \eqref{eq:lift-bound}--\eqref{eq:lift-time-bound} are uniform weighted estimates.

The comparison is initialized with $\bw_\eps(0)$, whereas the stated error uses $\bu(0)$. The uniform lifting bound compares these two initial discrepancies directly.

\begin{lemma}[Equivalent initial discrepancies]\label{lem:initial-equivalence}
There is a constant independent of $\eps$ such that
\begin{equation*}
 \E_\eps^0\le C\Dinit+C\sig^4,
 \quad \Dinit\le C\E_\eps^0+C\sig^4.
\end{equation*}
In particular either discrepancy tends to zero if and only if the other does.
\end{lemma}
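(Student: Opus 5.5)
The two discrepancies share the density term $\frac12\|\widehat\rho_\eps^0-\rho_0\|_2^2$. It therefore suffices to compare the weighted kinetic parts. We use the elementary inequality $|a-c|^2\le 2|a-b|^2+2|b-c|^2$ pointwise on $\Oe$. Taking $a=\bu_\eps^0$, $b=\bu(0)$ and $c=\bw_\eps(0)$ gives
\[
 \frac{\sig^4}{2}\int_{\Oe}\rho_\eps^0|\bu_\eps^0-\bw_\eps(0)|^2\dx
 \le 2\cdot\frac{\sig^4}{2}\int_{\Oe}\rho_\eps^0|\bu_\eps^0-\bu(0)|^2\dx
 +\sig^4\rho^*\int_{\Oe}|\bw_\eps(0)-\bu(0)|^2\dx .
\]
The symmetric choice, with the roles of $\bu(0)$ and $\bw_\eps(0)$ exchanged, gives the reverse inequality. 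The density bound $\rho_\eps^0\le\rho^*$ from \eqref{eq:micro-reference} has been used here. Hence both claims follow with $C=2$, plus a term of the form $\rho^*\sig^4\|\bw_\eps(0)-\bu(0)\|_{2,\Oe}^2$.

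It remains to show that $\|\bw_\eps(0)-\bu(0)\|_{L^2(\Oe)}$ is bounded uniformly in $\eps$. No rate is needed, since the factor $\sig^4$ already supplies the additive term. We use the lifting bound recorded after \cref{prop:stokes-approx}, namely $\|\bw_\eps\|_2\le C\|\bu\|_2$, or equivalently \eqref{eq:lift-bound} combined with \eqref{eq:poincare}. At $t=0$ this gives $\|\bw_\eps(0)\|_{2,\Oe}\le C\|\bu(0)\|_2$. The reference regularity from \cref{thm:local} gives $\|\bu(0)\|_2\le H_0/c_0$. Therefore
\[
 \|\bw_\eps(0)-\bu(0)\|_{2,\Oe}\le \|\bw_\eps(0)\|_2+\|\bu(0)\|_2\le C,
\]
and the additive term is at most $C\sig^4$. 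Alternatively, \eqref{eq:stokes-rate} gives the sharper bound $C\deltae$, but this is not needed.

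The only point requiring care is that $\bw_\eps(0)$ is meaningful. The lifting at time zero uses the continuous representatives $m(0)=\mu(\vartheta(0))$ and $\bu(0)$. Its definition is justified by \eqref{eq:lift-test-regularity}, which gives $\bw_\eps\in C([0,T];\Ve)$. The uniform bound then passes to $t=0$ by continuity. The equivalence of convergence to zero follows immediately because $\sig\to0$.
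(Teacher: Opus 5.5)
Your proposal is correct and follows the paper's proof essentially step for step. Both split $\bu_\eps^0-\bw_\eps(0)=(\bu_\eps^0-\bu(0))+(\bu(0)-\bw_\eps(0))$, apply $|a+b|^2\le2|a|^2+2|b|^2$ with $\rho_\eps^0\le\rho^*$, note that the density terms coincide, and use the uniform lifting bound $\|\bw_\eps(0)\|_2\le C$ to make the additive term $C\sig^4$; like the paper, you mention \eqref{eq:stokes-rate} only as an optional sharpening. Your appeal to continuity at $t=0$ is harmless but unnecessary, since $\bw_\eps(0)$ is defined directly by the lifting at the fixed time $t=0$ and the bound holds there without any limit.
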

\begin{proof}
The lifting energy bound gives $\|\bw_\eps(0)\|_2\le C$ and the macroscopic velocity has a fixed $L^2$ bound. Thus $\|\bw_\eps(0)-\bu(0)\|_{L^2(\Oe)}\le C$. Apply $|a+b|^2\le2|a|^2+2|b|^2$ and $\rho_\eps^0\le\rho^*$ to $\bu_\eps^0-\bw_\eps(0)=(\bu_\eps^0-\bu(0))+(\bu(0)-\bw_\eps(0))$ and then reverse the roles of the two comparison velocities. The density terms coincide. The uniform bound for $\bw_\eps(0)$ already gives the stated equivalence. With the specified geometry, \cref{prop:stokes-approx} sharpens the additive term in both inequalities to $C\sig^4\deltae^2$. The coarser bound suffices for the stability estimate.
\end{proof}

It remains to control the viscosity residual created by replacing $\mu(\vartheta_\eps)$ with the frozen coefficient. For either geometry, set $\be_\eps=\bu_\eps-\bw_\eps$ on $\Oe$ and $\br_\eps=\widetilde{\bw}_\eps-\bu$ on $\mathcal O$. In whole-domain integrals we identify $\be_\eps$ with its zero extension, so that $\bv_\eps=\widetilde \be_\eps+\br_\eps=\be_\eps+\br_\eps$. The tilde is retained when the extension itself is being emphasized. Define
\begin{equation}\label{eq:viscosity-residual}
X_\eps=\sig\|\sym \be_\eps\|_2,\qquad R_{\mu,\eps}=-2\sig^2\int_{\Oe}
  \bigl(\mu(\vartheta_\eps)-\mu(\vartheta)\bigr)
          \sym \bw_\eps:\sym \be_\eps\dx.
\end{equation}
By \eqref{eq:poincare}, \eqref{eq:temperature-error-infinity} and \eqref{eq:lift-bound},
\begin{align*}
 |R_{\mu,\eps}|
 &\le2L_\mu W_0 X_\eps\|z_\eps\|_\infty
 \le\mu_*\chi_0X_\eps^2
  +2L_\mu W_0 X_\eps
   \bigl(C_g\|\br_\eps\|_2
     +C_q|\kappa_s-\kappa_f|h_\eps^{1/q}
                         \|\nabla\vartheta\|_\infty\bigr).
\end{align*}
Under $\chi_0\le1/2$, Young's inequality gives
\begin{equation}\label{eq:viscosity-absorbed}
 |R_{\mu,\eps}|
 \le\mu_*X_\eps^2+
 C_{T,q}\bigl(\|\br_\eps\|_2^2
       +|\kappa_s-\kappa_f|^2h_\eps^{2/q}\bigr).
\end{equation}
Indeed, $\chi_0\le1/2$ bounds the first term in \eqref{eq:viscosity-absorbed} by $(\mu_*/2)X_\eps^2$. Young's inequality is chosen so that the remaining product consumes at most another $(\mu_*/2)X_\eps^2$. Hence $|R_{\mu,\eps}|$ consumes at most $\mu_*X_\eps^2$ of the viscous lower bound $2\mu_*X_\eps^2$, leaving at least $\mu_*X_\eps^2$ for the other remainders.

\section{Periodic quantitative homogenization}\label{sec:periodic-quantitative}
With the lifting estimates in hand, we turn to the relative-energy comparison. Following the relative-energy framework of \cite{BOP,CBSV2025,HNO2026}, we keep the density and velocity errors coupled and retain the thermal work and viscosity residual until the cancellation and absorption steps.

\subsection{Relative-energy framework}
The lifting belongs to the energy space, whereas the defining weak tests are smooth. The next density result allows both the comparison field and its scalar square to be used in the momentum and transport identities.

\begin{lemma}[Energy-space density of comparison tests]\label{lem:comparison-density}
Fix $\eps$ and put $D=\Oe$, $V=\Ve$ and $H=\Hvel$. If
\[
 \bU\in C([0,T];V)\cap W^{1,2}(0,T;H),
\]
then $\eta\bU$ is admissible in \eqref{eq:weak-momentum} and $\eta|\widetilde \bU|^2/2$ is admissible in the extended renormalized continuity equation for any smooth scalar function $\eta$ on $[0,T)$ that vanishes for $t$ sufficiently close to $T$. The resulting identities hold on $[0,\tau]$ for almost every $\tau\in(0,T)$, with the initial traces specified in \cref{def:weak}.
\end{lemma}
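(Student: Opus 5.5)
The plan is a standard mollification-in-time argument carried out at fixed $\eps$, where all constants may depend on the perforated domain. First I approximate $\bU$ by smooth admissible tests. Since $\Ve$ is the closure in $H^1$ of smooth periodic solenoidal fields vanishing near $\partial\He$, I pick a sequence $\bU_n\in C^\infty([0,T];\Ve_{\mathrm{sm}})$ such that
\[
 \bU_n\to\bU \text{ in } C([0,T];V),\qquad \partial_t\bU_n\to\partial_t\bU \text{ in } L^2(0,T;H).
\]
Here $\Ve_{\mathrm{sm}}$ denotes smooth solenoidal fields with zero trace. To build $\bU_n$, I extend $\bU$ evenly across $t=0$ and $t=T$, mollify in time, and then replace the time-mollified function by a finite sum. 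On each piece I use the spatial density of smooth solenoidal fields in $V$, which holds for the fixed Lipschitz perforated domain; the Bogovskii operator $\mathcal B_\eps$ of \eqref{eq:bogovskii} can restore solenoidality after spatial mollification with cutoff. Alternatively, I expand in a Galerkin basis of the Stokes operator on $\Oe$. The eigenfunctions are smooth up to $\partial\He$ by elliptic regularity, and truncating the expansion commutes with $\partial_t$, giving convergence in both norms simultaneously. I take the Galerkin route since it matches \cref{sec:weak-existence}.

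With $\bphi_n=\eta\bU_n$ inserted in \eqref{eq:weak-momentum}, I pass to $n\to\infty$ term by term. The convective term needs $\rho_\eps\bu_\eps\otimes\bu_\eps\in L^1_tL^{3/2}_x$ paired with $\nabla\bU_n\in C_tL^2$; since $\bu_\eps\in L^2_tL^6$ and $\bu_\eps\in L^\infty_tL^2$, the product $\bu_\eps\otimes\bu_\eps\in L^1_tL^{3/2}$, and $\nabla\bU_n\to\nabla\bU$ in $L^\infty_tL^2$. Hölder pairing needs $L^{3}$ against $L^{3/2}$, so I use instead $|\bu_\eps|^2\in L^1_tL^3\cap L^\infty_tL^1$ interpolated with $\nabla\bU_n$ in $L^\infty_t L^2$, which yields $\bu_\eps\otimes\bu_\eps\in L^{1}_tL^{2}$ only after using $L^\infty_tL^2\cap L^2_tL^6\subset L^{4}_tL^{3}$. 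The time derivative term converges by the $L^2_tH$ convergence, the viscous and forcing terms by $L^2_tH^1$ convergence, and the initial term by $C_tV$ convergence.

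To obtain the identity on $[0,\tau]$, I take $\eta=\eta_k$ approximating $\mathbf 1_{[0,\tau]}$. The boundary term $\sig^4\int\rho_\eps\bu_\eps(\tau)\cdot\bU(\tau)$ then appears at every Lebesgue point $\tau$, and also at every $\tau$ thanks to $\rho_\eps\bu_\eps\in C_{\mathrm{weak}}(L^2)$ and $\bU\in C_tL^2$.

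For the transport identity I take $b(r)=r$ and test $\zeta=\eta|\widetilde\bU|^2/2$. Zero extension gives $\widetilde\bU\in C_tH^1(\T)\cap W^{1,2}_tL^2(\T)$. I approximate by $\zeta_n=\eta|\widetilde\bU_n|^2/2$, which are smooth in time, and mollify them in space to make them smooth periodic. Their derivatives satisfy $\partial_t\zeta_n=\eta\widetilde\bU_n\cdot\partial_t\widetilde\bU_n+\eta'|\widetilde\bU_n|^2/2$ and $\nabla\zeta_n=\eta(\nabla\widetilde\bU_n)^{\mathrm T}\widetilde\bU_n$. Since $\widehat\rho_\eps$ is bounded, convergence of the test reduces to $\partial_t\zeta_n\to\partial_t\zeta$ in $L^1_{t,x}$ and $\widetilde\bu_\eps\cdot\nabla\zeta_n\to\widetilde\bu_\eps\cdot\nabla\zeta$ in $L^1$. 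The first holds from $C_tL^2\times L^2_tL^2$ convergence. The second holds because $\widetilde\bu_\eps\in L^2_tL^6$, $\nabla\widetilde\bU_n\in C_tL^2$ and $\widetilde\bU_n\in C_tL^3$ by Sobolev. Time localization to $[0,\tau]$ uses $\widehat\rho_\eps\in C_tL^q$.

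The main obstacle is the approximation step: it requires simultaneous convergence in $C_tV$ and $W^{1,2}_tH$ by smooth solenoidal fields with zero hole trace. The Stokes-eigenfunction projection handles this cleanly if I verify that the spectral projections $P_N$ are uniformly bounded on $V$, which follows from the projections being orthogonal in both the $H$ and the $V$ inner products.
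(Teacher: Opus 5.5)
Your proposal is correct and follows the paper's proof essentially step for step: Stokes-eigenfunction projections, which are contractive in both $H$ and $V$, combined with time reflection and mollification; term-by-term passage to the limit in the momentum identity; the bounded-density pairing for the scalar square; and time cutoffs for the endpoint terms. Three small fixes: the convective pairing against $\nabla\bU_n\in C_tL^2$ needs $\bu_\eps\otimes\bu_\eps\in L^1_tL^2$, which follows from $L^\infty_tL^2\cap L^2_tL^6\subset L^{8/3}_tL^4$ (giving $L^{4/3}_tL^2$, as in the paper) and not from $L^4_tL^3$, which only gives $L^2_tL^{3/2}$; upgrading $P_N\bU(t)\to\bU(t)$ to convergence in $C_tV$ uses compactness of the range of $\bU$ in $V$; and a general $b$ in the renormalized equation is handled exactly as $b(r)=r$, since $b(\widehat\rho_\eps)$ is bounded.
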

\begin{proof}
On the fixed domain $D$, let $\PP_J$ be the $H$-orthogonal projection onto the first $J$ Stokes eigenfunctions of the form $\int_D\nabla \bv:\nabla\bz\dx$ on $V$. The eigenfunctions are smooth up to the fluid boundary, solenoidal and vanish on the holes. The projections $\PP_J$ are contractions in the spectral norms of $H$ and $V$ and converge strongly in both spaces. Since the range of $\bU\in C_tV$ is compact and $\partial_t\bU\in L^2_tH$, we obtain
\[
 \PP_J \bU\to \bU\quad\hbox{in }C_tV,\quad
 \PP_J\partial_t\bU\to\partial_t\bU\quad\hbox{in }L^2_tH.
\]
Reflection beyond the time endpoints and time mollification of finite spectral sums yield a diagonal sequence $\bU_j$ smooth in time and up to the fluid boundary, with
\[
 \bU_j\to \bU\quad\hbox{in }C_tV,\quad
 \partial_t\bU_j\to\partial_t\bU\quad\hbox{in }L^2_tH.
\]
Multiplication by a fixed cutoff $\eta$ as in the statement preserves these convergences.

The microscopic energy bounds and three-dimensional interpolation imply
\[
 \|\bu_\eps\|_{L^{8/3}_tL^4(D)}
 \le C_D\|\bu_\eps\|_{L^\infty_tH}^{1/4}
            \|\bu_\eps\|_{L^2_tV}^{3/4},\quad
 \rho_\eps \bu_\eps\otimes \bu_\eps\in L^{4/3}_tL^2(D).
\]
The convective and viscous terms are therefore continuous under convergence of the test field in $L^4_tV$ and $L^2_tV$, respectively. The time term is continuous when the time derivatives converge in $L^1_tH$ and the force belongs to $L^2_tL^2(D)$. The approximation converges in each of these norms and in $H$ at time zero. Passing to the limit gives the momentum identity with test $\eta\bU$.

For the scalar test, set $\zeta_j=|\widetilde \bU_j|^2/2$ and $\zeta=|\widetilde \bU|^2/2$. Zero extension maps $V$ into $H^1(\T)$ and $H^1(\T)\hookrightarrow L^6(\T)$. The product rule and H\"older's inequality give
\[
 \zeta_j\to\zeta\quad\hbox{in }C_tW^{1,3/2}(\T),\quad
 \partial_t\zeta_j\to\partial_t\zeta
                    \quad\hbox{in }L^2_tL^{3/2}(\T).
\]
Indeed,
\begin{align*}
 \|\nabla(\zeta_j-\zeta)\|_{C_tL^{3/2}}
 &\le C\|\widetilde \bU_j-\widetilde \bU\|_{C_tH^1}
          (\|\widetilde \bU_j\|_{C_tH^1}+\|\widetilde \bU\|_{C_tH^1}),\\
 \|\partial_t(\zeta_j-\zeta)\|_{L^2_tL^{3/2}}
 &\le \|\widetilde \bU_j-\widetilde \bU\|_{C_tL^6}
                           \|\partial_t\widetilde \bU_j\|_{L^2_tL^2}+\|\widetilde \bU\|_{C_tL^6}
                 \|\partial_t\widetilde \bU_j-\partial_t\widetilde \bU\|_{L^2_tL^2}.
\end{align*}
Since the density is bounded and $\widehat\rho_\eps\widetilde \bu_\eps\in L^2_tL^3(\T)$, these convergences suffice to pass to the limit in the continuity equation. After multiplication by $\eta$, periodic spatial mollification and time mollification provide smooth scalar tests in the same norms. The argument also applies to $b(\widehat\rho_\eps)$, because $b$ is bounded on the density range. In particular,
\begin{equation}\label{eq:quadratic-comparison-test}
 \left[\int_D\frac12\rho_\eps|\bU|^2\dx\right]_0^\tau
 =\int_0^\tau\!\!\int_D\rho_\eps
   \left(\bU\cdot\partial_t\bU+
                 \bu_\eps\cdot\nabla\frac{|\bU|^2}{2}\right)\dx\dt.
\end{equation}
Smooth time cutoffs converging to the indicator of $[0,\tau]$ give the endpoint terms. At time zero, these follow from the weak momentum trace and $\bU\in C_tH$ and from density continuity and $\zeta\in C_tL^3$. At the terminal time, Lebesgue differentiation applies for almost every $\tau\in(0,T)$.
\end{proof}

By \cref{lem:comparison-density}, the regularity \eqref{eq:lift-test-regularity} makes time cutoffs of $\bw_\eps$ and $|\widetilde{\bw}_\eps|^2/2$ admissible comparison tests.

The preceding approximation exploits the smooth spectral basis available at fixed periodic geometry. For the bounded-domain argument, finite boundary regularity calls for the following geometric version of the same testing principle.

\begin{lemma}[Comparison tests on a finitely regular domain]
\label{lem:finite-boundary-density}
Let $D\subset\R^3$ be a fixed bounded $C^2$ domain, possibly with several boundary components. Put $V_D=H^1_0(D;\R^3)\cap\{\Div\bv=0\}$ and $H_D=\overline{V_D}^{\,L^2(D)}$. For $\bU\in C([0,T];V_D)\cap W^{1,2}(0,T;H_D)$ there are space-time smooth solenoidal fields $\bU_j$, compactly supported in $D$ in space, such that
\[
 \bU_j\to\bU\quad\text{in }C_tV_D,
 \qquad \partial_t\bU_j\to\partial_t\bU\quad\text{in }L^2_tH_D.
\]
The momentum and quadratic transport tests in \cref{lem:comparison-density} are valid on $D$.
\end{lemma}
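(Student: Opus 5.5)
The plan is to construct the approximation in two stages: first approximate $\bU$ in $C_tV_D$ and $\partial_t\bU$ in $L^2_tH_D$ by fields with values in a fixed smooth dense subspace, and then mollify in time. The spatial step uses the classical fact that on a bounded Lipschitz (in particular $C^2$) domain, possibly with several boundary components, the space $\mathcal V(D)=\{\bphi\in C_c^\infty(D;\R^3):\Div\bphi=0\}$ is dense in $V_D$ for the $H^1$ norm. Consequently it is also dense in $H_D$ for the $L^2$ norm.

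To handle both norms at once without relying on smooth Stokes eigenfunctions, I will build a linear approximation operator. Cover $\overline D$ by finitely many charts. In each boundary chart, take a localized field $\chi_k\bU$; its divergence $\nabla\chi_k\cdot\bU$ is corrected by a Bogovski\u{\i} operator on a star-shaped piece, so the localized pieces stay solenoidal with zero trace. Shift each piece inward along a direction transversal to the boundary by a distance $\delta$, and mollify at scale $\delta/2$. The resulting map $S_\delta$ is linear, maps $V_D$ into $\mathcal V(D)$ and $H_D$ into $\mathcal V(D)$, and is uniformly bounded on both $V_D$ and $H_D$. As $\delta\to0$ it converges strongly to the identity on each space.

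Because $S_\delta$ acts only in space, it commutes with $\partial_t$. Uniform boundedness together with strong convergence, applied on the compact range of $\bU\in C_tV_D$, gives $S_\delta\bU\to\bU$ in $C_tV_D$. Dominated convergence in time gives $S_\delta\partial_t\bU\to\partial_t\bU$ in $L^2_tH_D$. Next, extend $S_\delta\bU$ by reflection across $t=0$ and $t=T$ and mollify in time at scale $h$. This keeps spatial supports inside a fixed compact subset of $D$, preserves solenoidality, and produces space-time smooth fields. A diagonal choice $h=h(\delta)$ then yields the sequence $\bU_j$.

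The momentum and quadratic transport tests are then obtained exactly as in the proof of \cref{lem:comparison-density}. The approximants are now smooth, compactly supported solenoidal tests, and the bounds used there involve only $L^{8/3}_tL^4$ interpolation on the fixed domain and the Sobolev embedding of the zero extension. Passing to the limit and recovering the endpoint terms proceeds verbatim.

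The main obstacle is the construction of $S_\delta$ with uniform bounds in the weaker $H_D$ norm. For the Bogovski\u{\i} correction of $\nabla\chi_k\cdot\bU$, the $L^2$ bound needs the $W^{-1,2}$-type estimate of the Bogovski\u{\i} operator. This is available because $\nabla\chi_k\cdot\bU=\Div(\chi_k\bU)$ and $\chi_k\bU$ has zero normal trace. If this step proves delicate, I would instead use the Leray–Helmholtz projection, which is bounded on $L^2$ and on $H^1$ for $C^2$ domains.
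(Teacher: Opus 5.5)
\textbf{The gap.} The step that fails is your claim that $S_\delta$ converges strongly to the identity. With a partition of unity $\sum_k\chi_k\equiv1$ on $\overline D$ and independent Bogovski\u{\i} operators $B_k$ on the pieces $G_k$, the pieces before displacement and mollification sum to
\[
 \sum_k\bigl(\chi_k\bU-B_k(\nabla\chi_k\cdot\bU)\bigr)
 =\bU-\sum_kB_k(\nabla\chi_k\cdot\bU).
\]
It is true that $\sum_k\nabla\chi_k\cdot\bU=0$. However, the $B_k$ act on different domains, so the last sum is in general a nonzero element of $V_D$. Hence $S_\delta$ tends to $\bv\mapsto\bv-\sum_kB_k(\nabla\chi_k\cdot\bv)$, and your approximants do not converge to $\bU$.

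You also cannot move this defect into an interior piece. The lateral cutoffs of boundary charts have gradients that meet $\partial D$, so the corrections reach the wall. A field supported across several charts then admits no single inward translation.

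What is missing is a genuine decomposition $\bU=\sum_k\bU_k$ into solenoidal pieces, each supported in a single chart. One way is to build it sequentially, placing each Bogovski\u{\i} correction in the overlap with the charts not yet processed. The zero mean of each correction source comes from the vanishing flux of $\bU$, and the $H_D$ bound needs your negative-order Bogovski\u{\i} estimate on each overlap. Your fallback does not rescue this. The Leray projection destroys both the zero tangential trace and the compact support, so it maps neither into $V_D$ nor into compactly supported tests.

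\textbf{How the paper avoids it.} The paper does not localize at all. It takes the flow $F_\delta$ of a smooth field that points strictly inward on every component of $\partial D$. Then $F_\delta(\overline D)\subset\subset D$ and $F_\delta\to\mathrm{id}$ in $C^2$. It applies the Piola push-forward
\[
 (\mathscr T_\delta\bv)(F_\delta(\boldsymbol y))
 =\frac{\nabla F_\delta(\boldsymbol y)}{\det\nabla F_\delta(\boldsymbol y)}\,\widetilde\bv(\boldsymbol y)
\]
to the zero extension. A change of variables shows that solenoidality is preserved exactly. The operators are uniformly bounded on $L^2$ and $H^1$ and converge strongly to the identity. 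Mollifying at a radius below $\tfrac12\operatorname{dist}(F_\delta(\overline D),\partial D)$ then lands in compactly supported smooth solenoidal fields. No Bogovski\u{\i} operator or negative-order estimate is needed.

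\textbf{A simpler alternative.} You could also dispense with a uniformly bounded spatial operator entirely. Time mollification after reflection already gives $\bU^h\in C^1([0,T];V_D)$ with the two required convergences. For fixed $h$, approximate $\bU^h$ in $C^1_tV_D$ by finite sums $\sum_i\phi_i(t)\bw_i$. Then replace each $\bw_i$ by an element of $\mathcal V(D)$ close in $V_D$. Since $\|\cdot\|_{H_D}\le C\|\cdot\|_{V_D}$, this uses only the classical density you cite, followed by a diagonal choice.

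\textbf{The rest of your plan.} Your remaining steps match the paper's: compact range in $C_tV_D$, dominated convergence for $\partial_t\bU$, reflection and time mollification, the diagonal choice, and passage to the limit in the momentum and quadratic transport tests. For the scalar squares, the paper mollifies on an auxiliary torus $\mathbb T_L^3$, which is equivalent to your zero extension.
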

\begin{proof}
We choose a smooth compactly supported ambient vector field that points strictly inward on every component of $\partial D$. Such a field is obtained from finitely many local inward directions and a partition of unity. Its flow $F_\delta$, for small positive $\delta$, satisfies $F_\delta(\overline D)\subset\subset D$ and converges to the identity in $C^2$ as $\delta\to0$. Zero extension of a field in $V_D$, or in its $L^2$ closure $H_D$, is solenoidal in $\R^3$. For this extension define its Piola push-forward by
\[
(\mathscr T_\delta\bv)(F_\delta(\boldsymbol y)) =\frac{\nabla F_\delta(\boldsymbol y)}{\det\nabla F_\delta(\boldsymbol y)}
     \widetilde\bv(\boldsymbol y).
\]
For $\phi\in C_c^\infty(\R^3)$, change of variables gives
\[
 \int_{\R^3}\mathscr T_\delta\bv\cdot\nabla\phi\dx
 =\int_{\R^3}\widetilde\bv\cdot\nabla(\phi\circ F_\delta)\dx=0.
\]
Thus $\mathscr T_\delta\bv$ is solenoidal and supported in $F_\delta(\overline D)$. The operators are uniformly bounded on $L^2$ and $H^1$ and converge strongly to the identity in those norms. These statements follow first for smooth ambient fields and then by density and the uniform bounds on $F_\delta$ and its inverse.

Convolve $\mathscr T_\delta\bv$ with a standard mollifier whose radius is smaller than half the distance from $F_\delta(\overline D)$ to $\partial D$ and tends to zero as $\delta\to0$. The resulting linear operators $\mathscr S_\delta$ map into $C^\infty_c(D;\R^3)\cap\{\Div=0\}$, are uniformly bounded on $H_D$ and $V_D$ and converge strongly to the identity on each space. Compactness of the range of $\bU\in C_tV_D$ gives $\mathscr S_\delta\bU\to\bU$ in $C_tV_D$. Dominated convergence gives $\mathscr S_\delta\partial_t\bU\to\partial_t\bU$ in $L^2_tH_D$. Time mollification after reflection, followed by a diagonal choice, proves the stated approximation. Multiplication by a fixed smooth time cutoff that vanishes for $t$ sufficiently close to $T$ preserves both convergences.

The momentum pairings pass to the limit using the convective and stress bounds in the proof of \cref{lem:comparison-density}. For the scalar squares, choose $L>0$ with $\overline D\subset(-L/2,L/2)^3$ and identify opposite faces of this cube to obtain the auxiliary torus $\mathbb T_L^3=(\R/L\mathbb Z)^3$. Extend $\bU_j$ and $\bU$ by zero outside $D$. Since these fields have zero boundary trace, their extensions belong to $H^1(\mathbb T_L^3)$ and vanish near the identified faces. The product estimates in the proof of \cref{lem:comparison-density} give
\[
\begin{aligned}
 \frac12|\widetilde\bU_j|^2&\to\frac12|\widetilde\bU|^2
     &&\text{in }C_tW^{1,3/2}(\mathbb T_L^3),\quad  \partial_t\frac12|\widetilde\bU_j|^2&\to\partial_t\frac12|\widetilde\bU|^2
     &&\text{in }L^2_tL^{3/2}(\mathbb T_L^3).
\end{aligned}
\]
Periodic mollification produces smooth scalar tests in the same norms. Restrict these tests to the ambient domain of the continuity equation. The time-endpoint argument in the proof of \cref{lem:comparison-density} applies to these convergences. 
\end{proof}

The lifting is therefore an admissible comparison in the weak momentum
and transport identities. We now combine these identities with the
density and heat equations.

For either geometry write $d_\eps=\widehat\rho_\eps-\rho$ for the density error. In the periodic case define
\begin{equation*}
 \E_\eps(t)=\frac{\sig^4}{2}\int_{\Oe}\rho_\eps|\be_\eps|^2\dx
                 +\frac12\|d_\eps\|_2^2.
\end{equation*}
Its initial value is \eqref{eq:initial}. Combining the kinetic comparison with the density and temperature equations gives the following inequality. The viscosity estimate \eqref{eq:viscosity-absorbed} leaves a positive part of the dissipation available to absorb the inertial remainders.

\begin{lemma}[Relative-energy inequality]\label{lem:relative-energy}
Under the assumptions of \cref{thm:stability}, for almost every $\tau\in(0,T)$,
\begin{equation}\label{eq:rei}
\begin{aligned}
 &\E_\eps(\tau)+c\sig^2\int_0^\tau\|\sym \be_\eps\|_2^2\dt
       +c\int_0^\tau\|z_\eps\|_{H^1}^2\dt
 \\
 &\le\E_\eps^0+C\int_0^\tau\E_\eps(t)\dt
       +C_{T,q}\bigl(\eta_\eps^2+
         |\kappa_s-\kappa_f|^2h_\eps^{2/q}+\sig^3\bigr),
\end{aligned}
\end{equation}
where $c>0$ and $C_{T,q}$ are independent of small $\eps$.
\end{lemma}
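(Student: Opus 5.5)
We expand $\E_\eps(\tau)$ by combining three identities. First the energy inequality \eqref{eq:energy}. Second, the weak momentum identity \eqref{eq:weak-momentum} tested with $\bw_\eps$, which is admissible by \eqref{eq:lift-test-regularity} and \cref{lem:comparison-density}. Third, the quadratic transport identity \eqref{eq:quadratic-comparison-test} with $\bU=\bw_\eps$. Writing $\frac{\sig^4}{2}\int\rho_\eps|\be_\eps|^2=\frac{\sig^4}{2}\int\rho_\eps|\bu_\eps|^2-\sig^4\int\rho_\eps\bu_\eps\cdot\bw_\eps+\frac{\sig^4}{2}\int\rho_\eps|\bw_\eps|^2$, we obtain
\[
\frac{\sig^4}{2}\Bigl[\int\rho_\eps|\be_\eps|^2\Bigr]_0^\tau+2\sig^2\int_0^\tau\!\!\int\mu(\vartheta_\eps)\sym\bu_\eps:\sym\be_\eps+\int_0^\tau\!\!\int\bar\kappa_\eps|\nabla\vartheta_\eps|^2
\le\int_0^\tau\!\!\int(\rho_\eps\bfv\cdot\be_\eps+\vartheta_\eps\bg\cdot\bw_\eps)+\sig^4 I_{\rm kin}.
\]
The inertial remainder is $I_{\rm kin}=-\int\rho_\eps(\partial_t\bw_\eps+\bu_\eps\cdot\nabla\bw_\eps)\cdot\be_\eps$. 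In the viscous term we split $\sym\bu_\eps=\sym\be_\eps+\sym\bw_\eps$ and $\mu(\vartheta_\eps)=\mu(\vartheta)+(\mu(\vartheta_\eps)-\mu(\vartheta))$. Using \eqref{eq:lift} with $\bphi=\be_\eps\in\Ve$, the frozen part becomes $\int m\bM_0\bu\cdot\be_\eps$, and the remaining piece is exactly $-R_{\mu,\eps}$ from \eqref{eq:viscosity-residual}. The diagonal term gives at least $2\mu_*X_\eps^2$. By \eqref{eq:viscosity-absorbed}, $R_{\mu,\eps}$ consumes at most $\mu_*X_\eps^2$ and leaves errors of size $\|\br_\eps\|_2^2+|\kappa_s-\kappa_f|^2h_\eps^{2/q}$.

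Next we use the Darcy law tested against $\widetilde\be_\eps$. This test is solenoidal on $\T$, so the pressure drops out and $\int m\bM_0\bu\cdot\be_\eps=\int(\rho\bfv-\vartheta\bg)\cdot\be_\eps$. The forcing terms then reduce to $\int d_\eps\bfv\cdot\be_\eps$ together with thermal terms. For the thermal part we compute $\|\nabla z_\eps\|^2$ from \eqref{eq:heat-difference} tested with $z_\eps$ and add the macroscopic identity. We rewrite $\int\vartheta_\eps\bg\cdot\bw_\eps-\int\vartheta\bg\cdot\be_\eps$ by means of the heat equations, $\int\vartheta_\eps\bg\cdot\widetilde\bu_\eps=\int\bar\kappa_\eps|\nabla\vartheta_\eps|^2$ and $\int\vartheta\bg\cdot\bu=\kappa_f\int|\nabla\vartheta|^2$. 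The aim is to produce $+c\|\nabla z_\eps\|_2^2$ on the left. The cross terms are controlled by $\|z_\eps\|_2\|\br_\eps\|_2$ and by $|\kappa_s-\kappa_f|\,\|\mathbf1_{\He}\nabla\vartheta\|_2\|\nabla z_\eps\|_2\le C|\kappa_s-\kappa_f|h_\eps^{1/2}\|\nabla z_\eps\|_2$, and $h_\eps^{1/2}\le h_\eps^{1/q}$. The mean-zero Poincaré inequality then upgrades this to $\|z_\eps\|_{H^1}$. I expect this sign bookkeeping to be the main obstacle. One must check that the thermal work cancels to a nonnegative quadratic form $\int\bar\kappa_\eps|\nabla z_\eps|^2$ plus remainders, rather than leaving an uncontrolled product $\int z_\eps\bg\cdot\be_\eps$. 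If such a product does remain, we bound it by $\|z_\eps\|_\infty\|\be_\eps\|_2\le C_g(\|\be_\eps\|_2+\|\br_\eps\|_2)C_PX_\eps$ using \eqref{eq:temperature-error-infinity}. This loses a constant that does not go into $\chi_0$, so the exact cancellation is what has to be verified.

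The density term $\int d_\eps\bfv\cdot\be_\eps$ is bounded by $\|d_\eps\|_2C_PX_\eps\le\frac{\mu_*}{4}X_\eps^2+C\E_\eps$. For the density error we use the renormalized equation with $b(r)=r^2$, mixed with the smooth $\rho$. This gives $\frac{d}{dt}\frac12\|d_\eps\|^2=-\int d_\eps\,\bv_\eps\cdot\nabla\rho$. We bound it by $\|\nabla\rho\|_\infty\|d_\eps\|_2(\|\be_\eps\|_2+\|\br_\eps\|_2)$ and absorb it as before. The inertial remainder is handled with \eqref{eq:lift-time-bound}. We estimate $\sig^4\int\rho_\eps\partial_t\bw_\eps\cdot\be_\eps\le C\sig^4\|\be_\eps\|_2\le C\sig^5X_\eps$. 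The convective part we write as $\sig^4\int\rho_\eps(\be_\eps+\bw_\eps)\cdot\nabla\bw_\eps\cdot\be_\eps$. Its term quadratic in $\be_\eps$ is bounded by $\sig^4\|\be_\eps\|_3\|\be_\eps\|_6\|\nabla\bw_\eps\|_2\le C\sig^{3}\|\be_\eps\|_2^{1/2}\|\nabla\be_\eps\|_2^{3/2}$. This is where uniformity is delicate: since $\|\nabla\be_\eps\|_2\sim\sig^{-1}X_\eps$, the bound is $\le C\sig^{3/2}\sig^{1/2}X_\eps^2$ times $\|\be_\eps\|_2^{1/2}$-type factors. We split it via Young's inequality into a small multiple of $X_\eps^2$ for $\eps<\eps_1$ plus $C\E_\eps$, using the uniform bound \eqref{eq:uniform}. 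The remaining parts contribute $C\sig^3$ after Young's inequality, which explains the $\sig^3$ term. Collecting all terms and integrating $\|\br_\eps\|_2^2$ in time to obtain $\eta_\eps^2$ yields \eqref{eq:rei}.
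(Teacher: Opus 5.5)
Your route is the paper's route. You subtract the momentum identity tested with $\bw_\eps$ from the energy inequality and add the quadratic transport identity. You then split $\mu(\vartheta_\eps)$ into the frozen coefficient and $R_{\mu,\eps}$, use the lifting equation and the Darcy law, and finish with the renormalized density estimate and the heat-difference test. However, you leave the decisive step unproved. You flag the thermal cancellation as ``what has to be verified'' but do not verify it, and a sign error in your bookkeeping is what makes it look doubtful. Your fallback does not close. Estimating $\int z_\eps\bg\cdot\be_\eps$ through \eqref{eq:temperature-error-infinity} and the perforated Poincar\'e inequality gives a term of order $X_\eps^2$. Its coefficient is $O(1)$, built from $C_g$, $C_P$ and $\bg$, with no small factor. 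It therefore cannot be absorbed under $\chi_0\le1/2$ alone; it survives even when $L_\mu=0$. As written, the proof is incomplete at its central point.

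The cancellation is in fact exact. Moving $\int m\bM_0\bu\cdot\be_\eps=\int(\rho\bfv-\vartheta\bg)\cdot\be_\eps$ to the right-hand side produces $+\int\vartheta\bg\cdot\be_\eps$. So the thermal terms there are $\int\vartheta_\eps\bg\cdot\bw_\eps+\int\vartheta\bg\cdot\be_\eps$, not a difference.

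Testing \eqref{eq:weak-heat} with $\vartheta_\eps$ gives $\int\bar\kappa_\eps|\nabla\vartheta_\eps|^2\dx=\int\vartheta_\eps\bg\cdot(\be_\eps+\bw_\eps)\dx$. The microscopic thermal dissipation on the left therefore cancels the $\bw_\eps$ term, and the net thermal contribution on the right is exactly $-\int z_\eps\bg\cdot\be_\eps$. No separate macroscopic identity is needed.

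Now add \eqref{eq:heat-difference} tested with $z_\eps$. This puts $\int\bar\kappa_\eps|\nabla z_\eps|^2$ on the left and $\int(\be_\eps+\br_\eps)\cdot\bg\,z_\eps-\int(\bar\kappa_\eps-\kappa_f)\nabla\vartheta\cdot\nabla z_\eps$ on the right. The two $\be_\eps$ products cancel, leaving only the cross terms you already bound.

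Two minor corrections:
\begin{itemize}
\item The Poincar\'e bound is $\|\be_\eps\|_2\le C_PX_\eps$, not $C\sig X_\eps$. This is harmless after Young's inequality.
\item In the quadratic convective term, $\|\be_\eps\|_2^{1/2}\le CX_\eps^{1/2}$ gives directly $C\sig^{3/2}X_\eps^2$, which is absorbed for small $\eps$. So \eqref{eq:uniform} is not needed, and should not be used: it would make the constants depend on the initial weighted energy, whereas the lemma's constants are independent of the weak solution.
\end{itemize}
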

\begin{proof}
All applications of Young's inequality in this proof use positive parameters chosen independently and sufficiently small so that their total contribution is absorbed by the fixed viscous and thermal coercivity constants. The remaining constants may depend on these choices but stay uniform in $\eps$. Test \eqref{eq:weak-heat} with $\vartheta_\eps(t)$ to obtain
\begin{equation*}
 \int_{\T}\bar\kappa_\eps|\nabla\vartheta_\eps|^2\dx
 =\int_{\Oe}\vartheta_\eps \bg\cdot \bu_\eps\dx
 \quad\text{for almost every }t\in(0,T).
\end{equation*}
Using this identity, rewrite \eqref{eq:energy} as the kinetic energy inequality with force $(\rho_\eps \bfv-\vartheta_\eps \bg)$ and viscous dissipation. Combine this inequality with the momentum equation tested with $\bw_\eps$ and the continuity equation tested with $|\widetilde{\bw}_\eps|^2/2$. Both tests are justified at fixed $\eps$ by \eqref{eq:lift-test-regularity}, \cref{lem:comparison-density} and \eqref{eq:quadratic-comparison-test}. The heat equation for the difference is added after the density estimate. After passing to the limit in the tests, we use the uniform estimates \eqref{eq:poincare}, \eqref{eq:scalar} and \eqref{eq:lift-bound}--\eqref{eq:lift-time-bound}, together with the macroscopic norms. All identities are understood in integrated form.

For the relative kinetic energy, set $\mathcal K_\eps(t)=\sig^4\int_{\Oe}\rho_\eps|\be_\eps|^2\dx/2$ and
\[
 \mathscr I_\eps(t)=\sig^4\int_{\Oe}\rho_\eps \be_\eps\cdot
        \bigl(\partial_t\bw_\eps+(\bu_\eps\cdot\nabla)\bw_\eps\bigr)\dx.
\]
Testing the momentum equation with $\bw_\eps$ gives
\[
\begin{aligned}
 \sig^4\left[\int_{\Oe}\rho_\eps\bu_\eps\cdot\bw_\eps\dx\right]_0^\tau
 &=\sig^4\int_0^\tau\!\!\int_{\Oe}
 \bigl(\rho_\eps\bu_\eps\cdot\partial_t\bw_\eps
       +\rho_\eps\bu_\eps\otimes\bu_\eps:\nabla\bw_\eps\bigr)\dx\dt\\
 &\quad-2\sig^2\int_0^\tau\!\!\int_{\Oe}
       \mu(\vartheta_\eps)\sym\bu_\eps:\sym\bw_\eps\dx\dt
       +\int_0^\tau\!\!\int_{\Oe}
       (\rho_\eps\bfv-\vartheta_\eps\bg)\cdot\bw_\eps\dx\dt.
\end{aligned}
\]
Testing the continuity equation with $|\widetilde{\bw}_\eps|^2/2$ gives
\[
 \frac{\sig^4}{2}
 \left[\int_{\Oe}\rho_\eps|\bw_\eps|^2\dx\right]_0^\tau
 =\sig^4\int_0^\tau\!\!\int_{\Oe}\rho_\eps\bw_\eps\cdot
       \bigl(\partial_t\bw_\eps+(\bu_\eps\cdot\nabla)\bw_\eps\bigr)\dx\dt.
\]
Subtracting the first identity from the kinetic energy inequality and adding the second combines the time and convection terms into $-\mathscr I_\eps$. This gives
\begin{equation}\label{eq:kinetic-prelift}
\begin{aligned}
 &\mathcal K_\eps(\tau)-\mathcal K_\eps(0)
 +2\sig^2\int_0^\tau\!\!\int_{\Oe}
       \mu(\vartheta_\eps)|\sym\be_\eps|^2\dx\dt\\
 &\le \int_0^\tau\!\!\int_{\Oe}
       (\rho_\eps\bfv-\vartheta_\eps\bg)\cdot\be_\eps\dx\dt
       -\int_0^\tau \mathscr I_\eps(t)\dt
       -2\sig^2\int_0^\tau\!\!\int_{\Oe}
       \mu(\vartheta_\eps)\sym\bw_\eps:\sym\be_\eps\dx\dt .
\end{aligned}
\end{equation}
The stress terms in \eqref{eq:kinetic-prelift} follow by expanding $\sym\bu_\eps=\sym\be_\eps+\sym\bw_\eps$. 

Now split
$\mu(\vartheta_\eps)=m+[\mu(\vartheta_\eps)-m]$ in the last term of \eqref{eq:kinetic-prelift}. The second part is precisely $R_{\mu,\eps}$, whereas the lifting equation \eqref{eq:lift}, tested with $\be_\eps$, gives
\[
 2\sig^2\int_{\Oe}m\,\sym\bw_\eps:\sym\be_\eps\dx
 =\int_{\Oe}m\bM_0\bu\cdot\be_\eps\dx.
\]
Substituting $m\bM_0\bu=\rho\bfv-\vartheta\bg-\nabla p$ leaves the force differences
$d_\eps\bfv$ and $-z_\eps\bg$. Finally,
$\int_{\Oe}\nabla p\cdot\be_\eps\dx=0$ because $\be_\eps$ is solenoidal and has zero trace on every hole boundary. Hence, for almost every $\tau\in(0,T)$,
\begin{equation}\label{eq:kinetic-error}
\begin{aligned}
 &\mathcal K_\eps(\tau)-\mathcal K_\eps(0)
 +2\sig^2\int_0^\tau\!\!\int_{\Oe}
                   \mu(\vartheta_\eps)|\sym \be_\eps|^2\dx\dt\\
 &\le\int_0^\tau\left[
       \int_{\Oe}d_\eps \bfv\cdot \be_\eps\dx
       -\int_{\Oe}z_\eps \bg\cdot \be_\eps\dx
       +R_{\mu,\eps}-\mathscr I_\eps\right]\dt.
\end{aligned}
\end{equation}

The density error equation is
\[
 \partial_td_\eps+\widetilde \bu_\eps\cdot\nabla d_\eps
=-(\be_\eps+\br_\eps)\cdot\nabla\rho
 \quad\text{on }\T.
\]
The stated Sobolev regularity suffices for the density comparison.
To obtain the squared-error identity, use renormalization
with $b(r)=r^2$ for $\widehat\rho_\eps$ and test its continuity equation with $\rho$. This test is justified by space-time mollification, since
$\rho\in C_tH^s\cap C^1_tH^{s-1}$, $s\ge3$, and
$\widetilde\bu_\eps\in L^2_tL^2_x$. The effective transport equation also
preserves $\int_\T\rho^2\dx$. Finally,
$\int_\T\rho(\widetilde\bu_\eps-\bu)\cdot\nabla\rho\dx=0$
by solenoidality, with $\rho^2/2$ as an admissible $H^1$ test.
Expanding $\|\widehat\rho_\eps-\rho\|_2^2$ therefore gives the integrated identity
\begin{equation*}
 \frac12\|d_\eps(\tau)\|_2^2-\frac12\|d_\eps(0)\|_2^2
 =-\int_0^\tau\!\!\int_{\T}
 d_\eps(\be_\eps+\br_\eps)\cdot\nabla\rho\dx\dt.
\end{equation*}
The integrand belongs to $L^1_{t,x}$. Thus the squared density error is absolutely continuous. For any $\delta>0$, Young's inequality and \eqref{eq:poincare} give
\begin{equation*}
 \frac12\frac{\dd}{\dt}\|d_\eps\|_2^2
 \le \delta\sig^2\|\sym \be_\eps\|_2^2
       +C_\delta\|d_\eps\|_2^2+C_\delta\|\br_\eps\|_2^2
 \quad\text{for almost every }t\in(0,T).
\end{equation*}
Here \eqref{eq:poincare} controls the velocity error through dissipation.

Taking $z_\eps$ as a test function in \eqref{eq:heat-difference}, we obtain
\begin{align*}
 \int_{\T}\bar\kappa_\eps|\nabla z_\eps|^2\dx
 &=\int_{\T}(\be_\eps+\br_\eps)\cdot \bg\,z_\eps\dx-\int_{\T}(\bar\kappa_\eps-\kappa_f)
                         \nabla\vartheta\cdot\nabla z_\eps\dx.
\end{align*}
The $\be_\eps$ term cancels exactly with the thermal work in \eqref{eq:kinetic-error}. Since $z_\eps$ has mean zero on $\T$, the mean-zero Poincar\'e inequality gives $\|z_\eps\|_{H^1}\le C\|\nabla z_\eps\|_2$. The remaining terms satisfy
\[
 \begin{aligned}
 \left|\int_{\T} \br_\eps\cdot \bg\,z_\eps\dx\right|
 &\le\delta\|\nabla z_\eps\|_2^2+C_\delta\|\br_\eps\|_2^2,\\
 \left|\int_{\T}(\bar\kappa_\eps-\kappa_f)
                   \nabla\vartheta\cdot\nabla z_\eps\dx\right|
 &\le\delta\|\nabla z_\eps\|_2^2
       +C_{T,\delta}|\kappa_s-\kappa_f|^2h_\eps.
 \end{aligned}
\]
The force difference is bounded by $\delta\sig^2\|\sym \be_\eps\|_2^2+C_\delta\|d_\eps\|_2^2$.

We estimate the inertial terms. Put $D_\eps=\|\nabla \be_\eps\|_2$. The perforated Poincar\'e inequality and Sobolev interpolation give
\[
 \|\be_\eps\|_4^2\le \|\be_\eps\|_2^{1/2}\|\be_\eps\|_6^{3/2}
 \le C\sig^{1/2}D_\eps^2.
\]
The lifting estimates imply $\|\bw_\eps\|_2\le C$, $\|\nabla\bw_\eps\|_2\le C\sig^{-1}$ and $\|\partial_t\bw_\eps\|_2\le C$. Consequently $\|\bw_\eps\|_4\le C\|\bw_\eps\|_2^{1/4}\|\bw_\eps\|_6^{3/4}\le C\sig^{-3/4}$. Splitting $\bu_\eps=\be_\eps+\bw_\eps$ and applying H\"older's inequality with exponents $(4,4,2)$ yields
\begin{align*}
& \sig^4\left|\int_{\Oe}\rho_\eps \be_\eps\cdot(\be_\eps\cdot\nabla)\bw_\eps\dx\right|
 \le C\sig^4\|\be_\eps\|_4^2\|\nabla\bw_\eps\|_2
 \le C\sig^{7/2}D_\eps^2,\\
& \sig^4\left|\int_{\Oe}\rho_\eps \be_\eps\cdot(\bw_\eps\cdot\nabla)\bw_\eps\dx\right|
 \le C\sig^4\|\be_\eps\|_4\|\bw_\eps\|_4\|\nabla\bw_\eps\|_2
 \le C\sig^{5/2}D_\eps\le\delta\sig^2D_\eps^2+C_\delta\sig^3,\\
& \sig^4\left|\int_{\Oe}\rho_\eps \be_\eps\cdot\partial_t\bw_\eps\dx\right|
 \le C\sig^4\|\be_\eps\|_2\|\partial_t\bw_\eps\|_2
 \le C\sig^5D_\eps\le\delta\sig^2D_\eps^2+C_\delta\sig^8.
\end{align*}
The argument uses only the stated $L^2$-based bounds for $\bw_\eps$. Since $\be_\eps$ is periodic, solenoidal and has zero trace on every hole boundary, integration by parts on the unit torus after zero extension gives
$D_\eps^2=2\|\sym \be_\eps\|_2^2$. After \eqref{eq:viscosity-absorbed}, at least $\mu_*\sig^2\|\sym\be_\eps\|_2^2$ remains. Choose the Young parameters $\delta$ so that their total viscous contribution uses at most one quarter of this remainder. Since $\sig\to0$ as $\eps\to0$, we can then choose $\eps$ small enough that $2C\sig^{3/2}\le\mu_*/4$, which gives
\[
 C\sig^{7/2}D_\eps^2
 \le\frac{\mu_*}{4}\sig^2\|\sym\be_\eps\|_2^2.
\] Since $0\le h_\eps\le1$ and $q>3$, one has $h_\eps\le h_\eps^{2/q}$. Integration in time gives \eqref{eq:rei}.
\end{proof}

\subsection{Stability and convergence rates}
\cref{lem:relative-energy} now yields the stability estimate and its explicit rates.
\begin{proof}[Proof of \cref{thm:stability}]
Gronwall's lemma applied to \eqref{eq:rei} bounds the essential supremum of the relative energy and both dissipation integrals by $C_{T,q}(\E_\eps^0+\eta_\eps^2+ |\kappa_s-\kappa_f|^2h_\eps^{2/q}+\sig^3)$. Continuity of the density in $L^2$ permits us to replace its essential supremum by the supremum on $[0,T]$. Moreover,
\[
 \|\widetilde \bu_\eps-\bu\|_{L^2_tL^2_x}^2
 \le2C_P^2\sig^2\|\sym \be_\eps\|_{L^2_tL^2_x}^2+2\eta_\eps^2.
\]
The $L^2_tL^\infty_x$ temperature bound follows by squaring \eqref{eq:temperature-error-infinity} and integrating in time. We have therefore proved \eqref{eq:main-estimate} with $\E_\eps^0$. \cref{lem:initial-equivalence} and $\sig^4\le\sig^3$ give the version with $\Dinit$. The estimate remains valid with $\eta_\eps$ kept explicitly on the right-hand side.
\end{proof}

\begin{proof}[Proof of \crefrange{thm:homogenization}{cor:norm-rate}]
The choice $q=4$ in \cref{thm:stability}, together with \eqref{eq:eta-rate}, yields the stated rate. For fixed phase conductivities,
\[
|\kappa_s-\kappa_f|^2h_\eps^{1/2}
 \le C\eps^{3(\alpha-1)/2}\le C\eps^{\alpha-1},
 \quad \sig^3\le\sig^2\le\eps^{3-\alpha},
\]
which implies \eqref{eq:explicit-estimate}. Since $1<\alpha<3$, all remainder terms vanish and $\Dinit\to0$ gives the asserted strong convergences. The estimate $\|\mu(\vartheta_\eps)-\mu(\vartheta)\|_{L^2_tL^\infty_x}\le L_\mu\|\vartheta_\eps-\vartheta\|_{L^2_tL^\infty_x}$ yields \eqref{eq:viscosity-convergence}. Inserting the initial rate and taking square roots gives \cref{cor:norm-rate}. Finally, \eqref{eq:conditional-rate} follows from \eqref{eq:main-estimate} and the exact identity
\[
 h_\eps^{2/q}=|T_0|^{2/q}\tau_\eps^{6/q}
 =|T_0|^{2/q}\left(\frac{\eps}{\cellsize}\right)^{6/q}
   \eps^{6(\alpha-1)/q}
 \le C_q\eps^{6(\alpha-1)/q},
\]
where \eqref{eq:mesh-comparison} was used in the last step. The same uniform comparison shows that all limits and rate statements hold as $\eps\to0$.
\end{proof}

\section{Local well-posedness of the periodic Darcy--heat system}\label{sec:periodic-macro}
We construct the strong periodic reference solution used in \cref{thm:stability}. The instantaneous Darcy--heat problem provides the velocity map for the transport iteration.

\subsection{Instantaneous problem and regularity}
Let $\PP:L^2(\T;\R^3)\longrightarrow\V$ be the orthogonal projection and use $\V$, $\Lop$ and $\Bop$ from the periodic part of \cref{sec:main-results}. Define
\[
 \Jop=\Lop^{-1}\Bop,\quad
 \Kop=\Bop^*\Jop,\quad
 \Bop^*\psi=\PP(\psi\bg).
\]
The adjoint is taken in $L^2$, with scalar domain $L^2_\diamond(\T)$. Since $\bg=\nabla F$, the operator $\Jop:\V\longrightarrow H^2_\diamond(\T)$ is bounded, $\|\Jop\bv\|_\infty\le C_\infty\|\bv\|_2$ and $\Kop$ is self-adjoint and nonnegative. Throughout this section $q_0\le1/2$. For a prescribed density, set $\bh=\rho\bfv$.

\begin{proposition}[The instantaneous problem]\label{prop:nonlinear-frozen}
Let $\bh\in L^2(\T;\R^3)$ satisfy $\norm \bh{L^2}\le H_0$. Then there exists a unique triple $(\bu,\vartheta,p)\in\V\times H^2_\diamond\times H^1_\diamond$ such that
\begin{equation}\label{eq:nonlinear-instantaneous}
\left\{
\begin{aligned}
& \mu(\vartheta)\bM_0\bu+\nabla p=\bh-\vartheta\bg,\\
& \vartheta=\Jop\bu,\quad \Div\bu=0,\quad
 \av p=\av\vartheta=0.
\end{aligned}
\right.
\end{equation}
Moreover, $\norm \bu{L^2}\le H_0/c_0$ and $\norm\vartheta{L^\infty}\le C_\infty H_0/c_0$. For two such forces and their solutions,
\begin{equation}\label{eq:nonlinear-L2-stability}
 \norm{\bu_1-\bu_2}{L^2}
 \le\frac{\norm{\bh_1-\bh_2}{L^2}}{c_0(1-q_0)},\quad
 \norm{\vartheta_1-\vartheta_2}{H^2}
 \le C\norm{\bu_1-\bu_2}{L^2}.
\end{equation}
\end{proposition}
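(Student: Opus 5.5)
We prove \cref{prop:nonlinear-frozen} by a fixed point in $\vartheta$, or equivalently in $\bu$. For a frozen coefficient $m\in L^\infty$ with $\mu_*\le m\le\mu^*$, consider the linear problem
\[
m\bM_0\bu+\nabla p+\Bop^*\Jop\bu=\PP\bh,
\]
understood in $\V$ after projection. This is the weak form $\int m\bM_0\bu\cdot\bphi\dx+\int\Jop\bu\,\bg\cdot\bphi\dx=\int\bh\cdot\bphi\dx$ for $\bphi\in\V$. The bilinear form is coercive on $\V$ with constant $c_0=\mu_*\lambda$, because $\Kop=\Bop^*\Jop$ is nonnegative. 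Indeed, $\int\Jop\bu\,\bu\cdot\bg\dx=\kappa_f\|\nabla\Jop\bu\|_2^2\ge0$. Lax--Milgram therefore gives a unique $\bu=\Sop(m)\in\V$ with $\|\bu\|_2\le\|\bh\|_2/c_0\le H_0/c_0$. The pressure $p\in H^1_\diamond$ is recovered by de Rham on the torus, since the residual is orthogonal to solenoidal fields. We then define the map $\bu\mapsto\Sop(\mu(\Jop\bu))$ on the ball $\{\|\bu\|_2\le H_0/c_0\}\subset\V$. A fixed point gives a solution with $\vartheta=\Jop\bu$. The bound $\|\vartheta\|_\infty\le C_\infty H_0/c_0$ follows from the definition of $C_\infty$.

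The main step is the contraction estimate. Take two coefficients $m_1,m_2$ and forces $\bh_1,\bh_2$, and let $\bu_k$ solve the corresponding problems. Subtract the equations and test with $\bu_1-\bu_2$. Using nonnegativity of $\Kop$ and coercivity, we obtain
\[
c_0\|\bu_1-\bu_2\|_2^2\le\|\bh_1-\bh_2\|_2\|\bu_1-\bu_2\|_2+\Bigl|\int(m_1-m_2)\bM_0\bu_2\cdot(\bu_1-\bu_2)\dx\Bigr|.
\]
The last term is bounded by $\Lambda\|m_1-m_2\|_\infty\|\bu_2\|_2\|\bu_1-\bu_2\|_2$. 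For $m_k=\mu(\Jop\bv_k)$, we have $\|m_1-m_2\|_\infty\le L_\mu C_\infty\|\bv_1-\bv_2\|_2$. With $\|\bu_2\|_2\le H_0/c_0$, the Lipschitz constant of the map is at most $\Lambda L_\mu C_\infty H_0/c_0^2=q_0\le1/2$. Banach's theorem then gives existence.

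Uniqueness among all solutions in $\V\times H^2_\diamond\times H^1_\diamond$ follows from the same computation once any solution is known to obey $\|\bu\|_2\le H_0/c_0$. This bound is obtained by testing the nonlinear system with $\bu$ itself, using $\mu\ge\mu_*$ and $\Kop\ge0$. For the stability estimate \eqref{eq:nonlinear-L2-stability}, apply the same inequality directly to two nonlinear solutions with $m_k=\mu(\vartheta_k)$ and $\vartheta_k=\Jop\bu_k$:
\[
c_0\|\bu_1-\bu_2\|_2\le\|\bh_1-\bh_2\|_2+c_0q_0\|\bu_1-\bu_2\|_2.
\]
Rearranging gives the first bound. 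The $H^2$ bound for $\vartheta_1-\vartheta_2=\Jop(\bu_1-\bu_2)$ follows from periodic elliptic regularity, $\|\Lop^{-1}\|_{\mathcal L(L^2_\diamond,H^2)}\le C$, together with $\|\Bop\bv\|_2\le\|\bg\|_\infty\|\bv\|_2$.

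The only delicate point is making the thermal term harmless. I expect this to be the main obstacle, and it is settled by writing $\vartheta\bg$ as $\Bop^*\Jop\bu$ modulo gradients, which uses $\bg=\nabla F$ and the zero mean of $\Bop\bu$. Its sign then follows from the identity $\int\Jop\bu\,\bg\cdot\bu\dx=\langle\Lop\Jop\bu,\Jop\bu\rangle\ge0$.
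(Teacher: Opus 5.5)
Your proposal is correct and is essentially the paper's proof: you freeze the viscosity, solve the coercive frozen problem by Lax--Milgram using $\Kop\ge0$, close with a contraction of factor $q_0$, and obtain uniqueness and the $L^2$ stability bound from the same subtraction estimate. The only difference is cosmetic: you iterate on $\bu$ in the ball of radius $H_0/c_0$ in $\V$, whereas the paper iterates on $\tau\mapsto\Jop\bu_\tau$ in $L^\infty_\diamond(\T)$, and the two maps are conjugate through $\Jop$.
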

\begin{proof}
For each $\tau\in L^\infty_\diamond(\T)$, let $\bu_\tau\in\V$ be the unique solution of
\[
 \int_{\T}\mu(\tau)\bM_0\bu_\tau\cdot \bv\dx
+\langle \Kop\bu_\tau,\bv\rangle=\langle \bh,\bv\rangle\quad(\bv\in\V).
\]
Lax--Milgram and coercivity give $\norm{\bu_\tau}{L^2}\le H_0/c_0$. For $\tau_1,\tau_2\in L^\infty_\diamond(\T)$, subtract the corresponding equations and test with $\bu_{\tau_1}-\bu_{\tau_2}$ to obtain
\[
 \norm{\bu_{\tau_1}-\bu_{\tau_2}}{L^2}
 \le\frac{\Lambda L_\mu H_0}{c_0^2}
       \norm{\tau_1-\tau_2}{L^\infty}.
\]
Since $\|\Jop\bv\|_\infty\le C_\infty\|\bv\|_2$, the preceding Lipschitz constant is multiplied by $C_\infty$. By definition, this product is exactly $q_0$. Hence $\tau\mapsto\Jop\bu_\tau$ is a contraction on the complete Banach space $L^\infty_\diamond$. At its fixed point, $\vartheta=\Jop\bu\in H^2_\diamond$ and $\PP(\vartheta\bg)=\Bop^*\Jop\bu=\Kop\bu$. The variational equation therefore gives $\PP(\mu(\vartheta)\bM_0\bu+\vartheta\bg-\bh)=0$. The orthogonal complement of $\V$ consists of gradients of mean-zero $H^1$ functions, which recovers the unique pressure in \eqref{eq:nonlinear-instantaneous}. Conversely, each solution of \eqref{eq:nonlinear-instantaneous} determines this fixed point, which proves uniqueness. For two different forcings, by the same subtraction and $\norm{\vartheta_1-\vartheta_2}{L^\infty} \le C_\infty\norm{\bu_1-\bu_2}{L^2}$, we obtain
\[
c_0\norm{\bu_1-\bu_2}{L^2}
 \le\norm{\bh_1-\bh_2}{L^2}
+c_0q_0\norm{\bu_1-\bu_2}{L^2}.
\]
Absorption proves \eqref{eq:nonlinear-L2-stability}.
\end{proof}

The transport equation requires higher Sobolev regularity of the instantaneous velocity. We first estimate the frozen problem and then use the two-derivative gain of the heat equation to treat the nonlinear coefficient.

\begin{lemma}[Frozen Sobolev estimates]\label{lem:frozen-sobolev}
Let $k\ge1$ be an integer. The Leray projection commutes with spatial derivatives. The operator $\Kop$ satisfies, for solenoidal $\bv\in H^{k-1}(\T;\R^3)$,
\begin{equation}\label{eq:K-commutator}
 \|[\partial^\beta,\Kop]\bv\|_2
 \le C_{k,g}\|\bv\|_{H^{k-1}},\quad |\beta|=k.
\end{equation}
Let $\bh\in H^k(\T;\R^3)$ and let $\bu\in\V$ be the variational solution of $\PP(\bA\bu)+\Kop\bu=\PP\bh$. Assume $\bA=m\bM_0$ is uniformly positive and $m\in H^{k+1}(\T)$. Then $\bu\in H^k$ and
\begin{equation}\label{eq:frozen-Hk}
 \|\bu\|_{H^k}\le C_{\bA}\bigl(\|\bh\|_{H^k}+\|\bu\|_2\bigr).
\end{equation}
The constant depends on $k$, the fixed data, $\|m\|_{H^{k+1}}$ and the positive lower bound of $m$. The estimates hold uniformly for smooth approximations with common bounds in these norms.
\end{lemma}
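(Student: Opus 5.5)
The proof has two independent parts: the commutator bound \eqref{eq:K-commutator}, and a regularity estimate for the frozen problem obtained by induction on $k$. The regularity estimate uses \eqref{eq:K-commutator} to handle $\Kop$.

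\textbf{Commutator bound.} On the torus, $\PP$ is a Fourier multiplier, so it commutes with $\partial^\beta$. The same holds for $\Lop^{-1}$ on mean-zero functions. The commutator $[\partial^\beta,\Kop]$ therefore comes only from multiplication by $\bg$, which happens twice: in $\Bop\bv=\bv\cdot\bg$ and in $\Bop^*\psi=\PP(\psi\bg)$. Leibniz' rule gives
\[
 \partial^\beta\Kop\bv-\Kop\partial^\beta\bv
 =\PP\bigl([\partial^\beta,\bg]\Jop\bv\bigr)
 +\PP\bigl(\bg\,\Lop^{-1}[\partial^\beta,\bg\cdot]\bv\bigr).
\]
Each commutator with smooth $\bg$ loses at most $k-1$ derivatives.
\begin{itemize}
\item The first term is bounded by $C\|\Jop\bv\|_{H^{k-1}}\le C\|\bv\|_{H^{k-3}}$ (interpreting negative-order norms as $L^2$ norms), using the gain of two derivatives of $\Lop^{-1}$.
\item For the second term, $[\partial^\beta,\bg\cdot]\bv$ is a sum of products $\partial^{\gamma}\bg\cdot\partial^{\beta-\gamma}\bv$ with $|\gamma|\ge1$, so its $L^2$ norm is at most $C\|\bv\|_{H^{k-1}}$.
\item In the second term, the operator $\Lop^{-1}$ must act on mean-zero data. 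The mean of $\partial^\beta(\bv\cdot\bg)$ vanishes, and the mean of $\partial^\beta\bv\cdot\bg$ vanishes because $\partial^\beta\bv$ is solenoidal. Hence the difference also has zero mean.
\end{itemize}
Together these give \eqref{eq:K-commutator}.

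\textbf{Frozen regularity.} I would use difference quotients, which is equivalent to differentiating the equation. The case $k=0$ is the energy bound: test with $\bu$ and use that $\Kop$ is nonnegative, which gives $\|\bu\|_2\le C\|\bh\|_2$. Now assume the estimate for $k-1$. Apply $\partial^\beta$ with $|\beta|=k$ (realized by difference quotients) and test with $\partial^\beta\bu$. The leading term gives coercivity:
\[
\int m\bM_0\partial^\beta\bu\cdot\partial^\beta\bu\ge m_*\lambda\|\partial^\beta\bu\|_2^2 .
\]
The term $\langle\Kop\partial^\beta\bu,\partial^\beta\bu\rangle$ is nonnegative and can be dropped. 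The remaining pieces are treated as follows.
\begin{itemize}
\item The commutator $[\partial^\beta,\Kop]\bu$ is bounded by $C\|\bu\|_{H^{k-1}}$ by \eqref{eq:K-commutator}.
\item The forcing contributes $\|\bh\|_{H^k}$.
\item The coefficient commutator $\sum_{0<\gamma\le\beta}\binom\beta\gamma\partial^\gamma m\,\bM_0\partial^{\beta-\gamma}\bu$ consists of products of derivatives of $m$ with lower-order derivatives of $\bu$.
\end{itemize}
Since $\bu$ is solenoidal, the projection $\PP$ drops out when we pair with $\partial^\beta\bu$.

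\textbf{Main obstacle.} The difficulty is the coefficient commutator, estimated in $L^2$ using only $m\in H^{k+1}$. For $k\ge2$, I would use Moser-type product estimates on $\T^3$, separately for each split of derivatives.
\begin{itemize}
\item When $\partial^\gamma m$ carries few derivatives, $\nabla m\in H^k\hookrightarrow L^\infty$ handles the product.
\item When $|\gamma|=k$, I would bound $\|\partial^\gamma m\|_{3}\|\bu\|_{6}$, using $H^1\hookrightarrow L^3$ for $\partial^\gamma m\in H^1$. The factor $\|\bu\|_6$ is controlled via $\|\bu\|_{H^1}$, which the lower step already provides.
\item Intermediate splits follow from Gagliardo--Nirenberg interpolation.
\end{itemize}
This yields $\|\partial^\gamma m\,\partial^{\beta-\gamma}\bu\|_2\le C(\|m\|_{H^{k+1}})\|\bu\|_{H^{k-1}}$, with the $L^\infty$ bound on $\nabla m$ justified by $H^{k+1}\hookrightarrow W^{1,\infty}$ for $k\ge2$. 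The case $k=1$ needs only $\nabla m\in L^\infty$, which follows from $m\in H^2$ only after a short check; I would instead use $\nabla m\in H^1\hookrightarrow L^6$ together with $\bu\in L^3$, and absorb via interpolation and Young's inequality into $\|\nabla\bu\|_2$ plus $\|\bu\|_2$.

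\textbf{Conclusion.} Young's inequality and the induction hypothesis give $\|\bu\|_{H^k}\le C(\|\bh\|_{H^k}+\|\bu\|_2)$. Uniformity for smooth approximations holds because every constant depends only on $\|m\|_{H^{k+1}}$, the lower bound $m_*$, $k$ and $\bg$.
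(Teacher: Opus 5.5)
Your a priori estimates are essentially the paper's:
\begin{itemize}
\item the Leibniz expansion of $\Kop$ with at least one derivative on $\bg$ (your remark that the mean of $\partial^\beta(\bv\cdot\bg)-\bg\cdot\partial^\beta\bv$ vanishes is a slightly cleaner version of the paper's termwise zero-mean check);
\item coercivity at top order, with the nonnegative $\Kop$ term discarded;
\item the bound $\|\nabla\bA\|_6\|\bu\|_3$ at first order, followed by interpolation and Young;
\item the product estimate $\|[\partial^\beta,\bA]\bu\|_2\le C\|m\|_{H^{k+1}}\|\bu\|_{H^{k-1}}$ for $k\ge2$, and induction.
\end{itemize}
The gap is in how you make these estimates rigorous for the actual variational solution. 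It sits exactly at $k=1$.

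With difference quotients, the identity obtained from the admissible test $-D_{-h}D_h\bu$ contains $\int_{\T}(D_h\bA)\bu\cdot D_h\bu\dx$. Since $m$ is only in $H^2$, $D_h\bA$ is bounded uniformly in $h$ only in $L^6$. In three dimensions $H^2\not\hookrightarrow W^{1,\infty}$, so your aside that $\nabla m\in L^\infty$ follows from $m\in H^2$ ``after a short check'' is false; this is precisely why the case is delicate. A uniform-in-$h$ bound of that term therefore needs $\|\bu\|_3$. Absorbing $\|\bu\|_3\le C\|\bu\|_2^{1/2}(\|\nabla\bu\|_2+\|\bu\|_2)^{1/2}$ into the left-hand side presupposes $\|\nabla\bu\|_2<\infty$, which is the conclusion. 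As written the step is circular, and the qualitative claim $\bu\in H^1$ is not established.

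The paper breaks the circle as follows. It mollifies $m$ and $\bh$ and solves on solenoidal Fourier polynomials of frequency at most $M$. There $(-1)^{|\beta|}\partial^{2\beta}\bu_M$ is an admissible test and every norm is finite, so the absorption is legitimate. The bounds are uniform in $M$ and in the mollification scale, and weak compactness together with uniqueness of the $L^2$ variational solution transfers them to $\bu$.

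You could equally keep difference quotients but first replace $m$ by a mollification $m_\delta$. Then $\nabla m_\delta\in L^\infty$ gives $\bu_\delta\in H^1$ qualitatively, your $L^6$--$L^3$ absorption gives a $\delta$-uniform bound, and you let $\delta\to0$. Your closing sentence on smooth approximations names the right ingredient, but it has to be used to justify the estimate, not merely deduced from it.

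For $k\ge2$ your scheme is fine, since then $m\in H^{k+1}\hookrightarrow W^{1,\infty}$ and $\bu\in H^{k-1}$ is already known from the previous step.
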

\begin{proof}
The Leray projection is a Fourier multiplier on the torus and includes constant vector fields. The mean-zero inverse of $-\kappa_f\Delta$ also commutes with derivatives. Expanding $\Kop \bv=\PP\bigl(\bg\Lop^{-1}(\bv\cdot \bg)\bigr)$ by Leibniz' rule shows that each term in $[\partial^\beta,\Kop]\bv$ has at least one derivative on $\bg$ and at most $k-1$ derivatives on $\bv$. Smoothness of $\bg$ and the two-derivative gain of $\Lop^{-1}$ give \eqref{eq:K-commutator}. The scalar sources in this expansion have zero mean, since spatial derivatives of $\bg$ are gradients of periodic potentials and spatial derivatives of a solenoidal field remain solenoidal.

We mollify $m$ and $\bh$ periodically. At fixed mollification scale, solve the coercive variational equation on the space of solenoidal Fourier polynomials with frequencies $|k|\le M$, including constants. Its orthogonal projection commutes with spatial derivatives. Thus $(-1)^{|\beta|}\partial^{2\beta}\bu_M$ is an admissible finite-dimensional test. Integration by parts gives the differentiated energy identity, with no prior regularity assumption on the infinite-dimensional solution. The ensuing estimates are independent of $M$. Coercivity controls the highest derivative. At first order, the coefficient term is bounded by $C\|\nabla\bA\|_6\|\bu\|_3$. The interpolation inequality $\|\bu\|_3\le C\|\bu\|_2^{1/2}(\|\nabla \bu\|_2+\|\bu\|_2)^{1/2}$ and Young's inequality absorb the unknown gradient. At higher levels, the integer-order Sobolev product estimates give
\begin{equation*}
 \|[\partial^\beta,\bA]\bu\|_2
 \le C_k\|\bA\|_{H^{k+1}}\|\bu\|_{H^{k-1}},\quad k\ge2.
\end{equation*}
For $k=2$, Leibniz' rule and Sobolev embedding give, for $|\beta|=2$,
\[
 \|[\partial^\beta,\bA]\bu\|_2
 \le C\bigl(\|\nabla\bA\|_\infty\|\nabla\bu\|_2
       +\|\nabla^2\bA\|_6\|\bu\|_3\bigr)
 \le C\|\bA\|_{H^3}\|\bu\|_{H^1}.
\] For $k\ge3$, the highest coefficient derivative contributes $\|\partial^\beta\bA\|_2\|\bu\|_\infty\le C\|\bA\|_{H^{k+1}}\|\bu\|_{H^{k-1}}$, since $k-1\ge2$. If $1\le|\gamma|\le k-1$, then $\partial^\gamma\bA\in H^{k+1-|\gamma|}\hookrightarrow L^\infty$ and $\|\partial^{\beta-\gamma}\bu\|_2\le\|\bu\|_{H^{k-1}}$. These bounds cover every term in the commutator. Induction using \eqref{eq:K-commutator} proves \eqref{eq:frozen-Hk}. At fixed mollification scale, we let $M\to\infty$. Coercivity and density of the Fourier spaces identify the limit with the variational solution. Weak compactness retains the displayed Sobolev bounds. Periodic mollification preserves the positive lower bound and the relevant Sobolev bounds for $m$. Letting its scale tend to zero, using coefficient convergence in $L^\infty$ and uniqueness in $L^2$, proves the asserted regularity. The multiplication and composition estimates are the periodic integer-order forms of \cite[Corollary~2.86 and Theorem~2.87]{BCD}, with the constant part of a composed function separated.
\end{proof}

The two-derivative gain of the heat operator closes the frozen estimates for the nonlinear coefficient $\mu(\vartheta)$. The same gain controls differences of solutions and gives the regularity of the forcing-to-velocity map.

\begin{proposition}[Sobolev dependence on the forcing]\label{prop:nonlinear-sobolev}
Let $s\ge3$ be an integer. For forcings satisfying $\norm \bh{H^s}\le R$ and $\norm \bh{L^2}\le H_0$, the solutions of \cref{prop:nonlinear-frozen} satisfy
\[
 \norm \bu{H^s}+\norm\vartheta{H^{s+2}}+\norm p{H^{s+1}}\le C_R.
\]
For two forcings in this set and an integer $0\le m\le s$,
\begin{equation}\label{eq:nonlinear-Hm-stability}
 \norm{\bu_1-\bu_2}{H^m}
 +\norm{\vartheta_1-\vartheta_2}{H^{m+2}}
 \le C_R\norm{\bh_1-\bh_2}{H^m}.
\end{equation}
The velocity solution map is locally $C^1$ in $H^{s-1}$ near each such forcing. The constants may depend on finitely many derivatives of $\mu$ on $[-C_\infty H_0/c_0,C_\infty H_0/c_0]$.
\end{proposition}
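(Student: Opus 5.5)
We argue by a bootstrap in regularity, using \cref{lem:frozen-sobolev} with the frozen coefficient $m=\mu(\vartheta)$ and the two-derivative gain $\vartheta=\Jop\bu$. From \cref{prop:nonlinear-frozen} we already have $\|\bu\|_2\le H_0/c_0$ and $\|\vartheta\|_\infty\le C_\infty H_0/c_0$, so $\vartheta$ stays in the fixed interval $I_0=[-C_\infty H_0/c_0,C_\infty H_0/c_0]$. Hence only finitely many derivatives of $\mu$ on $I_0$ enter. The fixed point also satisfies $\PP(\mu(\vartheta)\bM_0\bu)+\Kop\bu=\PP\bh$, which is exactly the frozen equation of \cref{lem:frozen-sobolev} with $\bA=\mu(\vartheta)\bM_0$. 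Since $\Lop^{-1}$ gains two derivatives and $\bg$ is smooth, $\|\vartheta\|_{H^{j+2}}\le C\|\bu\|_{H^j}$ for all $j\ge0$. The composition estimates of \cite{BCD}, with the constant part $\mu(0)$ separated, then give
\[
\|\mu(\vartheta)\|_{H^{j+2}}\le C\bigl(1+\|\vartheta\|_{H^{j+2}}\bigr)\le C\bigl(1+\|\bu\|_{H^j}\bigr),
\]
with $C$ depending on $\|\vartheta\|_\infty$. We then induct on $k=1,\dots,s$. If $\bu\in H^{k-1}$, the bound above gives $m\in H^{k+1}$, and \eqref{eq:frozen-Hk} yields $\|\bu\|_{H^k}\le C(\|\bh\|_{H^k}+1)$. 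The base case $k=1$ needs $m\in H^2$, which follows from $\bu\in L^2$. This gives $\|\bu\|_{H^s}+\|\vartheta\|_{H^{s+2}}\le C_R$. The pressure is recovered from $\nabla p=\bh-\vartheta\bg-\mu(\vartheta)\bM_0\bu\in H^s$, so $p\in H^{s+1}$ by mean-zero elliptic regularity.

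For \eqref{eq:nonlinear-Hm-stability}, write $\delta\bu=\bu_1-\bu_2$ and $\delta\vartheta=\Jop\delta\bu$. The difference solves the frozen problem
\[
\PP(\mu(\vartheta_1)\bM_0\delta\bu)+\Kop\delta\bu=\PP\bigl(\delta\bh-(\mu(\vartheta_1)-\mu(\vartheta_2))\bM_0\bu_2\bigr).
\]
The case $m=0$ is \eqref{eq:nonlinear-L2-stability}. For $m\ge1$ we apply \eqref{eq:frozen-Hk} with $k=m$. The coefficient $\mu(\vartheta_1)\in H^{s+2}\subset H^{m+1}$ is uniformly bounded by the first step. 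The extra source is estimated by writing $\mu(\vartheta_1)-\mu(\vartheta_2)=\delta\vartheta\int_0^1\mu'(\vartheta_2+t\,\delta\vartheta)\,dt$ and using the product estimates
\[
\|(\mu(\vartheta_1)-\mu(\vartheta_2))\bu_2\|_{H^m}\le C_R\|\delta\vartheta\|_{H^m}\le C_R\|\delta\bu\|_{H^{m-2}}.
\]
These use $\bu_2\in H^s$ with $s\ge3$, together with the two-derivative gain; when $m<2$ we use instead $\|\delta\vartheta\|_{H^m}\le C\|\delta\bu\|_2$. Induction on $m$ then closes the estimate. Finally, $\|\delta\vartheta\|_{H^{m+2}}\le C\|\delta\bu\|_{H^m}$.

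For the $C^1$ property in $H^{s-1}$, we formally differentiate the fixed-point system. The derivative $\dot\bu$ in direction $\dot\bh$ should solve the linear problem
\[
\PP(\mu(\vartheta)\bM_0\dot\bu)+\PP\bigl(\mu'(\vartheta)(\Jop\dot\bu)\bM_0\bu\bigr)+\Kop\dot\bu=\PP\dot\bh.
\]
On $L^2$ this operator is invertible. The middle term is bounded by $\Lambda L_\mu\|\bu\|_2C_\infty\|\dot\bu\|_2\le c_0q_0\|\dot\bu\|_2$, and $q_0\le1/2$ leaves coercivity $c_0/2$. Invertibility on $H^{s-1}$ follows from the same frozen estimates as above, since the perturbation gains two derivatives through $\Jop$. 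To verify that this is the derivative, we write the remainder of $\bu(\bh+\dot\bh)-\bu(\bh)-\dot\bu$ as the solution of the linear problem with a quadratic source. That source is bounded in $H^{s-1}$ by $C\|\delta\vartheta\|_{W^{s-1,\infty}\cap H^{s+1}}\|\delta\bu\|_{H^{s-1}}$, hence by $o(\|\dot\bh\|_{H^{s-1}})$ using \eqref{eq:nonlinear-Hm-stability}. This requires $\mu''$ on $I_0$. Continuity of $\bh\mapsto D\bu(\bh)$ follows similarly from Lipschitz dependence of the coefficients.

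The main obstacle is the case $m\ge1$ of the difference estimate. There the product $(\mu(\vartheta_1)-\mu(\vartheta_2))\bu_2$ must be controlled in $H^m$ with $m$ up to $s$ without losing derivatives on $\bu_2$, and the Moser estimates must be arranged so that all derivatives fall on $\delta\vartheta$, where they are gained back.
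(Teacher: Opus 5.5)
Your bootstrap and your difference estimate coincide with the paper's proof. You use the same difference equation $\PP(\mu(\vartheta_1)\bM_0\delta\bu)+\Kop\delta\bu=\PP\bigl(\delta\bh-(\mu(\vartheta_1)-\mu(\vartheta_2))\bM_0\bu_2\bigr)$, the same two-derivative gain of $\Jop$ placing the extra source at order $m-2$, and the same induction through \cref{lem:frozen-sobolev}.

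The routes differ only for the $C^1$ claim. The paper applies the implicit function theorem to $\Phi(\bu,\bh)=\PP(\mu(\Jop\bu)\bM_0\bu)+\Kop\bu-\PP\bh$ on $(H^{s-1}\cap\V)\times H^{s-1}$. It uses exactly your linearized operator and your invertibility argument: nonsymmetric Lax--Milgram with coercivity $c_0(1-q_0)$, followed by the frozen bootstrap. The theorem then gives the $C^1$ branch on a whole $H^{s-1}$-neighbourhood at once, with no further estimates on perturbed forcings.

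You instead verify Fr\'echet differentiability by hand through the quadratic remainder. This gives the more quantitative bound $O(\|\dot\bh\|_{H^{s-1}}^2)$. However, it needs the solution map and \eqref{eq:nonlinear-Hm-stability} with $m=s-1$ on a full $H^{s-1}$-neighbourhood. There $\bh+\dot\bh$ need not lie in the $H^s$-ball and may have $\|\bh+\dot\bh\|_2>H_0$, so as written your appeal to \eqref{eq:nonlinear-Hm-stability} is outside its stated range. Both extensions are easy, but you should state them:
\begin{itemize}
\item Your first two steps run verbatim with $s$ replaced by $s-1\ge2$, since they only use that $H^{s-1}$ is an algebra.
\item The condition $q_0\le1/2$ keeps the contraction factor $q_0\|\bh'\|_2/H_0$ below one for $\|\bh'\|_2<2H_0$, so the fixed point still exists and is unique.
\end{itemize}

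A minor slip: the bound on the middle term of the linearized quadratic form should read $c_0q_0\|\dot\bu\|_2^2$, not $c_0q_0\|\dot\bu\|_2$.
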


\begin{proof}
The Sobolev bootstrap starts from $\bA=\mu(\vartheta)\bM_0$. Initially $\vartheta=\Jop\bu\in H^2$, so $\bA\in H^2$. The first-order estimate of \cref{lem:frozen-sobolev} gives $\bu\in H^1$ and then $\vartheta\in H^3$. Inductively, $\bu\in H^{m-1}$ implies $\bA\in H^{m+1}$ and \eqref{eq:frozen-Hk} gives $\bu\in H^m$ for $2\le m\le s$. The coefficient composition bounds depend only on the attained temperature interval and the preceding Sobolev level. The same lemma supplies the uniform smooth-approximation estimates, so this bootstrap applies to the original variational solution. The heat equation gives $\vartheta\in H^{s+2}$ and the pressure gradient identity, with zero mean, gives $p\in H^{s+1}$.

For the difference $\bv=\bu_1-\bu_2$, set $\bA_i=\mu(\vartheta_i)\bM_0$ for $i=1,2$ and put $b_{12}=\int_0^1\mu'(\vartheta_2+t(\vartheta_1-\vartheta_2))\dd t$. Then
\[
 \PP(\bA_1\bv)+\Kop\bv
=\PP(\bh_1-\bh_2)-\PP\bigl(b_{12}\bM_0\bu_2\,\Jop\bv\bigr).
\]
The $L^2$ stability estimate \eqref{eq:nonlinear-L2-stability} gives $\|\bv\|_2\le C_R\|\bh_1-\bh_2\|_2$. The coefficient $b_{12}\bM_0\bu_2$ is bounded in $H^s$. Since $\Jop$ gains two derivatives, the last term is bounded in $H^2$ by $C_R\|\bv\|_2$. Applying \cref{lem:frozen-sobolev} first with $k=1$ and then with $k=2$ gives $\|\bv\|_{H^k}\le C_R\|\bh_1-\bh_2\|_{H^k}$ at these two orders. For higher orders we use $\norm{b_{12}\bM_0\bu_2\Jop\bv}{H^m}\le C_R\norm \bv{H^{m-2}}$, $2\le m\le s$, together with the frozen estimates, to obtain \eqref{eq:nonlinear-Hm-stability}.

For the differentiability statement, set $m=s-1$ and equip $\mathcal X^m=H^m(\T;\R^3)\cap\V$ and $\mathcal Y^m=H^m(\T;\R^3)$ with their $H^m$ norms. Consider the $C^1$ map
\[
 \Phi:\mathcal X^m\times\mathcal Y^m\longrightarrow\mathcal X^m,\quad
 \Phi(\bu,\bh)=\PP\!\left(\mu(\Jop\bu)\bM_0\bu\right)+\Kop\bu-\PP\bh.
\]
At a solution, the Fr\'echet derivative of $\Phi$ with respect to $\bu$ is
\[
D_{\bu}\Phi\,\bv=\PP\bigl(\mu(\vartheta)\bM_0\bv +\mu'(\vartheta)(\Jop\bv)\bM_0\bu\bigr)+\Kop\bv.
\]
Its quadratic form is bounded below by $c_0(1-q_0)\|\bv\|_2^2$. Set $\mathscr A=D_{\bu}\Phi$, $\bA=\mu(\vartheta)\bM_0$ and $\bb=\mu'(\vartheta)\bM_0\bu\in H^s$. Nonsymmetric Lax--Milgram gives a unique $L^2$ solution of $\mathscr A\bv=\br$ and
\[
 \|\bv\|_2\le[c_0(1-q_0)]^{-1}\|\br\|_2.
\]
For $\br\in\mathcal X^m$, write its equation as $\PP(\bA\bv)+\Kop\bv=\br-\PP(\bb\,\Jop\bv)$. The two-derivative gain of $\Jop$ and Sobolev multiplication give
\[
 \|\bb\,\Jop\bv\|_{H^k}\le C_R
 \begin{cases}
    \|\bv\|_2,&k=1,2,\\
    \|\bv\|_{H^{k-2}},&3\le k\le s-1.
 \end{cases}
\]
Successive application of \cref{lem:frozen-sobolev} for $k=1,\ldots,m$ yields $\|\bv\|_{H^m}\le C_R\|\br\|_{H^m}$. Thus $\mathscr A$ is an isomorphism on $\mathcal X^m$. Multiplication, smooth composition and the bounded map $\Jop:H^m\cap\V\longrightarrow H^{m+2}$ make $\Phi$ continuously Fr\'echet differentiable between the stated spaces. The implicit function theorem gives the local velocity map $\Sop:\mathcal Y^m\longrightarrow\mathcal X^m$ with
\[
 D\Sop(\bh)\br=\mathscr A^{-1}\PP\br,\qquad \|D\Sop(\bh)\br\|_{H^m}\le C_R\|\br\|_{H^m}.
\]
Here the Fourier multiplier $\PP$ is bounded on $H^m$, so the derivative acts on arbitrary $\br\in\mathcal Y^m$. The margin $q_0\le1/2$ leaves a neighborhood on which the linearized coercivity stays positive.
\end{proof}

\subsection{Evolution problem and local well-posedness}\label{sec:nonconstant-state}
We use the instantaneous velocity map to iterate the transport equation.
\begin{proof}[Proof of \cref{thm:local}]
We denote by $\Sop(r)$ the instantaneous velocity corresponding to $\bh=r\bfv$ and initialize the iteration with $\rho^0(t)=\rho_0$. The successive densities solve
\begin{equation*}
 \partial_t\rho^{n+1}+\Sop(\rho^n)\cdot\nabla\rho^{n+1}=0,
 \quad \rho^{n+1}(0)=\rho_0.
\end{equation*}
For smooth approximations the characteristics exist because $H^s\hookrightarrow W^{1,\infty}$ for $s\ge3$. The solenoidal flow preserves $\rho_*\le\rho^{n+1}\le\rho^*$, so $\|\rho^n\bfv\|_2\le H_0$ at every step.

Fix $R>2\|\rho_0\|_{H^s}$. On the set $\{r\in H^s: \|r\|_{H^s}\le R, \ \rho_*\le r\le\rho^*\}$, multiplication by the fixed forcing gives $\|r\bfv\|_{H^s}\le C_{\bfv}R$. We apply \cref{prop:nonlinear-sobolev} with forcing radius $\widetilde R=C_{\bfv}R$ to bound $\Sop(r)$ in $H^s$. The density bounds ensure $\|r\bfv\|_2\le H_0$ on this set. The transport commutator satisfies
\[
 \|[\partial^\beta,\bv\cdot\nabla]r\|_2
 \le C_s\bigl(\|\nabla \bv\|_\infty\|r\|_{H^s}
+\|\bv\|_{H^s}\|\nabla r\|_\infty\bigr),
 \quad |\beta|\le s.
\]
For smooth approximations, testing the differentiated transport equation with $\partial^\beta\rho^{n+1}$ eliminates the principal term by solenoidality. Sobolev embedding gives
\begin{equation*}
 \frac{d}{dt}\|\rho^{n+1}\|_{H^s}
 \le C_R\|\rho^{n+1}\|_{H^s},
\end{equation*}
provided that the previous iterate remains in the ball. The characteristic argument for $L^2$ transport given next passes this estimate to the original data. Choose $T_*>0$ such that $e^{C_RT_*}\|\rho_0\|_{H^s}\le R/2$. This provides a common $H^s$ bound for all iterates on $[0,T_*]$.

For $n\ge1$, subtract two successive equations and put $d^{n+1}=\rho^{n+1}-\rho^n$. Then
\[
 \partial_td^{n+1}+\Sop(\rho^n)\cdot\nabla d^{n+1}
=-\bigl(\Sop(\rho^n)-\Sop(\rho^{n-1})\bigr)\cdot\nabla\rho^n,
 \quad d^{n+1}(0)=0.
\]
Since $H^{s-1}$ is an algebra, the $H^{s-1}$ stability in \cref{prop:nonlinear-sobolev} yields
\begin{equation*}
 \|d^{n+1}\|_{C([0,T_*];H^{s-1})}
 \le C_RT_*e^{C_RT_*}
                  \|d^n\|_{C([0,T_*];H^{s-1})}.
\end{equation*}
Reducing $T_*$ so that the coefficient is at most $1/2$, we obtain convergence of $\rho^n$ in $C_tH^{s-1}$ to a solution $\rho$, with a uniform $L^\infty_tH^s$ bound. The velocities converge in $C_tH^{s-1}$ as well. The corresponding difference estimate for two solutions proves uniqueness.

We establish continuity at the highest derivative level through the characteristic flow $X(t;a,\bx)$ of $\bu\in L^1_tW^{1,\infty}_x$. This flow preserves Lebesgue measure. For $z_0\in L^2$ and $f_z\in L^1_tL^2_x$, the formula
\[
 z(t,X(t;0,\bx))=z_0(\bx)
       +\int_0^t f_z(a,X(a;0,\bx))\dd a
\]
gives the unique $L^2$ transport solution and implies
\[
 \frac12\|z(t)\|_2^2-\frac12\|z_0\|_2^2
       =\int_0^t\int_\T f_z z\dx\dd a.
\]
Approximation of the scalar data along the fixed flow and measure preservation prove the identity and $z\in C_tL^2$. For $|\beta|\le s$, the distribution $r_\beta=\partial^\beta\rho$ satisfies
\[
 \partial_t r_\beta+\bu\cdot\nabla r_\beta
=-[\partial^\beta,\bu\cdot\nabla]\rho.
\]
The source belongs to $L^1_tL^2_x$ by the displayed commutator bound and $\bu\in L^1_tW^{1,\infty}_x$ is solenoidal. For $|\beta|\le s-1$, the initial trace follows directly from the $C_tH^{s-1}$ convergence. At order $|\beta|=s$, weak compactness and continuity in $H^{s-1}$ extend the common $H^s$ bound to every time. For $\varphi\in C^\infty(\T)$,
\[
 \langle r_\beta(t),\varphi\rangle
 =(-1)^{|\beta|}\int_\T\rho(t)\partial^\beta\varphi\dx
 \to(-1)^{|\beta|}\int_\T\rho_0\partial^\beta\varphi\dx
 =\langle\partial^\beta\rho_0,\varphi\rangle
 \quad\text{as }t\to0.
\]
Density of smooth functions in $L^2$ and the uniform derivative bound identify the weak $L^2$ initial trace as $\partial^\beta\rho_0$. The characteristic representation with this initial datum and the commutator source then gives $r_\beta\in C_tL^2_x$. Thus $\rho\in C_tH^s$. The $H^s$ local Lipschitz estimate of the instantaneous solution map gives $\bu\in C_tH^s$ and $\vartheta\in C_tH^{s+2}$. All bounds are on the local interval $[0,T_*]$.

Finally $\rho_t=-\bu\cdot\nabla\rho\in C_tH^{s-1}$. The $C^1$ solution map obtained by the implicit function theorem yields
\[
D_{\bu}\Phi\,\bu_t=\PP(\bfv\rho_t),\quad
 \bu_t\in C_tH^{s-1},\quad
 \vartheta_t=\Jop\bu_t\in C_tH^{s+1}.
\]
The pressure gradient identity and mean-zero normalization yield $p\in C_tH^{s+1}$. The instantaneous estimate gives $\|\bu(t)\|_2\le H_0/c_0$ and transport preserves the density bounds.
\end{proof}

\medskip
The instantaneous equations also give an energy identity and an explicit
stationary state with variable viscosity. Every instantaneous solution satisfies
\begin{equation*}
 \int_{\T}\mu(\vartheta)\bM_0\bu\cdot \bu\dx
       +\kappa_f\int_{\T}|\nabla\vartheta|^2\dx
 =\int_{\T} \bh\cdot \bu\dx.
\end{equation*}
This follows by testing the Darcy equation with $\bu$ and the heat equation with $\vartheta$. It explains the $L^2$ velocity bound used in \eqref{eq:lift-bound}. If $\bh=0$, the only instantaneous velocity and temperature are $\bu=0$ and $\vartheta=0$, irrespective of the contraction and absorption conditions.

The admissible class contains states with a spatially varying viscosity. For example, take $\rho_*=\rho^*=1$, choose $\mu_0>0$ and let
\[
 \mu(r)=\mu_0(1+\delta\tanh r),\quad 0<\delta<1,\quad
F(\bx)=\sin(2\pi x_1),\quad \bg=2\pi\cos(2\pi x_1)\be_1.
\]
For a nonzero amplitude $b$, prescribe
\begin{equation*}
 \begin{gathered}
 \rho\equiv1,\quad \bu=b\be_1,\quad
 \vartheta=\frac{b}{2\pi\kappa_f}\cos(2\pi x_1),\quad p=0,\\
 \bfv(\bx)=b\mu(\vartheta(\bx))\bM_0\be_1+\vartheta(\bx)\bg(\bx).
 \end{gathered}
\end{equation*}
These fields solve \eqref{eq:limit} and are stationary. The force is prescribed by this stationary state, whose temperature
and viscosity are nonconstant. Moreover $\|\bfv\|_2=O(|b|)$, so the contraction and absorption conditions hold for sufficiently small nonzero $b$ with $\mu$ fixed.

\medskip
Two further reductions recover familiar limiting cases. For constant viscosity, the instantaneous problem takes the linear coercive form
\begin{equation*}
\left\{
\begin{aligned}
 &\PP(\mu_0\bM_0\bu)+\Kop\bu=\PP(\rho\bfv),\\
& \vartheta=\Jop\bu.
\end{aligned}
\right.
\end{equation*}
Here $\mu_0>0$ is constant. Hence $L_\mu=0$, both coupling parameters $q_0$ and $\chi_0$ vanish. The viscosity residual $R_{\mu,\eps}$ is identically zero. Constant viscosity is compatible with nonzero $\Kop$ and nonconstant $\vartheta$, so the nonnegative operator $\Kop$ can still transmit thermal feedback. The term $\PP(\bM_0\bu)$ is retained because a constant anisotropic matrix need not map solenoidal fields into solenoidal fields.

If in addition $\bg=0$ (equivalently, $F$ is constant), the mean-zero elliptic heat equations give $\vartheta_\eps=\vartheta=0$. The effective equations then reduce exactly to
\begin{equation}\label{eq:isothermal-consistency}
\left\{
\begin{aligned}
 \partial_t\rho+\Div(\rho\bu)&=0,\quad \Div\bu=0,\\
 \mu_0\bM_0\bu+\nabla p&=\rho\bfv,\quad \av p=0.
\end{aligned}
\right.
\end{equation}
The system \eqref{eq:isothermal-consistency} has the density-dependent incompressible Darcy structure of \cite{BOP}. At the microscopic level, the thermal force disappears and the viscous term reduces to $-\mu_0\sig^2\Delta \bu_\eps$, with the same $\sig^4$ inertial scaling. The transport construction still yields a local-in-time strong solution when the two coupling parameters vanish.

The bounded-domain analysis retains the same interior resistance mechanism, with an additional correction for the mismatch between microscopic no-slip and macroscopic impermeability.

\section{Bounded-domain analytical tools and macroscopic well-posedness}\label{sec:bounded}
The bounded-domain result requires geometric and elliptic tools associated with the outer wall. Throughout this section, $\Oe$, $\He$ and $\sig$ refer to the geometry in \eqref{eq:bounded-geometry}. The Neumann heat estimate and conormal pressure estimate replace the periodic elliptic estimates, with zero-mean compatibility supplied by the conservative thermal force. Together with the geometric estimates, they construct the local macroscopic solution. The boundary-corrected lifting and the final relative-energy argument are deferred to \cref{sec:bounded-quantitative}.

\subsection{Geometric, thermal and conormal estimates}
The cell fields are constructed on the complete interior cubes. Their estimates extend to boundary strips by counting the cells that meet each strip. The cellwise zero mean of the divergence defect permits a local correction. A Neumann estimate controls the temperature error.

\begin{lemma}[Bounded-domain geometric tools]\label{lem:bounded-tools}
There are $\bW_\eps=(\bW_\eps^1,\bW_\eps^2,\bW_\eps^3)$ and $\bQ_\eps=(Q_\eps^1,Q_\eps^2,Q_\eps^3)$ for \eqref{eq:bounded-geometry} such that, after zero extension through the holes,
\begin{equation*}
 \|\bW_\eps\|_{L^\infty(\Omega)}\le C,\quad
 \|\bW_\eps-\bI\|_{L^2(\Omega)}\le C\eps^{\alpha-1},\quad
 \sig\bigl(\|\nabla \bW_\eps\|_{L^2(\Oe)}
                +\|\bQ_\eps\|_{L^2(\Oe)}\bigr)\le C,
\end{equation*}
and
\begin{equation}\label{eq:bounded-cell-residual}
 \left|\left\langle\sig^2(-\Delta \bW_\eps^i+\nabla Q_\eps^i)
          -\bM_0\be_i,\bzeta\right\rangle\right|
 \le C\eps\|\bzeta\|_{H^1(\Oe)}
 \quad(\bzeta\in H^1_0(\Oe;\R^3)).
\end{equation}
The columns are solenoidal and vanish on the holes, but they do not belong to $\V_{\eps,\Omega}$ since $\bW_\eps=\bI$ near $\partial\Omega$. For $C\eps\le\ell\le\ell_0$, they can be chosen to satisfy
\begin{equation}\label{eq:bounded-local-cell}
 \|\bW_\eps-\bI\|_{L^2(S_\ell)}\le C\ell^{1/2}\eps^{\alpha-1},
 \quad
 \|\nabla \bW_\eps\|_{L^2(S_\ell)}\le C\ell^{1/2}\sig^{-1}.
\end{equation}
Moreover,
\begin{equation}\label{eq:bounded-poincare}
 \|z\|_2\le C\sig\|\nabla z\|_2
       \quad(z\in H^1_0(\Oe)),\quad
 \|z\|_2\le C_{P,\Omega}\sig\|\sym z\|_2
       \quad(z\in\V_{\eps,\Omega}).
\end{equation}
All constants are independent of $\eps$ and $\ell$ in these ranges.
\end{lemma}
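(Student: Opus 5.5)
We build the bounded-domain fields cell by cell, reusing the interior construction behind \cref{lem:geometric-tools} (the exterior-cell construction of \cite[Proposition~2.7]{HLO}, rescaled as in \cref{app:periodic-geometry}). Only the complete interior cubes $Q_{\eps,k}$, $k\in\mathcal I_\eps^\Omega$, carry holes. In each such cube we place the standard corrector: the rescaled exterior Stokes profile $\bV^i((\bx-\eps k)/a_\eps)$ with pressure $a_\eps^{-1}Q^i$, glued to $\be_i$ by a cutoff annulus at radius $\sim\eps/4$ and made solenoidal there by a Bogovski\u{\i} correction on the annulus. We set $\bW_\eps^i=\be_i$ and $Q_\eps^i=0$ on $\Omega\setminus\mathcal Q_\eps$, and also on each cube outside the annulus. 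The columns are then solenoidal, vanish on the holes, and equal $\bI$ near $\partial\Omega$, since $\operatorname{dist}(T_{\eps,k},\partial\Omega)\ge3\eps/8$.

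The single-cell estimates are exactly those of the periodic case. They give $\|\bW_\eps\|_\infty\le C$ from the boundedness of $\bV^i$. On one cell they give $\int_{Q_{\eps,k}}|\bW_\eps-\bI|^2\le Ca_\eps^2\eps$ and $\int_{Q_{\eps,k}}(|\nabla\bW_\eps|^2+|Q_\eps|^2)\le Ca_\eps=C\eps^3\sig^{-2}$. Summing over the $O(\eps^{-3})$ cells gives the global bounds with $\tau_\eps=\eps^{\alpha-1}$. For \eqref{eq:bounded-local-cell} we count cells. A strip $S_\ell$ with $\ell\ge C\eps$ meets at most $C\ell\eps^{-3}$ cubes, because these cubes lie in $S_{\ell+\sqrt3\eps}$ and $|S_{2\ell}|\le C_\Omega\ell$ by \eqref{eq:bounded-count}. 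Multiplying the per-cell bounds by this count gives $C\ell\eps^{2(\alpha-1)}$ and $C\ell\sig^{-2}$, and taking square roots yields \eqref{eq:bounded-local-cell}.

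For the residual \eqref{eq:bounded-cell-residual} we test against $\bzeta\in H^1_0(\Oe)$ and split the pairing over cells. On $\Omega\setminus\mathcal Q_\eps$ the operator vanishes identically. Since $\bzeta=0$ on $\partial\Omega$, no boundary faces appear. The interior faces carry no jumps because the fields equal $\be_i$ near every face. Hence the pairing is a sum of per-cell contributions, each of which is exactly the periodic cell computation. That computation gives the flux identity: the normalization $\sig^2a_\eps/\eps^3=1$ makes the traction on the hole reproduce $\bM_0\be_i$ times the cell-average of $\bzeta$, up to the annulus error. The replacement of cell averages by pointwise values then costs $C\eps\|\nabla\bzeta\|_2$ by the cellwise Poincaré inequality. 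The resistance term $\bM_0\be_i$ integrated over $\Omega\setminus\mathcal Q_\eps$ is not matched by any hole. To control it, we use $|\Omega\setminus\mathcal Q_\eps|\le C\eps$ and a Hardy or trace-type bound for $\bzeta\in H^1_0$ on the strip $S_{2\sqrt3\eps}$, which gives $\|\bzeta\|_{L^1(S_{C\eps})}\le C\eps^{3/2}\|\nabla\bzeta\|_2$. This contribution is therefore at most $C\eps\|\bzeta\|_{H^1}$.

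The Poincaré inequalities \eqref{eq:bounded-poincare} follow cellwise. On each interior cube we apply the perforated Poincaré inequality $\|z\|_{L^2(Q_{\eps,k})}\le C\sig\|\nabla z\|_{L^2(Q_{\eps,k})}$ for functions vanishing on the hole \cite[Lemma~2.3]{HLO}. On the strip we use the wall Poincaré inequality $\|z\|_{L^2(S_{C\eps})}\le C\eps\|\nabla z\|_2$, valid since $z\in H^1_0$. Because $\eps\le\sig$ when $\alpha>1$, the strip bound is also of order $\sig$. The symmetric-gradient version follows from Korn's identity: $z$ is solenoidal and vanishes on $\partial\Omega$, so $2\|\sym z\|_2^2=\|\nabla z\|_2^2$ after zero extension.

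The main obstacle is the boundary strip in the residual estimate. The point is to confirm that the omitted cells contribute only $O(\eps)$ rather than the crude $O(\eps^{1/2})$ that $|\Omega\setminus\mathcal Q_\eps|\le C\eps$ alone would give. The zero trace of $\bzeta$ is what closes the estimate: it lets the Hardy/wall Poincaré bound on the strip supply the missing factor of $\eps^{1/2}$.
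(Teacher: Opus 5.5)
Your proposal is correct and follows the paper's proof essentially step for step. It matches in each part: the cellwise exterior-Stokes correctors on the complete interior cubes with $\bW_\eps=\bI$, $\bQ_\eps=0$ elsewhere; cell counting in $S_\ell$ for \eqref{eq:bounded-local-cell}; the per-cell mass/cell-average residual computation, plus the wall-Poincar\'e bound $C\eps^{3/2}\|\nabla\bzeta\|_2$ for the unmatched resistance on $\Omega\setminus\mathcal Q_\eps$; and cellwise plus strip Poincar\'e with $\eps\le\sig$.

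The only cosmetic difference is the gluing. You join the exterior profile to $\be_i$ with a cutoff and an annular Bogovski\u\i{} correction, which gives a volume residual on the annulus. The paper instead uses a homogeneous Stokes matching field on the fixed annulus between radii $\eps/8$ and $\eps/4$, which gives surface traction residuals. Both yield the same force-balance mass identity and the same bounds.
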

\begin{proof}
The same exterior Stokes fields as in \cref{lem:geometric-tools} are used in each complete cell. We fix the matching annulus between radii $\eps/8$ and $\eps/4$, set $\bW_\eps=\bI$ and $\bQ_\eps=0$ outside the matching balls and then normalize the assembled pressure globally. This fixed choice of matching radii yields the same resistance matrix $\bM_0$ and the same estimates. In particular $\bW_\eps=\bI$ on $S_{\eps/4}$. The unnormalized pressure is zero there as well. The collar width is proportional to $\eps$. The negative-norm estimate is the bounded-domain form of \cite[Proposition~2.7]{HLO}. Its tests have zero trace both on the holes and on $\partial\Omega$. The local bounds in \eqref{eq:bounded-local-cell} follow by counting cells that intersect $S_\ell$. There are at most $C\ell\eps^{-3}$ of them for $\ell\ge C\eps$ and each cell contributes at most $Ca_\eps^2\eps$ to $\|\bW_\eps-\bI\|_2^2$ and $Ca_\eps$ to $\|\nabla \bW_\eps\|_2^2$.

For the first inequality in \eqref{eq:bounded-poincare}, sum the cell estimate \eqref{eq:cell-poincare} over the complete cubes. Overlaps of their closed faces have measure zero. On $\Omega\setminus\mathcal Q_\eps$, contained in $S_{C\eps}$, the zero trace on $\partial\Omega$ and integration along inward normals give
\[
 \|z\|_{L^2(S_{C\eps})}\le C\eps
       \|\nabla z\|_{L^2(S_{C\eps})},
\]
where $z$ is extended by zero through the holes. Since $\eps\le\sig$ for small $\eps$, this completes the estimate. For solenoidal fields, $2\|\sym z\|_2^2=\|\nabla z\|_2^2$ follows from zero extension and integration by parts. The Poincar\'e estimate also controls the boundary strip formed by incomplete cells. The missing resistance on that strip contributes at most $C\eps^{3/2}\|\nabla\bzeta\|_2$, which is no larger than the right-hand side of \eqref{eq:bounded-cell-residual}.

\end{proof}

For the bounded-domain cell fields, we use the divergence defect $\mathfrak d_\eps(\bv)=\Div(\bW_\eps\bv)$ from \cref{sec:lift}. Multiplication by $\bW_\eps$ preserves the hole trace. For a field solenoidal in $\Omega$, the defect has zero mean on each complete cell, which permits a local correction with a constant independent of the hole-to-cell ratio.

\begin{lemma}[Uniform cellwise divergence repair]
\label{lem:bounded-local-repair}
Let $\bv\in H^1(\Omega;\R^3)$ satisfy $\Div\bv=0$ in $\Omega$. There is a linear operator $\mathfrak B_\eps$ such that
\begin{equation}\label{eq:bounded-local-bog}
 \begin{gathered}
 \mathfrak B_\eps(\bv)\in H^1_0(\Oe;\R^3),\quad
 \Div\mathfrak B_\eps(\bv)=\mathfrak d_\eps(\bv),\quad \|\nabla\mathfrak B_\eps(\bv)\|_2
       +\eps^{-1}\|\mathfrak B_\eps(\bv)\|_2
          \le C\|\mathfrak d_\eps(\bv)\|_2.
 \end{gathered}
\end{equation}
The constant is independent of $\eps$ and of $a_\eps/\eps$ for all sufficiently small $\eps$. The operator is defined on the solenoidal subspace of $H^1(\Omega;\R^3)$.
\end{lemma}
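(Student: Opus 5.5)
The plan is to build $\mathfrak B_\eps$ cell by cell, using a Bogovskii operator on each perforated cube. The key fact is that the construction of \cref{lem:bounded-tools} gives $\bW_\eps=\bI$ outside the matching balls $B(\eps k,\eps/4)$ for $k\in\mathcal I_\eps^{\Omega}$. Since $\Div\bv=0$, the defect
\[
\mathfrak d_\eps(\bv)=\sum_{i,j}((\bW_\eps)_{ij}-\delta_{ij})\partial_iv_j
\]
is therefore supported in $\bigcup_k B(\eps k,\eps/4)\setminus T_{\eps,k}$. Fix $k$ and set $A_{\eps,k}=B(\eps k,\eps/2)\setminus T_{\eps,k}\subset Q_{\eps,k}\cap\Oe$. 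These sets are pairwise disjoint and compactly contained in $\Omega$.

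The first step is to check that $\int_{A_{\eps,k}}\mathfrak d_\eps(\bv)\dx=0$. Write $\mathfrak d_\eps(\bv)=\Div(\bW_\eps\bv)$ on $A_{\eps,k}$. The field $\bW_\eps\bv$ vanishes on $\partial T_{\eps,k}$ and equals $\bv$ on $\partial B(\eps k,\eps/2)$. The divergence theorem then gives
\[
\int_{A_{\eps,k}}\mathfrak d_\eps(\bv)\dx=\int_{\partial B(\eps k,\eps/2)}\bv\cdot\bn\dd S=\int_{B(\eps k,\eps/2)}\Div\bv\dx=0.
\]
This uses only $\bv\in H^1$ and $\Div\bv=0$. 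It is exactly where solenoidality in all of $\Omega$ enters, rather than merely in $\Oe$.

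The second step, and the main obstacle, is a Bogovskii operator $\mathcal B_{A}$ on the annular domain $A=B(0,1/2)\setminus \tau\overline{T_0}$, with $\tau=a_\eps/\eps\to0$. It must satisfy $\|\nabla\mathcal B_A f\|_{L^2(A)}\le C\|f\|_{L^2(A)}$ with $C$ uniform in $\tau$. The obstacle is that $A$ degenerates as $\tau\to0$. I would use that the datum $f=\mathfrak d$ (rescaled) is supported in $B(0,1/4)$ with zero mean, and decompose it with a partition adapted to dyadic shells around the hole. A cleaner route is the following. First apply a standard Bogovskii operator on the fixed region $B(0,1/2)\setminus \overline{B(0,1/8)}$, which is harmless, to transfer the part of $f$ supported away from the hole. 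Then, near the hole, rescale by $\tau$: the set $B(0,\tau R)\setminus\tau T_0$ with $R$ fixed is a fixed John domain after scaling, and the Bogovskii constant for $\nabla$ in $L^2$ is scale invariant. The intermediate shells $\tau R\le|y|\le1/8$ I would handle with the standard star-shaped chaining, using a John-domain Bogovskii estimate (Acosta–Durán–Muschietti type). Here $B(0,1/2)\setminus\tau\overline{T_0}$ is uniformly John in $\tau$, because $T_0$ is smooth with connected exterior and small. Uniform John constants give a uniform $L^2$ Bogovskii constant. This is the statement I would invoke first. The alternative is the reference \cite[Proposition~2.5]{HLO}, whose local form is exactly this uniform annular estimate.

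The last step is assembly. Rescale $\mathcal B_A$ to $A_{\eps,k}$; the gradient bound is scale invariant, which gives $\|\nabla\mathfrak B_{\eps,k}\|_2\le C\|\mathfrak d_\eps\|_{L^2(A_{\eps,k})}$. The field vanishes on $\partial A_{\eps,k}$, so the Poincaré inequality on the ball of radius $\eps/2$ gives $\|\mathfrak B_{\eps,k}\|_2\le C\eps\|\nabla\mathfrak B_{\eps,k}\|_2$. Extend each piece by zero and set $\mathfrak B_\eps(\bv)=\sum_k\mathfrak B_{\eps,k}$. Disjointness of the supports gives $\mathfrak B_\eps(\bv)\in H^1_0(\Oe)$, the divergence identity, and the bound \eqref{eq:bounded-local-bog} after summing squares. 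Linearity is inherited from the linear Bogovskii operators.
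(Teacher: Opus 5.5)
Your argument is correct, but it gets the uniformity in $a_\eps/\eps$ by a different route from the paper. You apply a single Bogovski\u\i{} operator directly on the degenerating perforated ball $B(0,1/2)\setminus\tau\overline{T_0}$, $\tau=a_\eps/\eps\to0$. The $\tau$-independent constant is then imported from the Acosta--Dur\'an--Muschietti theorem: the $L^2$ Bogovski\u\i{} constant of a John domain depends only on its John constant.

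The paper never works on a degenerating domain. It extends $\mathfrak d_\eps(\bv)$ by zero into the hole and solves the divergence problem on the whole cube $Q_{\eps,k}$, which is a fixed reference shape. It then removes the hole trace of that solution $\bz_k$ with a cutoff at scale $a_\eps$, followed by the Bogovski\u\i{} operator on $B(\eps k,a_\eps/2)\setminus T_{\eps,k}$, an $a_\eps$-dilate of the fixed domain $B(0,1/2)\setminus\overline{T_0}$. The cutoff cost $a_\eps^{-1}\|\bz_k\|_{L^2(B(\eps k,Ca_\eps))}$ is bounded by H\"older's inequality and the scale-invariant embedding $H^1_0(Q_{\eps,k})\hookrightarrow L^6$.

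\emph{What each route buys.} The paper proves by hand, with two fixed reference constants, exactly the uniform estimate you cite. It also makes visible that the three-dimensional Sobolev exponent is what removes the dependence on $a_\eps/\eps$. Your route is shorter and would cover any uniformly John perforation, but it rests on a deeper theorem whose hypotheses you have to check.

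\emph{Closing your version.}
\begin{itemize}
\item Verify the uniform John property explicitly. Rescaling by $\tau^{-1}$ turns the domain into $B(0,1/(2\tau))\setminus\overline{T_0}$. Near the obstacle, use John curves of the fixed smooth connected domain $B(0,2R_0)\setminus\overline{T_0}$. Beyond it, use radial segments, along which the distance to the boundary grows linearly with arclength.
\item Note that linearity in $L^2$ can always be enforced by taking the minimal-norm solution.
\item The dyadic-shell and chaining sketch then becomes superfluous.
\item \cite[Proposition~2.5]{HLO} is not the right alternative reference: the paper uses it only for the global, non-uniform bound $C(1+\sig^{-1})$.
\end{itemize}
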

\begin{proof}
The uniformity of the constant follows from scaling fixed reference domains. More precisely, the standard Bogovski\u\i{} operator on $Q=(-1/2,1/2)^3$ has a constant depending only on $Q$. After the affine change $\bx=\eps(k+y)$, the estimate $\|\nabla \mathcal B_Q b\|_{L^2}\le C\|b\|_{L^2}$ is invariant under this rescaling. Likewise, $E_k=B(\eps k,a_\eps/2)\setminus T_{\eps,k}$ is exactly the image, under translation and dilation by $a_\eps$, of the fixed Lipschitz domain $B(0,1/2)\setminus\overline{T_0}$. Hence its zero-trace Bogovski\u\i{} constant is independent of both $a_\eps$ and $\eps$. All cells are translates of these two fixed reference geometries, so the same constant is valid for all $k$ and sufficiently small $\eps$.

On a complete cell, let $\bn_k$ denote its outward unit normal. Since $\bW_\eps=\bI$ near the faces and $\bW_\eps=0$ on the hole,
\[
 \int_{Q_{\eps,k}\setminus T_{\eps,k}}\mathfrak d_\eps(\bv)\dx
=\int_{\partial Q_{\eps,k}}\bv\cdot\bn_k\,\,\mathrm dS =\int_{Q_{\eps,k}}\Div \bv\dx=0.
\]
Outside the complete cells the defect vanishes. Each cell therefore admits its own zero-trace divergence solution with an $L^2$ gradient bound independent of the ratio $a_\eps/\eps$.

To construct a cellwise repair with this uniform bound, solve the divergence problem on the whole cube after extending the source by zero into the hole. The cube Bogovski\u\i{} solution $\bz_k$ satisfies $\|\nabla \bz_k\|_2\le C\|\mathfrak d_\eps(\bv)\|_{2,Q_{\eps,k}}$. Remove its trace on the hole by the small-ball correction used in \cref{app:periodic-geometry}. The flux through the hole is zero since $\Div \bz_k=0$ there. More explicitly, let $B_k=B(\eps k,a_\eps/2)$, $E_k=B_k\setminus T_{\eps,k}$ and let $\chi_k$ be a smooth cutoff equal to one near the hole and supported in $B_k$, with $|\nabla\chi_k|\le C/a_\eps$. Write $\mathcal B_{E_k}$ for the zero-trace Bogovski\u\i{} operator on $E_k$, obtained by rescaling the fixed reference annulus as in \cref{app:periodic-geometry}. The field $\bc_k=\chi_k\bz_k-\mathcal B_{E_k}\Div(\chi_k\bz_k)$ on $E_k$, extended by $\bz_k$ into the hole and by zero outside the small ball, is globally solenoidal. Hence $\bz_k-\bc_k$ has the original divergence, zero hole trace and zero cell-face trace. On a ball of radius $Ca_\eps$, the correction cost is bounded by
\[
C\bigl(\|\nabla \bz_k\|_2+ a_\eps^{-1}\|\bz_k\|_{L^2(B(\eps k,Ca_\eps))}\bigr)
 \le C\bigl(\|\nabla \bz_k\|_2+\|\bz_k\|_6\bigr)
 \le C\|\mathfrak d_\eps(\bv)\|_{2,Q_{\eps,k}}.
\]
Here the last Sobolev inequality is scale invariant for $\bz_k\in H^1_0(Q_{\eps,k})$ in dimension three. Since $\chi_k=0$ on $\partial B_k$ and $\chi_k=1$ on $\partial T_{\eps,k}$, the source of the annular Bogovski\u\i{} correction satisfies
\[
 \int_{E_k}\Div(\chi_k\bz_k)\dx
 =\int_{\partial T_{\eps,k}}\bz_k\cdot\bn_{E_k}\,\mathrm dS
 =-\int_{T_{\eps,k}}\Div\bz_k\dx=0,
\]
where $\bn_{E_k}$ is the outward normal of the fluid annulus,
pointing into the hole on its inner boundary. The zero-extended source
vanishes in the hole, so $\Div\bz_k=0$ there. Thus
$\mathcal B_{E_k}$ is applicable. The traces of $\chi_k\bz_k$ and of the zero-trace Bogovski\u\i{} term show that $\bc_k$ agrees with $\bz_k$ on the hole boundary and vanishes on $\partial B_k$. Consequently $\bz_k-\bc_k$ has zero trace both on the hole and on every cell face. Its zero extension through the hole belongs to $H^1_0(Q_{\eps,k})$, so the cell Poincar\'e inequality gives
$\|\bz_k-\bc_k\|_2\le C\eps\|\nabla(\bz_k-\bc_k)\|_2$.
The cube Bogovski\u\i{} map, multiplication by the fixed cutoff and the annular Bogovski\u\i{} map are linear in the cell source and hence in $\bv$. Summing over the disjoint cell interiors therefore defines a linear global operator. The zero cell-face traces create no jump terms, and the sum belongs to $H^1_0(\Oe)$. Squaring and summing the cell estimates proves both bounds in \eqref{eq:bounded-local-bog}. The construction is the localized mechanism behind \cite[Lemma~2.8]{HLO}. For general sources, the $q=2$ case of \cite[Theorem~2.3]{DFL}, equivalently the scale recorded in \cite[Proposition~2.5]{HLO}, gives the complementary global bound $C(1+\sig^{-1})$.
\end{proof}

A scalar Neumann estimate complements the geometric bounds. As on the torus, measurable ellipticity is sufficient, while the variational formulation also records the conormal flux generated by a divergence source.

\begin{lemma}[Interface-independent Neumann estimate]
\label{lem:bounded-scalar}
Let $a$ be a measurable scalar function on $\Omega$ satisfying $\kappa_*\le a\le\kappa^*$. For $b\in L^2(\Omega)$ with $\int_\Omega b=0$ and $\bG\in L^q(\Omega;\R^3)$, $q>3$, let $\psi\in H^1_\diamond(\Omega)$ satisfy
\begin{equation}\label{eq:bounded-scalar-weak}
 \int_\Omega a\nabla\psi\cdot\nabla\phi\dx
   =\int_\Omega b\phi\dx-\int_\Omega\bG\cdot\nabla\phi\dx
          \quad(\phi\in H^1(\Omega)).
\end{equation}
Then
\begin{equation}\label{eq:bounded-scalar-est}
 \|\psi\|_{H^1(\Omega)}+\|\psi\|_\infty
       \le C_{\Omega,q}(\|b\|_2+\|\bG\|_q).
\end{equation}
The constant depends on the fixed domain, ellipticity and $q$, not on the interfaces of $a$.
\end{lemma}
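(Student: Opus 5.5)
The plan is to follow the periodic proof of \cref{lem:scalar} line by line, replacing the torus by the fixed Lipschitz (indeed $C^{3,\beta}$) domain $\Omega$ and the periodic mean-zero Poincar\'e and Sobolev inequalities by their Neumann counterparts. First, existence and uniqueness in $H^1_\diamond(\Omega)$ follow from Lax--Milgram. The bilinear form $\int_\Omega a\nabla\psi\cdot\nabla\phi$ is coercive on $H^1_\diamond(\Omega)$ by the mean-zero Poincar\'e--Wirtinger inequality, which holds on the connected Lipschitz domain $\Omega$. The right-hand side is a bounded functional, since $|\int b\phi|+|\int\bG\cdot\nabla\phi|\le(\|b\|_2+|\Omega|^{1/2-1/q}\|\bG\|_q)\|\phi\|_{H^1}$. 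Because $\int_\Omega b=0$ and the $\bG$ term annihilates constants, the identity for tests in $H^1_\diamond$ extends to all $\phi\in H^1(\Omega)$. Testing with $\psi$ gives $\|\psi\|_{H^1}\le C_1M$, where $M=\|b\|_2+\|\bG\|_q$.

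For the $L^\infty$ bound I will use Stampacchia truncation. If $M=0$ then $\psi=0$. Otherwise Chebyshev with $k_0=\sqrt{2/|\Omega|}\,C_1M$ gives $|\{\psi>k_0\}|\le|\Omega|/2$. For $k\ge k_0$, put $\varphi_k=(\psi-k)_+\in H^1(\Omega)$ and $A_k=\{\psi>k\}$. The key geometric input is a Sobolev--Poincar\'e inequality on $\Omega$ for functions vanishing on a set of measure at least $|\Omega|/2$:
\[
 \|\varphi\|_{L^6(\Omega)}\le C_\Omega\|\nabla\varphi\|_{L^2(\Omega)}.
\]
I will prove it by compactness, using $H^1(\Omega)\hookrightarrow\hookrightarrow L^2(\Omega)$ and the extension theorem for Lipschitz domains together with $H^1\hookrightarrow L^6$. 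A normalized sequence violating it would converge to a constant that vanishes on a set of measure at least $|\Omega|/2$, hence to zero, which is a contradiction. I expect this step to be the only real departure from the torus argument. On the torus the zero-trace issue did not arise, whereas here it is essential that the truncations need not vanish on $\partial\Omega$. The Neumann formulation allows arbitrary $H^1(\Omega)$ tests, so $\varphi_k$ is admissible directly, with no boundary term.

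Testing \eqref{eq:bounded-scalar-weak} with $\varphi_k$, using $\nabla\varphi_k=\nabla\psi\,\mathbf 1_{A_k}$ and H\"older's inequality, gives
\[
 \kappa_*\|\nabla\varphi_k\|_2^2\le\|b\|_2|A_k|^{1/3}\|\varphi_k\|_6+\|\bG\|_q|A_k|^{1/2-1/q}\|\nabla\varphi_k\|_2 .
\]
Combined with the Sobolev--Poincar\'e bound, this yields $\|\varphi_k\|_6\le CM|A_k|^\gamma$ with $\gamma=\min\{1/3,1/2-1/q\}>1/6$, since $q>3$. Hence, for $\ell>k$,
\[
 |A_\ell|\le\bigl(CM/(\ell-k)\bigr)^6|A_k|^{6\gamma}.
\]
The geometric iteration $k_j=k_0+dM(1-2^{-j})$ is then run exactly as in \cref{lem:scalar}, with $|\Omega|$-dependent constants. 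It gives $|A_{k_0+dM}|=0$, that is, $\psi\le k_0+dM\le CM$. Applying the argument to $-\psi$, which solves the problem with data $(-b,-\bG)$, gives the two-sided bound and hence \eqref{eq:bounded-scalar-est}. Every constant depends only on $\Omega$, $\kappa_*$, $\kappa^*$ and $q$, and never on the structure of $a$.
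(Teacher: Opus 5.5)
Your proposal is correct and follows the paper's proof step for step: the energy bound $\|\psi\|_2\le C_1M$, the Chebyshev level $k_0=\sqrt{2/|\Omega|}\,C_1M$, the Sobolev--Poincar\'e inequality for functions vanishing on a set of measure at least $|\Omega|/2$, admissibility of $(\psi-k)_+$ because the Neumann formulation allows all $H^1(\Omega)$ tests, and the Stampacchia iteration from \cref{lem:scalar} with $\gamma=\min\{1/3,1/2-1/q\}$, followed by the same argument for $-\psi$. The only difference is that you also sketch the compactness proof of the Sobolev--Poincar\'e inequality and track the $|\Omega|$-dependence in the choice of $d$, both of which the paper takes as standard.
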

Equation \eqref{eq:bounded-scalar-weak} represents the natural conormal condition $(a\nabla\psi+\bG)\cdot\bn=0$ in variational form. For merely measurable $a$, this condition is not interpreted as a pointwise trace.
\begin{proof}
The mean-zero Poincar\'e inequality gives existence and $\|\psi\|_2\le C_1M$, where $M=\|b\|_2+\|\bG\|_q$. If $M=0$, coercivity gives $\psi=0$. Otherwise set $k_0=\sqrt{2/|\Omega|}\,C_1M$. Chebyshev's inequality gives $|\{\psi>k_0\}|\le|\Omega|/2$. If $\varphi_k=(\psi-k)_+$ and $k\ge k_0$, the fixed-domain Sobolev--Poincar\'e inequality for functions vanishing on a set of measure at least $|\Omega|/2$ gives $\|\varphi_k\|_6\le C_\Omega\|\nabla\varphi_k\|_2$. One can use $\varphi_k$ as a test because \eqref{eq:bounded-scalar-weak} is valid for all $H^1$ tests. The estimate and iteration in \cref{lem:scalar} now apply with $\gamma=\min\{1/3,1/2-1/q\}>1/6$. Repeating for $-\psi$ gives \eqref{eq:bounded-scalar-est}. 
\end{proof}

With the constants chosen in the bounded-domain part of \cref{sec:main-results}, apply the lemma to the temperature error $z_\eps$. Subtracting the heat equations gives
\begin{equation}\label{eq:bounded-heat-difference}
 \int_\Omega\bar\kappa_\eps\nabla z_\eps\cdot\nabla\psi\dx
 =\int_\Omega(\widetilde \bu_\eps-\bu)\cdot \bg\,\psi\dx
  -\int_\Omega(\bar\kappa_\eps-\kappa_f)
                     \nabla\vartheta\cdot\nabla\psi\dx.
\end{equation}
Both velocities are in $\V_\Omega$, so the first source has mean zero. The second source is supported on the holes, away from $\partial\Omega$. Consequently
\begin{equation}\label{eq:bounded-temp-error}
 \|z_\eps(t)\|_\infty
 \le C_{g,\Omega}\|\widetilde \bu_\eps(t)-\bu(t)\|_2
    +C_{\Omega,q}|\kappa_s-\kappa_f|h_\eps^{1/q}
                           \|\nabla\vartheta(t)\|_\infty.
\end{equation}
The matched insulation conditions give the same thermal error structure as on the torus, with the Neumann compatibility encoded in \eqref{eq:bounded-heat-difference}.

\subsection{Local well-posedness of the bounded Darcy--heat system}\label{sec:bounded-macro}
Let $\PP_\Omega$ be the orthogonal projection onto $\V_\Omega$. Using $\Lop_\Omega$, $\Bop_\Omega$ and $\Jop_\Omega$ from the bounded-domain part of \cref{sec:main-results}, set
\[
 \Kop_\Omega=\Bop_\Omega^*\Jop_\Omega,\quad
 \Bop_\Omega^*\psi=\PP_\Omega(\psi\bg).
\]
The operator $\Kop_\Omega$ is self-adjoint and nonnegative. The $L^2$ fixed-point argument from the torus therefore applies on $\V_\Omega$. To obtain higher regularity, we first estimate the pressure in the associated conormal problem. This replaces the Fourier argument used in \cref{lem:frozen-sobolev}.

The variable-coefficient pressure estimate rests on the following constant-coefficient conormal regularity result.

\begin{lemma}[Constant-coefficient conormal regularity]
\label{lem:constant-conormal}
Let $\bA_0$ be a fixed symmetric positive definite matrix, let $r\ge1$ be an integer and assume $\partial\Omega\in C^{r,1}$. Suppose $\bF\in H^r(\Omega;\R^3)$ and $f\in H^{r-1}(\Omega)$ satisfy $\int_\Omega f\dx=0$. The mean-zero variational solution of
\[
 \int_\Omega\bA_0\nabla q\cdot\nabla\phi\dx
=\int_\Omega\bF\cdot\nabla\phi\dx-\int_\Omega f\phi\dx
 \qquad(\phi\in H^1(\Omega))
\]
belongs to $H^{r+1}(\Omega)$ and satisfies
\begin{equation}\label{eq:constant-conormal-regularity}
 \|q\|_{H^{r+1}}
 \le C_{r,\Omega,\bA_0}
       \bigl(\|\bF\|_{H^r}+\|f\|_{H^{r-1}}+\|q\|_{H^1}\bigr).
\end{equation}
The boundary condition is $\bn\cdot\bA_0\nabla q=\bn\cdot\bF$ in the conormal sense. In particular, if $j\ge0$ is an integer, $b\in H^j_\diamond(\Omega)$ and $\partial\Omega\in C^{j+1,1}$, then the mean-zero solution of $-\kappa_f\Delta\theta=b$, $\partial_n\theta=0$, satisfies
\[
 \|\theta\|_{H^{j+2}(\Omega)}\le C_{j,\Omega,\kappa_f}\|b\|_{H^j(\Omega)}.
\]
\end{lemma}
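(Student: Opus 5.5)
The plan is to reduce the lemma to standard elliptic regularity for an oblique (conormal) Neumann problem with constant coefficients on a $C^{r,1}$ domain. We proceed by induction on $r$, using localization, flattening of the boundary and difference quotients in the tangential directions.

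\textbf{Interior.} Existence and uniqueness of the mean-zero solution follow from Lax--Milgram on $H^1_\diamond(\Omega)$. Compatibility holds because testing with $\phi=1$ gives $0=-\int_\Omega f\dx$. For interior regularity we take a cutoff $\chi\in C_c^\infty(\Omega)$. Then $\chi q$ solves $-\Div(\bA_0\nabla(\chi q))=-\Div(\chi\bF)+\chi f+{}$lower-order terms in $q$. Since $\bA_0$ is constant, difference quotients in every direction, iterated $r$ times, give $\|\chi q\|_{H^{r+1}}\le C(\|\bF\|_{H^r}+\|f\|_{H^{r-1}}+\|q\|_{H^r})$. The term $\|q\|_{H^r}$ is controlled by the inductive hypothesis, which reduces it to $\|q\|_{H^1}$.

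\textbf{Boundary.} Near a boundary point we flatten $\partial\Omega$ with a $C^{r,1}$ diffeomorphism $\Phi$. In the new variables the weak form becomes $\int a^{ij}\partial_jq\,\partial_i\phi=\int \bF'\cdot\nabla\phi-\int f'\phi$ on a half-ball, for all test functions $\phi$ that are not required to vanish on the flat part. This is what makes the problem natural rather than Dirichlet. The coefficients satisfy $a^{ij}\in C^{r-1,1}$, they are uniformly elliptic, and $\bF'\in H^r$, $f'\in H^{r-1}$, with the loss of one derivative on the metric coming from $\nabla\Phi\in C^{r-1,1}$.

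Because no trace condition is imposed on the test functions, tangential difference quotients $D_h^{-k}D_h^k(\chi q)$ are admissible tests. This gives tangential derivatives $\partial_\tau\nabla q\in L^2$. The normal second derivative $\partial_{nn}q$ is then recovered from the equation, since $a^{nn}\ge c>0$ and $\partial_{nn}q$ can be expressed through tangential derivatives, $\bF'$ and $f'$.

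For higher $r$ we differentiate the flattened equation tangentially $r-1$ times. The differentiated equation has the same conormal structure, with commutator terms involving at most $r-1$ derivatives of $a^{ij}$, and these belong to $L^\infty$ because $a^{ij}\in C^{r-1,1}$. We then recover the normal derivatives algebraically, step by step. A partition of unity, together with the interior estimate, yields \eqref{eq:constant-conormal-regularity}, with $\|q\|_{H^1}$ absorbing all lower-order contributions.

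\textbf{Main obstacle.} The main obstacle is the bookkeeping at the top order. We must check that a $C^{r,1}$ boundary suffices, that is, that the flattened coefficients are exactly $C^{r-1,1}$. The commutators $[\partial_\tau^{\beta},a^{ij}]\partial_jq$ with $|\beta|=r-1$ then need only $r-1$ derivatives of $a^{ij}$ in $L^\infty$, paired against $\nabla q\in H^{r-1}$ from the previous step. If this is too tight, the first thing to try is to place the last derivative on the test function, which is the usual duality trick.

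\textbf{Neumann corollary.} For the Neumann statement we take $\bA_0=\kappa_f\bI$, $\bF=0$ and $f=-b$, with $r=j+1$. This gives $\|\theta\|_{H^{j+2}}\le C(\|b\|_{H^j}+\|\theta\|_{H^1})$. The $H^1$ term is removed by the mean-zero Poincaré and energy estimate $\|\theta\|_{H^1}\le C\|b\|_2$. The trace condition $\partial_n\theta=0$ holds in $H^{j+1/2}(\partial\Omega)$ once $\theta\in H^2$, and it follows from the variational identity by integration by parts.
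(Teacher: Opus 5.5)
Your proposal is correct and follows essentially the same route as the paper. Both use Lax--Milgram with compatibility from $\phi=1$, flattening by $C^{r,1}$ charts to uniformly elliptic coefficients in $W^{r,\infty}=C^{r-1,1}$, tangential difference quotients (admissible because the conormal problem puts no trace condition on tests), recovery of normal derivatives from the equation, a partition of unity in which the induction hypothesis reduces $\|q\|_{H^r}$ to $\|q\|_{H^1}$, and the Neumann estimate by specialization with $r=j+1$.

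The only difference is bookkeeping. You run the $H^2$ step on $\partial_\tau^{r-1}q$, so $\partial_\tau^{r-1}f'\in L^2$ is immediate, whereas the paper takes $r$ tangential quotients and shifts one derivative of $\widehat f$ onto the test. In your version the commutator enters the divergence-form source and must lie in $H^1$, so it uses $r$ rather than $r-1$ derivatives of $a^{ij}$ — but these are exactly what $C^{r-1,1}$ provides.
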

\begin{proof}
Lax--Milgram on $H^1_\diamond(\Omega)$ gives the variational solution. Testing with constants gives the compatibility condition. We next prove the boundary regularity by difference quotients.

A $C^{r,1}$ boundary chart and its inverse have bounded weak derivatives through order $r+1$. Flattening such a chart transforms the bilinear form into one with symmetric uniformly elliptic coefficients $\widehat A\in W^{r,\infty}$ on a half-ball. In this proof hats denote the quantities in these local coordinates. The transformed vector source is in $H^r$ and the scalar source is in $H^{r-1}$. All changes of variables and multiplication estimates have constants controlled by the fixed atlas. The localized weak formulation has unrestricted traces on the flat boundary, as is appropriate for the conormal problem.

At the first level $r=1$, let $D_h^jz=(z(\,\cdot+h\be_j)-z)/h$ for a tangential direction $j=1,2$. Choose a smooth cutoff $\zeta$ supported in the chart and take $\phi=-D_{-h}^j(\zeta^2D_h^j\widehat q)$ in the local weak equation. For sufficiently small $h$, this test belongs to $H^1$ and is supported away from the curved edge. Discrete integration by parts produces the coercive term $\int\zeta^2\widehat A(\,\cdot+h\be_j)\nabla D_h^j\widehat q\cdot\nabla D_h^j\widehat q$. The difference-quotient bound on $H^1$ gives
\[
 \left|\int\widehat f\,D_{-h}^j(\zeta^2D_h^j\widehat q)\right|
 \le C\|\widehat f\|_2
       \bigl(\|\zeta\nabla D_h^j\widehat q\|_2+\|\widehat q\|_{H^1}\bigr).
\]
The vector-source term is bounded using $\|D_h^j\widehat{\bF}\|_2\le\|\partial_j\widehat{\bF}\|_2$. The coefficient commutator is bounded by
\[
 C\|\nabla\widehat A\|_\infty\|\widehat q\|_{H^1}
       \|\zeta\nabla D_h^j\widehat q\|_2,
\]
because $\|D_h^j\widehat A\|_\infty\le\|\partial_j\widehat A\|_\infty$. The bound $\|D_h^j\widehat q\|_2\le\|\partial_j\widehat q\|_2$ controls the cutoff terms. Coercivity and Young's inequality give
\[
 \|\zeta\nabla D_h^j\widehat q\|_2
 \le C\bigl(\|\widehat{\bF}\|_{H^1}+\|\widehat f\|_2+\|\widehat q\|_{H^1}\bigr)
\]
uniformly in $h$. Weak compactness as $h\to0$ controls all second derivatives with a tangential direction. In distributions,
\[
 \widehat A_{33}\partial_{33}\widehat q
 =\Div\widehat{\bF}+\widehat f
  -\sum_{(i,j)\ne(3,3)}\widehat A_{ij}\partial_{ij}\widehat q
  -\sum_{i,j}(\partial_i\widehat A_{ij})\partial_j\widehat q.
\]
The right-hand side belongs to $L^2$ and $\widehat A_{33}$ is bounded below, which recovers $\partial_{33}\widehat q$ and proves the local $H^2$ estimate.

For the induction step suppose $q\in H^r$. The same tests applied to $r$ tangential difference quotients give, on nested half-balls $U'\subset\subset U$ relative to the flat boundary,
\[
 \sum_{|\gamma|=r,\,\gamma_3=0}
 \|\nabla\partial^\gamma\widehat q\|_{L^2(U')}
 \le C\bigl(\|\widehat{\bF}\|_{H^r(U)}
+\|\widehat f\|_{H^{r-1}(U)} +\|\widehat q\|_{H^r(U)}\bigr).
\]
Here every coefficient commutator has at least one derivative on $\widehat A$ and at most $r$ derivatives on $\widehat q$. For a tangential multi-index $\gamma$ of length $r$, choose $j$ with $\gamma_j>0$. The scalar-source term satisfies
\[
 |\langle\partial^\gamma\widehat f,\phi\rangle|=|\langle\partial^{\gamma-\be_j}\widehat f,\partial_j\phi\rangle|\le C\|\widehat f\|_{H^{r-1}}\|\phi\|_{H^1}.
\]
Discrete integration by parts gives the same bound for difference quotients, using only $r-1$ derivatives of the scalar source. The cutoff terms involve derivatives of $\widehat q$ of order at most $r$. The estimates are uniform in the difference-quotient step, so weak compactness gives all derivatives of order $r+1$ with at most one normal derivative.

We recover the other derivatives by induction on their number of normal derivatives. Fix $N\in\{2,\ldots,r+1\}$ and suppose all derivatives of order $r+1$ with fewer than $N$ normal derivatives have been controlled. For $|\beta|=r-1$ and $\beta_3=N-2$, differentiate the distributional equation to obtain
\[
\begin{aligned}
 \widehat A_{33}\partial^{\beta+2\be_3}\widehat q
 &=\partial^\beta(\Div\widehat{\bF}+\widehat f)
   \!-\!\!\!\sum_{(i,j)\ne(3,3)}\widehat A_{ij}
                      \partial^{\beta+\be_i+\be_j}\widehat q\\
 &\quad-\sum_{i,j}[\partial^\beta,\widehat A_{ij}]
                                      \partial_{ij}\widehat q
   -\sum_{i,j}\partial^\beta
                    \bigl((\partial_i\widehat A_{ij})\partial_j\widehat q\bigr).
\end{aligned}
\]
The principal terms in the first sum contain fewer than $N$ normal derivatives. In the remaining sums at least one derivative falls on a coefficient and at most $r$ derivatives fall on $\widehat q$, so their $L^2$ norms are bounded by $C\|\widehat A\|_{W^{r,\infty}}\|\widehat q\|_{H^r}$. The source is in $L^2$ because $\widehat{\bF}\in H^r$ and $\widehat f\in H^{r-1}$. Since $\widehat A_{33}$ is bounded below, this identity controls every derivative of order $r+1$ with $N$ normal derivatives. Induction over $N$ completes the local estimate.

Interior estimates follow from ordinary difference quotients. A finite partition of unity and the induction hypothesis yield \eqref{eq:constant-conormal-regularity}. For the Neumann heat estimate take $\bA_0=\bI$, $\bF=0$, $f=-b/\kappa_f$ and $r=j+1$. The energy estimate controls the remaining $H^1$ norm. This gives the heat estimate under $C^{j+1,1}$ boundary regularity.
\end{proof}

The pressure estimate requires an $H^2$ bound for a variable principal matrix.
\begin{lemma}[First conormal regularity step]\label{lem:lipschitz-conormal}
Assume $\partial\Omega\in C^{1,1}$. Let $\bA\in W^{1,\infty}(\Omega;\R^{3\times3})$ be symmetric and uniformly elliptic. If $\bF\in H^1(\Omega;\R^3)$ and $f\in L^2(\Omega)$ with $\int_\Omega f\dx=0$, then the mean-zero variational solution of
\[
 \int_\Omega \bA\nabla q\cdot\nabla\phi\dx
 =\int_\Omega\bF\cdot\nabla\phi\dx-\int_\Omega f\phi\dx
 \qquad(\phi\in H^1(\Omega))
\]
belongs to $H^2(\Omega)$ and satisfies
\[
 \|q\|_{H^2}
 \le C\bigl(\|\bF\|_{H^1}+\|f\|_2+\|q\|_{H^1}\bigr),
\]
where $C$ depends only on the ellipticity constants, $\|\bA\|_{W^{1,\infty}}$, and the fixed $C^{1,1}$ atlas of $\Omega$.
\end{lemma}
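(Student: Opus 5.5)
We will prove the estimate by localizing and flattening the boundary, following the first level ($r=1$) of the proof of \cref{lem:constant-conormal}. The constant-coefficient argument there used only $\widehat A\in W^{1,\infty}$ at this step, so the same scheme applies. First, Lax--Milgram on $H^1_\diamond(\Omega)$ gives the mean-zero variational solution, and testing with constants gives the compatibility condition $\int_\Omega f\dx=0$. Next we cover $\overline\Omega$ by finitely many interior balls and boundary charts of the fixed $C^{1,1}$ atlas, with a subordinate partition of unity. A $C^{1,1}$ chart $\Phi$ has $\nabla\Phi,\nabla\Phi^{-1}\in W^{1,\infty}$. Hence the transformed matrix $\widehat A=(\det\nabla\Phi)^{-1}\nabla\Phi\,\bA\,\nabla\Phi^{\mathrm T}\circ\Phi^{-1}$ is symmetric, uniformly elliptic and in $W^{1,\infty}$, with bounds controlled by $\|\bA\|_{W^{1,\infty}}$ and the atlas. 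The transformed sources satisfy $\widehat{\bF}\in H^1$ and $\widehat f\in L^2$. The localized weak form on a half-ball imposes no trace restriction on the flat part, as is appropriate for the conormal problem.

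Interior regularity follows from ordinary difference quotients. For the tangential estimates near the boundary we take the test $\phi=-D^j_{-h}(\zeta^2D^j_h\widehat q)$ for $j=1,2$. Discrete integration by parts then yields the coercive term $\int\zeta^2\widehat A(\cdot+h\be_j)\nabla D^j_h\widehat q\cdot\nabla D^j_h\widehat q$. The error terms are handled as follows:
\begin{itemize}
\item the coefficient commutator is bounded by $\|\partial_j\widehat A\|_\infty\|\widehat q\|_{H^1}\|\zeta\nabla D_h^j\widehat q\|_2$;
\item the vector source is bounded through $\|D^j_h\widehat{\bF}\|_2\le\|\partial_j\widehat{\bF}\|_2$;
\item the scalar source is bounded by $\|\widehat f\|_2\bigl(\|\zeta\nabla D^j_h\widehat q\|_2+\|\widehat q\|_{H^1}\bigr)$;
\item the cutoff terms are bounded by $\|\widehat q\|_{H^1}$.
\end{itemize}
Young's inequality then gives a bound on $\|\zeta\nabla D^j_h\widehat q\|_2$ that is uniform in $h$. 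Weak compactness therefore controls every second derivative that contains a tangential direction.

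The main obstacle is the purely normal derivative $\partial_{33}\widehat q$, which must be recovered without any tangential testing. We read the equation distributionally as
\[
 \widehat A_{33}\partial_{33}\widehat q=\Div\widehat{\bF}+\widehat f-\sum_{(i,j)\ne(3,3)}\widehat A_{ij}\partial_{ij}\widehat q-\sum_{i,j}(\partial_i\widehat A_{ij})\partial_j\widehat q .
\]
Here the Lipschitz bound on $\widehat A$ is exactly what places the last sum in $L^2$. The remaining terms on the right are also in $L^2$ by the tangential step and the regularity of the sources. Since $\widehat A_{33}\ge c>0$ by ellipticity, this gives $\partial_{33}\widehat q\in L^2$ with the same bound. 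Mapping back through the charts and summing over the partition of unity yields
\[
 \|q\|_{H^2}\le C\bigl(\|\bF\|_{H^1}+\|f\|_2+\|q\|_{H^1}\bigr).
\]
The constant depends only on the ellipticity bounds, $\|\bA\|_{W^{1,\infty}}$ and the atlas.
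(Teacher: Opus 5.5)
Your proposal is correct and is essentially the paper's own argument: the paper proves this lemma by observing that the $r=1$ step in the proof of \cref{lem:constant-conormal} uses only ellipticity and the $W^{1,\infty}$ norm of the flattened matrix, which a $C^{1,1}$ chart preserves. That step consists of the tangential test $-D_{-h}^j(\zeta^2D_h^j\widehat q)$ followed by recovery of $\partial_{33}\widehat q$ from the distributional equation, and the paper then concludes with interior difference quotients and a finite partition of unity. You have simply written out in detail the steps that the paper cites by reference.
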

\begin{proof}
The $r=1$ argument in the proof of \cref{lem:constant-conormal} uses only the ellipticity and $W^{1,\infty}$ norm of the flattened matrix. A $C^{1,1}$ chart transforms the present matrix into one with these properties. The same test $-D_{-h}^j(\zeta^2D_h^jq)$ controls tangential second derivatives with $\bF\in H^1$ and $f\in L^2$. The displayed distributional equation in that proof recovers the second normal derivative. Interior difference quotients and a finite partition of unity give the claimed estimate, with precisely the stated coefficient dependence.
\end{proof}

In the nonlinear Darcy problem the pressure equation carries the positive scalar factor $a=\mu(\vartheta)^{-1}$. The next estimate incorporates this coefficient under positivity and Sobolev bounds for arbitrary $\nabla a$ satisfying the stated regularity assumptions.

\begin{lemma}[Conormal pressure estimates with a positive scalar coefficient]
\label{lem:bounded-conormal}
Let $\bA_0$ be a fixed symmetric positive definite matrix and assume $\partial\Omega$ is of class $C^{1,1}$. Suppose $a\in H^2(\Omega)$ and $0<a_*\le a\le a^*$ almost everywhere in $\Omega$. For a vector field $\bH\in H^1(\Omega;\R^3)$, let $p\in H^1_\diamond(\Omega)$ be the variational solution
\begin{equation}\label{eq:conormal-weak}
 \int_\Omega a\bA_0\nabla p\cdot\nabla\phi\dx
   =\int_\Omega a\bA_0\bH\cdot\nabla\phi\dx
       \quad(\phi\in H^1(\Omega)).
\end{equation}
Then $\bv=a\bA_0(\bH-\nabla p)\in H^1(\Omega)\cap\V_\Omega$ and
\begin{equation}\label{eq:conormal-base-estimate}
 \|p\|_{H^2}+\|\bv\|_{H^1}\le C\|\bH\|_{H^1}.
\end{equation}
The constant depends only on $\Omega,\bA_0,a_*,a^*$ and an upper bound for $\|a\|_{H^2}$, uniformly along the smooth approximations. More generally, if $m\ge2$ is an integer, $\partial\Omega$ is of class $C^{m,1}$, $a\in H^{m+1}(\Omega)$ and $\bH\in H^m(\Omega)$, then
\begin{equation}\label{eq:conormal-higher-estimate}
 \|p\|_{H^{m+1}}+\|\bv\|_{H^m}\le C_m\|\bH\|_{H^m},
\end{equation}
where $C_m$ may additionally depend on $\|a\|_{H^{m+1}}$.
\end{lemma}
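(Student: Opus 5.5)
The plan is to reduce \eqref{eq:conormal-weak} to a variable-coefficient conormal problem for $p$ with scalar coefficient $a$, apply \cref{lem:lipschitz-conormal} for the base case, and bootstrap with \cref{lem:constant-conormal} for higher orders. First, Lax--Milgram on $H^1_\diamond(\Omega)$ with coercivity $a_*\lambda_{\min}(\bA_0)$ gives existence, uniqueness and $\|p\|_{H^1}\le C\|\bH\|_2$. Testing with arbitrary $\phi\in H^1(\Omega)$ gives $\int_\Omega\bv\cdot\nabla\phi\dx=0$, which is exactly $\bv\in\V_\Omega$. The $H^1$ bound on $\bv$ will then follow from the product rule once $p\in H^2$:
\[
\nabla\bv=\bA_0(\bH-\nabla p)\otimes\nabla a+a\bA_0\nabla(\bH-\nabla p),
\]
with $\nabla a\in H^1\hookrightarrow L^6$ and $\bH-\nabla p\in H^1\hookrightarrow L^3$, and $a\in L^\infty$.

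The main obstacle is the base estimate, because $a\in H^2$ is not Lipschitz and \cref{lem:lipschitz-conormal} does not apply directly to the matrix $a\bA_0$. The first attempt is to divide by $a$. Formally $\Div(a\bA_0\nabla p)=a\Div(\bA_0\nabla p)+\nabla a\cdot\bA_0\nabla p$, so $p$ solves the constant-coefficient conormal problem
\[
-\Div(\bA_0\nabla p)=-\Div(\bA_0\bH)-a^{-1}\nabla a\cdot\bA_0(\bH-\nabla p),\qquad \bn\cdot\bA_0\nabla p=\bn\cdot\bA_0\bH .
\]
This is rigorous in variational form: test \eqref{eq:conormal-weak} with $\phi=\psi/a$, using $a^{-1}\in H^2\cap L^\infty$ and the fact that $\psi/a\in H^1$ for $\psi\in H^1$, since $\nabla a\in L^6$ and $\psi\in L^3$. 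This puts the problem in the form of \cref{lem:constant-conormal} with $r=1$, $\bF=\bA_0\bH\in H^1$ and $f=a^{-1}\nabla a\cdot\bA_0(\bH-\nabla p)$, corrected by a constant so that it has zero mean. However, $f$ is only in $L^{3/2}$ when $p\in H^1$, so two further steps are needed. First, run \cref{lem:constant-conormal} in a fractional, $W^{1,3/2}\to W^{2,3/2}$, form, or use interior and boundary difference quotients with the term $\nabla a\cdot\nabla p$ handled by Hölder: $\|\nabla a\,\nabla D_hp\|$ is controlled by $\|\nabla a\|_6\|\nabla p\|_3$. Second, interpolate $\|\nabla p\|_3\le C\|p\|_{H^1}^{1/2}\|p\|_{H^2}^{1/2}$ and absorb the result by Young. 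Concretely, the commutator term in the difference-quotient argument of \cref{lem:lipschitz-conormal} is bounded by
\[
\|D_ha\|_{L^6}\,\|\nabla q\|_{L^3}\,\|\zeta\nabla D_hq\|_2,
\]
and $\|\nabla q\|_3\le \eta\|q\|_{H^2}+C_\eta\|q\|_{H^1}$ allows absorption. To make this rigorous without assuming $p\in H^2$ a priori, I would mollify $a$, keeping $a_*\le a_\delta\le a^*$ and bounded $\|a_\delta\|_{H^2}$. \cref{lem:lipschitz-conormal} then gives $p_\delta\in H^2$, the absorbed estimate holds uniformly in $\delta$, and we pass to the limit by uniqueness and weak compactness. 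This yields \eqref{eq:conormal-base-estimate}.

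For \eqref{eq:conormal-higher-estimate}, induct on $m\ge2$ using the divided form. Assuming $p\in H^m$, the source $f=a^{-1}\nabla a\cdot\bA_0(\bH-\nabla p)$ lies in $H^{m-1}$. This uses that $H^m(\Omega)$ is an algebra for $m\ge2$, that $a^{-1}\in H^{m+1}$ by smooth composition on the range $[a_*,a^*]$, and that $\nabla a\in H^m$ and $\bH-\nabla p\in H^{m-1}$ with $m-1\ge1$; the products $H^m\cdot H^{m-1}\subset H^{m-1}$ in three dimensions for $m\ge2$ give the required inclusion. \cref{lem:constant-conormal} with $r=m$, $\bF=\bA_0\bH\in H^m$ and $\partial\Omega\in C^{m,1}$ then gives $p\in H^{m+1}$ with the bound. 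The estimate for $\bv=a\bA_0(\bH-\nabla p)\in H^m$ follows from the same product estimates, with constants depending on $\|a\|_{H^{m+1}}$.
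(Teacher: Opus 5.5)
Your argument is essentially the paper's proof: regularize $a$, get $p_\delta\in H^2$ qualitatively from \cref{lem:lipschitz-conormal}, pass to the divided form with the test $\psi/a_\delta$, apply \cref{lem:constant-conormal} with $r=1$ using $\|f_\delta\|_2\le\|\nabla\log a_\delta\|_6(\|\bH\|_3+\|\nabla p_\delta\|_3)$, interpolate, absorb, pass to the limit, and induct with $r=m$. Once $p_\delta\in H^2$ the divided source is already in $L^2$, so the $W^{1,3/2}$ route and the redone difference quotients are unnecessary.

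Two small fixes are needed. First, no constant should be subtracted from $f$: testing the divided identity with $\psi=1$ already gives $\int_\Omega f\dx=0$, whereas a nonzero shift would change the equation. Second, mollification does not preserve $a_*\le a_\delta\le a^*$ exactly, but $H^2\hookrightarrow C^0(\overline\Omega)$ gives uniform convergence and hence bounds such as $a_*/2\le a_\delta\le 2a^*$ for small $\delta$ (the paper instead approximates $\log a$ and exponentiates).
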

The proof uses the positive lower bound for $a$ together with its stated Sobolev bounds. The size of $\nabla a$ enters only through the constants in these bounds.
\begin{proof}
Coercivity on mean-zero scalars first gives $\|p\|_{H^1}\le C\|\bH\|_2$. To justify the first regularity step, approximate $\log a$ in $H^2$ by smooth functions and exponentiate. Since $H^2(\Omega)\hookrightarrow C^0(\overline\Omega)$, the resulting $a_j \to a$ in $H^2$ uniformly, with a common positive lower bound and upper bound. Approximate $\bH$ in $H^1$ by smooth $\bH_j$ and solve \eqref{eq:conormal-weak} for $p_j$. These solutions have a uniform $H^1$ bound.

For fixed $j$, \cref{lem:lipschitz-conormal} applied to $a_j\bA_0$ gives $p_j\in H^2(\Omega)$. We derive an estimate uniform in $j$. For a smooth scalar $\phi$, the test $\phi/a_j$ is admissible
in \eqref{eq:conormal-weak}. Expanding its gradient gives
\begin{equation}\label{eq:conormal-divided-weak}
 \int_\Omega\bA_0\nabla p_j\cdot\nabla\phi\dx
 =\int_\Omega\bA_0\bH_j\cdot\nabla\phi\dx
   -\int_\Omega f_j\phi\dx,\qquad
 f_j=\nabla\log a_j\cdot\bA_0(\bH_j-\nabla p_j).
\end{equation}
Since $p_j\in H^2$, we have $f_j\in L^2$. Density extends this identity
to every $\phi\in H^1(\Omega)$. It is the variational form of the
constant-principal-part conormal problem
\begin{equation}\label{eq:bounded-pressure-constant-principal}
 \left\{
\begin{aligned}
 \Div(\bA_0\nabla p_j)&=\Div(\bA_0\bH_j)
       +\nabla\log a_j\cdot\bA_0(\bH_j-\nabla p_j),
     &&\text{in }\Omega,\\
 \bn\cdot\bA_0\nabla p_j&=\bn\cdot\bA_0\bH_j,
     &&\text{on }\partial\Omega.
\end{aligned}
\right.
\end{equation}
Taking $\phi=1$ in \eqref{eq:conormal-divided-weak} gives the compatibility identity
\begin{equation*}
 \int_\Omega\nabla\log a_j\cdot \bA_0(\bH_j-\nabla p_j)\dx=0.
\end{equation*}
Thus the additional interior term has zero integral and the volume integral of the right-hand side of \eqref{eq:bounded-pressure-constant-principal} equals the prescribed conormal flux. Applying \cref{lem:constant-conormal} with $r=1$ and using \eqref{eq:constant-conormal-regularity} gives
\[
 \|p_j\|_{H^2}\le C\left(\|\bH_j\|_{H^1}+\|p_j\|_{H^1}
+\|\nabla\log a_j\|_6 (\|\bH_j\|_3+\|\nabla p_j\|_3)\right).
\]
With $b_j=\|\nabla\log a_j\|_6$, interpolation followed by Young's inequality gives, for any fixed $\delta>0$,
\[
b_j\|\nabla p_j\|_3
 \le Cb_j\|p_j\|_{H^1}^{1/2}\|p_j\|_{H^2}^{1/2}
 \le\delta\|p_j\|_{H^2}+C_\delta b_j^2\|p_j\|_{H^1}.
\]
After $\delta>0$ is chosen sufficiently small, the highest norm is absorbed. The $H^1$ energy bound and the uniform $H^2$ coefficient bound give an $H^2$ estimate for $p_j$ with a constant independent of $j$. Subtracting the variational problems shows $p_j\to p$ in $H^1$, since $a_j\to a$ uniformly and $\bH_j\to\bH$ in $L^2$. Weak compactness in $H^2$ therefore identifies the weak $H^2$ limit with the original variational solution $p$.  The formula $\bv=a\bA_0(\bH-\nabla p)$ and Sobolev multiplication give \eqref{eq:conormal-base-estimate}, and \eqref{eq:conormal-weak} gives $\bv\in\V_\Omega$.

For the higher estimates we use induction. Assume $p\in H^m$ and $a\in H^{m+1}$ for some $m\ge2$. Since $a\ge a_*>0$, the composition estimate in \cite[Theorem~2.87]{BCD}, localized to the fixed domain, gives
\[
 \|\nabla\log a\|_{H^m}
 \le C(a_*,\|a\|_{H^{m+1}}).
\]
Moreover $\bH-\nabla p\in H^{m-1}$. In three dimensions $H^m(\Omega)$ is a Banach algebra for $m\ge2$ and multiplication by an $H^m$ function is bounded on $H^{m-1}$. Hence
\[
f_m:=\nabla\log a\cdot\bA_0(\bH-\nabla p)
       \in H^{m-1}(\Omega),
 \qquad
 \|f_m\|_{H^{m-1}}
 \le C_m\bigl(\|\bH\|_{H^m}+\|p\|_{H^m}\bigr).
\]
Sobolev composition gives $a^{-1}\in H^{m+1}(\Omega)$.
Testing \eqref{eq:conormal-weak} with $\phi/a$ for smooth $\phi$ and
then using density yields
\[
 \int_\Omega\bA_0\nabla p\cdot\nabla\phi\dx
 =\int_\Omega\bA_0\bH\cdot\nabla\phi\dx
       -\int_\Omega f_m\phi\dx
       \quad(\phi\in H^1(\Omega)).
\]
In particular $\int_\Omega f_m\dx=0$. Thus both the interior equation
and its natural conormal condition hold in the variational sense. Since
$\bA_0\bH\in H^m(\Omega;\R^3)$ and the boundary is $C^{m,1}$,
\cref{lem:constant-conormal} applies with $r=m$ and yields
$p\in H^{m+1}$ and
\[
 \|p\|_{H^{m+1}}
 \le C_m\bigl(\|\bH\|_{H^m}+\|p\|_{H^m}\bigr).
\]
The induction hypothesis absorbs the lower-order pressure norm. The formula $\bv=a\bA_0(\bH-\nabla p)$ and the same Sobolev product estimates give $\bv\in H^m$. Iteration proves \eqref{eq:conormal-higher-estimate}. The constants depend on $\Omega$, $\bA_0$, $a_*$, $a^*$ and an upper bound for $\|a\|_{H^{m+1}}$. The smooth approximation argument preserves these dependencies at each finite order.
\end{proof}

The estimate in \cref{prop:bounded-instantaneous} raises the $L^2$ fixed point to $H^s$. For $s\ge4$ this gives $\bu\in W^{2,\infty}$, so the tangent flow preserves $\Omega$ and closes the $H^s$ transport estimate for the density. We first record the instantaneous solution map and its stability.

\begin{proposition}[Instantaneous bounded-domain problem]
\label{prop:bounded-instantaneous}
If $q_{0,\Omega}\le1/2$ and $\|\bh\|_{L^2(\Omega)}\le H_{0,\Omega}$, there is a unique $(\bu,\vartheta,p)\in\V_\Omega\times H^2_\diamond(\Omega) \times H^1_\diamond(\Omega)$ satisfying
\[
\left\{
\begin{aligned}
 &\mu(\vartheta)\bM_0\bu+\nabla p=\bh-\vartheta\bg,\\
 &\vartheta=\Jop_\Omega\bu,\quad \bu\in\V_\Omega,\ \
 \int_\Omega p\dx=\int_\Omega\vartheta\dx=0.
\end{aligned}
\right.
\]
It obeys $\|\bu\|_2\le H_{0,\Omega}/c_0$ and
\begin{equation}\label{eq:bounded-instantaneous-L2}
 \|\bu_1-\bu_2\|_2\le\frac{1}{c_0(1-q_{0,\Omega})}\|\bh_1-\bh_2\|_2.
\end{equation}
Let $s\ge4$ be an integer and, for the following higher-order statements, assume additionally that $\partial\Omega$ is of class $C^{s+1,1}$. On sets where $\|\bh\|_{H^s}\le R$,
\begin{equation}\label{eq:bounded-macro-Hs}
 \|\bu\|_{H^s}+\|\vartheta\|_{H^{s+2}}+\|p\|_{H^{s+1}}\le C_R.
\end{equation}
For two such forcings and every integer $0\le m\le s$,
\begin{equation}\label{eq:bounded-macro-difference}
 \begin{aligned}
 \|\bu_1-\bu_2\|_{H^m}+\|\vartheta_1-\vartheta_2\|_{H^{m+2}}+\|p_1-p_2\|_{H^{m+1}}\le C_R\|\bh_1-\bh_2\|_{H^m}.
 \end{aligned}
\end{equation}
The velocity solution map is locally $C^1$ from $H^{s-1}(\Omega;\R^3)$ to $H^{s-1}(\Omega;\R^3)\cap\V_\Omega$ near each such forcing. Constants may depend on finitely many derivatives of $\mu$ on the attained temperature interval.
\end{proposition}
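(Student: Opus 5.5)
The plan is to repeat the periodic argument of \cref{prop:nonlinear-frozen} and \cref{prop:nonlinear-sobolev}, with the Fourier arguments replaced by the conormal pressure estimates of \cref{lem:bounded-conormal}.

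\emph{$L^2$ theory.} For $\tau\in L^\infty_\diamond(\Omega)$, Lax--Milgram on $\V_\Omega$ for
\[
\int_\Omega\mu(\tau)\bM_0\bu_\tau\cdot\bv\dx+\langle\Kop_\Omega\bu_\tau,\bv\rangle=\langle\bh,\bv\rangle
\]
gives $\|\bu_\tau\|_2\le H_{0,\Omega}/c_0$. This uses only that $\Kop_\Omega$ is nonnegative. The Lipschitz bound in $\tau$ is $\Lambda L_\mu H_{0,\Omega}/c_0^2$. Composing with $\Jop_\Omega$ multiplies it by $C_{\infty,\Omega}$, so $\tau\mapsto\Jop_\Omega\bu_\tau$ has constant $q_{0,\Omega}\le1/2$ and is a contraction on $L^\infty_\diamond$. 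We then recover $p$ from the variational identity $\PP_\Omega(\mu(\vartheta)\bM_0\bu+\vartheta\bg-\bh)=0$. The complement of $\V_\Omega$ is $\nabla H^1(\Omega)$ by the definition of $\V_\Omega$. Uniqueness and \eqref{eq:bounded-instantaneous-L2} follow by subtraction and absorption, exactly as on the torus.

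\emph{Higher regularity.} The Darcy law can be inverted as $\bu=a\bM_0^{-1}(\bH-\nabla p)$, with $a=\mu(\vartheta)^{-1}$ and $\bH=\bh-\vartheta\bg$. Then $\Div\bu=0$ and $\bu\cdot\bn=0$ become precisely \eqref{eq:conormal-weak} with $\bA_0=\bM_0^{-1}$. Bootstrap:
\begin{itemize}[leftmargin=*]
\item $\vartheta=\Jop_\Omega\bu\in H^2$ by \cref{lem:constant-conormal}, so $a\in H^2$ and $\bH\in H^1$. Then \eqref{eq:conormal-base-estimate} gives $\bu\in H^1$ and $p\in H^2$.
\item The Neumann estimate with $j=1$ then gives $\vartheta\in H^3$.
\item Inductively, $\bu\in H^{m-1}$ gives $\vartheta\in H^{m+1}$, hence $a\in H^{m+1}$ by composition on the attained temperature interval, and $\bH\in H^m$ since $\bh\in H^s$. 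Then \eqref{eq:conormal-higher-estimate} gives $\bu\in H^m$ and $p\in H^{m+1}$. Each Neumann step with source $\bu\cdot\bg\in H^{m}_\diamond$ gives $\vartheta\in H^{m+2}$.
\end{itemize}
Reaching $m=s$ needs the conormal estimate up to order $s$, which requires $\partial\Omega\in C^{s,1}$. The final Neumann step with $j=s$ needs $C^{s+1,1}$. This is why the stronger boundary assumption appears, and it yields \eqref{eq:bounded-macro-Hs}.

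\emph{Differences and $C^1$ dependence.} For \eqref{eq:bounded-macro-difference} I write the difference equation $\mu(\vartheta_1)\bM_0\bv+\nabla(p_1-p_2)=\bh_1-\bh_2-(\vartheta_1-\vartheta_2)\bg-b_{12}\bM_0\bu_2\,\Jop_\Omega\bv$. This is again a conormal problem with coefficient $a_1=\mu(\vartheta_1)^{-1}$ and a known right-hand side. Since $\Jop_\Omega$ gains two derivatives, the lower-order terms are bounded in $H^m$ by $C_R\|\bv\|_{H^{m-2}}$, or by $\|\bv\|_2$ when $m\le2$. Iterating \cref{lem:bounded-conormal} from the $L^2$ bound upward gives the estimate; the case $m=0$ is the $L^2$ bound.

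For the $C^1$ statement, apply the implicit function theorem to
\[
\Phi(\bu,\bh)=\PP_\Omega(\mu(\Jop_\Omega\bu)\bM_0\bu)+\Kop_\Omega\bu-\PP_\Omega\bh
\]
on $H^{s-1}\cap\V_\Omega$. The linearization is $L^2$-coercive with constant $c_0(1-q_{0,\Omega})$. It is an isomorphism of $H^{s-1}\cap\V_\Omega$ by the same bootstrap: solving $\mathscr A\bv=\br$ is a conormal problem with the $\Jop_\Omega$-term moved to the right-hand side.

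\emph{Main obstacle.} The delicate point is that $\PP_\Omega$ is not a multiplier and need not preserve $H^m$ cleanly. Composition with the Leray projection raises the conormal regularity question for $\PP_\Omega(\psi\bg)$. I plan to avoid $\PP_\Omega$ at the higher levels entirely. All regularity arguments will be phrased through the pressure problem \eqref{eq:conormal-weak}, where the gradient part is absorbed into the unknown pressure. For the differentiability, $\Phi$ will be replaced by the equivalent map $(\bu,\bh)\mapsto\bu-\mathcal S_a(\bh-\Jop_\Omega\bu\,\bg)$, where $\mathcal S_a$ is the conormal solution map of \cref{lem:bounded-conormal}. Its smoothness in $a=\mu(\Jop_\Omega\bu)^{-1}$ on $H^{s}$ follows from the difference estimates.
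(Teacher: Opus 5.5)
Your proposal is correct and follows the paper's proof essentially step for step. This covers the $L^\infty$ contraction for $\tau\mapsto\Jop_\Omega\bu_\tau$, the conormal-pressure bootstrap with $a=\mu(\vartheta)^{-1}$ and $\bA_0=\bM_0^{-1}$ (including the $C^{s,1}$ versus $C^{s+1,1}$ bookkeeping), the difference equation with the $\Jop_\Omega$-terms moved to the right-hand side, and the implicit function theorem for the $c_0(1-q_{0,\Omega})$-coercive linearization. Your one divergence is unnecessary: the paper handles your $\PP_\Omega$ worry by writing $\PP_\Omega\bv=\bv-\nabla q$, with $q$ the Neumann solution of \cref{lem:constant-conormal} ($\bA_0=\bI$, $\bF=\bv$, $f=0$, $r=m$), so $\PP_\Omega$ is bounded on $H^m$ and the theorem applies to $\Phi_\Omega$ directly. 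Your substitute is exactly $\Psi=\mathcal S_{a(\bu)}\Phi$ and also works, but its $C^1$ dependence on $a$ needs an explicit derivative-plus-remainder estimate, not just the Lipschitz difference bounds.
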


\begin{proof}
For each $\tau\in L^\infty_\diamond(\Omega)$, let $\bu_\tau\in\V_\Omega$ be the unique solution of
\[
 \int_\Omega\mu(\tau)\bM_0\bu_\tau\cdot \bv\dx
+\langle \Kop_\Omega\bu_\tau,\bv\rangle =\int_\Omega \bh\cdot \bv\dx\quad(\bv\in\V_\Omega).
\]
Coercivity gives $\|\bu_\tau\|_2\le H_{0,\Omega}/c_0$. For $\tau_1,\tau_2\in L^\infty_\diamond(\Omega)$, subtraction of the frozen equations and testing with $\bu_{\tau_1}-\bu_{\tau_2}$ give contraction of $\tau\mapsto\Jop_\Omega\bu_\tau$ in $L^\infty$ with factor $q_{0,\Omega}$. It maps into the mean-zero ball of radius $C_{\infty,\Omega}H_{0,\Omega}/c_0$, which is complete in $L^\infty$. Banach's theorem gives the fixed point. Every solution obeys the same energy bound and hence belongs to that ball, proving uniqueness in the asserted class. Subtraction of the equations gives \eqref{eq:bounded-instantaneous-L2}. The orthogonal complement of $\V_\Omega$ is the closed space of gradients of $H^1$ functions, so pressure is recovered uniquely after fixing its mean.

To raise the spatial regularity, express the velocity through the conormal pressure problem. Put $\bA_0=\bM_0^{-1}$, $a=\mu(\vartheta)^{-1}$ and $\bH=\bh-\vartheta \bg$. Then $\bu=a\bA_0(\bH-\nabla p)$, where
\begin{equation}\label{eq:bounded-pressure}
\left\{
\begin{aligned}
 \Div(a\bA_0\nabla p)&=\Div(a\bA_0\bH),
     &&\text{in }\Omega,\\
 \bn\cdot a\bA_0\nabla p&=\bn\cdot a\bA_0\bH,
     &&\text{on }\partial\Omega,\\
 \int_\Omega p\dx&=0.
\end{aligned}
\right.
\end{equation}
This problem is understood in the variational sense of \eqref{eq:conormal-weak}. Initially $\vartheta=\Jop_\Omega\bu\in H^2$, hence $a=\mu(\vartheta)^{-1}\in H^2$. For $\bh\in H^1$, \cref{lem:bounded-conormal} gives $p\in H^2$ and $\bu\in H^1$. The Neumann heat equation then gives $\vartheta\in H^3$. At order $m\ge2$, the preceding level $\bu\in H^{m-1}$ implies $\vartheta,a\in H^{m+1}$ and $\bH\in H^m$ whenever $\bh\in H^m$. Since $m\le s$, the assumed $C^{s+1,1}$ boundary is more than sufficient for \eqref{eq:conormal-higher-estimate}, whose pressure step itself requires only $C^{m,1}$. This gives $\bu\in H^m$ and $p\in H^{m+1}$ and then $\vartheta\in H^{m+2}$. At the last heat step, apply the Neumann heat estimate in \cref{lem:constant-conormal} with $j=s$ to $b=\bu\cdot\bg\in H^s_\diamond(\Omega)$. Its required boundary class is precisely $C^{s+1,1}$, the class assumed here. This finite induction proves \eqref{eq:bounded-macro-Hs}. The variational change of test in \cref{lem:bounded-conormal}
retains the natural conormal condition throughout this bootstrap.

We next estimate the difference of two solutions at each Sobolev order. Let $\bv=\bu_1-\bu_2$, $\br=\bh_1-\bh_2$ and $\pi=p_1-p_2$. Set
\[
 \beta_{12}=\int_0^1\mu'\bigl(\vartheta_2+t(\vartheta_1-\vartheta_2)\bigr)\dd t,
 \quad \bc_{12}=\beta_{12}\bM_0\bu_2.
\]
The uniform $H^s$ bounds imply $\|\bc_{12}\|_{H^s}\le C_R$. The difference problem is
\begin{equation}\label{eq:bounded-difference-equation}
 \mu(\vartheta_1)\bM_0\bv+\nabla\pi
       =\br-(\Jop_\Omega\bv)\bg-\bc_{12}\Jop_\Omega\bv,
 \quad \bv\in\V_\Omega.
\end{equation}
At order zero, $|\beta_{12}|\le L_\mu$ and $\|\bu_2\|_2\le H_{0,\Omega}/c_0$, so
\[
 \left|\int_\Omega (\Jop_\Omega\bv)\,\bc_{12}\cdot\bv\dx\right|
 \le \frac{\Lambda L_\mu H_{0,\Omega}}{c_0}C_{\infty,\Omega}\|\bv\|_2^2
=c_0q_{0,\Omega}\|\bv\|_2^2.
\]
The $\bg\,\Jop_\Omega\bv$ term has nonnegative quadratic form. This proves the displayed $L^2$ bound and, by pressure recovery and the heat equation, the $m=0$ case of \eqref{eq:bounded-macro-difference}.

For $m=1,2$, multiplication and the two-derivative gain of $\Jop_\Omega$ give $\|\bc_{12}\Jop_\Omega\bv\|_{H^m}\le C_R\|\bv\|_2$. For $2\le m\le s$ they give
\[
 \|\bc_{12}\Jop_\Omega\bv\|_{H^m}
+\|\bg\,\Jop_\Omega\bv\|_{H^m}
       \le C_R\|\bv\|_{H^{m-2}}.
\]
The conormal estimate in \cref{lem:bounded-conormal} applies to \eqref{eq:bounded-difference-equation} with the fixed coefficient $a_1=\mu(\vartheta_1)^{-1}$. The right-hand side consists of $\br$ and quantities bounded by already controlled lower orders. Starting from \eqref{eq:bounded-instantaneous-L2} and inducting proves \eqref{eq:bounded-macro-difference}, including the pressure and thermal norms.

The differentiability of the solution map follows from the linearized equation in the same Sobolev spaces. For $m=s-1$, write $\mathcal X_\Omega^m=H^m(\Omega;\R^3)\cap\V_\Omega$ and $\mathcal Y_\Omega^m=H^m(\Omega;\R^3)$ with their $H^m$ norms. The Leray projection is bounded $\mathcal Y_\Omega^m\longrightarrow\mathcal X_\Omega^m$. Indeed, $\PP_\Omega\bv=\bv-\nabla q$, where $q$ satisfies \cref{lem:constant-conormal} with $\bA_0=\bI$, $\bF=\bv$ and $f=0$. Taking $r=m$ gives $q\in H^{m+1}$ and the required bound. The boundary regularity $C^{m,1}$ follows from $\partial\Omega\in C^{s+1,1}$ and $m\le s$. The map
\[
 \Phi_\Omega:\mathcal X_\Omega^m\times\mathcal Y_\Omega^m\longrightarrow\mathcal X_\Omega^m,
 \quad
 \Phi_\Omega(\bu,\bh)=\PP_\Omega\bigl(\mu(\Jop_\Omega\bu)\bM_0\bu\bigr)
+\Kop_\Omega\bu-\PP_\Omega \bh
\]
is $C^1$ by Sobolev multiplication and smooth composition. At a solution its velocity derivative is
\begin{equation*}
 \mathcal A_{\bu} \bv
   =\PP_\Omega\bigl(\mu(\vartheta)\bM_0\bv
            +\mu'(\vartheta)(\Jop_\Omega\bv)\bM_0\bu\bigr)+\Kop_\Omega\bv.
\end{equation*}
For $\bv\in\V_\Omega$, there holds
\[
 \langle\mathcal A_{\bu}\bv,\bv\rangle
       \ge c_0(1-q_{0,\Omega})\|\bv\|_2^2.
\]
Indeed the possibly signed term is bounded in absolute value by $\Lambda L_\mu\|\bu\|_2\|\Jop_\Omega\bv\|_\infty\|\bv\|_2$. The bilinear form is bounded on $\V_\Omega$. The nonsymmetric Lax--Milgram theorem therefore supplies its $L^2$ inverse. For $\br\in\mathcal X_\Omega^m$, its solution satisfies a pressure equation of the form \eqref{eq:bounded-difference-equation}, with $\bc_{12}$ replaced by $\mu'(\vartheta)\bM_0\bu\in H^s$. The same conormal induction proves
\begin{equation*}
 \|\mathcal A_{\bu}^{-1}\br\|_{H^m}\le C_R\|\br\|_{H^m}.
\end{equation*}
Thus $\mathcal A_{\bu}$ is an isomorphism on $\mathcal X_\Omega^m$. The implicit function theorem in these specified spaces yields the local $C^1$ solution map $\Sop_\Omega$ and
\[
D\Sop_\Omega(\bh)\br=\mathcal A_{\bu}^{-1}\PP_\Omega\br,
 \quad \br\in\mathcal Y_\Omega^m.
\]
The margin $q_{0,\Omega}\le1/2$ leaves a neighborhood of each base forcing on which the coercivity remains positive. The heat solution map has derivative $\Jop_\Omega D\Sop_\Omega(\bh)$ and the pressure derivative follows from the gradient equation with zero mean. These derivatives give the time regularity required by the transport construction.
\end{proof}

The instantaneous solution map now closes the density transport iteration and gives the local solution asserted in \cref{thm:bounded-local}.

\begin{proof}[Proof of \cref{thm:bounded-local}]
We denote by $\Sop_\Omega(\bh)$ the forcing-to-velocity map constructed in \cref{prop:bounded-instantaneous}. In the transport iteration use $\bv^n=\Sop_\Omega(\rho^n \bfv)$ and solve $\partial_t\rho^{n+1}+\bv^n\cdot\nabla\rho^{n+1}=0$ with $\rho^{n+1}(0)=\rho_0$, starting from $\rho^0(t)=\rho_0$. On a density ball of radius $R$, the bound $\|\rho^n\bfv\|_{H^s}\le C_{\bfv}R$ allows us to apply \cref{prop:bounded-instantaneous} with forcing radius $C_{\bfv}R$. The transport iteration then follows the proof of \cref{thm:local} with $\Sop_\Omega$ evaluated at the force. The velocities are Lipschitz and tangent to $\partial\Omega$, so their characteristic flows preserve $\overline\Omega$ and Lebesgue measure. At the highest derivative level, the transport estimates follow from the characteristic representation for tangent solenoidal fields. Indeed, if a Lipschitz tangent solenoidal field $\bv$ has flow $X(t;s,\bx)$, then this flow preserves $\Omega$ and Lebesgue measure. For $z_0\in L^2(\Omega)$ and $f_z\in L^1(0,T;L^2(\Omega))$, the formula
\[
z(t,X(t;0,\bx)) =z_0(\bx)+\int_0^t f_z(a,X(a;0,\bx))\dd a
\]
defines the unique $L^2$ transport solution, belongs to $C_tL^2$ and satisfies
\[
 \frac12\|z(t)\|_2^2-\frac12\|z_0\|_2^2
=\int_0^t\int_\Omega f_z z\dx\dd a.
\]
These assertions follow first for smooth scalar data along the fixed flow and then by $L^2$ approximation and measure preservation. Applied to the differentiated transport equation with $z=\partial^\beta\rho$ and $f_z=-[\partial^\beta,\bv\cdot\nabla]\rho$, this gives the $H^s$ estimate. At the highest order, testing $\partial^\beta\rho(t)$ against $C_c^\infty(\Omega)$ and using $\rho(t)\to\rho_0$ in $H^{s-1}$ identifies the weak initial trace as $\partial^\beta\rho_0$, by the density argument in the proof of \cref{thm:local}. The characteristic representation with this datum then gives $\partial^\beta\rho\in C_tL^2$ and hence $\rho\in C_tH^s$. The Sobolev product and difference estimates therefore give the same short-time iteration and contraction in $C_tH^{s-1}$ as in \cref{thm:local}. The locally $C^1$ instantaneous map gives the claimed time regularity of $\bu,\vartheta$ and pressure follows from \eqref{eq:bounded-pressure}.
\end{proof}

\section{Bounded-domain correctors and quantitative homogenization}\label{sec:bounded-quantitative}
With the bounded macroscopic solution and the common elliptic estimates available, it remains to adapt the periodic comparison field to the boundary $\partial\Omega$. This section constructs an admissible boundary-corrected lifting and then repeats the relative-energy argument with the additional boundary-layer errors made explicit.

\subsection{The boundary-corrected Stokes lifting}
\label{sec:bounded-cutoff}\label{sec:bounded-lift-time}
The embedding $H^s(\Omega)\hookrightarrow W^{2,\infty}(\Omega)$ for $s\ge4$ gives the spatial regularity required for the boundary correction. To match the microscopic outer boundary condition, a solenoidal correction supported in a thin strip removes the tangential trace of the Darcy velocity.

\begin{lemma}[Boundary trace removal]\label{lem:bounded-cutoff}
For $\bv\in W^{2,\infty}(\Omega;\R^3)\cap\V_\Omega$ and $0<\ell<\ell_0$, there is a linear boundary correction $\bb^\ell[\bv]$ supported in $S_\ell$ such that
\begin{equation}\label{eq:bounded-cutoff-properties}
 \begin{gathered}
 \Div \bb^\ell[\bv]=0,\quad \bb^\ell[\bv]|_{\partial\Omega}=\bv,
 \quad \bv^\ell:=\bv-\bb^\ell[\bv]\in H^1_0(\Omega)\cap\V_\Omega,\\
 \|\bb^\ell[\bv]\|_\infty+\ell\|\nabla \bb^\ell[\bv]\|_\infty\le C_{\bv},
 \quad
 \|\bb^\ell[\bv]\|_2\le C_{\bv}\ell^{1/2},\quad
 \|\nabla \bb^\ell[\bv]\|_2\le C_{\bv}\ell^{-1/2}.
 \end{gathered}
\end{equation}
Here $C_{\bv}\le C_\Omega\|\bv\|_{W^{2,\infty}}$.
\end{lemma}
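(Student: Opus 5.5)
The plan is to build $\bb^\ell[\bv]$ as the curl of a cut-off vector potential, or equivalently as a cut-off tangential extension followed by a local solenoidal fix. I would work in the tubular neighbourhood $S_{\ell_0}$. There the signed distance $d(\bx)=\operatorname{dist}(\bx,\partial\Omega)$ is $C^{3,\beta}$ and the nearest-point projection $\pi$ is $C^{2,\beta}$.

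First I construct a vector potential. Since $\bv\cdot\bn=0$ on $\partial\Omega$, the field $\bv$ is tangential on the wall. I look for a field $\bA$ on $S_{\ell_0}$ with $\nabla\times\bA=\bv$ near $\partial\Omega$ and $\bA=0$ on $\partial\Omega$, so that $\bA$ vanishes to first order at the wall. Concretely, I would take
\[
\bA(\bx)=d(\bx)\,\bigl(\bn\times\bv\bigr)(\pi(\bx))+O(d^2)\ \text{corrections}.
\]
The first-order term yields $\nabla\times\bA|_{\partial\Omega}=-(\nabla d\times(\bn\times\bv))=\bv_\tau=\bv$ on $\partial\Omega$. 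This uses $\nabla d=-\bn$ with the outward convention, and the sign is to be fixed when writing it out. Only the boundary trace of $\nabla\times\bA$ matters, so no exact identity in the interior is needed. Then I set
\[
\bb^\ell[\bv]=\nabla\times\bigl(\chi(d/\ell)\bA\bigr),
\]
where $\chi\in C_c^\infty([0,1))$ equals one near $0$. This field is automatically solenoidal, supported in $S_\ell$, and linear in $\bv$. On $\partial\Omega$ we have $\chi=1$, $\bA=0$ and $\nabla\chi$ is multiplied by $\bA=0$, so its trace equals $\nabla\times\bA|_{\partial\Omega}=\bv$. Hence $\bv^\ell=\bv-\bb^\ell[\bv]$ has zero trace. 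It is also solenoidal in $\Omega$, so it belongs to $H^1_0(\Omega)\cap\V_\Omega$, since zero trace gives the weak normal condition.

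Next come the estimates. Write $\bb^\ell=\chi\,\nabla\times\bA+\ell^{-1}\chi'(d/\ell)\nabla d\times\bA$. Since $|\bA|\le Cd\|\bv\|_{W^{1,\infty}}\le C\ell\|\bv\|_{W^{1,\infty}}$ on the support, and $|\nabla\bA|\le C\|\bv\|_{W^{1,\infty}}$, we get $\|\bb^\ell\|_\infty\le C\|\bv\|_{W^{1,\infty}}$.

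One derivative more gives terms of the form $\ell^{-2}\chi''|\bA|\le C\ell^{-1}$, $\ell^{-1}\chi'|\nabla\bA|$, and $|\nabla^2\bA|\le C\|\bv\|_{W^{2,\infty}}$. The last bound uses $\pi\in C^{2}$ and $d\in C^3$, which is available from the $C^{3,\beta}$ boundary, and it is exactly why $W^{2,\infty}$ is required. Together these give $\ell\|\nabla\bb^\ell\|_\infty\le C_\Omega\|\bv\|_{W^{2,\infty}}$. The $L^2$ bounds then follow by integrating these pointwise bounds over the support, using $|S_\ell|\le C_\Omega\ell$ from \eqref{eq:bounded-count}.

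I expect the main obstacle to be the geometric bookkeeping in the first step. One must verify that the trace of $\nabla\times\bA$ is exactly $\bv$, including the terms where derivatives fall on $\pi$ and on the surface-dependent frame $\bn\circ\pi$. At $d=0$ these contribute only through factors multiplied by $d$, so they vanish. Checking this carefully, and ensuring that $\bA$ lies in $W^{2,\infty}$ with norm $\le C_\Omega\|\bv\|_{W^{2,\infty}}$, is where the $C^{3,\beta}$ regularity of $\partial\Omega$ enters. If the direct formula proves awkward, I would instead take the cut-off $\chi(d/\ell)\bv$. I would then repair its divergence $\ell^{-1}\chi'\nabla d\cdot\bv$, which is $O(1)$ because $\nabla d\cdot\bv$ vanishes on the wall, using a Bogovski\u\i{} operator on the strip. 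That route, however, makes the uniform $L^\infty$ gradient bound harder, so the potential construction is preferred.
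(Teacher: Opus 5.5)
Your proposal is correct and follows the paper's proof essentially verbatim. The paper uses exactly the potential $\boldsymbol A_{\bv}=d\,(\bn\circ\pi)\times(\bv\circ\pi)$, with no $O(d^2)$ corrections. Its curl on $\partial\Omega$ is $\nabla d\times(\bn\times\bv)=-\bn\times(\bn\times\bv)=\bv$, so the extra minus sign in your displayed formula should simply be dropped. The paper then sets $\bb^\ell[\bv]=\operatorname{curl}(\chi(d/\ell)\boldsymbol A_{\bv})$ and obtains the estimates from $|\boldsymbol A_{\bv}|\le Cd$, from $|\nabla\boldsymbol A_{\bv}|+|\nabla^2\boldsymbol A_{\bv}|\le C_\Omega\|\bv\|_{W^{2,\infty}}$ and from $|S_\ell|\le C_\Omega\ell$, exactly as you propose.
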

\begin{proof}
Let $d(\bx)=\operatorname{dist}(\bx,\partial\Omega)$ and let $\pi(\bx)$ be the nearest boundary point in a fixed tubular neighborhood. Then $\nabla d=-\bn\circ\pi$. In that neighborhood define
\[
 \boldsymbol A_{\bv}(\bx)
=d(\bx)\,\bn(\pi(\bx))\times\bv(\pi(\bx)).
\]
We multiply $\boldsymbol A_{\bv}$ by a fixed cutoff that equals one in a smaller neighborhood and extend the product by zero to $\Omega$. The $C^{3,\beta}$ boundary and the trace of $\bv\in W^{2,\infty}$ give
\[
 \|\boldsymbol A_{\bv}\|_{W^{2,\infty}(\Omega)}
 \le C_\Omega\|\bv\|_{W^{2,\infty}(\Omega)},
 \qquad \boldsymbol A_{\bv}|_{\partial\Omega}=0.
\]
The construction is linear. On the boundary, $\operatorname{curl}\boldsymbol A_{\bv} =-\bn\times(\bn\times\bv)=\bv$ since $\bv\cdot\bn=0$. All other product-rule terms contain the factor $d$. This directly gives the bounded linear boundary potential, in the spirit of
\cite[Lemma~3.8]{HLO}.

Reduce $\ell_0$ to lie in the smaller tubular neighborhood. Choose a scalar cutoff $\xi_\ell(d)$ which equals one for $d\le\ell/2$ and zero for $d\ge\ell$, with derivatives bounded by $C\ell^{-1}$ and $C\ell^{-2}$. Set
\[
 \bb^\ell[\bv]
=\operatorname{curl}(\xi_\ell(d)\boldsymbol A_{\bv}).
\]
Its divergence is zero and its boundary trace is $\bv$. Since $|\boldsymbol A_{\bv}|\le C_{\bv}d$, $|\nabla\boldsymbol A_{\bv}|+|\nabla^2\boldsymbol A_{\bv}|\le C_{\bv}$, we have $|\bb^\ell[\bv]|\le C_{\bv}$ and $|\nabla\bb^\ell[\bv]|\le C_{\bv}\ell^{-1}$ on $S_\ell$. The measure bound $|S_\ell|\le C_\Omega\ell$ proves \eqref{eq:bounded-cutoff-properties}.
\end{proof}

For $C\eps\le\ell<\ell_0$, define
\begin{equation}\label{eq:bounded-comparison}
 \mathcal C_{\eps,\ell}\bv
      =\bW_\eps(\bv-\bb^\ell[\bv])
              -\mathfrak B_\eps(\bv-\bb^\ell[\bv]).
\end{equation}
Here $\bb^\ell[\bv]$ is the boundary correction from \cref{lem:bounded-cutoff}, while $\mathfrak B_\eps$ is the cellwise divergence repair from \cref{lem:bounded-local-repair}. After these two corrections, the resulting field belongs to $\V_{\eps,\Omega}$. The localized estimates give
\begin{equation*}
 \|\mathfrak d_\eps(\bv-\bb^\ell[\bv])\|_2
       \le C_{\bv}\tau_\eps(1+\ell^{-1/2}).
\end{equation*}
The first estimate in \eqref{eq:bounded-local-cell} and the cutoff bounds give
\[
 \begin{aligned}
 \|\mathfrak d_\eps(\bv)\|_2
 &\le C\tau_\eps\|\nabla\bv\|_\infty,\\
 \|\mathfrak d_\eps(\bb^\ell[\bv])\|_2
 &\le\|\bW_\eps-\bI\|_{L^2(S_\ell)}
       \|\nabla\bb^\ell[\bv]\|_\infty
 \le C_{\bv}\tau_\eps\ell^{-1/2}.
 \end{aligned}
\]
The cells intersecting $S_\ell$ lie in $S_{\ell+C\eps}$, whose volume is $O(\ell)$ since $\ell\ge C\eps$. Combining \eqref{eq:bounded-local-bog}, \eqref{eq:bounded-local-cell} and \eqref{eq:bounded-cutoff-properties} gives
\begin{align}
 \|\widetilde{\mathcal C_{\eps,\ell}\bv}-\bv\|_2
     &\le C_{\bv}(\tau_\eps+\ell^{1/2}),
            \label{eq:bounded-comparison-L2}\\
 \sig\|\nabla(\mathcal C_{\eps,\ell}\bv-\bW_\eps\bv)\|_2
     &\le C_{\bv}\bigl(\ell^{1/2}+\sig\ell^{-1/2}\bigr).
            \label{eq:bounded-comparison-energy}
\end{align}
The underlying gradient bound is
\[
 \|\nabla(\bW_\eps \bb^\ell[\bv])\|_2
+\|\nabla\mathfrak B_\eps(\bv-\bb^\ell[\bv])\|_2
 \le C_{\bv}\left(\ell^{1/2}\sig^{-1}+\ell^{-1/2}
+\tau_\eps(1+\ell^{-1/2})\right).
\]
The $L^2$ repair contributes at most $C_{\bv}\eps\tau_\eps(1+\ell^{-1/2})\le C_{\bv}\ell^{1/2}$, since $\eps\tau_\eps=a_\eps\le\eps\le C\ell$ and $\ell<1$. The factor $\eps$ in \eqref{eq:bounded-local-bog} keeps the $L^2$ divergence correction within the boundary-strip error.

The field $\mathcal C_{\eps,\ell}\bv$ is compared with the exact lifting $\bw_\eps[m,\bv]$ defined in \eqref{eq:bounded-lift}, for $m\in W^{1,\infty}(\Omega)$ with $0<m_*\le m\le m^*$. The untruncated field $\bW_\eps\bv$ has zero normal trace on the wall and may retain a tangential trace. We first estimate its stress against zero-trace tests and then bound the boundary correction.

\begin{lemma}[Untruncated weighted residual with a tangent trace]
\label{lem:bounded-untruncated}
Let $m\in W^{1,\infty}(\Omega)$ satisfy $0<m_*\le m\le m^*<\infty$ and let $\bU=\bW_\eps\bv$, with $\bv\in W^{1,\infty}(\Omega;\R^3)\cap\V_\Omega$. For $\bphi\in\V_{\eps,\Omega}$,
\begin{equation}\label{eq:bounded-untruncated-residual}
 \left|2\sig^2\int_{\Oe}m\sym \bU:\sym\bphi\dx
              -\int_{\Oe}m\bM_0\bv\cdot\bphi\dx\right|
 \le C\left(\frac\eps\sig+\sig\right)
                                 \sig\|\sym\bphi\|_2.
\end{equation}
The constant depends only on the fixed geometry, $m_*,m^*$, $\|m\|_{W^{1,\infty}}$ and $\|\bv\|_{W^{1,\infty}}$.
\end{lemma}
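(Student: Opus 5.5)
The plan is to repeat the proof of \cref{lem:weighted-residual} almost line by line. The changes are that the test fields $\bphi\in\V_{\eps,\Omega}$ now vanish on $\partial\Omega$ as well as on the holes, and that $\bU=\bW_\eps\bv$ has no divergence correction. Put $Z=\sig\|\sym\bphi\|_2$. By \eqref{eq:bounded-poincare} and the identity $2\|\sym\bphi\|_2^2=\|\nabla\bphi\|_2^2$ for zero-extended solenoidal fields in $H^1_0(\Omega)$, we have $\|\bphi\|_2\le CZ$ and $\|\nabla\bphi\|_2\le C\sig^{-1}Z$.

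The transpose identity \eqref{eq:weighted-transpose} still holds. Integration by parts now uses $\Div\bphi=0$ together with the zero trace of $\bphi$ on $\partial\Oe$, which covers both the holes and $\partial\Omega$. The tangential trace of $\bU$ on $\partial\Omega$ therefore produces no boundary term. The term $T_m$ is bounded by $\sig^2\|\nabla m\|_\infty\|\bU\|_2\|\nabla\bphi\|_2\le C\sig Z$, since $\|\bU\|_2\le C\|\bv\|_\infty|\Omega|^{1/2}$.

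Next write $\nabla\bU=\sum_iv_i\nabla\bW_\eps^i+\bW_\eps\nabla\bv$. The product term is bounded by $C\sig^2\|\nabla\bv\|_\infty\|\nabla\bphi\|_2\le C\sig Z$, exactly as before. For the column terms set $\omega_i=mv_i\in W^{1,\infty}(\Omega)$. Since $\bphi\in H^1_0(\Oe)$, the product $\omega_i\bphi$ belongs to $H^1_0(\Oe;\R^3)$, which is precisely the test class in the bounded residual estimate \eqref{eq:bounded-cell-residual}. This is the step where the wall enters: the residual estimate is only available against tests vanishing on $\partial\Omega$, and that is what the zero trace of $\bphi$ supplies. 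Pairing $\mathcal R_\eps^i=\sig^2(-\Delta\bW_\eps^i+\nabla Q_\eps^i)-\bM_0\be_i$ with $\omega_i\bphi$ gives the same decomposition as in the periodic proof:
\[
 \langle\mathcal R_\eps^i,\omega_i\bphi\rangle
 -\sig^2\int_{\Oe}\nabla\bW_\eps^i:(\bphi\otimes\nabla\omega_i)\dx
 +\sig^2\int_{\Oe}Q_\eps^i\nabla\omega_i\cdot\bphi\dx.
\]
Here \eqref{eq:bounded-cell-residual} bounds the first term by $C\eps(\|\bphi\|_2+\|\nabla\bphi\|_2)\le C(\eps/\sig)Z$. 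The bound $\sig(\|\nabla\bW_\eps\|_2+\|\bQ_\eps\|_2)\le C$ from \cref{lem:bounded-tools} then gives $C\sig Z$ for both commutators.

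The pressure commutator needs one check. Because $\bW_\eps\bv$ is not solenoidal, the pairing of $\nabla Q^i_\eps$ with $\omega_i\bphi$ produces exactly the displayed $Q_\eps^i\nabla\omega_i\cdot\bphi$ term, using $\Div(\omega_i\bphi)=\nabla\omega_i\cdot\bphi$. No boundary contribution arises, since $\omega_i\bphi$ vanishes on $\partial\Oe$ and $Q_\eps$ is defined through the distributional residual on $H^1_0(\Oe)$.

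Summing the three contributions over $i$ and adding $T_m$ and the product term yields the bound $C(\eps/\sig+\sig)Z$, which is \eqref{eq:bounded-untruncated-residual}. The main obstacle I expect is justifying that \eqref{eq:bounded-cell-residual} applies uniformly up to the wall. Near $\partial\Omega$ we have $\bW_\eps=\bI$ and $\bQ_\eps=0$ on $S_{\eps/4}$, so the residual there is simply $-\bM_0\be_i$ on the strip of incomplete cells. \cref{lem:bounded-tools} already absorbs that contribution into $C\eps\|\bzeta\|_{H^1}$, via the Poincar\'e estimate in the strip, so I would invoke it directly rather than reprove it.
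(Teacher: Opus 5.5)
Your proposal is correct and, apart from one step, it is the paper's proof. The bounds for $Z$, the product and coefficient-transpose terms, and the pairing of \eqref{eq:bounded-cell-residual} with $\omega_i\bphi\in H^1_0(\Oe;\R^3)$ together with the two commutators are treated exactly as there.

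The difference is in how \eqref{eq:weighted-transpose} survives the tangential wall trace of $\bU$. The paper computes, for smooth zero-trace tests, the wall integral $\sig^2\int_{\partial\Omega}m\sum_{i,j}n_iU_j\partial_j\phi_i\,\mathrm dS$. It rewrites it via $\partial_j\phi_i=n_j\partial_n\phi_i$ as $\sig^2\int_{\partial\Omega}m(\bU\cdot\bn)(\bn\cdot\partial_n\bphi)\,\mathrm dS$. It then kills this term with $\bU\cdot\bn=\bv\cdot\bn=0$, using $\bW_\eps=\bI$ near the wall, and extends to all of $\V_{\eps,\Omega}$ by \cref{lem:finite-boundary-density}.

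You instead put the burden on the test field. That is valid, but you should say why. The boundary term contains $\nabla\bphi$ rather than $\bphi$, so the zero trace alone does not remove it. For $\bphi$ smooth up to the wall and vanishing there, one has $\sum_in_i\partial_j\phi_i=n_j\Div\bphi$ on $\partial\Omega$, and this is zero by solenoidality; it is exactly the other factor $\bn\cdot\partial_n\bphi$ in the paper's product. Equivalently, approximate $\bphi$ in $H^1$ by $\bphi_k\in C_c^\infty(\Oe;\R^3)$ and integrate by parts with no boundary terms. The only leftover is $\int_{\Oe}\Div(m\bU)\Div\bphi_k\dx$, which tends to $0$.

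Made precise this way, your route shows that the transpose identity holds for every $\bU\in H^1(\Oe)$. So $\bv\cdot\bn=0$ is not used in this lemma at all; it matters only later, for the boundary correction and the admissibility of the comparison field. Your route also needs only density of $C_c^\infty$ in $H^1_0$ rather than a solenoidal approximation. The paper's route instead makes the cancellation visible through the structure of $\bU$ at the wall.

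A minor wording point: the pressure commutator arises because the test $\omega_i\bphi$ is not solenoidal, not because $\bW_\eps\bv$ is not.
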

\begin{proof}
For a smooth zero-trace test $\bphi$, the transpose-term calculation in \cref{lem:weighted-residual} produces the additional boundary integral
\[
 \sig^2\int_{\partial\Omega}
m\sum_{i,j}n_iU_j\partial_j\phi_i\,\,\mathrm dS.
\]
The tangential derivatives of $\bphi$ vanish on the wall, so $\partial_j\phi_i=n_j\partial_n\phi_i$. The displayed term is therefore $\sig^2\int_{\partial\Omega}m(\bU\cdot\bn) (\bn\cdot\partial_n\bphi)\,\,\mathrm dS=0$, since $\bW_\eps=\bI$ near the wall and $\bv\cdot\bn=0$. On the hole boundaries $\bU=0$. Thus \eqref{eq:weighted-transpose} holds for smooth zero-trace tests. For a general $\bphi\in\V_{\eps,\Omega}$, the spatial approximation in \cref{lem:finite-boundary-density} and continuity of the volume pairings in $H^1$ extend this identity to $\bphi$.

For the remaining calculation set $\mathcal R_{\eps,\Omega}^i=\sig^2(-\Delta\bW_\eps^i+\nabla Q_\eps^i)-\bM_0\be_i$, put $\omega_i=mv_i$ and let $Z=\sig\|\sym\bphi\|_2$. The bounded-domain Poincar\'e inequality gives $\|\bphi\|_2\le CZ$ and $\|\nabla\bphi\|_2\le C\sig^{-1}Z$. Since $\omega_i\bphi\in H^1_0(\Oe)$, \eqref{eq:bounded-cell-residual} and the corrector bounds give
\begin{align*}
 |\langle\mathcal R_{\eps,\Omega}^i,\omega_i\bphi\rangle|
 &\le C\eps\|\omega_i\bphi\|_{H^1}
 \le C\frac{\eps}{\sig}Z,\\
 \sig^2\left|\int_{\Oe}\nabla\bW_\eps^i:(\bphi\otimes\nabla\omega_i)\dx\right|
 &\le C\sig^2\|\nabla\bW_\eps^i\|_2\|\nabla\omega_i\|_\infty\|\bphi\|_2
 \le C\sig Z,\\
 \sig^2\left|\int_{\Oe}Q_\eps^i\nabla\omega_i\cdot\bphi\dx\right|
 &\le C\sig^2\|Q_\eps^i\|_2\|\nabla\omega_i\|_\infty\|\bphi\|_2
 \le C\sig Z.
\end{align*}
The product contribution satisfies $\sig^2|\int m\bW_\eps\nabla\bv:\nabla\bphi|\le C\sig Z$. The coefficient-transpose term obeys the same bound because $\|\bW_\eps\bv\|_2\le C$. Combining these bounds proves \eqref{eq:bounded-untruncated-residual}.
\end{proof}

The untruncated stress estimate, together with the boundary-correction cost, controls the exact lifting. The choice of strip thickness is made next and yields the bounded-domain approximation rate.

\begin{proposition}[Bounded-domain frozen Stokes rate]
\label{prop:bounded-stokes-rate}
Let $\bv\in W^{2,\infty}(\Omega;\R^3)\cap\V_\Omega$ and let $m\in W^{1,\infty}(\Omega)$ satisfy $0<m_*\le m\le m^*<\infty$. Then
\begin{equation}\label{eq:bounded-stokes-rate}
 \|\widetilde{\bw}_\eps[m,\bv]-\bv\|_{L^2(\Omega)}
     \le C\left(\frac\eps\sig+\sig^{1/2}\right).
\end{equation}
More precisely, for $C\eps\le\ell<\ell_0$,
\begin{equation}\label{eq:bounded-stokes-unoptimized}
 \|\widetilde{\bw}_\eps[m,\bv]-\bv\|_2
     \le C\left(\frac\eps\sig+\sig+
                       \ell^{1/2}+\sig\ell^{-1/2}\right).
\end{equation}
The constants depend only on the fixed geometry, coefficient bounds and the stated norms of $m,\bv$, uniformly for the admissible $\eps$ and $\ell$.
\end{proposition}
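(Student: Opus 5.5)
The plan mirrors the periodic proof of \cref{prop:stokes-approx}, with the admissible comparison field $\mathcal C_{\eps,\ell}\bv$ from \eqref{eq:bounded-comparison} in place of $\Cgeo\bv$. Write $\bw=\bw_\eps[m,\bv]$ and $\bphi=\bw-\mathcal C_{\eps,\ell}\bv\in\V_{\eps,\Omega}$. Subtracting the lifting identity \eqref{eq:bounded-lift} tested with $\bphi$ gives
\[
 2\sig^2\int_{\Oe}m\,\sym\bphi:\sym\bphi\dx
 =\int_{\Oe}m\bM_0\bv\cdot\bphi\dx-2\sig^2\int_{\Oe}m\,\sym(\mathcal C_{\eps,\ell}\bv):\sym\bphi\dx .
\]
I split $\mathcal C_{\eps,\ell}\bv=\bW_\eps\bv+(\mathcal C_{\eps,\ell}\bv-\bW_\eps\bv)$ on the right.

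The untruncated part is handled by \cref{lem:bounded-untruncated}, which applies because $\bphi$ has zero trace on $\partial\Oe$. It bounds the residual of $\bW_\eps\bv$ by $C(\eps/\sig+\sig)Z$, where $Z=\sig\|\sym\bphi\|_2$. For the correction part, Cauchy--Schwarz and \eqref{eq:bounded-comparison-energy} give
\[
2\sig^2\Bigl|\int_{\Oe}m\,\sym(\mathcal C_{\eps,\ell}\bv-\bW_\eps\bv):\sym\bphi\dx\Bigr|
\le 2m^*\sig\|\nabla(\mathcal C_{\eps,\ell}\bv-\bW_\eps\bv)\|_2\,Z
\le C(\ell^{1/2}+\sig\ell^{-1/2})Z .
\]
Coercivity $2m_*Z^2\le\cdots$ then yields $Z\le C(\eps/\sig+\sig+\ell^{1/2}+\sig\ell^{-1/2})$. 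The Poincar\'e inequality \eqref{eq:bounded-poincare} converts this into the same bound for $\|\bphi\|_2$.

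Next, the triangle inequality with \eqref{eq:bounded-comparison-L2} adds $C(\tau_\eps+\ell^{1/2})$. Since $\tau_\eps=\eps^{\alpha-1}=(\eps/\sig)^2\le\eps/\sig$, this gives \eqref{eq:bounded-stokes-unoptimized}. For \eqref{eq:bounded-stokes-rate}, choose $\ell=\sig$. This is admissible because $\sig=\eps^{(3-\alpha)/2}\ge C\eps$ for small $\eps$ (as $(3-\alpha)/2<1$) and $\sig<\ell_0$ eventually. With this choice $\ell^{1/2}+\sig\ell^{-1/2}=2\sig^{1/2}$ and $\sig\le\sig^{1/2}$.

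The constants depend on $\|\bv\|_{W^{2,\infty}}$ through $C_{\bv}$ in \cref{lem:bounded-cutoff}, and on $m$ only through its bounds and $\|m\|_{W^{1,\infty}}$. This gives the claimed uniformity.

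The main obstacle is the correction term. \eqref{eq:bounded-comparison-energy} must genuinely hold, in particular the bound on $\|\nabla(\bW_\eps\bb^\ell[\bv])\|_2$. For it, I would write $\nabla(\bW_\eps\bb^\ell)=(\nabla\bW_\eps)\bb^\ell+\bW_\eps\nabla\bb^\ell$. The first piece is controlled by the strip estimate \eqref{eq:bounded-local-cell} together with $\|\bb^\ell\|_\infty\le C_{\bv}$. The second is controlled by $\|\nabla\bb^\ell\|_2\le C\ell^{-1/2}$, together with the repair bound from \cref{lem:bounded-local-repair}.

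A secondary point is that \cref{lem:bounded-untruncated} uses the weighted transpose identity with a tangential trace. For the finite-energy test $\bphi$, that identity is justified by the density argument of \cref{lem:finite-boundary-density}.
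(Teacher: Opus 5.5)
Your proposal is correct and follows the paper's proof essentially step for step. The shared steps are: comparison with $\mathcal C_{\eps,\ell}\bv$; splitting its stress into the untruncated part, handled by \cref{lem:bounded-untruncated}, and the correction, handled by \eqref{eq:bounded-comparison-energy}; coercivity plus \eqref{eq:bounded-poincare}; the triangle inequality with \eqref{eq:bounded-comparison-L2} and $\tau_\eps\le\eps/\sig$; and finally the choice $\ell=\sig$. Your check of \eqref{eq:bounded-comparison-energy} via the product rule, the strip bound \eqref{eq:bounded-local-cell} and the repair estimate reproduces the gradient bound the paper records just before the proposition.
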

\begin{proof}
The comparison field is admissible by \eqref{eq:bounded-comparison}. Separate its stress into that of $\bU=\bW_\eps\bv$ and that of $\mathcal C_{\eps,\ell}\bv-\bU$. The first has residual \eqref{eq:bounded-untruncated-residual}, and the second satisfies
\[
2\sig^2\left|\int_{\Oe}m
       \sym(\mathcal C_{\eps,\ell}\bv-\bU):\sym\bphi\dx\right|
 \le C\bigl(\ell^{1/2}+\sig\ell^{-1/2}\bigr)
                                      \sig\|\sym\bphi\|_2
\]
by \eqref{eq:bounded-comparison-energy}. Subtract this comparison relation from \eqref{eq:bounded-lift} and test with $\bphi=\bw_\eps[m,\bv]-\mathcal C_{\eps,\ell}\bv$. Coercivity and \eqref{eq:bounded-poincare} give
\[
 \|\bw_\eps[m,\bv]-\mathcal C_{\eps,\ell}\bv\|_2
 \le C\left(\frac\eps\sig+\sig+\ell^{1/2}+\sig\ell^{-1/2}\right).
\]
The triangle inequality and \eqref{eq:bounded-comparison-L2} then imply \eqref{eq:bounded-stokes-unoptimized}, since $\tau_\eps=(\eps/\sig)^2\le\eps/\sig$ for sufficiently small $\eps$.

The boundary contribution to the squared estimate is $O(\ell+\sig^2/\ell)$. Choose $\ell=\sig$, which is admissible for all sufficiently small $\eps$, since $\eps/\sig=\eps^{(\alpha-1)/2}\to0$ and $\sig\to0$. Substitution yields \eqref{eq:bounded-stokes-rate}.
\end{proof}

\begin{lemma}[Stokes approximation with zero full boundary trace]
\label{lem:bounded-zero-trace}
Under the coefficient and geometry hypotheses of \cref{prop:bounded-stokes-rate}, let $\bv\in W^{2,\infty}(\Omega;\R^3)\cap\V_\Omega$ satisfy $\bv|_{\partial\Omega}=0$. Then
\[
 \|\widetilde{\bw}_\eps[m,\bv]-\bv\|_2
 \le C\left(\frac\eps\sig+\sig\right),
\]
where $C$ is independent of $\eps$ and depends only on the fixed geometry, coefficient bounds and the stated norms of $m,\bv$.
\end{lemma}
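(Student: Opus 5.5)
When $\bv$ has zero full trace on $\partial\Omega$, no boundary correction is needed, so I would take $\bb^\ell[\bv]=0$. The comparison field is then
\[
 \mathcal C_\eps\bv=\bW_\eps\bv-\mathfrak B_\eps(\bv),
\]
which is \eqref{eq:bounded-comparison} with the boundary correction omitted. Since $\bW_\eps=\bI$ near $\partial\Omega$ and $\bv=0$ there, $\bW_\eps\bv$ has zero trace on $\partial\Omega$. It also vanishes on the holes. Because $\Div\bv=0$, \cref{lem:bounded-local-repair} gives $\mathfrak B_\eps(\bv)\in H^1_0(\Oe)$ with $\Div\mathfrak B_\eps(\bv)=\mathfrak d_\eps(\bv)$. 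Hence $\mathcal C_\eps\bv\in\V_{\eps,\Omega}$. For the size of the defect, \eqref{eq:bounded-local-cell} and \cref{lem:bounded-tools} give $\|\mathfrak d_\eps(\bv)\|_2\le C\tau_\eps\|\nabla\bv\|_\infty$. Then \eqref{eq:bounded-local-bog} yields
\[
 \|\nabla\mathfrak B_\eps(\bv)\|_2+\eps^{-1}\|\mathfrak B_\eps(\bv)\|_2\le C\tau_\eps .
\]

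Next I would compare $\mathcal C_\eps\bv$ with the exact lifting. \cref{lem:bounded-untruncated} applies to $\bU=\bW_\eps\bv$; it only uses $\bv\cdot\bn=0$, so the boundary integral again vanishes. It gives a residual bounded by $C(\eps/\sig+\sig)\sig\|\sym\bphi\|_2$ for $\bphi\in\V_{\eps,\Omega}$. The repair term contributes
\[
 2\sig^2\left|\int_{\Oe}m\,\sym\mathfrak B_\eps(\bv):\sym\bphi\dx\right|\le C\sig^2\tau_\eps\|\nabla\bphi\|_2\le C\sig\tau_\eps\,\sig\|\sym\bphi\|_2,
\]
which is even smaller. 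Subtracting this from \eqref{eq:bounded-lift}, I would test with $\bphi=\bw_\eps[m,\bv]-\mathcal C_\eps\bv\in\V_{\eps,\Omega}$. Coercivity ($m\ge m_*$) and the Poincar\'e inequality \eqref{eq:bounded-poincare} then give
\[
 \|\bw_\eps[m,\bv]-\mathcal C_\eps\bv\|_2\le C(\eps/\sig+\sig).
\]

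Finally, I would bound $\|\widetilde{\mathcal C_\eps\bv}-\bv\|_2$ by
\[
 \|(\bW_\eps-\bI)\bv\|_2+\|\mathfrak B_\eps(\bv)\|_2\le C\eps^{\alpha-1}\|\bv\|_\infty+C\eps\tau_\eps .
\]
Since $\tau_\eps=(\eps/\sig)^2\le\eps/\sig$ for small $\eps$, the triangle inequality concludes the proof. The only point that needs care is the admissibility of $\bW_\eps\bv$ as a zero-trace field, and hence of $\mathcal C_\eps\bv$ in $\V_{\eps,\Omega}$. This rests on $\bW_\eps=\bI$ on $S_{\eps/4}$ together with $\bv|_{\partial\Omega}=0$. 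The $W^{2,\infty}$ hypothesis is stronger than needed: $W^{1,\infty}$ suffices for every step.
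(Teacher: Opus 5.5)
Your proposal is correct and follows essentially the same route as the paper. Both use the comparison field $\mathcal C_\eps\bv=\bW_\eps\bv-\mathfrak B_\eps(\bv)$, the untruncated residual of \cref{lem:bounded-untruncated} plus the repair term, coercivity with \eqref{eq:bounded-poincare}, and the triangle inequality with $\tau_\eps\le\eps/\sig$. Two small refinements are harmless: you use the unweighted local repair bound $\|\nabla\mathfrak B_\eps(\bv)\|_2\le C\tau_\eps$, giving an $O(\sig\tau_\eps)$ repair residual instead of the paper's $O(\tau_\eps)$, and you observe that $W^{1,\infty}$ regularity of $\bv$ suffices here.
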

\begin{proof}
Set $\mathcal C_\eps\bv=\bW_\eps\bv-\mathfrak B_\eps(\bv)$. The cell fields equal the identity near $\partial\Omega$, so the full zero trace of $\bv$ gives the outer zero trace of $\mathcal C_\eps\bv$. Its hole trace is zero, and \cref{lem:bounded-local-repair} cancels its divergence. Hence $\mathcal C_\eps\bv\in\V_{\eps,\Omega}$. The geometric estimates in \cref{lem:bounded-tools} and the local repair estimate give
\[
 \|\widetilde{\mathcal C_\eps\bv}-\bv\|_2\le C\tau_\eps,
 \qquad \sig\|\nabla\mathfrak B_\eps(\bv)\|_2\le C\tau_\eps.
\]
The untruncated residual in \cref{lem:bounded-untruncated}, together with the repair term, implies
\[
 \left|2\sig^2\int_{\Oe}m\sym(\mathcal C_\eps\bv):\sym\bphi\dx
       -\int_{\Oe}m\bM_0\bv\cdot\bphi\dx\right|
 \le C\left(\frac\eps\sig+\sig+\tau_\eps\right)\sig\|\sym\bphi\|_2
\]
for $\bphi\in\V_{\eps,\Omega}$. Subtract this relation from \eqref{eq:bounded-lift} and take $\bphi=\bw_\eps[m,\bv]-\mathcal C_\eps\bv$. Coercivity and \eqref{eq:bounded-poincare} give
\[
 \|\bw_\eps[m,\bv]-\mathcal C_\eps\bv\|_2
 \le C\left(\frac\eps\sig+\sig+\tau_\eps\right).
\]
The triangle inequality combines this bound with $\|\widetilde{\mathcal C_\eps\bv}-\bv\|_2\le C\tau_\eps$. Since $\tau_\eps=(\eps/\sig)^2\le\eps/\sig$ for sufficiently small $\eps$, this proves the asserted estimate.
\end{proof}

The approximation estimate concerns the spatial error. For the relative-energy argument we also need bounds on the exact lifting and its time derivative, which follow directly from its variational equation.

Consider an admissible reference solution $(\rho,\bu,\vartheta,p)$ in the sense of \cref{def:bounded-reference}. Its coupled energy identity gives $\sup_t\|\bu(t)\|_2\le H_{0,\Omega}/c_0$. Put $m=\mu(\vartheta)$ and $\bw_\eps(t)=\bw_\eps[m(t),\bu(t)]$. Testing \eqref{eq:bounded-lift} with $\bw_\eps$ and using \eqref{eq:bounded-poincare} yields
\begin{equation}\label{eq:bounded-lift-bounds}
\left\{
\begin{aligned}
& \sig\|\sym \bw_\eps(t)\|_2\le W_{0,\Omega},\quad  \|\partial_t\bw_\eps\|_2+\sig\|\nabla\partial_t\bw_\eps\|_2 \le C_T, \\
&\|\bw_\eps\|_2+\sig\|\nabla \bw_\eps\|_2+\sig^{1/2}\|\bw_\eps\|_3\le C_T.
\end{aligned}
\right.
\end{equation}
The spatial part of the last line follows since zero extension to $H^1_0(\Omega)$ and Sobolev embedding give $\|\bw_\eps\|_6\le C\|\nabla\bw_\eps\|_2\le C\sig^{-1}$, while the bounded-domain Poincar\'e inequality and interpolation yield $\|\bw_\eps\|_3\le C\sig^{-1/2}$. The time-derivative estimate is obtained separately by time difference quotients in the variational lifting, exactly as in \eqref{eq:lift-time}. It uses $\partial_t\bu\in L^\infty_tL^2_x$ and $\partial_t\vartheta\in L^\infty_tL^\infty_x$. For fixed $\eps$, the operator inverse argument gives
\[
 \bw_\eps\in W^{1,\infty}(0,T;\V_{\eps,\Omega})
       \subset C([0,T];\V_{\eps,\Omega}),
\]
which is sufficient for the comparison-test argument in \cref{lem:finite-boundary-density}. The constants in \eqref{eq:bounded-lift-bounds} are controlled by the fixed data and $\mathcal N_{T,\Omega}$. For the spatial error, \cref{prop:bounded-stokes-rate} gives, uniformly for almost every $t\in(0,T)$,
\[
 \|\widetilde{\bw}_\eps(t)-\bu(t)\|_2^2
 \le C_T\left(\frac{\eps^2}{\sig^2}+\sig\right)
 =C_T\left(\eps^{\alpha-1}+\eps^{(3-\alpha)/2}\right).
\]
Integrating in time gives
\begin{equation}\label{eq:bounded-eta}
 \eta_{\eps,\Omega}^2\le C_T\left(\eps^{\alpha-1}+\eps^{(3-\alpha)/2}\right).
\end{equation}

\subsection{Relative energy and proof of the bounded-domain limit}
\label{sec:bounded-homogenization}
We combine \eqref{eq:bounded-lift-bounds} and \eqref{eq:bounded-temp-error} in the relative-energy comparison, retaining the transport and momentum boundary terms.

\begin{proof}[Proof of \cref{thm:bounded-homogenization}]
Use the error fields $\be_\eps$, $\br_\eps$, $d_\eps$ and $z_\eps$ introduced in the periodic argument, now with ambient domain $\mathcal O=\Omega$ and zero extension of $\be_\eps$. The microscopic velocity and the comparison velocity, $\bu_\eps$ and $\bw_\eps$, have zero trace on all of $\partial\Oe$. The macroscopic $\bu$ need not have zero tangential trace. Define
\[
 \E_{\eps,\Omega}(t)=\frac{\sig^4}{2}\int_{\Oe}\rho_\eps|\be_\eps|^2\dx
+\frac12\|d_\eps\|_2^2.
\]
At fixed $\eps$, \cref{lem:finite-boundary-density} is applied on $D=\Oe$. The regularity in \cref{sec:bounded-lift-time} makes $\bw_\eps$ an admissible momentum comparison. The same lemma makes its zero-extended square an admissible scalar transport test. As in the proof of \cref{lem:finite-boundary-density}, scalar mollification is performed on $\mathbb T_L^3$, with $L$ chosen to contain $\overline\Omega$ in the interior of one coordinate cube, and the tests are restricted to $\Omega$. Thus the comparison tests require only the finite-boundary approximation furnished by \cref{lem:finite-boundary-density} on the $C^{3,\beta}$ wall.

We first compare momentum and density. The momentum test with $\bw_\eps$ and the continuity test with $|\widetilde{\bw}_\eps|^2/2$ give \eqref{eq:kinetic-prelift} with bounded-domain integrals and the same pre-lifting cross-stress and inertial terms. Splitting $\mu(\vartheta_\eps)$ and then applying the bounded lifting equation therefore gives \eqref{eq:kinetic-error} with bounded-domain integrals. The pressure pairing vanishes because
\[
 \int_{\Oe}\nabla p\cdot \be_\eps\dx
=\int_\Omega\nabla p\cdot\widetilde \be_\eps\dx=0.
\]
The argument uses only the velocity trace at the outer boundary. The extended density satisfies
\[
 \partial_td_\eps+\widetilde \bu_\eps\cdot\nabla d_\eps
=-(\be_\eps+\br_\eps)\cdot\nabla\rho.
\]
The squared-error identity follows by expanding
$\|\widehat\rho_\eps-\rho\|_2^2$, using renormalization with $b(r)=r^2$
and testing the microscopic continuity equation with the reference density,
as in the periodic proof. Here $\rho\in L^\infty_tW^{1,\infty}_x$ and
$\partial_t\rho\in L^\infty_tL^2_x$ suffice for scalar-test approximation
on the fixed domain. The transport velocities have zero normal flux, and
$\int_\Omega\rho(\widetilde\bu_\eps-\bu)\cdot\nabla\rho\dx=0$
by the definition of $\V_\Omega$. Thus no density boundary term remains.
For any fixed small $\delta>0$, this gives
\[
 \frac12\frac{d}{dt}\|d_\eps\|_2^2
       \le\delta X_\eps^2+C_\delta\|d_\eps\|_2^2
+C_\delta\|\br_\eps\|_2^2.
\]

The temperature equation supplies the thermal dissipation. Testing \eqref{eq:bounded-heat-difference} with $z_\eps$ gives
\[
 \int_\Omega\bar\kappa_\eps|\nabla z_\eps|^2\dx
=\int_\Omega(\be_\eps+\br_\eps)\cdot \bg\,z_\eps\dx -\int_\Omega(\bar\kappa_\eps-\kappa_f)
                           \nabla\vartheta\cdot\nabla z_\eps\dx.
\]
The term with $\be_\eps$ cancels the thermal work in the relative kinetic-energy inequality. The insulated boundary condition makes the heat-flux boundary integral vanish. Mean-zero Poincar\'e controls $\|z_\eps\|_{H^1}$ by its gradient. The other two terms are bounded by a small part of thermal dissipation plus $C(\|\br_\eps\|_2^2+|\kappa_s-\kappa_f|^2h_\eps)$.

For the viscosity residual in \eqref{eq:viscosity-residual}, estimates \eqref{eq:bounded-temp-error} and \eqref{eq:bounded-lift-bounds} give
\begin{align*}
 |R_{\mu,\eps}|
  &\le\mu_*\chi_{0,\Omega}X_\eps^2
       +2L_\mu W_{0,\Omega}X_\eps
           \left(C_{g,\Omega}\|\br_\eps\|_2
                +C_{T,\Omega,q}|\kappa_s-\kappa_f|h_\eps^{1/q}\right)\\
  &\le\mu_*X_\eps^2
       +C_{T,\Omega,q}\left(\|\br_\eps\|_2^2
                       +|\kappa_s-\kappa_f|^2h_\eps^{2/q}\right).
\end{align*}
The last step uses $\chi_{0,\Omega}\le1/2$. As in the periodic case, the coefficient part consumes at most $(\mu_*/2)X_\eps^2$, while Young's inequality consumes at most another $(\mu_*/2)X_\eps^2$. At least $\mu_*X_\eps^2$ therefore remains from the lower bound $2\mu_*X_\eps^2$.

It remains to estimate the inertial remainders. Zero extension gives the fixed-domain bound $\|\be_\eps\|_6\le C\|\nabla\be_\eps\|_2$. Consequently, \eqref{eq:bounded-poincare} yields $\|\be_\eps\|_4^2\le C\sig^{1/2}D_\eps^2$ for $D_\eps=\|\nabla\be_\eps\|_2$. The lifting estimates in \eqref{eq:bounded-lift-bounds} give $\|\bw_\eps\|_2\le C$, $\|\nabla\bw_\eps\|_2\le C\sig^{-1}$ and $\|\partial_t\bw_\eps\|_2\le C$. Interpolation then yields $\|\bw_\eps\|_4\le C\sig^{-3/4}$. Thus
\begin{align*}
 &\sig^4\left|\int_{\Oe}\rho_\eps\be_\eps\cdot(\be_\eps\cdot\nabla)\bw_\eps\dx\right|
 \le C_T\sig^4\|\be_\eps\|_4^2\|\nabla\bw_\eps\|_2
 \le C_T\sig^{7/2}D_\eps^2,\\
 &\sig^4\left|\int_{\Oe}\rho_\eps\be_\eps\cdot(\bw_\eps\cdot\nabla)\bw_\eps\dx\right|
 \le C_T\sig^4\|\be_\eps\|_4\|\bw_\eps\|_4\|\nabla\bw_\eps\|_2
 \le C_T\sig^{5/2}D_\eps\le\delta\sig^2D_\eps^2+C_{T,\delta}\sig^3,\\
& \sig^4\left|\int_{\Oe}\rho_\eps\be_\eps\cdot\partial_t\bw_\eps\dx\right|
 \le C_T\sig^4\|\be_\eps\|_2\|\partial_t\bw_\eps\|_2
 \le C_T\sig^5D_\eps\le\delta\sig^2D_\eps^2+C_{T,\delta}\sig^8.
\end{align*}
Since $\be_\eps\in H^1_0(\Oe;\R^3)$ is solenoidal, integration by parts gives $D_\eps^2=2\|\sym\be_\eps\|_2^2$. Choose the Young parameters $\delta$ so that their total viscous contribution uses at most $(\mu_*/4)X_\eps^2$. Since $\sig\to0$ as $\eps\to0$, we can then choose $\eps$ small enough that $2C_T\sig^{3/2}\le\mu_*/4$, and hence
\[
 C_T\sig^{7/2}D_\eps^2\le\frac{\mu_*}{4}X_\eps^2.
\]
At least $(\mu_*/2)X_\eps^2$ remains after these absorptions. Therefore
\begin{align*}
 \E_{\eps,\Omega}(\tau)+c\int_0^\tau X_\eps^2\dt
             +c\int_0^\tau\|z_\eps\|_{H^1}^2\dt
  &\le \E_{\eps,\Omega}(0)+C\int_0^\tau \E_{\eps,\Omega}(t)\dt\\
  &\quad+C_{T,\Omega,q}\left(\eta_{\eps,\Omega}^2
           +|\kappa_s-\kappa_f|^2h_\eps^{2/q}+\sig^3\right).
\end{align*}
To express the initial term through $\mathcal D_{\eps,\Omega}^0$, the lifting bounds give $\|\bw_\eps(0)-\bu(0)\|_{L^2(\Oe)}\le C_T$. Since $\rho_\eps^0\le\rho^*$, the decomposition
\[
 \bu_\eps^0-\bw_\eps(0)
 =\bu_\eps^0-\bu(0)+\bu(0)-\bw_\eps(0)
\]
and $|\boldsymbol a+\boldsymbol b|^2\le2|\boldsymbol a|^2+2|\boldsymbol b|^2$ imply
\[
 \E_{\eps,\Omega}(0)\le2\mathcal D_{\eps,\Omega}^0+C_T\sig^4,
 \qquad
 \mathcal D_{\eps,\Omega}^0\le2\E_{\eps,\Omega}(0)+C_T\sig^4.
\]
The density terms coincide in these two quantities. Gronwall and \eqref{eq:bounded-temp-error} now give \eqref{eq:bounded-stability}, with the cost $C_T\sig^4$ absorbed by $C_T\sig^3$.

Finally take $q=4$ and use \eqref{eq:bounded-count} and \eqref{eq:bounded-eta}. Then $h_\eps^{1/2}\le C\eps^{3(\alpha-1)/2} \le C\eps^{\alpha-1}$ and $\sig^3\le\sig=\eps^{(3-\alpha)/2}$. This proves \eqref{eq:bounded-final-rate}. Lipschitz continuity of $\mu$ gives the viscosity convergence. All smallness thresholds and constants refer to the fixed macroscopic strong-solution interval.
\end{proof}

\begin{proof}[Proof of \cref{cor:bounded-rates}]
The first assertion follows from \eqref{eq:bounded-final-rate} by taking the minimum of the three squared-error exponents and then taking square roots for the individual norms. If $\bu|_{\partial\Omega}=0$, \cref{lem:bounded-zero-trace} gives $\eta_{\eps,\Omega}^2\le C_T(\eps^{\alpha-1}+\eps^{3-\alpha})$. In \eqref{eq:bounded-stability} with $q=4$, $h_\eps^{1/2}\le C\eps^{3(\alpha-1)/2}\le C\eps^{\alpha-1}$ and $\sig^3\le\sig^2=\eps^{3-\alpha}$ for small $\eps$. Substitution gives the trace-compatible squared-error estimate.
\end{proof}

\appendix
\setcounter{section}{0}
\renewcommand{\thesection}{A.\arabic{section}}
\crefalias{section}{appendix}
\section{Periodic cell correctors and zero-hole-trace duality}
\label{app:periodic-geometry}
In this appendix $\Oe$ denotes the periodic geometry in \eqref{eq:geometry}. Let $D=\Oe$. We record the periodic construction and the testing conventions used in \cref{lem:geometric-tools}. The ambient domains allowed in \cite{HLO} include $\T$. Their resistance matrix $M_0$ is denoted by $\bM_0$ here in \eqref{eq:resistance}.

We begin with the cell construction at the independent hole and cell-spacing scales. For this appendix abbreviate $l=\cellsize$ and $a=a_\eps$. The $N_\eps^3=l^{-3}$ translates $Q_{\eps,k}=lk+l[-1/2,1/2)^3$, indexed modulo $N_\eps$, tile $\T$. Use the rescaled exterior Stokes field in $B(lk,l/8)\setminus T_{\eps,k}$, a homogeneous Stokes matching field on $B(lk,l/4)\setminus\overline{B(lk,l/8)}$ and $\be_i$ outside $B(lk,l/4)$. The two annular velocity traces have compatible flux because the exterior field and $\be_i$ are solenoidal. Extend velocity by zero through the hole. For each matching problem choose the annular pressure to have zero mean on that annulus before assembly. The exterior pressure has its decay normalization and the pressure outside the matching balls is initially zero. Assemble these pieces and then subtract one global constant from each assembled pressure column, as in \cref{lem:geometric-tools}.

All annuli become the same fixed annulus after rescaling by $l$. The exterior far-field bounds and annular Stokes estimates are uniform for $0<a/l\le\tau_0$, with a fixed sufficiently small $0<\tau_0<1$. They give
\[
 \begin{aligned}
& \|\bW_\eps-\bI\|_2^2\le C l^{-3}(a^2l+a^3)\le C(a/l)^2=C\tau_\eps^2,\\
& \|\nabla \bW_\eps\|_2^2+\|\bQ_\eps\|_2^2\le C l^{-3}a=C\sig^{-2}.
 \end{aligned}
\]
The estimate includes the zero-extension error on the holes, whose contribution is $O(l^{-3}a^3)$. This is the two-scale form of the construction in \cite[Proposition~2.7 and Appendix~A]{HLO}.

The cell construction gives a residual supported on the matching surfaces. Its total mass determines the normalized resistance. We specify the traction signs using the nonsymmetric Stokes stress $\mathsf S(\bw,q)=\nabla\bw-q\bI$, for which $-\Div\mathsf S=-\Delta\bw+\nabla q$ under our gradient convention. Fix one column $i$ and one cell $k$. Let $S_1=\partial B(lk,l/8)$ and $S_2=\partial B(lk,l/4)$, with $\bnr=(\bx-lk)/|\bx-lk|$ on both spheres. Thus $\bnr$ points from the inner region to the outer region. On $S_1$ it is opposite to the outward normal of the matching annulus. Write $\mathsf S_E$ for the rescaled exterior stress and $\mathsf S_A$ for the homogeneous matching-annulus stress, with its pressure constant fixed before assembly. Outside $S_2$ the unnormalized stress is zero. Integration by parts in the three regions defines a vector-valued surface measure $\nu_k^i$ with densities $t_{k,1}^i=(\mathsf S_E-\mathsf S_A)\bnr$ on $S_1$ and $t_{k,2}^i=\mathsf S_A \bnr$ on $S_2$. Its pairing with a vector test is
\begin{equation*}
 \langle\nu_k^i,\bzeta\rangle
  =\int_{S_1}(\mathsf S_E-\mathsf S_A)\bnr\cdot\bzeta\,\,\mathrm dS
       +\int_{S_2}\mathsf S_A \bnr\cdot\bzeta\,\,\mathrm dS.
\end{equation*}
There is also a hole-boundary functional, which vanishes for $\bzeta\in\Xe$ because its hole trace is zero. Since $\Div\mathsf S_A=0$ in the matching annulus,
\[
 \int_{S_2}\mathsf S_A \bnr\,\,\mathrm dS
-\int_{S_1}\mathsf S_A \bnr\,\,\mathrm dS=0.
\]
The normal of the exterior fluid on the hole, pointing into the solid, is denoted by $\bnh$. Exterior force balance now gives
\begin{equation}\label{eq:cell-traction-mass}
 \int_{S_1\cup S_2}\dd\nu_k^i
   =\int_{S_1}\mathsf S_E \bnr\,\,\mathrm dS
   =-\int_{\partial T_{\eps,k}}\mathsf S_E \bnh\,\,\mathrm dS
   =a\bM_0\be_i.
\end{equation}
For the last sign, test the reference exterior equation with $\bV^j-\be_j$. Its trace is $-\be_j$ on the hole and it decays at infinity. Hence
\[
(\bM_0)_{ij} =-\be_j\cdot\int_{\partial T_0}\mathsf S(\bV^i,Q^i)\bnh\,\,\mathrm dS.
\]
Scaling the surface traction contributes a factor $a$ and symmetry of $\bM_0$ identifies the vector in \eqref{eq:cell-traction-mass}. The exterior decay and the fixed-annulus estimates also give $\|t_{k,1}^i\|_{L^\infty(S_1)}+\|t_{k,2}^i\|_{L^\infty(S_2)}\le Ca/l^2$. A subsequent common pressure normalization changes neither jump.

For each complete cell, set $\bar\bzeta_k=l^{-3}\int_{Q_{\eps,k}}\widetilde\bzeta\dx$. The zero extension belongs to $H^1$ on the full cell. Scaled trace and Poincar\'e inequalities give
\[
 \|\widetilde\bzeta-\bar\bzeta_k\|_{L^2(S_1\cup S_2)}
        \le Cl^{1/2}\|\nabla\widetilde\bzeta\|_{L^2(Q_{\eps,k})}.
\]
Since $\sig^2a=l^3$, cancellation of the cell-average parts gives
\begin{align*}
 \left|\left\langle
   \sig^2(-\Delta \bW_\eps^i+\nabla Q_\eps^i)-\bM_0\be_i,
       \bzeta\right\rangle\right|&\le\sum_k\sig^2
           |\langle\nu_k^i,\widetilde\bzeta-\bar\bzeta_k\rangle|\\
 &\le C\sum_k l^{5/2}
                 \|\nabla\widetilde\bzeta\|_{L^2(Q_{\eps,k})}
       \le Cl\|\nabla\bzeta\|_{L^2(D)}.
\end{align*}
This proves \eqref{eq:cell-residual}, the two-scale version of the traction calculation in \cite[Appendix~A]{HLO}. Periodic face terms cancel. The residual belongs to $\Xe^*$ and is tested against $\Xe$. A global pressure constant leaves the pairing unchanged because $\int_D\Div\bzeta=0$.

To complete the geometric tools, we construct a periodic inverse of the divergence with the stated scale. For $b\in L^2_\diamond(D)$ extend it by zero and set $\bv=\nabla\Delta_{\T}^{-1}\widetilde b$, where $\Delta_{\T}^{-1}$ is the inverse of the periodic Laplacian on mean-zero scalars. Then $\Div \bv=\widetilde b$ and $\|\bv\|_{H^1(\T)}\le C\|b\|_2$. Choose once and for all a smooth cutoff $\chi$ on the reference ball, equal to one near $\overline{T_0}$ and supported in $B(0,1/2)$ and set $\chi_k(\bx)=\chi((\bx-\cellsize k)/a)$. Thus $B_k=B(\cellsize k,a/2)$ and $|\nabla\chi_k|\le C/a$. Each cell uses a translate and rescaling of the same cutoff and reference Bogovski\u\i{} operator, with indices taken modulo $N_\eps$. The pieces are assembled with these fixed choices. On $E_k=B_k\setminus T_{\eps,k}$ set $h_k=\Div(\chi_k\bv)$. Its integral is zero since the outer trace vanishes and the flux through the hole is zero by $\Div \bv=0$ there. Let $\mathcal B_{E_k}$ be the zero-trace Bogovski\u\i{} operator on this scaled copy of the fixed connected domain $B(0,1/2)\setminus\overline{T_0}$. Define $\bc_k=\chi_k\bv-\mathcal B_{E_k}h_k$ on $E_k$, equal to $\bv$ on the hole and zero outside $B_k$. The traces match across both interfaces, so $\bc_k\in H^1(\T)$ is globally solenoidal since its distributional divergence has no interface measure. Hence $\bv-\bc_k$ has zero trace on that hole, including for cells that meet the periodic identification. Writing $\bar \bv_k=\cellsize^{-3}\int_{Q_{\eps,k}}\bv\dx$, scaled Sobolev--Poincar\'e and the local divergence estimate give
\begin{equation*}
 \|\nabla \bc_k\|_{L^2(B_k)}^2
 \le C\bigl(\|\nabla \bv\|_{L^2(Q_{\eps,k})}^2+a|\bar \bv_k|^2\bigr).
\end{equation*}
Indeed $a^{-1}\|\bv-\bar \bv_k\|_{L^2(B_k)} \le C\|\bv-\bar \bv_k\|_{L^6(Q_{\eps,k})} \le C\|\nabla \bv\|_{L^2(Q_{\eps,k})}$. The constant part costs $a^{1/2}|\bar \bv_k|$. This is the local mechanism used in the construction of the perforated-domain divergence inverse in \cite[Section~3]{DFL}. The disjoint supports and $\sum_k a|\bar \bv_k|^2\le a\cellsize^{-3}\|\bv\|_2^2$ show that the linear operator $\mathcal B_\eps b=(\bv-\sum_k\bc_k)|_D$ satisfies
\[
 \Div\mathcal B_\eps b=b,\quad
 \|\nabla\mathcal B_\eps b\|_2
 \le C\bigl(\|\nabla \bv\|_2+(a\cellsize^{-3})^{1/2}\|\bv\|_2\bigr)
 \le C(1+\sig^{-1})\|b\|_2.
\]
The corrections are supported inside the periodically identified cells. For general $L^2$ data, approximate $b$ by smooth mean-zero functions on $D$. At fixed $\eps$, the estimate gives convergence of the constructed fields in $H^1(D)$. Continuity of divergence and of the trace operator preserves both $\Div\mathcal B_\eps b=b$ and the zero hole traces. The uniform bound comes from the common reference construction. To verify the Poincar\'e scale directly, let $\bar z_k=l^{-3}\int_{Q_{\eps,k}}\widetilde z\dx$. Since $\widetilde z=0$ on the hole, its $L^6$ norm there gives the first inequality in the following chain, while the scale-invariant Sobolev--Poincar\'e inequality on the full cube gives the second.
\[
 a^{1/2}|T_0|^{1/6}|\bar z_k|=\|\widetilde z-\bar z_k\|_{L^6(T_{\eps,k})}
 \le\|\widetilde z-\bar z_k\|_{L^6(Q_{\eps,k})}
 \le C\|\nabla\widetilde z\|_{L^2(Q_{\eps,k})}.
\]
The ordinary cell Poincar\'e inequality then yields
\begin{equation}\label{eq:cell-poincare}
 \|\widetilde z\|_{L^2(Q_{\eps,k})}^2
 \le C\left(l^2+\frac{l^3}{a}\right)
               \|\nabla\widetilde z\|_{L^2(Q_{\eps,k})}^2
 \le C\sig^2\|\nabla\widetilde z\|_{L^2(Q_{\eps,k})}^2,
\end{equation}
because $a<l$ and $l^3/a=\sig^2$. Summing over cells proves \eqref{eq:general-poincare}. This is the $L^2$ cell argument underlying \cite[Lemma~3.4.1]{AllaireII} and \cite[Lemma~2.3]{HLO}. For solenoidal $z$, periodic integration by parts gives $2\|\sym z\|_2^2=\|\nabla z\|_2^2$. This proves the periodic assertions of \cref{lem:geometric-tools}, with the stated scale-dependent divergence bound.

\section{Fixed-parameter microscopic existence in the periodic case}\label{sec:weak-existence}
The periodic Galerkin construction is carried out at fixed $\eps$. Before stating the existence proposition, we record two fixed-domain compactness facts that identify the time traces and the nonlinear kinetic energy. Throughout this section, the constants $C_{\eps,T}$ may depend on the fixed perforated domain but remain independent of the Galerkin dimension. The final energy estimate recovers the weighted bounds that are uniform in $\eps$.

\begin{lemma}[Strong continuity of the density-weighted mass operator]
\label{lem:mass-operator}
Fix $D=\Oe$, $V=\Ve$, $H=\Hvel$ and let $\PP_D:L^2(D;\R^3)\longrightarrow H$ be the orthogonal projection. Suppose $\rho\in C([0,T];L^q(D))$ for $1\le q<\infty$, with $\rho_*\le\rho(t,\bx)\le\rho^*$. For $\bv\in H$ define
\begin{equation*}
 \mathscr A_\rho(t)\bv=\PP_D(\rho(t)\bv).
\end{equation*}
For $t\in[0,T]$, the operator $\mathscr A_\rho(t)$ is a bounded self-adjoint isomorphism of $H$, with
\begin{equation}\label{eq:mass-coercivity}
 \|\mathscr A_\rho(t)\|_{\mathcal L(H,H)}\le\rho^*,\quad
 (\mathscr A_\rho(t)\bv,\bv)_H\ge\rho_*\|\bv\|_H^2,\quad
 \|\mathscr A_\rho(t)^{-1}\|_{\mathcal L(H,H)}\le\rho_*^{-1}.
\end{equation}
The families $\mathscr A_\rho(t)$ and $\mathscr A_\rho(t)^{-1}$ are continuous in the strong operator topology on $H$. Moreover, let $\bmom\in C_{\mathrm{weak}}([0,T];H)$ and define $\bu(t)=\mathscr A_\rho(t)^{-1}\bmom(t)$. Then $\bu\in C_{\mathrm{weak}}([0,T];H)$ and $\rho \bu\in C_{\mathrm{weak}}([0,T];L^2(D))$.
\end{lemma}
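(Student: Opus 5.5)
The plan is to treat the static operator bounds first and then deduce the continuity statements from dominated convergence together with the resolvent identity.

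For fixed $t$ and $\bv,\bz\in H$ we have $(\mathscr A_\rho(t)\bv,\bz)_H=\int_D\rho(t)\bv\cdot\bz\dx$, because $\PP_D$ is the orthogonal projection onto $H$ and $\bz\in H$. This form is symmetric, so $\mathscr A_\rho(t)$ is self-adjoint. The bounds $\rho_*\le\rho\le\rho^*$ give $\|\PP_D(\rho\bv)\|_2\le\|\rho\bv\|_2\le\rho^*\|\bv\|_H$ and $(\mathscr A_\rho\bv,\bv)_H\ge\rho_*\|\bv\|_H^2$. Lax--Milgram on $H$ then yields invertibility with $\|\mathscr A_\rho(t)^{-1}\|\le\rho_*^{-1}$, which proves \eqref{eq:mass-coercivity}.

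For strong continuity, fix $\bv\in H$ and let $t_n\to t$. Then $\|\mathscr A_\rho(t_n)\bv-\mathscr A_\rho(t)\bv\|_H\le\|(\rho(t_n)-\rho(t))\bv\|_2$. Since $\rho\in C_tL^1$, every subsequence has a further subsequence along which $\rho(t_n)\to\rho(t)$ almost everywhere. The integrand is dominated by $(2\rho^*)^2|\bv|^2$, so dominated convergence shows that this term tends to zero, and the subsequence argument gives convergence of the full sequence. For the inverses, use the identity
\[
\mathscr A_\rho(t_n)^{-1}\bz-\mathscr A_\rho(t)^{-1}\bz=\mathscr A_\rho(t_n)^{-1}\bigl(\mathscr A_\rho(t)-\mathscr A_\rho(t_n)\bigr)\mathscr A_\rho(t)^{-1}\bz .
\]
Together with the uniform bound $\rho_*^{-1}$, this gives strong continuity of $t\mapsto\mathscr A_\rho(t)^{-1}$.

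For the last claim, let $\bmom\in C_{\mathrm{weak}}([0,T];H)$ and $\bz\in H$. By self-adjointness of the inverse,
\[
(\bu(t_n),\bz)_H=(\bmom(t_n),\mathscr A_\rho(t_n)^{-1}\bz)_H .
\]
The first factor converges weakly and is bounded, since a weakly continuous function is bounded by uniform boundedness. The second factor converges strongly. Hence the pairing converges to $(\bmom(t),\mathscr A_\rho(t)^{-1}\bz)_H=(\bu(t),\bz)_H$, and $\bu\in C_{\mathrm{weak}}([0,T];H)$.

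Finally, for $\bphi\in L^2(D;\R^3)$ we write $\int_D\rho(t_n)\bu(t_n)\cdot\bphi=\int_D\bu(t_n)\cdot\rho(t_n)\bphi$. The dominated convergence argument above shows that $\rho(t_n)\bphi\to\rho(t)\bphi$ strongly in $L^2$. Also $\bu(t_n)\rightharpoonup\bu(t)$ in $L^2(D)$, because $H$ is a closed subspace and weak convergence in $H$ tested against $\PP_D\bpsi$ gives convergence against any $\bpsi\in L^2$. The $\bu(t_n)$ are uniformly bounded by $\rho_*^{-1}\sup_t\|\bmom(t)\|_H$. A weak–strong product limit therefore gives $\rho\bu\in C_{\mathrm{weak}}([0,T];L^2(D))$.

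The main point to watch is that strong operator continuity, rather than norm continuity, is what the weak–strong pairing requires. Only $L^q$ continuity of $\rho$ is available, so norm continuity would fail; this is why the subsequence-plus-dominated-convergence step must be carried out pointwise in $\bv$.
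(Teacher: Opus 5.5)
Your proof is correct and follows the same structure as the paper's. Both use the form $(\mathscr A_\rho(t)\bv,\bz)_H=\int_D\rho(t)\bv\cdot\bz\dx$ with Lax--Milgram, the resolvent identity for the inverses, the pairing $(\bu(t),\bz)_H=(\bmom(t),\mathscr A_\rho(t)^{-1}\bz)_H$, and a weak--strong product for $\rho\bu$.

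The only difference is how you get strong continuity of $\bv\mapsto\rho(t)\bv$. You use subsequence a.e.\ convergence and dominated convergence directly for every $\bv\in L^2(D)$. The paper first proves the bound $\|\rho(t)-\rho(s)\|_{L^3(D)}\|\bv\|_{L^6(D)}$ on the dense subspace $V$ and extends to $H$ with the uniform bound $2\rho^*$, treating $\rho(t)\bpsi$ likewise via bounded $\bpsi$. Your version is slightly more elementary and avoids the embedding $V\hookrightarrow L^6(D)$.
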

\begin{proof}
For $\bv,\bz\in H$ one has $(\mathscr A_\rho(t)\bv,\bz)_H=\int_D\rho(t)\bv\cdot\bz\dx$. The bounds and invertibility in \eqref{eq:mass-coercivity} follow from positivity and Lax--Milgram. Fix $s\in[0,T]$. To prove strong continuity, first let $\bv\in V$. Since $V\hookrightarrow L^6(D)$,
\[
 \|(\mathscr A_\rho(t)-\mathscr A_\rho(s))\bv\|_H
 \le\|\rho(t)-\rho(s)\|_{L^3(D)}\|\bv\|_{L^6(D)}\to0\quad\text{as }t\to s.
\]
For general $\bv\in H$, choose $\bv_j\in V$ with $\bv_j\to \bv$ in $H$ as $j\to\infty$. The replacement error is bounded by $2\rho^*\|\bv-\bv_j\|_H$, uniformly in $s,t$. Letting $t\to s$ and then $j\to\infty$ proves strong operator continuity on $H$. For $\bh\in H$, the inverse identity gives
\begin{equation*}
 \begin{split}
 \|\mathscr A_\rho(t)^{-1}\bh-\mathscr A_\rho(s)^{-1}\bh\|_H
 &\le\rho_*^{-1}\|
 (\mathscr A_\rho(s)-\mathscr A_\rho(t))
                   \mathscr A_\rho(s)^{-1}\bh\|_H\to0\quad\text{as }t\to s.
 \end{split}
\end{equation*}
Weak continuity of $\bmom$ on $[0,T]$ gives $\sup_t\|\bmom(t)\|_H<\infty$. Self-adjointness, strong continuity of the inverse and the identity $(\bu(t),\bv)_H=(\bmom(t),\mathscr A_\rho(t)^{-1}\bv)_H$ imply weak continuity of $\bu$. The inverse bound in \eqref{eq:mass-coercivity} also gives $\sup_t\|\bu(t)\|_H<\infty$.
For a fixed bounded vector field $\bpsi$, continuity of $\rho$ in $L^2(D)$ gives $\rho(t)\bpsi\to\rho(s)\bpsi$ in $L^2(D)$ as $t\to s$. Approximation in $L^2(D)$ and the uniform density bound extend this convergence to every $\bpsi\in L^2(D;\R^3)$. Weak continuity of $\bu$ and strong continuity of $\PP_D(\rho(t)\bpsi)$ now imply continuity of $\int_D\rho(t)\bu(t)\cdot\bpsi\dx=(\bu(t),\PP_D(\rho(t)\bpsi))_H$, proving the assertion for the full momentum.
\end{proof}

The mass-operator lemma supplies the correct weak time traces. To identify the nonlinear kinetic energy in the Galerkin limit, we also need strong compactness of the velocity under density weights. The next lemma provides this fixed-domain step.

\begin{lemma}[Weighted kinetic compactness on a fixed perforated domain]
\label{lem:weighted-kinetic-compactness}
Let $D$ be fixed. Let $H$ be a closed subspace of $L^2(D;\R^3)$ with the inherited inner product and let the Hilbert space $V$ be densely and continuously embedded in $H$. Identify $H$ with a subspace of $V^*$ by the $L^2$ pairing. Let $\PP_D$ be the $L^2$-orthogonal projection onto $H$. For finite-dimensional subspaces $\mathcal X_n\subset V$, let $\PP_n$ be the $L^2$-orthogonal projection onto $\mathcal X_n$. Suppose $0<\rho_*\le\rho_n,\rho\le\rho^*<\infty$, $\rho_n\to\rho$ strongly in $L^2((0,T)\times D)$ and $\bu_n\in L^2(0,T;\mathcal X_n)$ satisfies $\bu_n\rightharpoonup\bu$ in $L^2(0,T;V)$. Set $\bmom_n=\PP_n(\rho_n\bu_n)$ and assume
\[
 \bmom_n\to\PP_D(\rho\bu)
       \quad\text{strongly in }L^2(0,T;V^*)\quad\text{as }n\to\infty.
\]
Then
\begin{equation}\label{eq:weighted-kinetic-lemma-limit}
 \int_0^T\!\!\int_D\rho_n|\bu_n|^2\dx\dt
 \to
 \int_0^T\!\!\int_D\rho|\bu|^2\dx\dt\quad\text{as }n\to\infty,
\end{equation}
and
\[
 \bu_n\to\bu
 \quad\text{strongly in }L^2((0,T)\times D)\quad\text{as }n\to\infty.
\]
\end{lemma}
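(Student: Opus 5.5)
The plan is to write the weighted kinetic integral as a duality pairing between the strongly convergent momenta and the weakly convergent velocities. Since $\bu_n(t)\in\mathcal X_n\subset H$ and $\PP_n$ is the $L^2$-orthogonal projection onto $\mathcal X_n$, we have for almost every $t$
\[
 \int_D\rho_n|\bu_n|^2\dx=(\rho_n\bu_n,\bu_n)_{L^2}=(\PP_n(\rho_n\bu_n),\bu_n)_{L^2}=\langle\bmom_n,\bu_n\rangle_{V^*,V},
\]
where the last identity uses the identification of $H$ with a subspace of $V^*$ through the $L^2$ pairing. Likewise $\int_D\rho|\bu|^2\dx=(\PP_D(\rho\bu),\bu)_H=\langle\PP_D(\rho\bu),\bu\rangle_{V^*,V}$, since $\bu(t)\in V\subset H$.

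Integrating in time, the pairing of a strongly convergent sequence in $L^2(0,T;V^*)$ with a weakly convergent sequence in $L^2(0,T;V)$ converges. Explicitly, we split
\[
 \int_0^T\langle\bmom_n,\bu_n\rangle\dt-\int_0^T\langle\PP_D(\rho\bu),\bu\rangle\dt
 =\int_0^T\langle\bmom_n-\PP_D(\rho\bu),\bu_n\rangle\dt+\int_0^T\langle\PP_D(\rho\bu),\bu_n-\bu\rangle\dt.
\]
The first term is bounded by $\|\bmom_n-\PP_D(\rho\bu)\|_{L^2_tV^*}\sup_n\|\bu_n\|_{L^2_tV}$. This tends to zero by the hypothesis and the uniform boundedness principle for weakly convergent sequences. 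The second term tends to zero because $\PP_D(\rho\bu)\in L^2_tV^*=(L^2_tV)^*$. This proves \eqref{eq:weighted-kinetic-lemma-limit}.

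For the strong convergence, we expand
\[
 \int_0^T\!\!\int_D\rho_n|\bu_n-\bu|^2=\int\!\!\int\rho_n|\bu_n|^2-2\int\!\!\int\rho_n\bu_n\cdot\bu+\int\!\!\int\rho_n|\bu|^2.
\]
The first term converges to $\int\!\!\int\rho|\bu|^2$ by the previous step. The third converges to the same limit by dominated convergence: pass to an a.e. convergent subsequence of $\rho_n$, which is bounded by $\rho^*$, with $|\bu|^2\in L^1$, and then argue by subsequences.

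The cross term is the main obstacle, since $\rho_n\bu_n$ is a product of two merely weakly and strongly convergent factors in different topologies. I would write it as $\int\!\!\int\bu_n\cdot(\rho_n\bu)$. Since $\bu_n\rightharpoonup\bu$ in $L^2(0,T;V)$ and $V\hookrightarrow H\subset L^2$ continuously, $\bu_n\rightharpoonup\bu$ weakly in $L^2_{t,x}$.

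It then suffices to show $\rho_n\bu\to\rho\bu$ strongly in $L^2_{t,x}$. This again follows from a.e. convergence along subsequences together with the domination $|\rho_n\bu|\le\rho^*|\bu|$. Hence the cross term tends to $2\int\!\!\int\rho|\bu|^2$.

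Therefore $\int\!\!\int\rho_n|\bu_n-\bu|^2\to0$, and the lower bound $\rho_n\ge\rho_*$ yields $\bu_n\to\bu$ strongly in $L^2((0,T)\times D)$. Because every subsequence admits a further subsequence with the same limit, the whole sequence converges.
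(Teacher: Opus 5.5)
Your proposal is correct and follows the paper's proof essentially step for step: you use the same identity $\int_D\rho_n|\bu_n|^2\dx=\langle\bmom_n,\bu_n\rangle$ coming from $\bu_n\in\operatorname{Ran}\PP_n$, the same two-term splitting of the pairing, and the same expansion of $\int_0^T\!\int_D\rho_n|\bu_n-\bu|^2\dx\dt$ closed by the lower bound $\rho_n\ge\rho_*$. The only difference is cosmetic: you obtain $\rho_n\bu\to\rho\bu$ in $L^2$ by dominated convergence along a.e.-convergent subsequences, whereas the paper uses an explicit truncation on $\{|\bu|>K\}$, and either argument works.
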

\begin{proof}
Since $\bu_n\in\operatorname{Ran}\PP_n$, we have
\[
 \int_0^T\!\!\int_D\rho_n|\bu_n|^2\dx\dt
=\int_0^T\langle\bmom_n,\bu_n\rangle\dt.
\]
The difference between this pairing and $\int_0^T\langle\PP_D(\rho\bu),\bu\rangle\dt$ can be written as
\[
 \int_0^T\langle\bmom_n-\PP_D(\rho\bu),\bu_n\rangle\dt
+\int_0^T\langle\PP_D(\rho\bu),\bu_n-\bu\rangle\dt,
\]
the first term tends to zero by strong convergence in $L^2_tV^*$ and the boundedness of $\bu_n$ in $L^2_tV$, while the second tends to zero by weak convergence in $L^2_tV$. Since $\bu\in H$, $\langle\PP_D(\rho\bu),\bu\rangle=\int_D\rho|\bu|^2\dx$, which proves \eqref{eq:weighted-kinetic-lemma-limit}.

For $K>0$, truncation of the fixed field $\bu\in L^2((0,T)\times D)$ gives
\[
 \| (\rho_n-\rho)\bu\|_{L^2((0,T)\times D)}^2
 \le K^2\|\rho_n-\rho\|_{L^2((0,T)\times D)}^2
      +4(\rho^*)^2\int_{\{|\bu|>K\}}|\bu|^2\dx\dt.
\]
First let $n\to\infty$ at fixed $K$, using the strong density convergence, and then let $K\to\infty$. The $L^2$ integrability of $\bu$ makes the last integral tend to zero, so $(\rho_n-\rho)\bu\to0$ strongly in $L^2((0,T)\times D)$ as $n\to\infty$. Pairing $\rho_n\bu\to\rho\bu$ with the weakly convergent sequence $\bu_n$ and with the fixed field $\bu$ yields, as $n\to\infty$,
\[
 \int_0^T\!\!\int_D\rho_n\bu_n\cdot\bu\dx\dt
   \to
 \int_0^T\!\!\int_D\rho|\bu|^2\dx\dt,
 \qquad
 \int_0^T\!\!\int_D\rho_n|\bu|^2\dx\dt
   \to
 \int_0^T\!\!\int_D\rho|\bu|^2\dx\dt.
\]
Expanding the weighted square and using \eqref{eq:weighted-kinetic-lemma-limit} yields $\int_0^T\!\int_D\rho_n|\bu_n-\bu|^2\dx\dt\to0$ as $n\to\infty$. The lower bound $\rho_n\ge\rho_*$ gives the strong $L^2$ convergence.
\end{proof}

The two preceding lemmas provide the time traces and the strong compactness required for the nonlinear kinetic term. We can therefore state the fixed-parameter existence result in the form used by the homogenization argument.

\begin{proposition}[Microscopic existence for a fixed parameter]\label{prop:micro-existence}
Fix $0<\eps<\eps_0$ as in \eqref{eq:continuous-parameter} and let $T>0$. For any admissible initial pair $(\rho_\eps^0,\bu_\eps^0)$ there exists a finite-energy weak solution in the sense of \cref{def:weak} on $(0,T)$. If the initial data satisfy \eqref{eq:initial-weighted-bound}, then the resulting solutions satisfy the weighted bounds in \eqref{eq:uniform} with a constant $C_T$ independent of $\eps$. The constant $C_T$ depends on the supremum in \eqref{eq:initial-weighted-bound}, the common density bounds, the force and the fixed geometric and constitutive data.
\end{proposition}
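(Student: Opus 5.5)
We will construct the solution by a Faedo--Galerkin scheme on the fixed perforated domain, with the density transported exactly and the temperature solved instantaneously. Let $\mathcal X_n\subset\Ve$ be spanned by the first $n$ Stokes eigenfunctions of the fixed domain $D=\Oe$, as in the proof of \cref{lem:comparison-density}; they are smooth up to the boundary, solenoidal and vanish on the holes. Mollify the initial density to $\rho^0_{\eps,\delta}$ with the same bounds, and take $\PP_n\bu_\eps^0$ as velocity datum. For $\bu_n\in C([0,T_n];\mathcal X_n)$ the field $\widetilde\bu_n$ is smooth and Lipschitz in space, tangent on the holes and solenoidal on $\T$. Hence $\widehat\rho_n$ solves the extended transport equation by characteristics and keeps $\rho_*\le\widehat\rho_n\le\rho^*$. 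The temperature $\vartheta_n(t)$ is the mean-zero solution of \eqref{eq:weak-heat} with source $\widetilde\bu_n\cdot\bg$. \cref{lem:scalar} gives $\|\vartheta_n\|_{H^1}+\|\vartheta_n\|_\infty\le C\|\bu_n\|_2$, and $\vartheta_n$ depends linearly and Lipschitz-continuously on $\bu_n$.

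The Galerkin momentum equation is the ODE $\frac{d}{dt}\PP_n(\rho_n\bu_n)=\ldots$ with the invertible mass matrix $\PP_n\rho_n\PP_n$, which is coercive by $\rho_n\ge\rho_*$. A Schauder fixed point in $C([0,T];\mathcal X_n)$, or a local Picard iteration extended by the energy bound, gives the approximate solutions. Testing the Galerkin equation with $\bu_n$, using the transport identity for $\rho_n|\bu_n|^2$ and testing the heat equation with $\vartheta_n$ gives the energy equality \eqref{eq:energy} with equality for the approximations. In particular, the thermal work $-\int\vartheta_n\bg\cdot\bu_n$ equals $-\int\bar\kappa_\eps|\nabla\vartheta_n|^2$. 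The $n$-independent bounds then follow from \eqref{eq:poincare} after absorbing the work of $\bfv$. The same absorption with $\eps$-independent constants yields \eqref{eq:uniform} once \eqref{eq:initial-weighted-bound} holds. Thus $\bu_n$ is bounded in $L^\infty_tH\cap L^2_tV$, and $\vartheta_n$ is bounded in $L^2_tH^1\cap L^2_tL^\infty$.

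For the passage $n\to\infty$, and then $\delta\to0$ or diagonally, we argue as follows.
\begin{enumerate}
\item The DiPerna--Lions theory \cite{DiPernaLions}, with strong convergence of the extended densities, gives $\widehat\rho_n\to\widehat\rho$ in $C_tL^q$. Here $\widetilde\bu_n\rightharpoonup\widetilde\bu$ in $L^2_tH^1(\T)$ is solenoidal, and weak continuity plus convergence of the $L^2$ norms, obtained by renormalizing with $b(r)=r^2$, upgrades weak to strong convergence.
\item The Galerkin equation bounds $\partial_t\bmom_n$, with $\bmom_n=\PP_n(\rho_n\bu_n)$, in $L^{4/3}_t(\mathcal X_m)^*$ uniformly for $n\ge m$, using the convective bound $\rho\bu\otimes\bu\in L^{4/3}_tL^2$ from \cref{lem:comparison-density}. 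Aubin--Lions with $H\hookrightarrow\hookrightarrow V^*$, together with a uniform estimate for $(I-\PP_m)$ on $V$, gives $\bmom_n\to\PP_D(\rho\bu)$ strongly in $L^2_tV^*$.
\item \cref{lem:weighted-kinetic-compactness} then gives strong convergence $\bu_n\to\bu$ in $L^2_{t,x}$. This identifies the convective term and, by linearity and \cref{lem:scalar}, gives $\vartheta_n\to\vartheta$ strongly in $L^2_tH^1$.
\item Strong convergence of $\vartheta_n$ in $L^2_tL^2$ and boundedness of $\mu$ yield $\mu(\vartheta_n)\sym\bphi\to\mu(\vartheta)\sym\bphi$ strongly. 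This passes to the limit in the viscous term against $\sym\bu_n\rightharpoonup\sym\bu$.
\end{enumerate}
The energy inequality follows by weak lower semicontinuity of the dissipations. The kinetic term at a.e. time is handled by $\sqrt{\rho_n}\bu_n\rightharpoonup\sqrt\rho\bu$, and the work term converges. The weak time continuity and the initial traces come from \cref{lem:mass-operator} applied to $\bmom=\PP_D(\rho\bu)\in C_{\mathrm{weak}}([0,T];H)$, with the initial values recovered from the Galerkin data.

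The main obstacle is the compactness step for the momentum, namely showing that $\bmom_n\to\PP_D(\rho\bu)$ strongly in $L^2_tV^*$ despite the $n$-dependent projection and only weak velocity bounds. I would first prove the time-equicontinuity of $\langle\bmom_n,\bphi\rangle$ for $\bphi\in\mathcal X_m$ fixed. I would then combine it with $\|(I-\PP_m)\bphi\|_H\le\lambda_m^{-1/2}\|\bphi\|_V$ and the $L^\infty_tH$ bound to get a uniform modulus in $V^*$. Identification of the limit as $\PP_D(\rho\bu)$ uses the strong density convergence from the first item above.
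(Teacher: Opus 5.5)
Your proposal is correct and follows essentially the same route as the paper's proof. Both use a Stokes-eigenfunction Galerkin scheme with the density transported by characteristics and the temperature solved instantaneously via \cref{lem:scalar}, the Galerkin energy equality, DiPerna--Lions compactness of the extended densities, and strong $L^2_tV^*$ compactness of $\bmom_n=\PP_n(\rho_n\bu_n)$ fed into \cref{lem:weighted-kinetic-compactness}, with the traces recovered through \cref{lem:mass-operator}. The only minor difference concerns your ``main obstacle'': the paper tests directly with $\PP_n\bphi$ and uses that the spectral projection is a contraction on $V$, which bounds $\partial_t\bmom_n$ in $L^{4/3}_tV^*$ uniformly in $n$ and makes your splitting $\bphi=\PP_m\bphi+(I-\PP_m)\bphi$ unnecessary.
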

The proof proceeds by a Galerkin approximation on the fixed fluid domain, followed by compactness for the transported density and the projected momentum.

\begin{proof}[Proof of \cref{prop:micro-existence}]
For this proof, we set $D=\Oe$, $V=\Ve$ and $H=\Hvel$, with $\eps$ fixed. We write $(\rho,\bu,\vartheta)$ for the Galerkin limit on this microscopic domain. Let $\PP_D$ be the $L^2(D)$ projection onto $H$. The Stokes operator associated with the form $\int_D\nabla \bv:\nabla \bz\dx$ on $V$ has a smooth orthonormal eigenbasis $(\ba_j)_{j\ge1}$ of $H$. Denote by $\PP_n$ the $L^2(D)$ projection onto $\mathcal X_n=\operatorname{span}\{\ba_1,\ldots,\ba_n\}$, so that $\PP_n\to \PP_D$ strongly on $L^2(D)$.

We mollify the positive extension of the initial density periodically on $\T$ and restrict the smooth functions $\rho_n^0$ to $D$. Then $\rho_n^0\to\rho_\eps^0$ in every finite $L^q(D)$ and $\rho_*\le\rho_n^0\le\rho^*$. The Lipschitz norms of these functions are taken on $\T$. We set $\bu_n^0=\PP_n\bu_\eps^0$.

These approximations also converge at the level of initial momentum and kinetic energy. For this fixed $\eps$, abbreviate $r_n=\rho_n^0$, $r=\rho_\eps^0$, $z_n=\PP_n\bu_\eps^0$ and $z=\bu_\eps^0\in H\subset L^2(D)$. Then $z_n\to z$ in $L^2(D)$, $r_n\to r$ in $L^2(D)$ and $\rho_*\le r_n,r\le\rho^*$. The decomposition
\[
r_nz_n-rz=r_n(z_n-z)+(r_n-r)z
\]
controls the first term by $\rho^*\|z_n-z\|_2$. For any $R>0$,
\[
 \|(r_n-r)z\|_2
 \le R\|r_n-r\|_2
+2\rho^*\|z\mathbf1_{\{|z|>R\}}\|_2.
\]
Letting $n\to\infty$ at fixed $R$ and then $R\to\infty$ gives
\begin{equation}\label{eq:initial-momentum-product}
 \rho_n^0\bu_n^0\to\rho_\eps^0\bu_\eps^0
 \quad\text{strongly in }L^2(D;\R^3).
\end{equation}
The same truncation argument applies to $(\sqrt{r_n}-\sqrt r)z$, since the square-root coefficients are uniformly bounded and converge in $L^2(D)$. Indeed, $|\sqrt{r_n}-\sqrt r|\le |r_n-r|/(2\sqrt{\rho_*})$. Consequently, we have
\begin{equation}\label{eq:initial-kinetic-product}
 \begin{gathered}
 \sqrt{\rho_n^0}\,\bu_n^0\to
           \sqrt{\rho_\eps^0}\,\bu_\eps^0\quad\text{in }L^2(D),\quad
 \int_D\rho_n^0|\bu_n^0|^2\dx\to
           \int_D\rho_\eps^0|\bu_\eps^0|^2\dx.
 \end{gathered}
\end{equation}
Since $\PP_n$ is an $L^2$ contraction and $\PP_n\to \PP_D$ strongly on $L^2(D)$, \eqref{eq:initial-momentum-product} also gives $\PP_n(\rho_n^0\bu_n^0)\to \PP_D(\rho_\eps^0\bu_\eps^0)$ strongly in $H$. These limits are taken as $n\to\infty$ on the fixed domain $D$.

The approximate density and temperature are now coupled to the Galerkin velocity. For $\bu_n(t)\in\mathcal X_n$, solve the transport equation $\partial_t\rho_n+\bu_n\cdot\nabla\rho_n=0$ with this initial density. The finite-dimensional velocity is Lipschitz in space and has zero boundary trace. Its flow preserves the density bounds. For each time define the mean-zero temperature $\vartheta_n$ by
\[
-\Div(\bar\kappa_\eps\nabla\vartheta_n)=\widetilde \bu_n\cdot \bg
 \quad\hbox{on }\T.
\]
The source has mean zero. The temperature depends linearly and continuously on $\bu_n$, with values in $H^1\cap L^\infty$ by \cref{lem:scalar}.

For each $\ba_j\in\mathcal X_n$ impose
\begin{equation}\label{eq:galerkin}
\begin{aligned}
 &\sig^4\frac{d}{dt}\int_D\rho_n\bu_n\cdot \ba_j\dx
 -\sig^4\int_D\rho_n\bu_n\otimes \bu_n:\nabla \ba_j\dx\\
 &\quad+2\sig^2\int_D\mu(\vartheta_n)\sym \bu_n:\sym \ba_j\dx
 =\int_D(\rho_n\bfv-\vartheta_n\bg)\cdot \ba_j\dx.
\end{aligned}
\end{equation}
The coupled flow--ODE system is locally solvable by contraction. Fix $n$ and $\eps$. Write $\bu_{\bc}(t)=\sum_{j=1}^nc_j(t)\ba_j$ and let $\bc^0$ be the coefficient vector of $\bu_n^0$. We work on
\[
 \mathscr K_{R,\tau}=\{\bc\in C([0,\tau];\R^n):
       \bc(0)=\bc^0,\ \|\bc-\bc^0\|_{C_t\R^n}\le1\},\quad
R=|\bc^0|+1,
\]
equipped with the supremum norm. Finite-dimensional norm equivalence gives
\[
 \|\bu_{\bc}(t)\|_{W^{1,\infty}(D)}\le K_{\eps,n}R,
 \quad
 \|\bu_{\bc}(t)-\bu_{\bd}(t)\|_{W^{1,\infty}(D)}
      \le K_{\eps,n}|\bc(t)-\bd(t)|.
\]
The zero boundary trace makes the characteristic flows stay in $D$. Periodicity and solenoidality make them volume preserving. Let $\Phi_{\bc}(t;s,\bx)$ solve $\partial_t\Phi_{\bc}(t;s,\bx)=\bu_{\bc}(t,\Phi_{\bc}(t;s,\bx))$ with $\Phi_{\bc}(s;s,\bx)=\bx$. Gronwall's inequality gives
\begin{equation*}
 \sup_{s,t\in[0,\tau]}
 \|\Phi_{\bc}(t;s,\cdot)-\Phi_{\bd}(t;s,\cdot)\|_\infty
 \le K_{\eps,n}\tau e^{K_{\eps,n}R\tau}\|\bc-\bd\|_{C_t\R^n}.
\end{equation*}
The transported density associated with $\bc$ is $\rho_{\bc}(t,\bx)=\rho_n^0(\Phi_{\bc}(0;t,\bx))$. In particular,
\begin{equation*}
 \|\rho_{\bc}-\rho_{\bd}\|_{C_tL^\infty(D)}
 \le\|\nabla\rho_n^0\|_\infty K_{\eps,n}\tau e^{K_{\eps,n}R\tau}
                 \|\bc-\bd\|_{C_t\R^n}.
\end{equation*}
The density bounds remain $\rho_*,\rho^*$. Let $\vartheta_{\bc}$ denote the mean-zero solution of the heat equation with source $\widetilde \bu_{\bc}\cdot \bg$. By \cref{lem:scalar} and orthonormality,
\begin{equation*}
 \|\vartheta_{\bc}(t)-\vartheta_{\bd}(t)\|_{H^1(\T)}
 +\|\vartheta_{\bc}(t)-\vartheta_{\bd}(t)\|_\infty
 \le C\|\bg\|_\infty|\bc(t)-\bd(t)|.
\end{equation*}
Define the mass matrix $\bM[\bc]$ and force vector $\bG[\bc]$ by
\begin{align*}
 \bM[\bc]_{ij}(t)&=\int_D\rho_{\bc}(t)\ba_j\cdot \ba_i\dx,
                       \\
 \bG[\bc]_i(t)&=\int_D(\rho_{\bc}\bfv-\vartheta_{\bc}\bg)\cdot \ba_i\dx
     -2\sig^2\int_D\mu(\vartheta_{\bc})\sym \bu_{\bc}:\sym \ba_i\dx-\sig^4\int_D\rho_{\bc}(\bu_{\bc}\cdot\nabla)\bu_{\bc}\cdot \ba_i\dx.
\end{align*}
For all $\bxi\in\R^n$, $\rho_*|\bxi|^2\le\bxi\cdot\bM[\bc](t)\bxi\le\rho^*|\bxi|^2$. The inverse matrices are therefore bounded by $\rho_*^{-1}$, independently of the path, time, $n$ and $\eps$. Using the identity for the difference of inverses, the preceding path estimates and the Lipschitz continuity of $\mu$, we obtain constants $K,L$ such that, on this ball and for $\tau\le1$,
\[
 \|\sig^{-4}\bM[\bc]^{-1}\bG[\bc]\|_{C_t\R^n}\le K,
 \quad
 \|\sig^{-4}\bM[\bc]^{-1}\bG[\bc]
-\sig^{-4}\bM[\bd]^{-1}\bG[\bd]\|_{C_t\R^n}
 \le L\|\bc-\bd\|_{C_t\R^n}.
\]
Here $K,L$ may depend on $n,\eps,R,\rho_n^0,\bfv,\bg$. Define
\begin{equation*}
 (\mathscr F\bc)(t)=\bc^0+\sig^{-4}\int_0^t
             \bM[\bc](s)^{-1}\bG[\bc](s)\dd s.
\end{equation*}
It maps $\mathscr K_{R,\tau}$ into itself if $\tau K\le1$ and $\|\mathscr F\bc-\mathscr F\bd\|_{C_t\R^n} \le\tau L\|\bc-\bd\|_{C_t\R^n}$. Choose also $\tau L<1$. Banach's theorem gives a $C^1$ fixed point. Using the transport equation for $\rho_{\bc}$, the fixed-point ODE is equivalent to \eqref{eq:galerkin}. It therefore solves the coupled Galerkin momentum and density equations.

Multiplication by the velocity coefficients, together with the transport equation for $\rho_n$, gives
\begin{equation}\label{eq:galerkin-energy}
\begin{aligned}
 &\frac{\sig^4}{2}\int_D\rho_n(\tau)|\bu_n(\tau)|^2\dx
 +2\sig^2\int_0^\tau\!\!\int_D\mu(\vartheta_n)|\sym \bu_n|^2\dx\dt
 \\
 &\quad+\int_0^\tau\!\!\int_{\T}
                  \bar\kappa_\eps|\nabla\vartheta_n|^2\dx\dt
 =\frac{\sig^4}{2}\int_D\rho_n^0|\bu_n^0|^2\dx
       +\int_0^\tau\!\!\int_D\rho_n\bfv\cdot \bu_n\dx\dt.
\end{aligned}
\end{equation}
The thermal identity used here is $\int_D\vartheta_n\bg\cdot \bu_n= \int_{\T}\bar\kappa_\eps|\nabla\vartheta_n|^2$. Consequently, on every finite interval,
\begin{equation}\label{eq:fixed-eps-bounds}
 \|\bu_n\|_{L^\infty_tL^2(D)}+\|\bu_n\|_{L^2_tV}
 +\|\vartheta_n\|_{L^2_tH^1(\T)}
 +\|\vartheta_n\|_{L^2_tL^\infty(\T)}\le C_{\eps,T}.
\end{equation}
At fixed $n,\eps$, the energy bound controls the velocity coefficients. Moreover, $\|\nabla\rho_n(t)\|_\infty \le\|\nabla\rho_n^0\|_\infty \exp(\int_0^t\|\nabla \bu_n(s)\|_\infty\dd s)$ is finite on every finite interval by finite-dimensional norm equivalence. The local construction can therefore be continued up to any finite time $T$.

The energy bound allows passage to the limit at fixed $\eps$. We first establish strong compactness of the density. Extend $\rho_n$ by $\rho_*$ on the holes and $\bu_n$ by zero. The extended density satisfies
\[
 \partial_t\widehat\rho_n
=-\Div(\widehat\rho_n\widetilde \bu_n),\quad
 \|\partial_t\widehat\rho_n\|_{L^\infty_tH^{-1}(\T)}\le C_{\eps,T}.
\]
The bounded $L^2$ ball is compact in $H^{-1}(\T)$. Equicontinuity and compactness therefore give, along a subsequence,
\[
 \widehat\rho_n\to\widehat\rho
       \quad\hbox{in }C([0,T];H^{-1}(\T)),\quad
 \bu_n\rightharpoonup \bu\quad\hbox{in }L^2_tV.
\]
The same extraction gives $\bu_n\rightharpoonup^*\bu$ in $L^\infty(0,T;H)$. Since every $\widehat\rho_n$ equals $\rho_*$ on the fixed holes, its distributional limit has that value there. The fluid density is defined by $\rho=\widehat\rho|_D$. Thus $\widehat\rho=\rho\mathbf1_D+\rho_*\mathbf1_{\He}$. The continuity equation passes to the limit already at this stage. Indeed, for a smooth scalar test $\zeta$ that vanishes for $t$ sufficiently close to $T$,
\[
 \left|\int_0^T
 \langle\widehat\rho_n-\widehat\rho,
                    \widetilde \bu_n\cdot\nabla\zeta\rangle\dt\right|
 \le C_\zeta\|\widehat\rho_n-\widehat\rho\|_{C_tH^{-1}}
                   \|\widetilde \bu_n\|_{L^1_tH^1}\to0.
\]
The remaining product passes by weak convergence of $\bu_n$. Periodic spatial mollification and the commutator estimate in \cite[Lemma~II.1(ii)]{DiPernaLions} give the renormalized limit equation, since $\widetilde \bu\in L^2_tH^1(\T)$ is solenoidal and $\widehat\rho$ is bounded. In particular, $\widehat\rho\in C_{\mathrm{weak}}([0,T];L^2(\T))$, it attains the initial density and preserves every finite $L^q$ norm. For $q=2$ this yields
\[
 \|\widehat\rho(t)\|_2=\|\widehat\rho(0)\|_2
 \quad(0\le t\le T).
\]
Weak $L^2$ continuity together with constancy of the $L^2$ norm implies strong continuity $\widehat\rho\in C([0,T];L^2(\T))$. Indeed, for any sequence $t_k\to t$ as $k\to\infty$, weak continuity gives $\widehat\rho(t_k)\rightharpoonup\widehat\rho(t)$ in $L^2$, while the norms agree. Uniform convexity then gives strong convergence. Consequently the range $\{\widehat\rho(t):0\le t\le T\}$ is compact in $L^2(\T)$.

We next prove strong density convergence uniformly in time. Since the transported approximations preserve their $L^2$ norms and $\widehat\rho_n^0\to\widehat\rho(0)$ strongly in $L^2$, we have
\[
 \sup_{t\in[0,T]}
 \bigl|\|\widehat\rho_n(t)\|_2^2-\|\widehat\rho(t)\|_2^2\bigr|\to0.
\]
Given $\delta>0$, compactness of the range of $\widehat\rho$ provides finitely many smooth functions $g_1,\ldots,g_N$ such that each $\widehat\rho(t)$ is within $\delta$ in $L^2$ of some $g_j$. For each fixed $g_j$, the convergence in $C_tH^{-1}$ gives
\[
 \sup_{t\in[0,T]}
 \bigl|\langle\widehat\rho_n(t)-\widehat\rho(t),g_j\rangle\bigr|\to0.
\]
The uniform $L^2$ bounds then imply
\[
 \sup_{t\in[0,T]}
 \bigl|\langle\widehat\rho_n(t)-\widehat\rho(t),\widehat\rho(t)\rangle\bigr|
 \le o_n(1)+C\delta.
\]
Expanding the squared $L^2$ distance and then letting $n\to\infty$ at fixed $\delta$, followed by $\delta\to0$, gives convergence in $C([0,T];L^2(\T))$. The common $L^\infty$ bound and interpolation (and, for $q<2$, the finite measure of $\T$) therefore give
\begin{equation}\label{eq:fixed-density-strong}
 \widehat\rho_n\to\widehat\rho
       \quad\hbox{in }C([0,T];L^q(\T))\quad(1\le q<\infty).
\end{equation}
The velocity is at this point known only to converge weakly.

With strong density convergence established, compactness of the projected momentum yields strong convergence of the velocity. The fixed-domain energy bound \eqref{eq:fixed-eps-bounds} and interpolation first give, uniformly in $n$,
\begin{equation}\label{eq:galerkin-flux-integrability}
 \|\bu_n\|_{L^{8/3}_tL^4(D)}\le C_{\eps,T},\quad
 \|\rho_n\bu_n\otimes \bu_n\|_{L^{4/3}_tL^2(D)}\le C_{\eps,T}.
\end{equation}
The projected momentum is $\bmom_n=\PP_n(\rho_n\bu_n)$. For $\bphi\in V$, the Galerkin equation is tested with $\PP_n\bphi$. Contractivity of $\PP_n$ on the form domain gives
\[
|\langle\partial_t\bmom_n,\bphi\rangle|
 \le C_\eps\bigl(\|\bu_n\|_4^2+\|\sym \bu_n\|_2
+\|\rho_n\bfv-\vartheta_n\bg\|_2\bigr)\|\bphi\|_V.
\]
The factors $\sig^{-2}$ and $\sig^{-4}$ from the stress and force are included in $C_\eps$. Consequently,
\begin{equation}\label{eq:projected-momentum}
 \|\bmom_n\|_{L^\infty_tH}
       +\|\partial_t\bmom_n\|_{L^{4/3}(0,T;V^*)}\le C_{\eps,T}.
\end{equation}
The derivative bound in \eqref{eq:projected-momentum} gives
\[
 \|\bmom_n(t)-\bmom_n(s)\|_{V^*}\le C_{\eps,T}|t-s|^{1/4}.
\]
By the compact embedding $H\hookrightarrow\hookrightarrow V^*$ and the Arzel\`a--Ascoli theorem, a subsequence converges in $C([0,T];V^*)$. In particular,
\[
 \bmom_n\to \bmom\quad\hbox{strongly in }L^2(0,T;V^*).
\]
For a fixed $\bphi\in L^2_tV$, the spectral convergence gives $\PP_n\bphi\to\bphi$ in $L^2_tH$ and $\int_0^T\langle \bmom_n,\bphi\rangle\dt=\int_0^T\!\int_D\rho_n\bu_n\cdot \PP_n\bphi\dx\dt$. For the fixed test, the relevant product convergence is
\[
 \|(\rho_n-\rho)\bphi\|_{L^2_tL^2(D)}
 \le\|\rho_n-\rho\|_{C_tL^3(D)}
                         \|\bphi\|_{L^2_tL^6(D)}\to0.
\]
The spectral projection error is controlled by the common $L^2_tH$ bound for $\rho_n\bu_n$ and $\PP_n\bphi\to\bphi$ in $L^2_tH$. Weak convergence of $\bu_n$ now identifies $\bmom=\PP_D(\rho\bu)$ on $V$. The hypotheses of \cref{lem:weighted-kinetic-compactness} are now satisfied. Hence
\[
 \int_0^T\!\!\int_D\rho_n|\bu_n|^2\dx\dt
 \to
 \int_0^T\!\!\int_D\rho|\bu|^2\dx\dt.
\]
Moreover,
\[
 \bu_n\to\bu
 \quad\text{strongly in }L^2((0,T)\times D).
\]

The limit temperature $\vartheta(t)\in H^1_\diamond(\T)$ is defined as the unique solution of
\[
-\Div(\bar\kappa_\eps\nabla\vartheta(t)) =\widetilde\bu(t)\cdot\bg.
\]
The source is compatible and \cref{lem:scalar} gives the required time-integrated bounds. Here $\vartheta$ is the fixed-$\eps$ microscopic temperature associated with the coefficient $\bar\kappa_\eps$. Strong velocity convergence identifies this temperature and then the viscous stress. Applying \cref{lem:scalar} to the difference of the heat equations gives
\begin{equation}\label{eq:fixed-temperature-strong}
 \|\vartheta_n-\vartheta\|_{L^2_tH^1_x}
 +\|\vartheta_n-\vartheta\|_{L^2_tL^\infty_x}
 \le C\|\widetilde \bu_n-\widetilde \bu\|_{L^2_{t,x}}\to0.
\end{equation}
Thus $\mu(\vartheta_n)\to\mu(\vartheta)$ strongly in $L^2_tL^\infty_x$, while the coefficients are bounded by $\mu^*$. To identify the stress for an arbitrary $L^2$ tensor test $\Psi$, take an almost-everywhere convergent subsequence. Since the viscosities are bounded by $\mu^*$, dominated convergence gives $(\mu(\vartheta_n)-\mu(\vartheta))\Psi\to0$ in $L^2$ as $n\to\infty$ along this subsequence. Together with $\sym \bu_n\rightharpoonup\sym \bu$ this proves weak $L^2$ convergence of the stresses. For convection, we have
\[
 \begin{aligned}
 &\|\rho_n\bu_n\otimes \bu_n-\rho \bu\otimes \bu\|_{L^1_{t,x}}\le\rho^*\|\bu_n\otimes \bu_n-\bu\otimes \bu\|_{L^1_{t,x}}
        +\int_0^T\!\!\int_D|\rho_n-\rho|\,|\bu|^2\dx\dt
        \to0.
 \end{aligned}
\]
The first term uses strong $L^2$ velocity convergence and the second uses boundedness and almost-everywhere convergence of density. The thermal force passes to the limit by \eqref{eq:fixed-temperature-strong}.

We next extend the limiting spectral identities to every test in the weak formulation. Let $\bphi$ be a test from \cref{def:weak} and set $\bphi_n=\PP_n\bphi$. Use $\bphi_n$ in the integrated Galerkin equations. Strong spectral convergence on the form domain and commutation with the time derivative give
\begin{equation*}
 \begin{gathered}
 \bphi_n\to\bphi\quad\hbox{in }L^4_tV\cap L^2_tV\cap C_tH,\quad \partial_t\bphi_n\to\partial_t\bphi\quad\hbox{in }L^1_tH.
 \end{gathered}
\end{equation*}
The projection error in convection satisfies
\begin{equation*}
 \left|\int_0^T\!\!\int_D
     \rho_n\bu_n\otimes \bu_n:\nabla(\bphi_n-\bphi)\dx\dt\right|
 \le C_{\eps,T}\|\bphi_n-\bphi\|_{L^4_tV}\to0.
\end{equation*}
The stress projection error is bounded by $C_{\eps,T}\|\bphi_n-\bphi\|_{L^2_tV}$, the time projection error by $C_{\eps,T}\|\partial_t\bphi_n-\partial_t\bphi\|_{L^1_tH}$ and the force projection error by $C_{\eps,T}\|\bphi_n-\bphi\|_{L^2_tH}$. For the initial term, strong convergence in \eqref{eq:initial-momentum-product} and $\bphi_n(0)\to\bphi(0)$ in $H$ give the required limit. After removing the projection errors, all terms are paired with the fixed test $\bphi$. We use $\rho_n\bu_n\otimes \bu_n\to\rho \bu\otimes \bu$ in $L^1_{t,x}$ and the previously established stress and thermal-force convergences. Hence \eqref{eq:weak-momentum} holds for every test in \cref{def:weak}.

For pressure reconstruction, introduce the vector distribution on $(0,T)\times D$
\[
 \begin{aligned}
 \mathcal F_\eps={}&\rho \bfv-\vartheta \bg
  -\sig^4\bigl[\partial_t(\rho \bu)+\Div(\rho \bu\otimes \bu)\bigr]+\Div\bigl(2\sig^2\mu(\vartheta)\sym \bu\bigr).
 \end{aligned}
\]
The energy and convective bounds imply $\mathcal F_\eps\in\mathcal D'(0,T;\Xe^*)$ and the test-density argument extends its annihilation property to time-smooth tests valued in $\Ve$. Let $\mathcal B_D:L^2_\diamond(D)\longrightarrow\Xe$ be the periodic zero-hole-trace divergence right inverse constructed in \cref{app:periodic-geometry}, now used on the fixed perforated domain $D=\Oe$. For a time-smooth mean-zero scalar test $\psi$, define
\[
 \langle p_\eps,\psi\rangle
=-\langle\mathcal F_\eps,\mathcal B_D\psi\rangle.
\]
Every $\bzeta\in\Xe$ satisfies $\bzeta-\mathcal B_D(\Div\bzeta)\in\Ve$, hence $\langle\mathcal F_\eps,\bzeta\rangle =-\langle p_\eps,\Div\bzeta\rangle$. This constructs $p_\eps\in\mathcal D'(0,T;L^2_\diamond(D))$ and $\nabla p_\eps=\mathcal F_\eps$. The zero mean fixes the additive time distribution. Connectedness of $D$ gives uniqueness in that gauge. This is a direct fixed-domain de Rham reconstruction through the chosen divergence right inverse. It imposes no pressure boundary condition and provides no estimate uniform in $\eps$.

The initial energies converge by \eqref{eq:initial-kinetic-product}. Along a further subsequence, $\bu_n\to \bu$ in $L^2(D)$ for almost every $t\in(0,T)$. Together with \eqref{eq:fixed-density-strong} and weak lower semicontinuity of the dissipation, this passes \eqref{eq:galerkin-energy} to \eqref{eq:energy}. For the viscous term we use $\sqrt{\mu(\vartheta_n)}\sym \bu_n\rightharpoonup \sqrt{\mu(\vartheta)}\sym \bu$ in $L^2$.

Finally, we recover the initial velocity and full momentum traces from the projected momentum. Write $\bmom=\PP_D(\rho \bu)$. The equation just obtained for every defining test, together with \eqref{eq:galerkin-flux-integrability} for the limit, implies
\[
 \bmom\in L^\infty(0,T;H),\quad
 \partial_t\bmom\in L^{4/3}(0,T;V^*).
\]
For $\bphi\in V$, the time derivative gives
\[
|\langle \bmom(t)-\bmom(s),\bphi\rangle|
 \le |t-s|^{1/4}
     \|\partial_t\bmom\|_{L^{4/3}(0,T;V^*)}\|\bphi\|_V.
\]
These scalar pairings have continuous representatives. By density of $V$ in $H$ and the $L^\infty_tH$ bound, we obtain $\bmom\in C_{\mathrm{weak}}([0,T];H)$. The initial term in \eqref{eq:weak-momentum} gives
\begin{equation*}
 \bmom(0)=\PP_D(\rho_\eps^0\bu_\eps^0).
\end{equation*}
The mass-operator lemma, applied with the continuous density furnished by \eqref{eq:fixed-density-strong}, gives the required time representative. Since $\bmom(t)=\mathscr A_\rho(t)\bu(t)$ for almost every $t\in(0,T)$, the representative $\bu(t)=\mathscr A_\rho(t)^{-1}\bmom(t)$ is weakly continuous in $H$ and
\[
 \bu(0)=\mathscr A_\rho(0)^{-1}\PP_D(\rho_\eps^0\bu_\eps^0)
=\bu_\eps^0.
\]
By the same lemma, $\rho \bu$ is weakly continuous in $L^2(D)$ and has initial trace $\rho_\eps^0\bu_\eps^0$. Thus \eqref{eq:initial-momentum-product} identifies the projected initial momentum and the strongly continuous inverse mass operator recovers the velocity trace and then the full momentum trace. The density and temperature conditions in \cref{def:weak} follow from the preceding limits.

We have proved existence at fixed $\eps$. If \eqref{eq:initial-weighted-bound} holds, we estimate the force work in \eqref{eq:energy} by the improved Poincar\'e inequality, as in \cref{sec:heat}. This gives \eqref{eq:uniform} with constants independent of $\eps$. The gradient bound remains weighted by $\sig$. The corresponding unweighted bound is of order $\sig^{-1}$.
\end{proof}

\section{Fixed-parameter microscopic existence in the bounded case}
\label{app:bounded-existence}
The periodic fixed-domain construction extends to the bounded geometry after the transport equation is embedded in a larger torus and the temperature equation is solved with the insulated Neumann condition on $\partial\Omega$. All compactness arguments are again performed at fixed $\eps$. The resulting statement is as follows.

\begin{proposition}[Bounded-domain microscopic existence]
\label{prop:bounded-micro}
For each fixed sufficiently small $\eps>0$, every admissible initial pair with $0<\rho_*\le\rho_\eps^0\le\rho^*$ and $\bu_\eps^0\in\mathscr H_{\eps,\Omega}$ of finite kinetic energy has a finite-energy weak solution in the sense of \cref{def:bounded-weak} on every finite interval. If the initial weighted kinetic energies are uniformly bounded, then
\begin{equation}\label{eq:bounded-uniform}
 \sig^2\|\sqrt{\rho_\eps}\bu_\eps\|_{L^\infty_tL^2_x}
 +\|\widetilde \bu_\eps\|_{L^2_tL^2_x}
 +\sig\|\nabla \bu_\eps\|_{L^2_tL^2_x}
 +\|\vartheta_\eps\|_{L^2_tH^1_x}
 +\|\vartheta_\eps\|_{L^2_tL^\infty_x}\le C_T.
\end{equation}
\end{proposition}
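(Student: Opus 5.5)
We follow the periodic Galerkin scheme in the proof of \cref{prop:micro-existence} at fixed $\eps$, with $D=\Oe$, $V=\V_{\eps,\Omega}$ and $H=\mathscr H_{\eps,\Omega}$. The fluid domain is now a fixed bounded smooth domain whose boundary consists of $\partial\Omega$ and the hole boundaries. Its Stokes operator with form $\int_D\nabla\bv:\nabla\bz\dx$ on $V$ has compact inverse, so it supplies an orthonormal eigenbasis $(\ba_j)$ of $H$. Interior elliptic regularity and boundary regularity of the Stokes system on the $C^{3,\beta}$ domain make these eigenfunctions $W^{1,\infty}(D)$. Finite-dimensional norm equivalence then gives the Lipschitz bounds needed for the flows.

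For each Galerkin velocity $\bu_n$ we make three choices.
\begin{itemize}
\item[(i)] The transport is posed on an auxiliary torus $\mathbb T_L^3\supset\overline\Omega$, as in \cref{lem:finite-boundary-density}. The zero extension $\widetilde\bu_n$ is Lipschitz, solenoidal and vanishes outside $D$. Its flow therefore preserves $\Omega$, $D$ and Lebesgue measure, and the extended density $\widehat\rho_n$ equals $\rho_*$ on the holes.
\item[(ii)] The temperature $\vartheta_n$ solves the insulated Neumann problem \eqref{eq:bounded-weak-heat}. The source is compatible by \eqref{eq:bounded-compatibility}, and \cref{lem:bounded-scalar} gives $\|\vartheta_n\|_{H^1}+\|\vartheta_n\|_\infty\le C\|\bg\|_\infty\|\widetilde\bu_n\|_2$, linearly and Lipschitz in $\bu_n$.
\item[(iii)] The contraction on $\mathscr K_{R,\tau}$ with mass matrix $\bM[\bc]$ and force vector $\bG[\bc]$ is repeated verbatim.
\end{itemize}
Testing with $\bu_n$ and using $\int_\Omega\bar\kappa_\eps|\nabla\vartheta_n|^2=\int_D\vartheta_n\bg\cdot\bu_n$ gives the analogue of \eqref{eq:galerkin-energy}. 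This gives fixed-$\eps$ bounds as in \eqref{eq:fixed-eps-bounds}, and hence global continuation on every finite interval.

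The passage to the limit proceeds as in the periodic case.
\begin{itemize}
\item[(i)] $\partial_t\widehat\rho_n$ is bounded in $L^\infty_tH^{-1}(\mathbb T_L^3)$, which gives convergence in $C_tH^{-1}$. The renormalized equation follows from \cite[Lemma~II.1(ii)]{DiPernaLions} on the torus, since $\widetilde\bu\in L^2_tH^1$ is solenoidal. Norm conservation and the finite-net argument then upgrade this to $C_tL^q$ convergence.
\item[(ii)] The projected momenta $\bmom_n=\PP_n(\rho_n\bu_n)$ satisfy \eqref{eq:projected-momentum}. Arzel\`a--Ascoli with $H\hookrightarrow\hookrightarrow V^*$ yields strong $L^2_tV^*$ convergence. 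The compactness of $H\hookrightarrow V^*$ follows from Rellich on the bounded domain.
\item[(iii)] \cref{lem:weighted-kinetic-compactness} gives strong $L^2$ velocity convergence. \cref{lem:bounded-scalar} applied to differences gives strong temperature convergence in $L^2_tH^1\cap L^2_tL^\infty$. The stress, convection and thermal force then pass to the limit.
\item[(iv)] Admissible tests $\bphi$ vanishing on $\partial\Oe$ are handled by $\PP_n\bphi\to\bphi$ in $L^4_tV\cap C_tH$. Weak lower semicontinuity yields the energy inequality.
\item[(v)] \cref{lem:mass-operator}, whose proof only uses $V\hookrightarrow L^6(D)$ and the bounds \eqref{eq:mass-coercivity}, recovers the weakly continuous representatives and the initial traces.
\end{itemize}
A distributional pressure can again be obtained from the cube-based divergence inverse on the fixed bounded perforated domain. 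The weak formulation does not need it, however.

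For \eqref{eq:bounded-uniform}, we estimate the force work in the energy inequality by $\rho^*\|\bfv\|_2\|\bu_\eps\|_2$. The uniform Poincar\'e inequality \eqref{eq:bounded-poincare} bounds this by $\mu_*\sig^2\|\sym\bu_\eps\|_2^2+C$. Absorbing this term gives the weighted kinetic and dissipation bounds. Zero extension and $2\|\sym\bu_\eps\|_2^2=\|\nabla\bu_\eps\|_2^2$ give the gradient bound. The thermal dissipation, together with the mean-zero Poincar\'e inequality, gives the $H^1$ bound. \cref{lem:bounded-scalar} gives $\|\vartheta_\eps(t)\|_\infty\le C_{g,\Omega}\|\widetilde\bu_\eps(t)\|_2$, and squaring and integrating in time gives the $L^2_tL^\infty_x$ bound.

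The main obstacle is the regularity of the Stokes eigenfunctions up to the wall and up to the hole boundaries, which is needed for Lipschitz Galerkin flows. If $W^{1,\infty}$ regularity is in doubt, we would first try replacing the eigenbasis by smooth compactly supported solenoidal fields produced by the Piola--mollification operators $\mathscr S_\delta$ of \cref{lem:finite-boundary-density}, Gram--Schmidt orthonormalized in $H$. Their span is dense in $V$, and they are trivially Lipschitz. The price is that the projections $\PP_n$ are no longer uniformly bounded on $V$. We would compensate by identifying the limit equation through density of $\bigcup_n\mathcal X_n$ in $V$ rather than through contractivity of $\PP_n$.
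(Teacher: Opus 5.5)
Your proposal is correct and follows essentially the same route as the paper: a Stokes eigenbasis on the fixed fluid domain, transport of the $\rho_*$-extended density on the auxiliary torus $\mathbb T_L^3$, the insulated Neumann heat problem handled by \cref{lem:bounded-scalar}, the periodic flow--ODE contraction, the same density, projected-momentum and weighted-kinetic compactness steps with \cref{lem:mass-operator} for the traces, and \eqref{eq:bounded-poincare} for the uniform bounds. The one difference is the eigenfunction regularity, and it does not change the argument: the paper recovers each eigenpressure with a fixed-domain Bogovski\u\i{} operator and applies \cite[Theorem~3.2]{Breit2024} twice to get $H^2$ and then $H^3\hookrightarrow W^{1,\infty}$, which is the same two-step bootstrap your classical Stokes regularity on the $C^{3,\beta}$ domain provides, so the fallback basis is unnecessary; and if you do reconstruct a pressure, use the standard fixed-domain Bogovski\u\i{} operator rather than the cellwise repair, which only inverts cellwise-mean-zero sources.
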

The proof follows the periodic Galerkin scheme, with the two boundary conditions treated separately in the transport and thermal equations.

\begin{proof}[Proof of \cref{prop:bounded-micro}]

Use the auxiliary torus $\mathbb T_L^3$ from the proof of \cref{lem:finite-boundary-density}, with $L$ chosen so that $\overline\Omega\subset(-L/2,L/2)^3$. At fixed $\eps$, write $D=\Oe$, $V=\V_{\eps,\Omega}$ and $H=\mathscr H_{\eps,\Omega}$. As in the preceding appendix, $(\rho,\bu,\vartheta)$ denotes the fixed-$\eps$ Galerkin limit throughout this proof. Extend velocities by zero outside $D$ and densities by the same positive constant $\rho_*$ on $\mathbb T_L^3\setminus D$. Every $\bv\in V$ extends to a solenoidal $H^1$ field on the larger torus. These extensions satisfy the global transport equation distributionally across the interface and allow renormalization and scalar test approximation. We solve the heat equation on $\Omega$ with its Neumann boundary condition.

On this fixed domain, the Galerkin construction couples the transport equation to the Neumann heat problem. The coercive Stokes form $\int_D\nabla\bv:\nabla\bz$ defines a positive self-adjoint operator on $H$. Since $V\hookrightarrow\hookrightarrow H$, its inverse is compact. The spectral theorem therefore provides an $L^2$-orthonormal basis $(\ba_j)_{j\ge1}\subset V$ and eigenvalues $\lambda_j\to\infty$. The eigenvalue identity holds first against all tests in $V$. The distribution $-\Delta\ba_j-\lambda_j\ba_j\in H^{-1}(D;\R^3)$ annihilates every solenoidal $H^1_0$ test. Using the fixed-domain Bogovski\u\i{} right inverse of divergence, equivalently the $L^2$ inf--sup condition on the fixed Lipschitz domain $D$, we recover a unique mean-zero pressure $\pi_j\in L^2_\diamond(D)$ such that
\[
 -\Delta\ba_j+\nabla\pi_j=\lambda_j\ba_j,
 \qquad \Div\ba_j=0,
 \qquad \ba_j|_{\partial D}=0.
\]
Thus each eigenpair is a weak Dirichlet Stokes solution. We apply \cite[Theorem~3.2]{Breit2024}, which covers dimensions $n=2,3$, twice on the fixed perforated domain to obtain the spatial Lipschitz regularity required by the finite-dimensional transport flow. For $n=3$, $p=2$ and $s=2$, we take $\varrho=4$ and any $\theta\in(3/2,2)$ in its Besov-domain hypothesis. The $C^{3,\beta}$ boundary satisfies this local hypothesis, and the zero boundary datum satisfies the compatibility condition. Since $\lambda_j\ba_j\in L^2(D)$, the theorem gives $\ba_j\in H^2(D)$ and the normalized pressure $\pi_j\in H^1(D)$.

The forcing $\lambda_j\ba_j$ now belongs to $H^1(D)$. Applying the same theorem with $p=2$, $s=3$, $\varrho=2$ and any $\theta\in(5/2,3)$ gives $\ba_j\in H^3(D)\hookrightarrow W^{1,\infty}(D)$. Rotating and shrinking each boundary chart gives the local smallness of the Lipschitz constant required by the theorem. The regularity constants may depend on $D$ and $j$, since they are used only for the finite-dimensional flow.

We use this basis as in \cref{sec:weak-existence}. Spectral convergence in the form domain permits passage to general tests without higher-order boundary compatibility. We obtain the initial Galerkin densities by mollifying the positive density extension on $\mathbb T_L^3$ and restricting it to $D$. The initial momentum and kinetic-energy products converge by the truncation argument in \eqref{eq:initial-momentum-product}--\eqref{eq:initial-kinetic-product}. The finite-dimensional velocity fields are Lipschitz and have zero trace, so their flows preserve the boundary and the positive density bounds.

For each Galerkin velocity $\bu_n$, solve
\[
 \int_\Omega\bar\kappa_\eps\nabla\vartheta_n\cdot\nabla\psi\dx
=\int_\Omega(\widetilde \bu_n\cdot \bg)\psi\dx
       \quad(\psi\in H^1(\Omega)),\quad
  \int_\Omega\vartheta_n\dx=0.
\]
Compatibility is \eqref{eq:bounded-compatibility} and \cref{lem:bounded-scalar} gives linear continuity of this solution map from $H$, with its $L^2(D)$ norm, to $H^1_\diamond(\Omega)\cap L^\infty(\Omega)$. All Galerkin inputs belong to $H$ and therefore have compatible Neumann sources, so the same flow--ODE fixed-point argument applies. Testing with $\bu_n$ and $\vartheta_n$ yields
\begin{align*}
 &\frac{\sig^4}{2}\int_D\rho_n(\tau)|\bu_n(\tau)|^2\dx+2\sig^2\int_0^\tau\!\!\int_D\mu(\vartheta_n)|\sym \bu_n|^2\dx\dt+\int_0^\tau\!\!\int_\Omega\bar\kappa_\eps|\nabla\vartheta_n|^2\dx\dt\\
   &=\frac{\sig^4}{2}\int_D\rho_n^0|\bu_n^0|^2\dx
                    +\int_0^\tau\!\!\int_D\rho_n\bfv\cdot \bu_n\dx\dt.
\end{align*}
The boundary contribution vanishes. At fixed $n,\eps$, this energy prevents finite-time blowup of the Galerkin coefficients and the Lipschitz flow controls the spatial Lipschitz norm of the transported density on every finite interval.

The resulting energy bound gives the compactness needed to pass to the limit. The density compactness proof in \cref{sec:weak-existence} now takes place on $\mathbb T_L^3$. Uniform boundedness and equicontinuity in $H^{-1}$ first identify its transport equation with the weak limit of the velocity. The renormalization argument then gives weak $L^2$ continuity and conservation of the $L^2$ norm. Uniform convexity yields strong $L^2$ continuity of the limit. The compact-range argument used in the proof of \eqref{eq:fixed-density-strong} consequently gives strong density convergence in every finite $C_tL^q$ space. If $\bmom_n=\PP_n(\rho_n\bu_n)$, the same estimates give
\[
 \|\bmom_n\|_{L^\infty_tH}
+\|\partial_t\bmom_n\|_{L^{4/3}_tV^*}\le C_{\eps,T}.
\]
By the compact embedding $H\hookrightarrow\hookrightarrow V^*$ and the preceding time-derivative bound, after extraction $\bmom_n\to \bmom$ strongly in $L^2(0,T;V^*)$. Let $\widehat\rho$ be the limit on the larger torus and set $\rho=\widehat\rho|_D$. Its extension remains $\rho_*$ outside $D$. Strong density convergence and weak velocity convergence identify $\bmom=\PP_D(\rho\bu)$. The hypotheses of \cref{lem:weighted-kinetic-compactness} are therefore satisfied and we obtain
\[
 \bu_n\to\bu
 \quad\text{strongly in }L^2(0,T;L^2(D)),
\]
together with convergence of the weighted kinetic energies. Define the fixed-parameter temperature $\vartheta(t)\in H^1_\diamond(\Omega)$ by
\[
 \int_\Omega\bar\kappa_\eps\nabla\vartheta(t)\cdot\nabla\psi\dx
=\int_\Omega(\widetilde\bu(t)\cdot\bg)\psi\dx
 \qquad(\psi\in H^1(\Omega)).
\]
Compatibility follows from the zero normal trace of the extended velocity. On this fixed conductivity field, \cref{lem:bounded-scalar} then implies
\[
 \|\vartheta_n-\vartheta\|_{L^2_tH^1(\Omega)}
+\|\vartheta_n-\vartheta\|_{L^2_tL^\infty(\Omega)}
       \le C_\Omega\|\bu_n-\bu\|_{L^2_tL^2(D)}\to0.
\]
This identifies the variable-viscosity stress, while strong density and velocity convergence identify convection. Spectral projection errors are removed using $L^{4/3}_tL^2_x$ integrability of $\rho_n\bu_n\otimes \bu_n$ and $L^4_tV$ convergence of the tests. For pressure reconstruction, use the standard Bogovski\u\i{} right inverse $\mathcal B_D:L^2_\diamond(D)\longrightarrow H^1_0(D;\R^3)$ on this fixed bounded perforated domain $D$. Its operator norm may depend on $\eps$. Only fixed-$\eps$ boundedness of this operator is used in the existence proof. If $\mathcal F_\eps$ denotes the vector distribution obtained by moving all non-pressure terms in the momentum equation to the right-hand side, define for $\psi\in C_c^\infty(0,T;L^2_\diamond(D))$
\[
 \langle p_\eps,\psi\rangle
 =-\langle\mathcal F_\eps,\mathcal B_D\psi\rangle.
\]
Exactly as in the periodic reconstruction, this yields
$p_\eps\in\mathcal D'(0,T;L^2_\diamond(D))$ and $\nabla p_\eps=\mathcal F_\eps$ in distributions. The zero-mean condition fixes the additive time distribution. No $\eps$-uniform pressure estimate is used here. The energy inequality follows by lower semicontinuity.

The initial traces and admissibility of comparison tests follow from the same fixed-domain operators. The density-weighted mass operator in \cref{lem:mass-operator} is a fixed-domain Hilbert-space result and therefore applies to the present $D,V,H$. It recovers the weakly continuous velocity and the full momentum trace from the projected momentum trace. The comparison-test density statement used in \cref{thm:bounded-homogenization} follows independently from \cref{lem:finite-boundary-density}. Scalar squares of zero-extended comparison fields may be mollified on $\mathbb T_L^3$ and then restricted to $\Omega$. This completes the existence proof in \cref{prop:bounded-micro}. Finally, apply \eqref{eq:bounded-poincare} and \cref{lem:bounded-scalar} directly to the energy inequality to obtain the uniform weighted estimates \eqref{eq:bounded-uniform}.
\end{proof}
\begingroup
\small
\setlength{\baselineskip}{11pt}
\pdfbookmark[0]{References}{references}

\endgroup
\end{document}